\newif\iffirstlecture\firstlecturefalse
\newcommand{\lectureseries}{\firstlecturetrue
              \secdef\@lectureseries\@slectureseries} 
\newcommand{\@lectureseries}[2][default]{\chapter*{#2}%
              \gdef\thelectureseries{#1}} 
\newcommand{\@slectureseries}[1]{\chapter*{#1}}
\renewcommand{\auth}{\secdef\@auth\@sauth}
\newcommand{\@auth}[2][default]{\vspace{-1pc}{\raggedleft
        \Large\bf\noindent
        #2\endgraf
        \vspace*{2pc}
        }
        \def\@author{#1}%
}
\newcommand{\@sauth}[1]{\vspace{-1pc}{\raggedleft
        \Large\bf\noindent
        #1\endgraf
        \vspace*{2pc}
        }
        \def\@author{#1}%
}
\def\lecture#1{\global\Lecturetrue\global\Monographfalse
\iffirstlecture\else\chapter*{}\fi\firstlecturefalse
  \global\let\sectionmark\@gobble 
  \addtocounter{lecture}1\relax
  \refstepcounter{chapter}%
\gdef\thelecturename{#1\unskip}
  {\Large\bfseries
   \raggedleft
   \@xp\uppercase\@xp{\thelecturelabel} {\LARGE\thelecturenum}\\
   \vspace*{3pt}%
   \thelecturename
   \endgraf}%
  \let\@secnumber=\thelecturenum
  \@xp\lecturemark\@xp{\thelecturename}%
  \addcontentsline{toc}{chapter}{%
    \thelecturelabel\ \thelecturenum.\ \thelecturename}%
  \vspace{34\p@}\noindent}
\def\lecture{\global\Lecturetrue\global\Monographfalse
\iffirstlecture\else\chapter*{}\fi%
  \global\let\sectionmark\@gobble 
\secdef\@lecture\@slecture}
\def\@lecture[#1]#2{%
  \addtocounter{lecture}1\relax
  \refstepcounter{chapter}%
\gdef\thelecturename{#1\unskip}\firstlecturefalse
  {\Large\bfseries
   \raggedleft
   \@xp\uppercase\@xp{\thelecturelabel} {\LARGE\thelecturenum}\\
   \vspace*{3pt}%
    #2\unskip
   \endgraf}%
  \let\@secnumber=\thelecturenum
  \@xp\lecturemark\@xp{\thelecturename}%
  \addcontentsline{toc}{chapter}{%
    \thelecturelabel\ \thelecturenum.\ #2}%
  \vspace{34\p@}\noindent}
\def\slecturerunhead#1#2#3{%
    \let\@tempa\chaptername
    \uppercasenonmath{\@tempa}%
    \def\@tempb{#3\unskip}%
    \uppercasenonmath{\@tempb}%
    {\normalfont\@tempb}
    }
\def\slecturemark{
    \@secmark\markright\slecturerunhead\chaptername}%
\def\@slecture#1{%
\iffirstlecture
\gdef\thelecturename{#1\unskip}\firstlecturefalse
  {\Large\bfseries
\noindent\thelecturename
   \endgraf}%
  \let\@secnumber=\thelecturenum
  \@xp\slecturemark\@xp{\thelecturename}%
  \addcontentsline{toc}{chapter}{%
    \thelecturename}%
 \vspace{-6\p@}\noindent
\else
\gdef\thelecturename{#1\unskip}\firstlecturefalse
  {\Large\bfseries
   \raggedleft
   \@xp\uppercase\@xp{\thelecturename}
   \endgraf}%
  \let\@secnumber=\thelecturenum
  \@xp\slecturemark\@xp{\thelecturename}%
  \addcontentsline{toc}{chapter}{%
    \thelecturename}%
  \vspace{34\p@}\noindent
\fi}
  \def\chapterrunhead#1#2#3{%
    \let\@tempa\@author
    \uppercasenonmath{\@tempa}%
    \uppercasenonmath{\thelectureseries}%
    \textmd{\@tempa, \thelectureseries}
    }
  \def\lecturerunhead#1#2#3{%
    \let\@tempa\chaptername
    \uppercasenonmath{\@tempa}%
    \def\@tempb{#3\unskip}%
    \uppercasenonmath{\@tempb}%
    \textmd{\@tempa\ #2. \@tempb}
    }
  \let\chapterrunhead\partrunhead
\newif\ifBibliographyIsASection\BibliographyIsASectionfalse
  \def\bibliomark{
    \@secmark\markright\bibliorunhead\chaptername}%
  \def\bibliorunhead#1#2#3{%
    \let\@tempa\chaptername
    \uppercasenonmath{\@tempa}%
    \def\@tempb{#3\unskip}%
    \uppercasenonmath{\@tempb}%
    \textmd{\@tempb}
    }
\def\thebibliography#1{%
  \ifBibliographyIsASection
    \section*\refname
    \if@backmatter
      \markboth{\refname}{\refname}%
    \fi
  \else
\chapter*{}
  {\Large\bfseries
   \raggedleft
   \@xp\uppercase\@xp{\bibname} \\
   \endgraf}%
  \let\@secnumber=\thelecturenum
  \@xp\bibliomark\@xp{\bibname}%
  \addcontentsline{toc}{chapter}{%
    \bibname}%
  \vspace{34\p@}\noindent
  \fi
  \normalsize\labelsep .5em\relax
  \list{\@arabic\c@enumi.}{\settowidth\labelwidth{\@biblabel{#1}}%
  \leftmargin\labelwidth
  \advance\leftmargin\labelsep
	\usecounter{enumi}}\sloppy
  \clubpenalty9999 \widowpenalty\clubpenalty  \sfcode`\.\@m}
  \def\indexmark{
    \@secmark\markright\indexrunhead\chaptername}%
  \def\indexrunhead#1#2#3{%
    \let\@tempa\chaptername
    \uppercasenonmath{\@tempa}%
    \def\@tempb{#3\unskip}%
    \uppercasenonmath{\@tempb}%
    \textmd{\@tempb}
    }
\def\theindex{\cleardoublepage
\@restonecoltrue\if@twocolumn\@restonecolfalse\fi
\columnseprule \z@ \columnsep 35pt
\def\indexchap{\@startsection
		{chapter}{1}{\z@}{8pc}{34pt}%
		{\raggedleft
		\Large\bfseries}}%
 \twocolumn[\indexchap[{\indexname}]{\@xp\uppercase\@xp{\indexname}}]
  \@xp\indexmark\@xp{\indexname}%
	\thispagestyle{plain}\let\item\@idxitem\parindent\z@
	 \footnotesize\parskip\z@ plus .3pt\relax\let\item\@idxitem}
\def\@makefntext{\noindent\@makefnmark}
\def\setaddress{%
  {\let\@makefnmark\relax  \let\@thefnmark\relax
        \nobreak
        \addressnum@=\z@
        \loop\ifnum\addressnum@<\addresscount@\advance\addressnum@\@ne
           \footnote{$^{\hbox{\tiny\number\addressnum@}}$%
           \csname @address\number\addressnum@\endcsname
           \csname @curraddr\number\addressnum@\endcsname
           \csname @email\number\addressnum@\endcsname}\repeat
  \ifx\@empty\@date\else \@footnotetext{\@setdate}\fi
  \ifx\@empty\@subjclass\else \@footnotetext{\@setsubjclass}\fi
  \ifx\@empty\@keywords\else \@footnotetext{\@setkeywords}\fi
  \ifx\@empty\thankses\else \@footnotetext{%
    \def\par{\let\par\@par}\@setthanks}\fi
    }%
  \@setcopyright
}
\def\@tmpevenhead{\relax}
\def\cleardoublepage{\clearpage\if@twoside \ifodd\c@page\else
    \let\@tmpevenhead\@evenhead \let\@evenhead\relax\hbox{}\eject 
    \let\@evenhead\@tmpevenhead\if@twocolumn\hbox{}\newpage\fi\fi\fi}
\def\@setcopyright{%
  \let\copyrightyear\currentyear             
  \insert\copyins{\hsize\textwidth
    \parfillskip\z@ \leftskip\z@\@plus.9\textwidth
    \fontsize{6}{7\p@}\normalfont\upshape
    \everypar{}%
    \vskip-\skip\copyins \nointerlineskip
    \noindent\vrule\@width\z@\@height\skip\copyins
    \copyright\copyrightyear\ American Mathematical Society\par
    \kern\z@}%
}
\renewcommand{\@auth}[2][default]{{\raggedleft
        \begingroup
  \fontsize{\@xivpt}{18}\bfseries
  #2\par \endgroup
        \vspace*{2pc}
        }
        \def\@author{#1}%
}
\renewcommand{\@sauth}[1]{{\raggedleft
        \begingroup
  \fontsize{\@xivpt}{18}\bfseries
  #1\par \endgroup
        \vspace*{2pc}
        }
        \def\@author{#1}%
}
\def\@lecture[#1]#2{%
  \addtocounter{lecture}1\relax
  \refstepcounter{chapter}%
\gdef\thelecturename{#1\unskip}\firstlecturefalse
  {\Large\bfseries
   \raggedleft
   \@xp\uppercase\@xp{\thelecturelabel} {\LARGE\thelecturenum}\\
   \vspace*{3pt}%
    #2\unskip
   \endgraf}%
  \let\@secnumber=\thelecturenum
  \@xp\lecturemark\@xp{\thelecturename}%
  \addcontentsline{toc}{chapter}{%
    \thelecturelabel\ \thelecturenum.\ #2}%
  \vspace{10\p@}\noindent}
\def\@slecture#1{%
\iffirstlecture
\gdef\thelecturename{#1\unskip}\firstlecturefalse
  {\Large\bfseries
\noindent\thelecturename
   \endgraf}%
  \let\@secnumber=\thelecturenum
  \@xp\slecturemark\@xp{\thelecturename}%
  \addcontentsline{toc}{chapter}{%
    \thelecturename}%
 \vspace{-6\p@}\noindent
\else
\gdef\thelecturename{#1\unskip}\firstlecturefalse
  {\Large\bfseries
   \raggedleft
   \@xp\uppercase\@xp{\thelecturename}
   \endgraf}%
  \let\@secnumber=\thelecturenum
  \@xp\slecturemark\@xp{\thelecturename}%
  \addcontentsline{toc}{chapter}{%
    \thelecturename}%
  \vspace{10\p@}\noindent
\fi}
\numberwithin{section}{chapter}
\numberwithin{equation}{chapter}
\theoremstyle{plain}
\newtheorem{theorem}[equation]{Theorem}
\newtheorem{lemma}[equation]{Lemma}
\newtheorem{corollary}[equation]{Corollary}
\theoremstyle{definition}
\newtheorem{definition}[equation]{Definition}
\newtheorem{example}[equation]{Example}
\newtheorem{fact}[equation]{Fact}
\theoremstyle{remark}
\newtheorem{claim}[equation]{Claim}
\newtheorem{remark}[equation]{Remark}
\newcommand{\tv}{| \! | \! |}
\newcommand{\nequiv}{ \equiv \mspace{-16mu} / \mspace{11mu}}
\newcounter{exnum}
\newenvironment{problems}
   {\setcounter{exnum}{0}
    \begin{list}{\textbf{Problem \arabic{chapter}.\arabic{exnum}.}}
            {\setlength{\leftmargin}{0in} 
             \setlength{\rightmargin}{0in}
             \setlength{\labelwidth}{0in} 
             \setlength{\labelsep}{.1in}
             \setlength{\itemindent}{.1in}
             \setlength{\itemsep}{0.5ex}}
     \usecounter{exnum}}
   {\end{list}}
\begin{document}


\part*{The Cauchy Problem for the Quasilinear Schr\"{o}dinger Equation \\ (Following Kenig-Ponce-Vega)}
\pauth{Carlos E. Kenig}
\tableofcontents

\mainmatter
\setcounter{page}{1}

\LogoOn

\lectureseries[Quasilinear Schr\"{o}dinger Equation]{The Cauchy Problem for the Quasilinear Schr\"{o}dinger Equation \\ (Following Kenig-Ponce-Vega)}

\auth[C.~E. Kenig]{Carlos E. Kenig}

\address{Department of Mathematics, University of Chicago, Chicago, Illinois, USA 60637}
\email{cek@math.uchicago.edu}


\thanks{The author was supported in part by the National Science Foundation}

\setaddress

\lecture{The energy method}

In these lectures we will discuss the local in time well-posedness of the Cauchy problem for quasi-linear Schr\"{o}dinger equations, i.e.\ equations of the form
\begin{equation}\label{E:QLCP}
\text{(QLCP)} \left\{ 
\begin{aligned}
\partial_t u &= ia_{lk}(x,t;u,\bar{u},\nabla_xu,\nabla_x\bar{u}) \partial_{x_lx_k}^2u \\
& \quad + ib_{lk}(x,t;u,\bar{u},\nabla_xu,\nabla_x\bar{u}) \partial_{x_lx_k}^2\bar{u} \\
& \quad + \vec{b}_1(x,t;u,\bar{u},\nabla_xu,\nabla_x\bar{u})\cdot \nabla_xu \\
&\quad + \vec{b}_2(x,t;u,\bar{u},\nabla_xu,\nabla_x\bar{u})\cdot \nabla_x\bar{u} \\
& \quad + c_1(x,t;u,\bar{u})u+c_2(x,t;u,\bar{u})\bar{u} \\
& \quad + f(x,t) \\
u\big|_{t=0} &= u_0 
\end{aligned}
\right. \qquad  x\in \mathbb{R}^n, \; t\in [0,T]
\end{equation}
We will be assigning suitable ``ellipticity'' hypotheses on $\{ a_{lk}, b_{lk} \}$, smoothness on all the coefficients, ``asymptotic-flatness'' on the coefficients, and as we shall see a (necessary) ``non-trapping'' condition on a Hamiltonian flow obtained from the coefficients and the data $u_0$.  By ``local well-posedness'' in a space $B$, we mean that, given $u_0\in B$, $f\in X$, there exists $T=T(u_0,f)$, and a unique $u\in C([0,T]; B)$, such that $u$ solves the equation (in a suitable sense), $u(0,-)=u_0$, and the mapping $(u_0,f)\in B\times X\mapsto u\in C([0,T];B)$ is continuous.  In general, the space $B$ will be a Sobolev space, like 
$$H^s(\mathbb{R}^n)= \left\{ f\in \mathcal{S}' : \int (1+|\xi|^2)^s |\hat{f}(\xi)|^2 \, d\xi < \infty \right\}$$ 
or a ``weighted Sobolev space'' of the type $H^s(\mathbb{R}^n)\cap L^2( |x|^Ndx)$, whose presence will be explained later on.  It turns out that the classical theory of pseudo-differential operators ($\Psi$DO) is an appropriate and useful tool in this task, and we will review it and utilize it.

Equations of the type described above appear in several fields of physics, such as plasma fluids, classical and quantum ferromagnetism, laser theory, etc., and also in complex geometry, where, for example, in K\"{a}hler geometry they model ``Schr\"odinger flows''.  These equations are also analogous to corresponding ones for hyperbolic equations, where the corresponding results were obtained much earlier, in the 70's, by Kato and his collaborators \cite{Kato} \cite{HKM}.

The problem was extensively studied in the 90's, in the case of constant coefficients (i.e.\ $(x,t)$ independent) and semilinear non-linearity, i.e.\
\begin{equation}\label{E:SLCP}
\text{(SLCP)} \left\{ 
\begin{aligned}
\partial_t u &= i\Delta u + F(u,\bar{u},\nabla_xu,\nabla_x\bar{u}) \\
u\big|_{t=0} &= u_0 
\end{aligned}
\right. \qquad
x\in \mathbb{R}^n, \;
t\in [0,T]
\end{equation}
Let us first discuss SLCP \eqref{E:SLCP} in the case when there are no derivatives in the non-linearity, i.e.\ when $F(u,\bar{u},\nabla_xu,\nabla_x\bar{u})=G(u,\bar{u})$ with $G(0,0)=0$.  In this case, the energy method applies, and gives local-wellposedness in $H^s(\mathbb{R}^n)$ for $s>n/2$.  Since the energy method will be important to us in the sequel, let us work out this result.  

Thus, we assume $G(0,0)=0$, $G\in C^\infty(\mathbb{C}\times \mathbb{C})$, and we wish to show the local well-posedness of the Cauchy problem
\begin{equation}\label{E:3}
\left\{
\begin{aligned}
\partial_t u &= i\Delta u + G(u,\bar{u}) \\
u\big|_{t=0} &= u_0
\end{aligned} 
\right.
\end{equation}
in the Sobolev space $H^s(\mathbb{R}^n)$ for $s>n/2$.  To simplify the exposition, let us assume that $G$ is a polynomial, so that 
$$G(u,\bar{u})= \sum_{ \substack{ 0\leq j \leq M \\ 0 \leq k \leq N \\ (j,k) \neq (0,0) } } c_{jk}u^j\bar{u}^k$$
We will recall a few facts about Sobolev spaces.
\begin{fact}\label{F:1}
$\|u\|_{L^\infty(\mathbb{R}^n)} \leq C \|u\|_{H^s(\mathbb{R}^n)}$ for $s>n/2$.
\end{fact}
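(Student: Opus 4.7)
The plan is to prove this classical Sobolev embedding via Fourier inversion together with the Cauchy--Schwarz inequality. By density of $\mathcal{S}(\mathbb{R}^n)$ in $H^s(\mathbb{R}^n)$, it suffices to establish the inequality for a Schwartz function $u$ and then pass to the limit; the same argument simultaneously shows that a general $H^s$ function (with $s>n/2$) is (equal almost everywhere to) a continuous function, so that $\|u\|_{L^\infty}$ is meaningful.

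For $u\in\mathcal{S}(\mathbb{R}^n)$, Fourier inversion gives
$$u(x) = (2\pi)^{-n/2}\int_{\mathbb{R}^n} e^{ix\cdot\xi}\hat{u}(\xi)\,d\xi,$$
so that $|u(x)|\leq (2\pi)^{-n/2}\|\hat{u}\|_{L^1}$ uniformly in $x$. The idea is to bound $\|\hat{u}\|_{L^1}$ by trading a factor of $(1+|\xi|^2)^{s/2}$ between $\hat{u}$ and the measure: write
$$|\hat{u}(\xi)| = (1+|\xi|^2)^{-s/2}\cdot (1+|\xi|^2)^{s/2}|\hat{u}(\xi)|$$
and apply Cauchy--Schwarz to obtain
$$\|\hat{u}\|_{L^1} \leq \left(\int_{\mathbb{R}^n}(1+|\xi|^2)^{-s}\,d\xi\right)^{1/2}\|u\|_{H^s(\mathbb{R}^n)}.$$

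It remains to verify that the weight integral is finite precisely when $s>n/2$. Passing to polar coordinates,
$$\int_{\mathbb{R}^n}(1+|\xi|^2)^{-s}\,d\xi = c_n\int_0^\infty \frac{r^{n-1}}{(1+r^2)^{s}}\,dr,$$
and this integral is convergent at infinity exactly when $2s-(n-1)>1$, i.e.\ when $s>n/2$ (convergence at $0$ is automatic). Setting
$$C = (2\pi)^{-n/2}\left(c_n\int_0^\infty \frac{r^{n-1}}{(1+r^2)^{s}}\,dr\right)^{1/2}$$
yields $\|u\|_{L^\infty}\leq C\|u\|_{H^s}$ for Schwartz $u$, and then for general $u\in H^s$ by density. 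The only real ``obstacle,'' if one can call it that, is the dimensional bookkeeping that pins down the threshold $s=n/2$; everything else is a direct application of Fourier inversion and Cauchy--Schwarz.
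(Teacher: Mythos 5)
Your proof is correct and matches the paper's own argument (given in the solutions appendix, Problem 1a): Fourier inversion bounds $\|u\|_{L^\infty}$ by $\|\hat{u}\|_{L^1}$, and Cauchy--Schwarz against the weight $(1+|\xi|^2)^{\pm s/2}$ reduces matters to the convergence of $\int (1+|\xi|^2)^{-s}\,d\xi$ for $s>n/2$. You have merely spelled out the density step and the polar-coordinate computation, which the paper leaves implicit.
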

\begin{fact}\label{F:2}
For $s>n/2$, $H^s(\mathbb{R}^n)$ is an algebra under pointwise multiplication, i.e.\ 
$$\| f\cdot g \|_{H^s} \leq C \|f\|_{H^s}\|g\|_{H^s}$$
This is a consequence of Fact~\ref{F:1}. 
\end{fact}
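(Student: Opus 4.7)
The plan is to pass to the Fourier side and reduce everything to the mechanism behind Fact~\ref{F:1}. Recall that $\widehat{fg}=\hat f * \hat g$. The single analytic input is the ``weak triangle inequality'' for the Bessel weight,
$$(1+|\xi|^2)^{s/2}\leq C_s\bigl[(1+|\xi-\eta|^2)^{s/2}+(1+|\eta|^2)^{s/2}\bigr],\qquad s\geq 0,$$
which is immediate from $|\xi|\leq|\xi-\eta|+|\eta|$.

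First I would bound $|\widehat{fg}|\leq|\hat f|*|\hat g|$, multiply by $(1+|\xi|^2)^{s/2}$, and plug the weak triangle inequality into the convolution so as to split the result into two pieces, one carrying the weight on $\hat g$ and one on $\hat f$. Taking $L^2_\xi$ norms and applying Young's convolution inequality $\|u*v\|_{L^2}\leq \|u\|_{L^1}\|v\|_{L^2}$ in each piece yields
$$\|fg\|_{H^s}\leq C\bigl(\|\hat f\|_{L^1}\|g\|_{H^s}+\|f\|_{H^s}\|\hat g\|_{L^1}\bigr).$$
Second, I would estimate $\|\hat f\|_{L^1}$ by $\|f\|_{H^s}$ through Cauchy--Schwarz: writing $|\hat f(\xi)|=(1+|\xi|^2)^{-s/2}(1+|\xi|^2)^{s/2}|\hat f(\xi)|$ one obtains
$$\|\hat f\|_{L^1}\leq \Bigl(\int (1+|\xi|^2)^{-s}\,d\xi\Bigr)^{1/2}\|f\|_{H^s},$$
which is exactly the step one uses to derive Fact~\ref{F:1} via $\|f\|_{L^\infty}\leq\|\hat f\|_{L^1}$. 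Combining the two displays gives $\|fg\|_{H^s}\leq C\|f\|_{H^s}\|g\|_{H^s}$.

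The hypothesis $s>n/2$ enters at exactly one place---integrability of $(1+|\xi|^2)^{-s}$ on $\mathbb R^n$---precisely where it enters in Fact~\ref{F:1}, which is why the author records the implication. I do not foresee any genuine obstacle; the one thing to verify with care is that the weak triangle inequality is applied \emph{inside} the convolution (so that the weighted factor sits on a single function), after which Young's inequality in the $L^1 * L^2 \to L^2$ form closes the estimate. If one wanted to replace the Fourier argument by a real-variable Leibniz/Moser estimate, the same range $s>n/2$ can be recovered but only by invoking the full scale of Sobolev embeddings $H^\sigma\hookrightarrow L^p$; the route above has the virtue of using only Plancherel, Young and Cauchy--Schwarz.
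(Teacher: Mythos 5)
Your proof is correct and proceeds by essentially the same Fourier-side mechanism as the paper's (Problem 1b in the appendix): split the Bessel weight via a triangle-type inequality inside the convolution $\hat f * \hat g$, and use $s>n/2$ to make the unweighted factor harmless. The only difference is packaging---you close the estimate with Young's inequality $L^1 * L^2 \to L^2$ and the explicit bound $\|\hat f\|_{L^1}\leq C\|f\|_{H^s}$, whereas the paper introduces auxiliary functions $F,G$ with $\hat F=|\hat f|$, $\hat G=|\hat g|$, passes back to physical space by Plancherel, and uses $\|D^s F\cdot G\|_{L^2}\leq\|D^sF\|_{L^2}\|G\|_{L^\infty}$ together with $\|G\|_{L^\infty}\leq C\|g\|_{H^s}$; these two bookkeeping devices are equivalent.
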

\begin{fact} \label{F:3}
For $s>n/2$, if $G(0,0)=0$, $G$ is smooth, $s>n/2$, then 
$$\|G(u,\bar{u})\|_{H^s} \leq R( \| u \|_{H^s} )$$
where $R$ is an increasing function that depends on $G$, $s$, with $R(0)=0$.
  For instance, in our polynomial case, we have 
$$\|G(u,\bar{u})\|_{H^s} \leq C \{ \|u\|_{H^s}^{M+N}+\|u\|_{H^s} \}$$
\end{fact}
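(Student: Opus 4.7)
The plan is to prove the explicit polynomial inequality directly from Fact~\ref{F:2}, and then to reduce the general smooth case to a Moser-type estimate. For the polynomial case, fix indices $(j,k)$ with $j+k\geq 1$ and apply Fact~\ref{F:2} iteratively to $j+k-1$ pairwise products. Since conjugation preserves the $H^s$ norm (the Fourier transform of $\bar u$ is $\overline{\hat u(-\xi)}$), we have $\|\bar u\|_{H^s}=\|u\|_{H^s}$, and so
\[
\|u^j\bar u^k\|_{H^s}\;\leq\; C^{j+k-1}\,\|u\|_{H^s}^{j+k}.
\]
Summing over the finite index set $0\leq j\leq M$, $0\leq k\leq N$, $(j,k)\neq(0,0)$, and applying the elementary inequality $x^\ell\leq x+x^{M+N}$ valid for $x\geq 0$ and $1\leq\ell\leq M+N$, we collapse the sum to the stated bound $C\{\|u\|_{H^s}+\|u\|_{H^s}^{M+N}\}$.

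For the general smooth case, I would use $G(0,0)=0$ and the fundamental theorem of calculus along the segment from $(0,0)$ to $(u,\bar u)$ in $\mathbb{C}^2$ to factor out one power of the solution:
\[
G(u,\bar u) \;=\; u\int_0^1\partial_1 G(tu,t\bar u)\,dt \;+\; \bar u\int_0^1\partial_2 G(tu,t\bar u)\,dt \;=\; u\,\tilde G_1(u,\bar u)+\bar u\,\tilde G_2(u,\bar u),
\]
with $\tilde G_i\in C^\infty(\mathbb{C}\times\mathbb{C})$. By Fact~\ref{F:2}, it then suffices to establish a Moser estimate of the form $\|F(u,\bar u)\|_{H^s}\leq C(\|u\|_{L^\infty})(1+\|u\|_{H^s})$ for any smooth $F:\mathbb{C}^2\to\mathbb{C}$. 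Invoking Fact~\ref{F:1} to convert $\|u\|_{L^\infty}$ into $\|u\|_{H^s}$ then delivers an increasing function $R$ with $R(0)=0$, as required.

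The main obstacle is this Moser estimate for non-polynomial $F$. For integer $s$, the Leibniz rule expresses each $\partial^\alpha F(u,\bar u)$ with $|\alpha|\leq s$ as a finite sum of products of the form $(\partial^\beta F)(u,\bar u)\prod_i\partial^{\beta_i}u\prod_j\partial^{\gamma_j}\bar u$ with $\sum|\beta_i|+\sum|\gamma_j|=|\alpha|$; each prefactor is bounded in $L^\infty$ by a continuous function of $\|u\|_{L^\infty}$ (controlled via Fact~\ref{F:1}), and each product of partial derivatives is controlled in $L^2$ by $\|u\|_{L^\infty}^a\|u\|_{H^s}$ for suitable $a$ using the Gagliardo-Nirenberg interpolation between $L^\infty$ and $H^s$. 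For non-integer $s>n/2$, the same conclusion can be reached via a Littlewood-Paley/paradifferential argument, telescoping $F(u,\bar u)-F(0,0)$ over dyadic frequency cutoffs of $u$; this is the step where genuine harmonic analysis, rather than the algebra property of Fact~\ref{F:2}, is indispensable.
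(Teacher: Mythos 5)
Your proposal is correct and follows essentially the same route as the paper's appendix solution to Problem~1.1(c): iterated application of the algebra property (Fact~\ref{F:2}) for the polynomial case, then a Moser-type estimate via the chain/Leibniz rule, $L^\infty$ control through Fact~\ref{F:1}, and interpolation for integer $s$, with fractional Leibniz/chain rules (equivalently a Littlewood--Paley/paradifferential argument) for non-integer $s>n/2$. Your fundamental-theorem-of-calculus factorization $G(u,\bar u)=u\,\tilde G_1(u,\bar u)+\bar u\,\tilde G_2(u,\bar u)$ to extract the $R(0)=0$ behavior is a slightly cleaner packaging of the same idea, and your Gagliardo--Nirenberg route to the multilinear $L^2$ bound is a standard substitute for the iterated algebra-product estimate used in the appendix; neither changes the essential shape of the argument.
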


\underline{Step 1.  \textit{A priori} estimates}.  Assume that we have a sufficiently regular solution $u$ to \eqref{E:3}.  Let $\widehat{J^su}(\xi,t)=\hat{u}(\xi,t)(1+|\xi|^2)^{s/2}$.  We take \eqref{E:3} and rewrite it as
\begin{align}
\partial_t u &= i\Delta u + G(u,\bar{u}) \label{E:4}\\
\partial_t \bar{u} &= -i\Delta \bar{u} + \overline{G(u,\bar{u})} \label{E:5}
\end{align}
We now apply $J^s$ to both equations, multiply \eqref{E:4} by $\overline{J^su}=J^s\bar{u}$, multiply \eqref{E:5} by $J^su$, integrate both equations in $x$, and add.  We then have (with $v=J^su$)
$$
\partial_t \int |v|^2 = i \int [ \Delta v \bar{v} - \Delta \bar{v} v ] + \int J^sG(u,\bar{u}) \overline{J^su} + \int J^s\overline{G(u,\bar{u})}J^s u
$$
Since $i\int [\Delta v \bar{v} - \Delta \bar{v} v ] = 0$, this term drops out.  Using Fact \ref{F:3}, we obtain, with $f(t) = \|u(-,t)\|_{H^s}^2= \|v(-,t)\|_{L^2}^2$, 
\begin{align*}
 \left| \frac{d}{dt} f(t) \right| & \leq 2\| J^sG(u,\bar{u})\|_{L^2}\|v\|_{L^2} \\
&\leq C \{ \|u\|_{H^s}^{M+N} + \|u\|_{H^s} \}\|u\|_{H^s} \\
&\leq C \{ f(t) + f(t)^{(M+N+1)/2} \}
\end{align*}
We now define $f_1(t) = \sup_{0<r<t} \|u(-,r)\|_{H^s}^2$.  Then $\exists \; r_0$, $0\leq r_0 \leq t$ such that 
$$f_1(t) = f(r_0) = \int_0^{r_0} f'(r)\, dr + f(0) \leq \|u_0\|_{H^s}^2+Ctf_1(t)+Ctf_1(t)^\alpha$$
where $\alpha=\frac{M+N+1}{2}>1$.  For any $t\leq \frac{1}{2C}$, we obtain 
$$f_1(t) \leq 2 \|u_0\|_{H^s}^2 + 2Ctf_1(t)^\alpha$$
Let now $T_0= \text{first }t \leq \frac{1}{2C}$ for which $f_1(T_0) \geq 4 \|u_0\|_{H^s}^2$.  Since $f_1(t)$ is continuous, 
$$4\|u_0\|_{H^s}^2 = f_1(T_0)\leq 2 \|u_0\|_{H^s}^2 + 2CT_04^\alpha \|u_0\|_{H^s}^{2\alpha}$$
and so $T_0\geq \frac{1}{C4^\alpha\|u_0\|_{H^s}^{2\alpha-2}}$.  In other words, if $T_0= \min\left\{ \frac{1}{2C}, \frac{1}{C4^\alpha \|u_0\|_{H^s}^{2\alpha-2}} \right\}$, then for $0\leq t\leq T_0$, we have $\|u(-,t)\|_{H^s}^2 \leq 4 \|u_0\|_{H^s}^2$, which is our \textit{a priori} inequality. \\

\begin{remark} \label{R:1}
Suppose we considered solutions to 
\begin{equation} \label{E:6}
\left\{
\begin{aligned}
\partial_t u &= -\epsilon\Delta^2 u + i\Delta u + G(u,\bar{u}) \\
u\big|_{t=0} &= u_0
\end{aligned}
\right. \qquad \epsilon>0
\end{equation}
Then, the same conclusion holds, with $C$ independent of $\epsilon$.  In fact, we only need to understand $\epsilon \int [ \Delta^2v \cdot \bar{v} + \Delta^2 \bar{v} \cdot v ] \, dx = 2 \epsilon \int |\Delta v |^2 \geq 0$.  But then
\begin{align*}
\partial_t \int |v|^2 & = -2\epsilon \int |\Delta v|^2 + \int J^sG(u,\bar{u})\overline{J^s u} + \int \overline{J^sG(u,\bar{u})} J^s u \\
& \leq \int J^sG(u,\bar{u})\overline{J^su} + \int \overline{J^sG(u,\bar{u})} J^su 
\end{align*}
and we proceed as before.
\end{remark}

\underline{Step 2.  Existence of solutions}.  For each $\epsilon >0$, a solution $u^\epsilon$ on $[0,T_\epsilon]$ to \eqref{E:6} is obtained by ``standard parabolic theory''.  Specifically, let $s>n/2$, and define
\begin{align*}
X_{T,M_0} = \{ \quad & v: \mathbb{R}^n \times [0,T] \to \mathbb{C}, \; v\in C([0,T]; H^s), \; v(0)= u_0, \\
& \text{ and } \tv y \tv_T = \sup_{[0,T]} \|v(t)\|_{H^s} \leq M_0 \quad \}
\end{align*}
We then have: for any $u_0\in H^s$, $\|u_0\|_{H^s} \leq M_0/2$, there exists $T_\epsilon=O(\epsilon)$, depending only on $M_0$, $s$, $n$, $G$, and a unique solution $u^\epsilon$ in $X_{T_\epsilon, M_0}$ to
\begin{equation} \label{E:2}
\left\{
\begin{aligned}
\partial_t u &= -\epsilon \Delta^2 u + i \Delta u + G(u,\bar{u}) \\
u\big|_{t=0} &= u_0
\end{aligned}
\right.
\end{equation}
so that $\sup_{t\in[0,T_\epsilon]} \|u^\epsilon(t)\|_{H^s} \leq M_0$.  This is proved by converting \eqref{E:2} into the integral equation $\Gamma u^\epsilon =u^\epsilon$, where
$$\Gamma w(t) = e^{-\epsilon t \Delta^2}u_0 + \int_0^t e^{-\epsilon (t-t')\Delta^2} [ i\Delta w + G(w,\bar{w})] dt'$$
and showing that, for appropriate $T_\epsilon$, $\Gamma$ is a contraction on $X_{T_\epsilon, M_0}$.  The only estimate for the semigroup $\{ e^{-\epsilon t \Delta^2}, t\geq 0 \}$ that is needed is $\| \Delta e^{-\epsilon t \Delta^2} g \|_{L^2} \leq \frac{1}{\epsilon^{1/2}t^{1/2}} \|g\|_{L^2}$.  

Set $M_0=8\|u_0\|_{H^s}$.  Obtain, as above, a solution $u^\epsilon$ to \eqref{E:2} on $[0,T_\epsilon]$.  One then uses the \textit{a priori} estimate in Remark \ref{R:1}, to conclude that, if $T_\epsilon\leq T_0= \min \left\{ \frac{1}{2C}, \frac{1}{C4^\alpha\|u_0\|_{H^s}^{2\alpha-2}} \right\}$, one has $\sup_{[0,T_\epsilon]} \|u^\epsilon(t) \|_{H^s} \leq 4 \|u_0\|_{H^s} \leq \frac{M_0}{2}$.  We can then iterate this local existence result, in the interval $[T_\epsilon, 2T_\epsilon]$, etc., to find now a solution to \eqref{E:2} in $[0,T_0]$, $0<\epsilon < 1$, with $\sup_{[0,T_0]} \| u^\epsilon(t)\|_{H^s} \leq 4\|u_0\|_{H^s}$.  

Now consider $0<\epsilon'<\epsilon<1$, and let $u^\epsilon$, $u^{\epsilon'}$ be the corresponding solutions to \eqref{E:2}.  Set $v=u^\epsilon-u^{\epsilon'}$, so that
\begin{equation} \label{E:8}
\partial_t v = -(\epsilon-\epsilon')\Delta^2u^\epsilon - \epsilon'\Delta^2v + i \Delta v + [G(u^\epsilon, \bar{u}^\epsilon)-G(u^{\epsilon'}, \bar{u}^{\epsilon'})]
\end{equation}
Recall that $\sup_{[0,T_0]} \|u^\epsilon(t)\|_{L^\infty} \leq M_0$, and similarly for $u^{\epsilon'}$, and that $|G(u^\epsilon,\bar{u}^\epsilon)-G(u^{\epsilon'},\bar{u}^{\epsilon'})| \leq C_{M_0}|u^\epsilon-u^{\epsilon'}|$.  We then multiply \eqref{E:8} by $\bar{v}$, conjugate \eqref{E:8} and multiply by $v$, add, and integrate in $x$, to obtain
$$\partial_t \int |v|^2 \leq 2(\epsilon - \epsilon') \|\Delta^2u^\epsilon\|_{L^2}\|v\|_{L^2} + C_{M_0}\|v\|_{L^2}^2$$
so that, with $s>4$,
$$\sup_{0<t<T} \|v\|_{L^2}^2 \leq C(\epsilon-\epsilon')\|v\|_{L^2}+TC_{M_0}\sup_{0<t<T}\|v\|_{L^2}^2$$
Selecting $T\leq T_0$ such that $TC_{M_0} < \frac{1}{2}$ and using that $\|v\|_{L^2} \leq C$,  we have $v\to 0$ in $C([0,T]; L^2)$ as $\epsilon, \; \epsilon' \to 0$, giving that $u^\epsilon\to u$ in $C([0,T]; L^2)$ as $\epsilon \to 0$.  The family $u^\epsilon$ belongs to $L^\infty([0,T]; H^s)$, and thus, by weak-$\ast$ compactness, $u\in L^\infty([0,T]; H^s)$.  By the interpolation inequality 
$$\|v\|_{H^{s-1}} \leq \|v\|_{L^2}^{1/s}\|v\|_{H^s}^{(s-1)/s}$$
we have $u\in L^\infty([0,T]; H^s)\cap C([0,T]; H^{s-1})$.

\underline{Step 3.  Uniqueness}.  We argue as in Step 2, with $v=u-u'$, and $\epsilon=\epsilon'=0$, where $u$ and $u'$ are solutions.  We then obtain
$$\sup_{0<t<T} \|v\|_{L^2} \leq T C_{M_0} \sup_{0<t<T} \|v\|_{L^2}$$
which yields uniqueness, by taking $T\leq 1/(2C_{M_0})$.

\underline{Step 4.  $u\in C([0,T]; H^s)$ depends continuously on $u_0$}.  Here there is a standard argument, due to Bona-Smith \cite{BoSm}.  One solves with data $u_0^\delta=\varphi_\delta\ast u_0$, where $\varphi\in \mathcal{S}(\mathbb{R}^n)$, $\int \varphi = 1$, $\int x^\alpha\varphi(x)\, dx=0 \; \forall \, |\alpha| \neq 0$.  We then show that $u^\delta$, the solution corresponding to $u_0^\delta$, converges in $L^\infty([0,T_0]; H^s)$ to $u$ as $\delta\to 0$.  To see this, we show 
$$\sup_{[0,T_0]} \|u^\delta(t)\|_{H^{s+l}} \leq C\delta^{-l}, \quad l>0$$
and then use interpolation and the fact that 
$$\sup_{[0,T_0]} \|(u^\delta-u^{\delta'})(t)\|_{L^2} \leq C\|u_0^\delta-u_0^{\delta'} \|_{L^2} = o(\delta^s)$$  
This completes our outline of the energy method applied to SLCP. 

\begin{remark}
For power non-linearities, $G(u,\bar{u})=|u|^\alpha u$, more refined results can be obtained by means of mixed norm estimates (the so-called Strichartz estimates), by using the contraction principle on suitable mixed norm spaces \cite{CW}, or the $X_{s,b}$ spaces of J. Bourgain \cite{KPVnls} \cite{CDKS}.
\end{remark}
Now we briefly turn to the case of $F(u,\bar{u}, \nabla_x u, \nabla_x \bar{u} )$, and explain what the energy method gives in this case.  Suppose that for any $u\in H^s(\mathbb{R}^n)$, \medspace $s>\frac{n}{2}+1$, 
$$ \left| \sum_{|\alpha|\leq s} \int_{\mathbb{R}^n} \partial_x^\alpha F(u,\bar{u},\nabla_x u, \nabla_x \bar{u}) \partial_x^\alpha \bar{u} \, dx \right| \leq C (1+\|u\|_{H^s}^\rho)\|u\|_{H^s}^2$$
Then the above proof works (here $\rho=\rho(F)\in \mathbb{N}$).  Thus, for these examples, the energy method gives local well-posedness in $H^s$, $s>\frac{n}{2}+1$.

\begin{example} 
\begin{enumerate}
\item $n=1$, $F=\partial_x(|u|^k u)$.
\item $n\geq 1$, $F(u,\bar{u},\nabla_x \bar{u})$
\item $n\geq 1$, $F$ general, $\partial_{\partial_{x_j u}}F$, $\partial_{\partial_{x_j}\bar{u}} F$, $j=1, \ldots, n$ are real.
\end{enumerate}
\end{example}

These results are due to Tsutsumi-Fukuda \cite{TsFu}, Klainerman \cite{Kl}, Klainerman-Ponce \cite{KlPo}, Shatah \cite{Sh}.  The difficulty comes from trying to ``recover'' the ``extra-derivative'' in the non-linear term.  The remainder of the course will be devoted to developing the machinery necessary for this, in the most general situation.

We now need to recall some classical facts about $\Psi$DO.  \\

\subsection*{$\Psi$DO}
Recall the following basic facts about the Fourier transform:   
\begin{align*}
&\hat{f}(\xi) = \int e^{-2\pi i x\cdot \xi} f(x)\, dx, \quad f\in L^1(\mathbb{R}^n) \\
&\| \hat{f} \|_\infty \leq \|f\|_1 \\
&\text{Plancherel:} \quad \|\hat{f}\|_{L^2}=\|f\|_{L^2}
\end{align*}
We recall that $\widehat{\;}: \mathcal{S}(\mathbb{R}^n)\to \mathcal{S}(\mathbb{R}^n)$, and is invertible, with inverse $\check{f}(x)=\int e^{2\pi i x\cdot \xi} f(\xi) \, d\xi $.  If $L$ is a partial differential operator, of the form 
$$Lf(x)= \sum_{|\alpha|\leq m} a_\alpha(x) \partial_x^\alpha f(x)$$
and we let $a(x,\xi)=\sum_{|\alpha|\leq m} a_\alpha(x) (2\pi i \xi)^\alpha$, we have 
$$Lf(x)= \int_{\mathbb{R}^n} a(x,\xi) \hat{f}(\xi) e^{2\pi i x \xi} \, d\xi$$
Thus, for a function $a(x,\xi)$ (the symbol), we denote 
$$Tf(x)=\Psi_af(x)=\int a(x,\xi) \hat{f}(\xi) e^{2\pi i x \cdot \xi} \, d\xi$$
Note that this is, at this point, purely formal.  We start with the ``standard symbol class'' $S^m$: $a\in S^m$ if $a\in C^\infty(\mathbb{R}^n\times \mathbb{R}^n)$ and satisfies
$$|\partial_x^\beta \partial_\xi^\alpha a(x,\xi) | \leq A_{\alpha, \beta} (1+|\xi|)^{m-|\alpha|}$$
for all $\alpha, \beta$.  The $A_{\alpha, \beta}$ are called the seminorms of the symbol.  It is easy to see that if $a\in S^m$, $\Psi_a: \mathcal{S}\to \mathcal{S}$ and also $\Psi_a: \mathcal{S}'\to \mathcal{S}'$.
\begin{theorem} \label{T:1}
Suppose that $a\in S^0$.  Then $\Psi_a:L^2\to L^2$ (with norm depending only on finitely many seminorms of $a$, depending on the dimension).
\end{theorem}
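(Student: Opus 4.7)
My plan is to follow the classical strategy of the Cotlar--Stein almost orthogonality lemma applied to a dyadic Littlewood--Paley decomposition of the symbol in the frequency variable. Choose $\phi_0, \phi \in C_c^\infty(\mathbb{R}^n)$ with $\phi_0$ supported in $|\xi| \leq 2$, $\phi$ supported in the annulus $\{1/2 \leq |\xi| \leq 2\}$, and $\phi_0(\xi) + \sum_{k \geq 1} \phi(2^{-k}\xi) \equiv 1$. Set $a_k(x,\xi) = a(x,\xi)\phi(2^{-k}\xi)$ for $k \geq 1$, $a_0 = a\phi_0$, and $T_k = \Psi_{a_k}$, so that formally $T = \sum_k T_k$. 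The $S^0$ hypothesis then yields the uniform pointwise estimates $|\partial_x^\beta \partial_\xi^\alpha a_k(x,\xi)| \leq C_{\alpha\beta} 2^{-k|\alpha|}$ on the $k$-th annulus, where $C_{\alpha\beta}$ depends on only finitely many seminorms of $a$.

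The next step is a uniform $L^2$-bound on each $T_k$ individually. Writing its Schwartz kernel as $K_k(x,y) = \int a_k(x,\xi) e^{2\pi i(x-y)\cdot\xi}\, d\xi$ and repeatedly integrating by parts in $\xi$ against the factor $e^{2\pi i(x-y)\cdot\xi}$, one derives
$$|K_k(x,y)| \leq C_M\, 2^{kn}(1 + 2^k|x-y|)^{-M}$$
for arbitrary $M$. Taking $M > n$, Schur's test gives $\|T_k\|_{L^2 \to L^2} \leq C$ uniformly in $k$.

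The heart of the proof is almost orthogonality. A direct calculation shows that the kernel of $T_j T_k^*$ equals $\int a_j(x,\xi)\overline{a_k(y,\xi)}\, e^{2\pi i(x-y)\cdot\xi}\, d\xi$, whose integrand vanishes identically for $|j-k| \geq 2$ by the disjoint $\xi$-support of the cutoffs, so $T_j T_k^* = 0$ in that range. The adjoint composition $T_j^* T_k$ is subtler: its kernel is a double integral of $\overline{a_j(z,\xi)} a_k(z,\eta)\, e^{2\pi i(x\cdot\xi - y\cdot\eta) + 2\pi i z\cdot(\eta-\xi)}$, and integration by parts in $z$ exploits the frequency separation $|\xi-\eta| \gtrsim 2^{\max(j,k)}$ (forced by the $\xi$-support constraints when $|j-k|\geq 2$) to produce decay $(1+|\xi-\eta|)^{-N}$, while further integration by parts in $\xi,\eta$ yields spatial decay. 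After appropriate regularization and Schur's test one obtains
$$\|T_j^* T_k\|_{L^2 \to L^2} \leq C_N\, 2^{-N|j-k|}$$
for any $N$, with constants depending only on finitely many seminorms.

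Cotlar--Stein then delivers $\|\sum_k T_k\|_{L^2 \to L^2} \leq C$. To legitimize the formal series, one first truncates by replacing $a(x,\xi)$ with $a(x,\xi)\psi(\xi/R)$ for a Schwartz cutoff $\psi$, runs the argument, and then passes to the limit $R \to \infty$ using that the bound is $R$-independent. The main obstacle is the almost-orthogonality estimate for $T_j^* T_k$: unlike $T_j T_k^*$, the composition does not vanish identically, and extracting the required decay requires carefully balancing integrations by parts in the auxiliary variable $z$ and in the dual variables $\xi,\eta$, all while tracking that the resulting constants involve only finitely many fixed seminorms of $a$.
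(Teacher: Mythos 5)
Your proposal takes a genuinely different route from the one the paper uses (the solution to Problem~1.4(b) in the appendix): you decompose dyadically in the frequency variable alone, whereas the paper decomposes simultaneously in $x$ and $\xi$ into unit cubes, setting $T_{ij}f(x)=\phi(x-i)\int\phi(\xi-j)a(x,\xi)\hat f(\xi)e^{2\pi ix\cdot\xi}\,d\xi$. Both are classical. Your observation that $T_jT_k^*=0$ for $|j-k|\geq 2$ by disjoint $\xi$-supports is correct and is the analogue of the paper's $T_{ij}T_{i'j'}^*=0$ for $|j-j'|\geq 2$ and $T_{i'j'}^*T_{ij}=0$ for $|i-i'|\geq 2$. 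What the double unit-cube decomposition buys is that every kernel in sight is compactly supported in both variables, so all integrals converge absolutely and Schur's test applies with no further ado, at the cost of a two-parameter Cotlar--Stein. Your one-parameter decomposition is conceptually leaner, but it forces you to confront exactly the difficulty you flag at the end, and this is where there is a gap.

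The kernel you write for $T_j^*T_k$, namely $\iiint\overline{a_j(z,\xi)}a_k(z,\eta)\,e^{2\pi i((x-z)\cdot\xi+(z-y)\cdot\eta)}\,d\xi\,d\eta\,dz$, is not an absolutely convergent integral: an $S^0$ symbol is merely bounded, not decaying, in $x$, so the $z$-integrand is a bounded nondecaying function of $z$, and integrating by parts in $z$ trades it for another such function. The regularization you propose --- multiplying $a$ by $\psi(\xi/R)$ --- does nothing here, since each $a_j$ already has compact $\xi$-support; it only truncates the Cotlar--Stein sum to finitely many terms, which was never the problem. What is needed is a regularization in the $x$ variable (equivalently in $z$): replace $a(x,\xi)$ by $a(x,\xi)\chi(\epsilon x)$ with $\chi\in C_c^\infty$, $\chi(0)=1$; then the $z$-integral converges, the integrations by parts in $z$, $\xi$, $\eta$ produce the bound $\|T_j^*T_k\|\lesssim 2^{-N|j-k|}$ uniformly in $\epsilon$ (the extra terms in which $\partial_z$ hits $\chi(\epsilon z)$ carry a factor of $\epsilon$ and disappear in the limit), and one lets $\epsilon\to 0$ at the end. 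Alternatively one can invoke the standard oscillatory-integral interpretation of the triple integral, but some such device is essential; as written, the crucial almost-orthogonality estimate is not justified. Once this is repaired, your Schur/Cotlar--Stein argument does close, and it is a legitimate alternative to the paper's proof.
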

\noindent See, for example, \cite{Ku} Ch.\ 2 Theorem 4.1 or \cite{St2} Ch.\ VI, \S 2

\subsection*{The symbolic calculus (Kohn-Nirenberg calculus)} 

\begin{theorem} \label{T:2}
Suppose that $a,b \in S^{m_1}, S^{m_2}$.  Then, there is a symbol $c\in S^{m_1+m_2}$ such that $\Psi_c=\Psi_a\circ\Psi_b$.  Moreover,
$$c \sim \sum_\alpha \frac{(2\pi i)^{-|\alpha|}}{\alpha!} (\partial_\xi^\alpha a) (\partial_x^\alpha b)$$
in the sense that $c-\sum_{|\alpha|<N}\frac{(2\pi i)^{-|\alpha|}}{\alpha!} (\partial_\xi^\alpha a)(\partial_x^\alpha b) \in S^{m_1+m_2-N}, \; \forall \, N>0$.
\end{theorem}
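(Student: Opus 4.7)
The plan is to derive an explicit integral representation for the symbol $c$ of the composition, then Taylor expand in $\xi$ to read off the asymptotic series, and finally verify that the truncation remainders actually live in the claimed symbol classes. Starting from the definitions and the computation
$$\widehat{\Psi_b f}(\xi) = \int \widehat{b}_1(\xi-\eta,\eta)\,\hat f(\eta)\,d\eta,$$
where $\widehat{b}_1(\zeta,\eta)$ denotes the Fourier transform of $b$ in the first variable, a direct calculation for $f\in\mathcal{S}$ followed by the change of variables $\zeta=\xi-\eta$ yields the formal identity $\Psi_a\Psi_b f(x)=\int c(x,\eta)\hat f(\eta) e^{2\pi i x\cdot \eta}\,d\eta$ with
$$c(x,\eta) = \int a(x,\eta+\zeta)\,\widehat{b}_1(\zeta,\eta)\, e^{2\pi i x\cdot\zeta}\, d\zeta.$$
This integral is not absolutely convergent, so the first task is to make sense of it as an oscillatory integral: inserting the identity $(1-\Delta_\zeta)^M e^{2\pi i x\cdot\zeta}=(1+4\pi^2|x|^2)^M e^{2\pi i x\cdot\zeta}$ and integrating by parts, together with the rapid decay of $\widehat{b}_1(\zeta,\eta)$ in $\zeta$ that comes from the $x$-smoothness of $b$, yields an absolutely convergent expression and a $C^\infty$ function of $(x,\eta)$.

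For the asymptotic expansion, Taylor expand $a$ in the second variable around $\eta$:
$$a(x,\eta+\zeta) = \sum_{|\alpha|<N}\frac{\zeta^\alpha}{\alpha!}(\partial_\xi^\alpha a)(x,\eta) + R_N(x,\eta,\zeta).$$
Substituting and using $\zeta^\alpha e^{2\pi i x\cdot\zeta}=(2\pi i)^{-|\alpha|}\partial_x^\alpha e^{2\pi i x\cdot\zeta}$, an integration by parts in $x$ together with Fourier inversion $\int \widehat{b}_1(\zeta,\eta)e^{2\pi i x\cdot\zeta}\,d\zeta = b(x,\eta)$ converts the main sum into
$$\sum_{|\alpha|<N}\frac{(2\pi i)^{-|\alpha|}}{\alpha!}(\partial_\xi^\alpha a)(x,\eta)\,(\partial_x^\alpha b)(x,\eta),$$
matching the series in the statement.

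The main obstacle is showing that the remainder symbol $r_N(x,\eta)=\int R_N(x,\eta,\zeta)\widehat{b}_1(\zeta,\eta)e^{2\pi i x\cdot\zeta}d\zeta$ belongs to $S^{m_1+m_2-N}$. The Taylor remainder is controlled by $(\partial_\xi^\alpha a)(x,\eta+t\zeta)$ for $|\alpha|=N$ and $t\in[0,1]$, which is bounded by $(1+|\eta+t\zeta|)^{m_1-N}$; when $|\zeta|$ is large compared to $|\eta|$ this gives no useful power of $(1+|\eta|)$. The remedy is to split the $\zeta$-integral at $|\zeta|=|\eta|/2$. In the region $|\zeta|\le|\eta|/2$ one has $|\eta+t\zeta|\sim|\eta|$, so the Taylor bound gives exactly $(1+|\eta|)^{m_1-N}$, and a similar $\eta$-dependence comes from $\widehat{b}_1(\zeta,\eta)$ using that $b\in S^{m_2}$. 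In the region $|\zeta|>|\eta|/2$ one abandons the Taylor expansion and works directly with the original integrand, integrating by parts in $\zeta$ repeatedly (using a cutoff plus the factor $(1-\Delta_x)^M$ as above) to obtain decay $\langle\zeta\rangle^{-L}$ for arbitrarily large $L$; combined with $|\zeta|\gtrsim|\eta|$ this supplies any negative power of $(1+|\eta|)$ desired. Differentiating under the integral and repeating these arguments produces the full family of seminorm bounds showing $r_N\in S^{m_1+m_2-N}$, and hence $c\in S^{m_1+m_2}$ with the claimed asymptotic expansion.
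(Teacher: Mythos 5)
Your overall strategy matches the one the paper uses for Problem 1.5 (taken from Stein): derive the explicit formula $c(x,\eta)=\int a(x,\eta+\zeta)\widehat b_1(\zeta,\eta)e^{2\pi i x\cdot\zeta}\,d\zeta$, Taylor expand $a$ in the second variable, and estimate the remainder by splitting the $\zeta$-integral at $|\zeta|\sim|\eta|$. However, there is a genuine gap at the very first step. You claim the oscillatory integral becomes absolutely convergent because of ``the rapid decay of $\widehat b_1(\zeta,\eta)$ in $\zeta$ that comes from the $x$-smoothness of $b$,'' and you lean on this same decay again when estimating the remainder. This is false: for a symbol $b\in S^{m_2}$, the derivatives $\partial_x^\beta b$ are merely bounded in $x$, not integrable, so $\widehat b_1(\cdot,\eta)$ is a tempered distribution in $\zeta$, not a rapidly decaying function. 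The extreme case $b\equiv 1\in S^0$ gives $\widehat b_1(\zeta,\eta)=\delta(\zeta)$, which has no decay at all. Smoothness of $b$ in $x$ alone controls nothing about the $\zeta$-tail of $\widehat b_1$; decay of $\widehat b_1$ requires spatial localization of $b$. The insertion of $(1-\Delta_\zeta)^M$ doesn't rescue this, since integrating by parts in $\zeta$ dumps the derivatives onto $a(x,\eta+\zeta)\widehat b_1(\zeta,\eta)$ (and produces a harmless factor $(1+4\pi^2|x|^2)^{-M}$), but does not make $\widehat b_1$ into a function.

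The paper's proof handles exactly this obstruction: it first proves the theorem under the additional hypothesis that $b(x,\xi)$ has compact $x$-support, in which case $\widehat b_1(\cdot,\xi)\in\mathcal S$ and everything in your estimate goes through. To remove the compactness, it localizes near an arbitrary point $x_0$, writing $b=b_1+b_2$ with $b_1=\rho b$ supported near $x_0$ and $b_2=(1-\rho)b$ supported away from $x_0$. The expansion is read off from the $b_1$ piece, and one then shows that $c_2(x,\xi)$ (the symbol of $\Psi_a\circ\Psi_{b_2}$) lies in $S^{-\infty}$ for $x$ near $x_0$, by repeated integration by parts in the double-integral form
$$c_2(x,\xi)=\iint e^{i(x-y)\cdot(\eta-\xi)}a(x,\eta)(1-\rho(y))b(y,\xi)\,d\eta\,dy,$$
using $|x-y|\geq \tfrac12$ on the support to gain arbitrary decay. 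Your proposal is missing this entire localization step, and as written the key decay assertion is wrong; with the reduction to compact $x$-support (plus the localization error estimate) put in, the remaining estimates you sketch are essentially the paper's.
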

\noindent See, for example, \cite{Ku} Ch.\ 2, Theorem 1.7(1) or \cite{St2} Ch.\ VI, \S 3.
\begin{remark}  Note that $c-ab \in S^{m_1+m_2-1}$ and that each term $\partial_\xi^\alpha a \partial_x^\alpha b \in S^{m_1+m_2-|\alpha|}$
\end{remark}
\begin{remark}
Consider $\Psi_a \Psi_b - \Psi_b\Psi_a$, which is an operator with symbol in $S^{m_1+m_2-1}$.  Its symbol is $-(2\pi i)\sum_{j=1}^n \left\{ \frac{\partial a}{\partial \xi_j} \frac{\partial b }{\partial x_j} - \frac{\partial b}{\partial \xi_j} \frac{\partial a}{\partial x_j} \right\}$, modulo symbols of order $m_1+m_2-2$.
\end{remark}
\begin{theorem} \label{T:3}Let $a\in S^m$.  Then there is $a^\ast\in S^m$ such that $\Psi_a^\ast=\Psi_{a^\ast}$, and $\forall \, N>0$,
$$a^\ast(x,\xi)- \sum_{|\alpha|<N} \frac{(2\pi i)^{-|\alpha|}}{\alpha!} \partial_\xi^\alpha \partial_x^\alpha \bar{a}(x,\xi)\in S^{m-N}$$ 
($\Psi_a^\ast = \Psi_{\bar{a}} + \text{order }m-1$)
\end{theorem}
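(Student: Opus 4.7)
The plan is to realize $a^*$ by reducing the $L^2$-adjoint of $\Psi_a$ to standard Kohn-Nirenberg form via an oscillatory-integral computation, in direct parallel with the proof of the composition formula (Theorem~\ref{T:2}). First, using Fourier inversion, I rewrite
$$\Psi_a f(x) = \iint a(x,\xi)\, e^{2\pi i (x-y)\cdot \xi}\, f(y)\, dy\, d\xi,$$
and then, pairing $\langle \Psi_a f, g\rangle$ in $L^2$ and renaming variables, I obtain the amplitude representation of the formal adjoint
$$\Psi_a^* g(x) = \iint \overline{a(y,\xi)}\, e^{2\pi i (x-y)\cdot \xi}\, g(y)\, dy\, d\xi.$$
Here the ``amplitude'' $p(x,y,\xi)=\overline{a(y,\xi)}$ lies in a natural amplitude class but depends on the wrong spatial variable; the task is to absorb that $y$-dependence into the $x$-dependence of a standard symbol $a^*(x,\zeta)$.

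To do so, I insert $g(y)=\int \hat g(\zeta)\, e^{2\pi i y\cdot\zeta}\, d\zeta$, carry out the $y$-integration as a partial Fourier transform of $\bar a$ in its first slot, and change variables $\eta=\xi-\zeta$. This identifies
$$a^*(x,\zeta) = \int \widehat{\bar a}(\eta,\,\eta+\zeta)\, e^{2\pi i x\cdot \eta}\, d\eta,$$
where $\widehat{\bar a}(\eta,\xi)$ is the Fourier transform of $\bar a$ in the first variable. Taylor-expanding $\widehat{\bar a}(\eta,\eta+\zeta)$ in its second slot about $\zeta$ produces the asymptotic series in the statement term by term, via the identity $\int e^{2\pi i x\cdot \eta}\eta^\alpha \widehat{h}(\eta)\, d\eta = (2\pi i)^{-|\alpha|}\partial_x^\alpha h(x)$, which yields $(2\pi i)^{-|\alpha|}\partial_\xi^\alpha\partial_x^\alpha \bar a(x,\zeta)/\alpha!$ as the $\alpha$-th term.

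The main obstacle is rigorous interpretation: since $a\in S^m$ need not decay in $x$, $\widehat{\bar a}(\cdot,\xi)$ is only a tempered distribution, so the integral for $a^*$ must be handled as an oscillatory integral. The standard remedy is iterated integration by parts: one uses $(1-\Delta_\eta/(2\pi)^2)^M e^{2\pi i x\cdot \eta}=(1+|x|^2)^M e^{2\pi i x\cdot \eta}$ in $\eta$ to create $x$-decay, while dual integration by parts in $y$ trades powers of $\eta$ for $\partial_y$ acting on $\bar a$, producing genuine $\eta$-integrability. Applying this device to the integral Taylor remainder
$$R_N(\eta,\zeta) = N\sum_{|\alpha|=N}\frac{\eta^\alpha}{\alpha!}\int_0^1 (1-t)^{N-1}(\partial_\xi^\alpha \widehat{\bar a})(\eta,\,\zeta+t\eta)\, dt$$
and checking the bounds uniformly in the symbol seminorms of $a$ gives $a^* - \sum_{|\alpha|<N}\frac{(2\pi i)^{-|\alpha|}}{\alpha!}\partial_\xi^\alpha\partial_x^\alpha \bar a \in S^{m-N}$ for every $N\geq 1$. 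The details closely parallel those of the proof of Theorem~\ref{T:2} and can be executed as in \cite{Ku} Ch.~2 or \cite{St2} Ch.~VI, \S 3.
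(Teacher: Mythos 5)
Your proposal is correct and takes essentially the same route as the paper: in the appendix solution to Lecture 2, Problem 2, the adjoint is written as the compound-symbol operator $\Psi_{[c]}$ with $c(x,y,\xi)=\overline{a(y,\xi)}$, and the compound-symbol reduction theorem (Problem 1b) is then applied — and the proof of that theorem performs exactly the Taylor expansion of $\widehat{\bar a}(\eta,\eta+\zeta)$ in its second slot that you carry out. You have simply inlined that computation rather than passing through the general compound-symbol statement, and your treatment of the integral remainder and the oscillatory-integral regularization parallels the paper's.
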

\noindent See for example, \cite{Ku} Ch.\ 2, Theorem 1.7(2) or \cite{St2} Ch. VI, \S 6.2.
\begin{remark}
If we are given a sequence $a_j\in S^{m_j}$, $j=0,1,\ldots$, such that $m_j\to -\infty$ and $m_0>m_1> \cdots$, then there is a symbol $a\in S^{m_0}$ with $a \sim a_0+a_1+ \ldots$, i.e.\ $a-\sum_{j=0}^k a_j\in S^{m_k}$.
\end{remark}
\noindent See \cite{Ku} Ch. 2, Lemma 3.2.
\begin{remark}
When we work with symbols of limited regularity (i.e.\ $|\alpha|+|\beta|\leq M(n)$), the above results still hold, but only for $N$ small, and finitely many seminorms.
\end{remark}

\section*{Problems for Lecture \arabic{chapter}}
\begin{problems}
\item \hspace*{0in}
\begin{enumerate}
\item Show that $\|u\|_{L^\infty} \leq C \|u\|_{H^s}$, $s>n/2$.
\item As a corollary, show that $H^s(\mathbb{R}^n)$, $s>n/2$ forms an algebra.
\item Also show that, if $f\in C^\infty(\mathbb{R})$, $u\in H^s(\mathbb{R}^n)$, $s>n/2$, $f(0)=0$, then $f(u)\in H^s(\mathbb{R}^n)$.
\end{enumerate}
\item Complete the proof of local well-posedness of
$$
\left\{ 
\begin{aligned}
&\partial_t u = i\Delta u + F(u,\bar{u}) \\
&u\big|_{t=0}=u_0 \in H^s, \; s>\tfrac{n}{2}
\end{aligned}
\right.
$$
sketched in the lecture.  In particular, use the `artificial viscosity' method, and the \textit{ a priori} estimate, to prove existence and uniqueness, and the Bona-Smith method to prove continuous dependence.

\item Give the proof of local well-posedness for
$$\left\{
\begin{aligned}
&\partial_t u = i\Delta u + F(u,\bar{u}, \nabla_x \bar{u}) \\
&u\big|_{t=0} = u_0 \in H^s(\mathbb{R}^n)
\end{aligned}
\right.$$
for $s>\frac{n}{2}+1$.
\item \hspace*{0in}
\begin{enumerate}
\item Prove the Cotlar-Stein Lemma:  If $\{ T_j \}$ is a finite collection of bounded operators on $L^2(\mathbb{R}^n)$, and there are $\gamma(j)$ such that $\gamma(j)\geq 0$, $A= \sum_{j=-\infty}^{+\infty} \gamma(j) < \infty$,
 $$\| T_i^\ast T_j \| \leq \gamma(i-j)^2$$
$$\|T_i T_j^\ast \| \leq \gamma(i-j)^2$$
Then $T= \sum T_j$ verifies $\|T\| \leq A$.
\item Use the Cotlar-Stein Lemma to show $\Psi_a: L^2\to L^2$ when $a\in S^0$.
\end{enumerate}
\item \label{P:1.5}Prove the ``composition formula'' in the Kohn-Nirenberg calculus.
\item Given a sequence of symbols $a_j\in S^{m_j}$, $m_j\to -\infty$, $m_0>m_1> \cdots$, find a symbol $a\in S^{m_0}$, with $a-\sum_{j=0}^k a_j \in S^{m_k}$.
\end{problems}

\lecture{Pseudo-differential operators and the G\r{a}rding inequality}

We start out by continuing our review of $\Psi$DO.  Recall that 
$$H^s(\mathbb{R}^n)= \{ f\in \mathcal{S}': \int (1+|\xi|^2)^s|\hat{f}(\xi)|^2 \, d\xi < \infty \}$$
and $\widehat{J^sf}=(1+|\xi|^2)^{s/2}\hat{f}(\xi)$, and hence $\sigma(J^s)\in S^s$, where $\sigma(\Psi_a)=a$.  

\begin{theorem}\label{T:4}
If $a\in S^m$, $\Psi_a: H^s\to H^{s-m}$.
\end{theorem}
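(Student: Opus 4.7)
The plan is to reduce the claim to the $L^2$-boundedness theorem (Theorem~\ref{T:1}) by conjugating with Bessel potentials. Since $\|g\|_{H^{s-m}} = \|J^{s-m}g\|_{L^2}$ and $\|f\|_{H^s}=\|J^s f\|_{L^2}$, the estimate $\|\Psi_a f\|_{H^{s-m}} \le C\|f\|_{H^s}$ will follow at once if we can show that the conjugated operator
$$
R := J^{s-m}\circ \Psi_a \circ J^{-s}
$$
is bounded on $L^2(\mathbb{R}^n)$. Here $J^{-s}$ makes sense as the $\Psi$DO whose symbol is $(1+|\xi|^2)^{-s/2}$, and $J^s J^{-s} = I$ on $\mathcal{S}'$, so for $f\in H^s$ I can legitimately write $f = J^{-s}(J^s f)$ with $J^s f \in L^2$.

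The first step is to verify that $\sigma(J^\sigma)(\xi) = (1+|\xi|^2)^{\sigma/2}$ lies in $S^\sigma$ for every real $\sigma$. This is a straightforward induction: each $\xi$-derivative picks up one factor of $(1+|\xi|^2)^{-1/2}$ up to bounded factors, so $|\partial_\xi^\alpha(1+|\xi|^2)^{\sigma/2}|\le C_\alpha(1+|\xi|)^{\sigma-|\alpha|}$, with no $x$-dependence. In particular $\sigma(J^{s-m})\in S^{s-m}$ and $\sigma(J^{-s})\in S^{-s}$.

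Next I apply the composition formula (Theorem~\ref{T:2}) twice. Composing $J^{s-m}$ with $\Psi_a$ yields a $\Psi$DO with symbol in $S^{(s-m)+m}=S^s$; composing the result with $J^{-s}$ yields an operator with symbol in $S^{s+(-s)}=S^0$. Thus $R=\Psi_c$ for some $c\in S^0$, and Theorem~\ref{T:1} gives $\|R\|_{L^2\to L^2}\le C$, with $C$ controlled by finitely many seminorms of $c$ and hence, tracking through the composition, by finitely many seminorms of $a$. Combining,
$$
\|\Psi_a f\|_{H^{s-m}} = \|J^{s-m}\Psi_a f\|_{L^2} = \|R(J^s f)\|_{L^2} \le C\|J^s f\|_{L^2} = C\|f\|_{H^s},
$$
which is the desired bound.

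I do not expect any serious obstacle: the argument is essentially a bookkeeping exercise in the Kohn--Nirenberg calculus, and everything it needs has already been recorded (symbolic calculus, $L^2$-boundedness for $S^0$, and the trivial fact that $\sigma(J^\sigma)\in S^\sigma$). The only mild subtlety is to make sure $\Psi_a$ is first understood on tempered distributions before pre- and post-multiplying by $J^{-s}$ and $J^{s-m}$ respectively, so that the composition $R$ is meaningful as a continuous map $\mathcal{S}\to\mathcal{S}$ (and $\mathcal{S}'\to\mathcal{S}'$) before invoking the $L^2$ bound; this is immediate from the stated mapping properties of $\Psi$DO on $\mathcal{S}$ and $\mathcal{S}'$.
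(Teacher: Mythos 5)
Your argument is exactly the paper's proof: conjugate by Bessel potentials, observe via the composition calculus (Theorem~\ref{T:2}) that $J^{s-m}\Psi_a J^{-s}$ has symbol in $S^0$, and invoke Theorem~\ref{T:1} for $L^2$-boundedness. The extra remarks about $\sigma(J^\sigma)\in S^\sigma$ and the $\mathcal{S}\to\mathcal{S}$ mapping properties are correct but were taken for granted in the paper's one-line version.
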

\begin{proof}
$J^{s-m}\Psi_aJ^{-s}=\Psi_b$, $b\in S^0$, by Theorem \ref{T:2}.  Hence, by Theorem \ref{T:1} it is $L^2$ bounded, and the result follows.   
\end{proof}
\noindent Notice that the theorem shows that operators of negative order ``regularize''.\\

\noindent \textbf{An application:} We say that $a(x,\xi)\in S^m$ is elliptic if $|a(x,\xi)|\geq A|\xi|^m$, $|\xi|\geq R$.  Then, there exists $b\in S^{-m}$ and $e\in S^{-\infty}=\cap S^m$ such that $\Psi_b\Psi_a + \Psi_e = I$.
\begin{proof}
First, by adding $\phi(\xi)(1+|\xi|^2)^{m/2}$ to $a(x,\xi)$, with $\phi\in C_0^\infty$, we can assume, without loss of generality, that the estimate from below, by $(1+|\xi|^2)^{m/2}$, holds for all $\xi$.  Let us define the notation $a_1\circ a_2=a_3$ if $\Psi_{a_1}\circ \Psi_{a_2}= \Psi_{a_3}$.  We determine $b\sim b_0+b_1+ \cdots$ as follows: Let $b_0=a^{-1}$, so that $b_0\circ a= 1+e_0$, where $e_0\in S^{-1}$.  By induction, if $b_0, \ldots, b_{j-1}$ are chosen such that 
$$(b_0+b_1+ \cdots+b_{j-1})\circ a= 1 +e_{j-1}, \quad e_{j-1}\in S^{-j}$$
we choose $b_j=-e_{j-1}/a\in S^{-m-j}$, $e_j=b_j\circ a-b_ja\in S^{-j-1}$, so that 
$$(b_0+ \cdots +b_j) \circ a = 1 + e_j, \quad e_j\in S^{-j-1}$$
\end{proof}

\begin{corollary} $\|f \|_{H^{s-m}} \leq c \| \Psi_af \|_{H^s} + c \|f\|_{H^r}$, for any $r\leq s-m$.
\end{corollary}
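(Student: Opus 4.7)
The plan is to apply the parametrix identity just constructed. The identity $\Psi_b \Psi_a + \Psi_e = I$ with $b \in S^{-m}$ and $e \in S^{-\infty}$, evaluated at $f$ and followed by the triangle inequality in $H^{s-m}$, gives
$$\|f\|_{H^{s-m}} \leq \|\Psi_b \Psi_a f\|_{H^{s-m}} + \|\Psi_e f\|_{H^{s-m}}.$$
It therefore suffices to dominate each summand on the right in the required way.

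For the first term, Theorem \ref{T:4} applied to $b \in S^{-m}$ yields the mapping $\Psi_b : H^s \to H^{s+m}$; combined with the continuous inclusion $H^{s+m} \hookrightarrow H^{s-m}$, this gives
$$\|\Psi_b \Psi_a f\|_{H^{s-m}} \leq C \|\Psi_b \Psi_a f\|_{H^{s+m}} \leq C \|\Psi_a f\|_{H^s}.$$
For the second term, since $e \in S^{-\infty} = \bigcap_{m'} S^{m'}$, Theorem \ref{T:4} gives that $\Psi_e$ is continuous from $H^r$ to $H^t$ for arbitrary real $r, t$; in particular for the given $r \leq s - m$ we obtain $\|\Psi_e f\|_{H^{s-m}} \leq C \|f\|_{H^r}$. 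Adding the two bounds yields the corollary.

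There is no real obstacle here; the substantive work was done in constructing the parametrix, and the corollary is essentially a bookkeeping exercise on top of Theorem \ref{T:4}. The one point worth flagging is that the hypothesis $r \leq s - m$ plays no role in the estimates themselves, since smoothing operators are bounded between arbitrary Sobolev spaces; its purpose is to make the inequality genuinely useful, so that the remainder $\|f\|_{H^r}$ on the right is of strictly weaker order than the norm $\|f\|_{H^{s-m}}$ one is trying to control, and the estimate can be iterated or absorbed as needed in later applications.
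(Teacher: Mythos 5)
Your proof is correct and is exactly the argument the corollary is meant to be a one-line consequence of: apply the parametrix identity $\Psi_b\Psi_a + \Psi_e = I$ to $f$, and estimate the two terms using Theorem \ref{T:4}. The paper itself states the corollary without proof precisely because this is the intended bookkeeping. Your remark at the end is also right: the condition $r \le s-m$ is not used in the estimate (the smoothing operator $\Psi_e$ maps any $H^r$ to any $H^t$), but it is what makes the inequality an elliptic a priori estimate rather than a tautology. One small point worth keeping in mind, which you implicitly use: the step $H^{s+m}\hookrightarrow H^{s-m}$ requires $m\ge 0$, which is the case throughout this context (elliptic operators of nonnegative order); for $m<0$ the statement would not be the relevant one anyway.
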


\subsection*{The G\r{a}rding inequality (H\"{o}rmander \cite{Ho}, Lax-Nirenberg \cite{LaNi})}

\begin{theorem}[The sharp G\r{a}rding inequality] \label{T:5}
Let $a\in S^m$, with $\text{Re }a(x,\xi)\geq 0$ for $|\xi|\geq M$.  Then $\text{Re }\left< \Psi_af,f \right> \geq -C\|f\|_{H^\frac{m-1}{2}}^2$.
\end{theorem}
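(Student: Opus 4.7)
The plan is to reduce to the order-$1$ case and then apply Friedrichs' symmetrization to write $\Psi_b$ as a manifestly nonnegative operator plus an $L^2$-bounded remainder; the ``sharp'' half-derivative loss $H^{(m-1)/2}$ will come from a symmetry-based cancellation in the symbolic expansion of that remainder.

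\textbf{Step 1 (Reduction to $m=1$).} First I take the self-adjoint part $\tfrac12(\Psi_a+\Psi_a^\ast)=\Psi_c$, which preserves $\text{Re}\,\langle\Psi_a f,f\rangle$ and, by Theorem~\ref{T:3}, has real symbol $c$ with $c-\text{Re}\,a\in S^{m-1}$. Setting $g=J^{(m-1)/2}f$, Theorem~\ref{T:2} yields $J^{-(m-1)/2}\Psi_c J^{-(m-1)/2}=\Psi_b$ with $b\in S^1$ and principal symbol $b_0(x,\xi)=(1+|\xi|^2)^{-(m-1)/2}\,\text{Re}\,a(x,\xi)\geq 0$ for $|\xi|\geq M$. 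Modifying $b$ by an $S^0$ symbol (which produces only an $L^2$-bounded operator by Theorem~\ref{T:1}, absorbed into the final constant), I may assume $b\geq 0$ on $\mathbb{R}^{2n}$. Since $\|g\|_{L^2}=\|f\|_{H^{(m-1)/2}}$, the goal becomes
$$\text{Re}\,\langle\Psi_b g,g\rangle\geq -C\|g\|_{L^2}^2.$$

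\textbf{Step 2 (Friedrichs symmetrization).} Fix real, even $\phi\in\mathcal{S}(\mathbb{R}^n)$ with $\|\phi\|_{L^2}=1$, write $\langle\eta\rangle=(1+|\eta|^2)^{1/2}$, and introduce the wave packets
$$\phi_{y,\eta}(x)=\langle\eta\rangle^{n/4}\phi\bigl(\langle\eta\rangle^{1/2}(x-y)\bigr)e^{2\pi i(x-y)\cdot\eta},$$
together with the Friedrichs operator
$$Tg(x)=\iint_{\mathbb{R}^{2n}}b(y,\eta)\,\langle g,\phi_{y,\eta}\rangle\,\phi_{y,\eta}(x)\,dy\,d\eta.$$
Because $b\geq 0$,
$$\langle Tg,g\rangle=\iint b(y,\eta)\,|\langle g,\phi_{y,\eta}\rangle|^2\,dy\,d\eta\geq 0,$$
so $T$ is manifestly nonnegative on $L^2$.

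\textbf{Step 3 (The error, and the main obstacle).} The crux is to show $T=\Psi_{\tilde b}$ for some $\tilde b\in S^1$ with $\tilde b-b\in S^0$; then Theorem~\ref{T:1} bounds $\|\Psi_b-T\|_{L^2\to L^2}\leq C$, giving
$$\text{Re}\,\langle\Psi_b g,g\rangle=\langle Tg,g\rangle+\text{Re}\,\langle(\Psi_b-T)g,g\rangle\geq -C\|g\|_{L^2}^2.$$
I identify $\tilde b$ by writing the kernel of $T$ as an oscillatory integral in $\eta$ with phase $(x-z)\cdot\eta$ and amplitude
$$A(x,z,\eta)=\int b(y,\eta)\langle\eta\rangle^{n/2}\phi\bigl(\langle\eta\rangle^{1/2}(x-y)\bigr)\phi\bigl(\langle\eta\rangle^{1/2}(z-y)\bigr)dy,$$
and reducing to Kohn-Nirenberg form through the standard diagonal expansion $\tilde b(x,\xi)\sim\sum_\alpha\frac{(2\pi i)^{-|\alpha|}}{\alpha!}\partial_\xi^\alpha D_z^\alpha A(x,z,\xi)|_{z=x}$. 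Taylor-expanding $b(y,\xi)$ about $y=x$ and changing variables $w=\langle\xi\rangle^{1/2}(x-y)$, each factor $(y-x)$ yields $\langle\xi\rangle^{-1/2}$ times a moment of $|\phi|^2$ against a monomial in $w$. The decisive cancellation -- what makes the bound ``sharp'' -- is that the first-order moments $\int w_j|\phi(w)|^2\,dw$ vanish by the evenness of $\phi$, so the error starts at second order and lies in $S^{m-1}=S^0$; without this symmetry cancellation the remainder would only be in $S^{m-1/2}$, forcing a full-derivative rather than half-derivative loss on the right-hand side.
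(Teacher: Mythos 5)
The paper states the sharp G\aa rding inequality without proof, citing H\"ormander \cite{Ho} and Lax--Nirenberg \cite{LaNi}; the only proof the notes supply (Problem 3, Lecture 2, solved in the appendix) is of the \emph{non-sharp} inequality under strict ellipticity $\text{Re}\,a \geq C|\xi|^m$, via the square-root symbol $b=(\text{Re}\,a-\tfrac12C)^{1/2}$. Your route---Friedrichs/wave-packet symmetrization---is the one in Stein \cite{St2}, Ch.~VII, and is entirely legitimate. Step 1 (reduction to a real nonnegative symbol of order $1$ and rescaling $g=J^{(m-1)/2}f$) and Step 2 (manifest positivity of $T$ since $b\geq 0$) are both correct, and you have correctly put your finger on the point that makes the bound sharp: the vanishing of $\int w_j|\phi(w)|^2\,dw$ kills the first-order Taylor term, so the diagonal error begins at order $\langle\xi\rangle^{-1}\cdot\langle\xi\rangle^{1}=\langle\xi\rangle^{0}$.

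Step 3 has a genuine gap. The amplitude $A(x,z,\eta)$ is \emph{not} a compound symbol in the class of Problem 1 (Lecture 2): each $x$- or $z$-derivative lands on $\phi(\langle\eta\rangle^{1/2}(\cdot-y))$ and costs a factor $\langle\eta\rangle^{1/2}$, so $A$ lies only in the type-$(1/2,1/2)$ class. Consequently the ``standard diagonal expansion'' $\tilde b\sim\sum_\alpha \frac{(2\pi i)^{-|\alpha|}}{\alpha!}\partial_\xi^\alpha D_z^\alpha A|_{z=x}$ that you invoke is not covered by the Kohn--Nirenberg calculus developed in the lectures, and the conclusion ``$\tilde b-b\in S^0$'' (meaning $S^0_{1,0}$, which is what Theorem~\ref{T:1} requires) is asserted rather than proved; the $|\alpha|\geq 1$ terms each carry a $\langle\xi\rangle^{|\alpha|/2}$ from $D_z^\alpha$ and further cancellations beyond the first-moment one are needed to bring them down to order $0$, which you do not carry out. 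Moreover, even granting that, the Taylor remainders only satisfy $(1/2,1/2)$-type derivative bounds directly, so that each $\xi$-derivative of the remainder gains only $\langle\xi\rangle^{-1/2}$; one must Taylor-expand to an $\alpha$-dependent order to recover $S^0_{1,0}$ seminorm bounds for the finitely many $\alpha$ that Theorem~\ref{T:1} needs. The standard ways to close this are either (a) to prove the composition/reduction lemma and Calder\'on--Vaillancourt $L^2$-boundedness for $S^0_{1/2,1/2}$ and work in that class, or (b) to bypass the symbol calculus and estimate the Schwartz kernel of $\Psi_b-T$ directly, which is what Stein does. As written, the appeal to Theorem~\ref{T:1} does not follow from what you have established.
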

\begin{remark}
$C$ depends only on finitely many seminorms of $a$.
\end{remark}
\begin{remark} The above inequality has an improvement, due to C. Fefferman-Phong \cite{FePh}, where the error $\geq -C\|f\|_{H^\frac{m-2}{2}}^2$.  However, the above inequality still holds for Hilbert space valued symbols $a$ \cite{LaNi}, but the Fefferman-Phong improvement does not hold in that setting.
\end{remark}

\subsection*{Inverses of $\Psi$DO}

Suppose that $a\in S^0$, and recall that $\Psi_a:L^2(\mathbb{R}^n)\to L^2(\mathbb{R}^n)$ with norm dominated by finitely many seminorms $M(n)$ of $a$ in $S^0$.  Thus, assume that $$\sup_\xi \left< \xi  \right>^{|\alpha|} |\partial_x^\beta \partial_\xi^\alpha a(x,\xi)|\leq C^{-1} \text{ for }|\alpha|+|\beta|\leq M(n)$$  Then, for $C$ large, we have that $\|\Psi_a\|_{L^2}\leq \frac{1}{2}$, and thus $I-\Psi_a: L^2\to L^2$ is invertible, and its inverse $(I-\Psi_a)^{-1}= I+\Psi_a+ \cdots+\Psi_a^{k+1}+ \cdots$.

\begin{theorem}[Beals \cite{Be} \cite{Be79}] \label{T:6} 
Let $Q=(I-\Psi_a)^{-1}$, then $\exists \; b \in S^0$ such that $Q=\Psi_b$, and 
$$1+\sum_{k=0}^N \sigma( \Psi_a^{k+1}) \to b \text{ as }N\to \infty \text{ in }S^0$$
Moreover, seminorms of $b$ can be estimated by powers of seminorms of $a$.
\end{theorem}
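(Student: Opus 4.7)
The plan is to apply Beals' characterization of $\mathrm{Op}(S^0)$: an operator $T$ on $\mathcal{S}(\mathbb{R}^n)$ coincides with $\Psi_c$ for some $c\in S^0$ if and only if every iterated commutator $\mathrm{ad}(V_1)\cdots \mathrm{ad}(V_m) T$, with each $V_i\in\{x_1,\ldots,x_n,\partial_{x_1},\ldots,\partial_{x_n}\}$ and $\mathrm{ad}(V)T:=[V,T]$, extends to a bounded operator on $L^2(\mathbb{R}^n)$; moreover the seminorms of $c$ in $S^0$ are quantitatively equivalent to these commutator operator norms. I would take this characterization, which is the substantive content of the cited Beals papers, as a black box.

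First, the symbolic calculus (Theorem~\ref{T:2}) shows that $[x_j,\Psi_a]$ and $[\partial_{x_j},\Psi_a]$ are pseudo-differential operators of orders $-1$ and $0$ respectively, with symbols depending linearly on a single derivative of $a$. Iterating, $\mathrm{ad}(V_1)\cdots\mathrm{ad}(V_m)\Psi_a$ belongs to $\mathrm{Op}(S^0)$ with symbol controlled by finitely many seminorms of $a$, and is therefore $L^2$-bounded by Theorem~\ref{T:1}. The key step is then the identity obtained by differentiating $(I-\Psi_a)Q=I$:
$[V,Q] \;=\; Q\,[V,\Psi_a]\,Q, \qquad V\in\{x_j,\partial_{x_j}\}.$
Since $\|Q\|_{L^2\to L^2}\le 2$ and $[V,\Psi_a]$ is $L^2$-bounded, $[V,Q]$ is bounded on $L^2$. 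Applying further $\mathrm{ad}$-operators and using the Leibniz rule $\mathrm{ad}(V')(AB)=\mathrm{ad}(V')(A)B+A\,\mathrm{ad}(V')(B)$, a straightforward induction on the commutator order expresses every $\mathrm{ad}(V_1)\cdots\mathrm{ad}(V_m) Q$ as a finite sum of products of $Q$-factors and multi-commutators of $\Psi_a$, each $L^2$-bounded with operator norm majorized by a polynomial in finitely many seminorms of $a$. Beals' characterization then yields $Q=\Psi_b$ with $b\in S^0$ and the claimed seminorm estimate.

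For the asymptotic expansion, set $Q_N:=\sum_{k=0}^{N+1}\Psi_a^k$, whose symbol $\tau_N = 1+\sum_{k=0}^N \sigma(\Psi_a^{k+1})$ lies in $S^0$ by Theorem~\ref{T:2}, and observe $Q-Q_N = \Psi_a^{N+2}Q$. Expanding $\mathrm{ad}(V_1)\cdots\mathrm{ad}(V_m)(\Psi_a^{N+2}Q)$ by the Leibniz rule produces at most $(N+3)^m$ terms; in each term, at least $N+2-m$ of the $\Psi_a$-factors are untouched by any commutator and contribute a geometric factor $\|\Psi_a\|^{N+2-m}\le 2^{-(N+2-m)}$, while the remaining factors are bounded by constants depending only on seminorms of $a$ and on $\|Q\|$. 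Hence every commutator $L^2$-norm of $Q-Q_N$ is $O(N^m\,2^{-N})\to 0$, and Beals' characterization gives $\tau_N\to b$ in every $S^0$ seminorm.

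The main obstacle is the invocation of Beals' characterization itself—the deep tool imported from the cited papers—after which the argument is essentially combinatorial, with the only delicate check being that the polynomial-in-$N$ combinatorics of the Leibniz expansion are defeated by the geometric decay $\|\Psi_a\|^N\le 2^{-N}$ uniformly in each fixed commutator order.
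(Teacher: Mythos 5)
The structure of your argument (resolvent identity $[V,Q]=Q\,[V,\Psi_a]\,Q$, Leibniz induction, geometric decay beating the polynomial combinatorics) is exactly the right skeleton and is essentially what Beals does; the paper itself gives no proof and simply cites \cite{Be} \cite{Be79}, so there is no in-text argument to compare against. However, there is a genuine gap in the version of Beals' characterization you invoke. The criterion you state --- that $T=\Psi_c$ for some $c\in S^0$ iff every iterated commutator $\mathrm{ad}(V_1)\cdots\mathrm{ad}(V_m)T$ with $V_i\in\{x_j,\partial_{x_j}\}$ is bounded on $L^2$ --- characterizes $\mathrm{Op}(S^0_{0,0})$, i.e.\ symbols all of whose derivatives are merely bounded, with no gain of decay in $\xi$. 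It does not characterize the paper's $S^0$, which is H\"ormander's $S^0_{1,0}$: $|\partial_x^\beta\partial_\xi^\alpha a|\le A_{\alpha\beta}\langle\xi\rangle^{-|\alpha|}$. A counterexample is $a(x,\xi)=e^{i\xi_1}$: every iterated commutator of $\Psi_a$ with the $x_j$'s and $\partial_{x_j}$'s is a constant multiple of $\Psi_{e^{i\xi_1}}$, hence $L^2$-bounded, yet $a\notin S^0_{1,0}$. As written, your argument only yields $b\in S^0_{0,0}$, which is weaker than the theorem's conclusion $b\in S^0$, and the convergence $\tau_N\to b$ is established only in the $S^0_{0,0}$ topology.

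The correct Beals criterion for $S^0_{1,0}$ requires each $\mathrm{ad}(x_j)$ to produce a Sobolev gain: one needs, for every pair of multi-indices $(\alpha,\beta)$, that $\mathrm{ad}(x)^\alpha\mathrm{ad}(\partial_x)^\beta T$ is bounded from $L^2$ into $H^{|\alpha|}$ (equivalently, $J^{|\alpha|}\mathrm{ad}(x)^\alpha\mathrm{ad}(\partial_x)^\beta T$ is $L^2$-bounded), with the seminorm equivalence holding at this level. To verify this for $Q$ you must commute the $J^{|\alpha|}$ past $Q$; this is doable via $[J^s,Q]=Q\,[J^s,\Psi_a]\,Q$ and $J^sQJ^{-s}=(I-J^s\Psi_aJ^{-s})^{-1}$, using that $J^s\Psi_aJ^{-s}$ has an $S^0$ symbol with controlled seminorms by Theorem~\ref{T:2}, but it adds a nontrivial layer of bookkeeping. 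Likewise, in the convergence step your geometric factor is $\|\Psi_a\|_{L^2\to L^2}^{N+2-m}$; once $J$-powers are interleaved, the bound that must decay geometrically is $\|J^s\Psi_aJ^{-s}\|_{L^2\to L^2}$ for the relevant $s$, and one has to check (again via the calculus, choosing the smallness constant $C$ in terms of the seminorm order being estimated) that this too stays strictly below $1$. These are not merely cosmetic: without them the argument does not reach $S^0=S^0_{1,0}$.
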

We now turn to a couple of less well-known results for $\Psi$DO, which will be useful for our study of Schr\"{o}dinger equations.  Our first one is the study of $\Psi$DO on weighted $L^2$ spaces.  For $m\in \mathbb{R}$, let $\lambda_m(x)=\left<x\right>^{-m}=(1+|x|^2)^{-m/2}$.

\begin{theorem} \label{T:7}
Given $m\in \mathbb{R}$, $\exists \, N=N(n,|m|)>0$ such that, if $a\in S^0$, then 
$$\Psi_a: L^2(\mathbb{R}^n, \lambda_m(x)dx) \to L^2(\mathbb{R}^n, \lambda_m(x)dx)$$
with norm depending only on $|m|$ and seminorms of $a$ with $|\alpha|+|\beta|\leq N$.  
\end{theorem}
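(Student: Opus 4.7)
\noindent\textit{Proof proposal.} The natural reduction is to recast the weighted boundedness as an unweighted one: $\Psi_a$ is bounded on $L^2(\langle x\rangle^{-m}\,dx)$ if and only if the conjugated operator
$$T := M_{\langle\cdot\rangle^{-m/2}}\,\Psi_a\, M_{\langle\cdot\rangle^{m/2}}$$
is bounded on the ordinary $L^2(\mathbb{R}^n)$, where $M_\phi$ denotes multiplication by $\phi$ and the identity $\|\Psi_a f\|_{L^2(\lambda_m)}=\|T(\langle\cdot\rangle^{-m/2}f)\|_{L^2}$ is checked by the change of unknown $g=\langle\cdot\rangle^{-m/2}f$. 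To handle $T$ I would localize in the physical variable. Fix a smooth partition of unity $\{\phi_j\}_{j\geq 0}$ with $\phi_0$ supported in $\{|x|\leq 2\}$ and $\phi_j$ supported in the annulus $\{2^{j-1}\leq |x|\leq 2^{j+1}\}$ for $j\geq 1$, and write $T=\sum_{j,k\geq 0}T_{jk}$ with
$$T_{jk}=M_{\phi_j\langle\cdot\rangle^{-m/2}}\,\Psi_a\, M_{\phi_k\langle\cdot\rangle^{m/2}}.$$
On $\operatorname{supp}\phi_j\times\operatorname{supp}\phi_k$ the weight ratio $\langle x\rangle^{-m/2}\langle y\rangle^{m/2}$ is comparable to $2^{m(k-j)/2}$, and the plan is to split the double sum into a near-diagonal band $|j-k|\leq C_0$ and a far-diagonal tail $|j-k|>C_0$.

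For the near-diagonal part the weight factor is $O(1)$, so Theorem \ref{T:1} gives $\|T_{jk}\|_{L^2\to L^2}\leq C$ uniformly in $(j,k)$. To sum these I would group by the offset $\ell=k-j$, $|\ell|\leq C_0$, and observe that for each fixed $\ell$ the family $\{T_{j,j+\ell}\}_{j\geq 0}$ is almost orthogonal in the Cotlar--Stein sense: since $\phi_j\phi_{j'}\equiv 0$ whenever $|j-j'|\geq 2$, both products
$$T_{j,j+\ell}^*\,T_{j',j'+\ell}\quad\text{and}\quad T_{j,j+\ell}\,T_{j',j'+\ell}^*$$
vanish outside the band $|j-j'|\leq 1$ and are uniformly bounded inside it. Cotlar--Stein then yields $\bigl\|\sum_j T_{j,j+\ell}\bigr\|\leq 3C$ for each admissible $\ell$, and summing over the finitely many $|\ell|\leq C_0$ closes this part.

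For the off-diagonal terms, on $\operatorname{supp}\phi_j\times\operatorname{supp}\phi_k$ one has $|x-y|\gtrsim 2^{\max(j,k)}$. The standard argument for symbols of order zero -- integrating by parts $N$ times in $\xi$ in $K(x,y)=\int a(x,\xi)e^{2\pi i(x-y)\cdot\xi}\,d\xi$ -- shows that $K$ is smooth off the diagonal and satisfies $|K(x,y)|\leq C_N|x-y|^{-N}$ for $|x-y|\geq 1$, with $C_N$ controlled by finitely many seminorms of $a$. The kernel of $T_{jk}$ is therefore pointwise dominated by $C_N\,2^{m(k-j)/2}\,2^{-N\max(j,k)}$ and supported in a set of measure $O(2^{n(j+k)})$, so Schur's test gives
$$\|T_{jk}\|_{L^2\to L^2}\ \leq\ C_N\,2^{m(k-j)/2}\,2^{n\max(j,k)}\,2^{-N\max(j,k)}.$$
Choosing $N>n+|m|/2$ makes the double sum over $\{|j-k|>C_0\}$ absolutely convergent, and the final seminorm threshold can be taken as $N(n,|m|)=M(n)+\lceil n+|m|/2\rceil+1$, where $M(n)$ is the constant from Theorem \ref{T:1}. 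The main obstacle I anticipate is exactly this off-diagonal bookkeeping: the possible weight blow-up $2^{|m|\max(j,k)/2}$ and the volume factor $2^{n\max(j,k)}$ must be dominated simultaneously by the kernel decay $2^{-N\max(j,k)}$, and this is what fixes the threshold $N(n,|m|)$. A minor preliminary point is that $T$ must first be checked to make sense on $\mathcal{S}(\mathbb{R}^n)$ so that the partition-of-unity expansion is legitimate, which is immediate from $\Psi_a:\mathcal{S}\to\mathcal{S}$ and the smoothness of $\langle\cdot\rangle^{\pm m/2}$.
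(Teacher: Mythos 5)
Your proposal is correct, but it takes a genuinely different route from the paper. The paper's proof of Theorem~\ref{T:7} is an algebraic one: after a reduction by duality and interpolation to the case where the relevant power of $(1+|x|^2)$ is a nonnegative integer, it conjugates by the weight exactly as you do, but then exploits the Fourier-side identity $\widehat{(1+|x|^2)^m f}(\xi) = (I-\Delta_\xi)^m\hat f(\xi)$ and integrates by parts. The $\xi$-derivatives distribute, via Leibniz, between $e^{ix\cdot\xi}$ (producing powers of $x$ that the outer weight $(1+|x|^2)^{-m}$ absorbs) and the symbol $a$; the net effect is that the conjugated operator is again a pseudodifferential operator with a new symbol in $S^0$ whose seminorms are controlled by finitely many seminorms of $a$, so a single application of Theorem~\ref{T:1} finishes the proof. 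Your approach treats the weight as an external geometric object rather than an operation internal to the pseudodifferential calculus: you decompose dyadically in the physical variable, use the off-diagonal kernel decay $|K(x,y)|\lesssim_N |x-y|^{-N}$ together with Schur's test to sum the far-diagonal pieces, and use Cotlar--Stein (which here reduces to the observation that the products literally vanish unless $|j-j'|\le 1$, so mere local finiteness suffices) for the near-diagonal band. Both arguments ultimately rest on Theorem~\ref{T:1}, but the trade-offs are visible: the paper's version is shorter and avoids any explicit kernel estimate, at the cost of the preliminary reduction by weighted interpolation to integer powers; your version is longer and needs the standard kernel decay for $S^0$ symbols, but it works directly for arbitrary real $m$ without any interpolation step, and the dependence of the seminorm threshold $N(n,|m|)$ is made transparent by the Schur-test bookkeeping. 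One small point to nail down if you write this out: you should fix $C_0$ (say $C_0\ge 3$) explicitly so that the claimed separation $|x-y|\gtrsim 2^{\max(j,k)}$ on the far-diagonal supports actually holds, and note that the kernel decay is only used there where $|x-y|\ge 1$, so there is no issue with the singularity on the diagonal.
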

\begin{proof}
By duality and interpolation, it suffices to prove this for $m\in 4\mathbb{N}$.  Define now $Tf(x)= (1+|x|^2)^{-m} \Psi_a((1+|x|^2)^mf(x))$.  It suffices to show that $T$ is $L^2$ bounded.   We have (ignoring factors of $2\pi$ from now on) $((1+|x|^2)^mf(x))\sphat \, (\xi)=(I-\Delta_\xi)^m\hat{f}(\xi)$, so that 
$$Tf(x)= \int e^{ix\cdot \xi} \frac{a(x,\xi)}{(1+|x|^2)^m} (I-\Delta_\xi)^m\hat{f}(\xi) \, d\xi= \int \left[(I-\Delta_\xi)^m \frac{ e^{ix\cdot \xi}a(x,\xi)}{ (1+|x|^2)^{m}}\right] \hat{f}(\xi) \, d\xi$$
Once we obtain this formula, we can apply the Leibniz rule and the $L^2$ boundedness of $S^0$ operators to obtain the result.
\end{proof}

\begin{definition}
Let $\{ Q_\mu \}_{\mu\in \mathbb{Z}^n}$ be the unit cubes with integer coordinates as corners, which cover $\mathbb{R}^n$.  For $f\in L^2_\text{loc}(\mathbb{R}^n)$, we define
$$\tv f \tv = \sup_{\mu\in \mathbb{Z}^n} \|f\|_{L^2(Q_\mu)}$$
$$\tv f \tv_1 = \sup_{\substack{ \text{all }Q \\ l(Q)=1}} \|f\|_{L^2(Q)}$$
\end{definition}
\begin{remark}
We clearly have $\tv f \tv \leq \tv f \tv_1 \leq C_n \tv f \tv$, and hence $\forall \, x_0 \in \mathbb{R}^n$,  $\tv f(-+x_0) \tv \leq C_n \tv f \tv$.  Moreover, if $m>n$, then $\|f\|_{L^2(\lambda_m(x)dx)} \leq C_{m,n}\tv f \tv$.
\end{remark}
\begin{theorem} \label{T:8}
There exists $N=N(n)$ such that if $a\in S^0$, we have $\tv \Psi_a f \tv \leq C \tv f \tv$, with $C$ depending only on seminorms of $a$ with $|\alpha|+|\beta|\leq N$.
\end{theorem}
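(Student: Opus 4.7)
The plan is to localize via a smooth partition of unity adapted to the unit cubes, and then split the contribution to $\Psi_a f$ on a given cube $Q_\mu$ into a short-range piece handled by the global $L^2$ boundedness of Theorem \ref{T:1}, and a long-range piece controlled by the rapid off-diagonal decay of the kernel of $\Psi_a$.

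First I would fix $\phi \in C_0^\infty(\mathbb{R}^n)$ with $\text{supp}\,\phi \subset [-1,1]^n$ and $\sum_{\nu \in \mathbb{Z}^n} \phi(x-\nu) \equiv 1$, set $\phi_\nu(x) = \phi(x-\nu)$, and write $f = \sum_\nu f_\nu$ with $f_\nu = \phi_\nu f$. Then $\|f_\nu\|_{L^2} \leq C_n \tv f \tv$ uniformly in $\nu$. Next I would establish the off-diagonal kernel bound: formally
$$
K(x,y) = \int e^{2\pi i (x-y)\cdot \xi} a(x,\xi)\, d\xi,
$$
and since $(x-y)^\alpha e^{2\pi i (x-y)\cdot \xi} = (-2\pi i)^{-|\alpha|} \partial_\xi^\alpha e^{2\pi i(x-y)\cdot \xi}$, integration by parts (justified by regularization, e.g.\ inserting $\chi(\varepsilon\xi)$ and letting $\varepsilon\to 0$) gives, for every $N>n$,
$$
|K(x,y)| \leq \frac{C_N}{|x-y|^N}, \qquad x\neq y,
$$
where $C_N$ depends only on seminorms of $a$ with $|\alpha|\leq N$, $|\beta|=0$.

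Now fix $\mu \in \mathbb{Z}^n$ and split
$$
\Psi_a f\big|_{Q_\mu} = \sum_{|\mu-\nu|\leq 3} \Psi_a f_\nu + \sum_{|\mu-\nu|> 3} \Psi_a f_\nu.
$$
For the near terms there are only $O(1)$ indices, and Theorem \ref{T:1} gives
$\|\Psi_a f_\nu\|_{L^2(Q_\mu)} \leq \|\Psi_a f_\nu\|_{L^2} \leq C\|f_\nu\|_{L^2} \leq C\tv f\tv$.
For the far terms, if $|\mu-\nu|>3$ then $x\in Q_\mu$ and $y\in\text{supp}\,\phi_\nu$ force $|x-y|\geq c|\mu-\nu|$, so by the kernel estimate and Cauchy--Schwarz,
$$
|\Psi_a f_\nu(x)| \leq \int |K(x,y)|\,|f_\nu(y)|\,dy \leq \frac{C_N}{|\mu-\nu|^N}\,\|f_\nu\|_{L^1} \leq \frac{C_N'}{|\mu-\nu|^N}\,\tv f\tv.
$$
Choosing any $N>n$ and summing over $\nu$ yields $\|\Psi_a f\|_{L^2(Q_\mu)} \leq C\tv f\tv$, and taking $\sup_\mu$ gives the theorem. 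The total number of seminorms used is $M(n)$ from Theorem \ref{T:1} plus those needed to make $N>n$, so $N=N(n)$ suffices.

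The main technical point will be to make the kernel bound rigorous: $K$ is only an oscillatory integral and a priori not absolutely convergent, so the integration by parts must be justified (standard regularization), and one must verify that the bound uses only finitely many seminorms depending on $n$ alone. Beyond this, the argument is essentially a $\tv\cdot\tv$-version of the almost-diagonality of $\Psi$DO of order zero.
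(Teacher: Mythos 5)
Your argument is correct, but it takes a genuinely different route from the paper. The paper's proof of Theorem~\ref{T:8} is a short reduction to the weighted estimate of Theorem~\ref{T:7}: for a given cube $Q_{\mu_0}$, translate so that the cube sits at the origin, observe that the translated symbol $b(y,\xi)=a(y+x_{\mu_0},\xi)$ has the same $S^0$ seminorms as $a$, and then bound $\|\Psi_b g\|_{L^2(Q_0)}$ by $\|\Psi_b g\|_{L^2(\lambda_m\,dx)}\leq C\|g\|_{L^2(\lambda_m\,dx)}\leq C\tv g\tv$, using $\lambda_m\gtrsim 1$ on $Q_0$. This is very economical because it leans entirely on the already-proved Theorem~\ref{T:7}, plus the translation-invariance of the $S^0$ seminorm family. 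Your proof instead works from scratch: a partition of unity $f=\sum_\nu \phi_\nu f$, the global $L^2$ bound (Theorem~\ref{T:1}) for the near cubes, and the off-diagonal kernel decay $|K(x,y)|\lesssim |x-y|^{-N}$ (obtained by integrating by parts in $\xi$) for the far cubes, followed by summing the tail. Both proofs close with only finitely many seminorms of $a$, with count depending only on $n$. What the paper's approach buys is brevity and the fact that it inherits the weighted estimate already in hand; what your approach buys is self-containedness (it avoids Theorem~\ref{T:7} entirely), makes the almost-diagonality of zero-order $\Psi$DO completely explicit, and in fact gives slightly more -- it proves the stronger statement $\sum_\nu\|\Psi_af\|_{L^2(Q_\mu)}\lesssim \sum_\nu\|f\|_{L^2(Q_\nu)}$ for free, i.e.\ essentially Corollary~\ref{C:1}, without appealing to duality.

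Two small points you flagged yourself and should indeed be nailed down: (i) the kernel bound is an oscillatory integral, so the integration by parts must be done on the regularized symbol $\chi(\varepsilon\xi)a(x,\xi)$ and then one lets $\varepsilon\to 0$, and this is only used in the region $|x-y|\geq c>0$, where it is legitimate; (ii) the identity $\Psi_af=\sum_\nu\Psi_af_\nu$ holds because $f\in L^2(\lambda_m\,dx)\subset\mathcal{S}'$ for $m>n$, $\Psi_a$ is continuous on $\mathcal{S}'$, and your estimates show the series converges absolutely in $L^2(Q_\mu)$, hence in $\mathcal{D}'(Q_\mu)$, forcing the sum to agree with $\Psi_af$ there. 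With those remarks spelled out the argument is complete.
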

\begin{proof}
Let $Q_{\mu_0}$ be any unit size cube, and $x_{\mu_0}$ one of its corners.  Let $Q_0$ be the unit cube with the origin as one of its corners, so that $Q_{\mu_0}=x_{\mu_0}+Q_0$.  Then
$$\Psi_af(x)= \int e^{ix\xi} a(x,\xi) \hat{f}(\xi)  \, d\xi = \int e^{iy\xi} a(y+x_{\mu_0}, \xi ) e^{ix_{\mu_0}\xi} \hat{f}(\xi)\, d\xi$$
Let $g(x)= (e^{ix_{\mu_0} \xi}\hat{f}(\xi))\spcheck(x)=f(x+x_{\mu_0})$, $b(y,\xi)=a(y+x_{\mu_0},\xi)$.  Then $\tv g \tv\leq C_n \tv f \tv$ and $b\in S^0$, with bounds independent of $x_{\mu_0}$.  Moreover,
$$\| \Psi_b g \|_{L^2(Q_0)} = \|\Psi_a f \|_{L^2(Q_{\mu_0})}$$
Now, take $m>n$, and use the fact that, if $y\in Q_0$, then $\lambda_m(y)\geq C_n$, to see that 
$$\|\Psi_b g \|_{L^2(Q_0)} \leq C_n \|\Psi_b g \|_{L^2(\lambda_m dx)} \leq C \|g\|_{L^2(\lambda_m dx )} \leq C \tv g \tv \leq C \tv f \tv$$
which gives the proof.
\end{proof}

\begin{corollary} \label{C:1}
Let 
$$\tv f \tv' = \sum_{\mu\in \mathbb{Z}^n} \|f\|_{L^2(Q_\mu)}$$
Then $\tv \Psi_a f \tv' \leq C \tv f \tv'$, by duality.  Moreover, if $f: \mathbb{R}^n\times [0,T] \to \mathbb{C}$, and we define 
$$\tv f \tv_T = \sup_{\mu\in \mathbb{Z}^n} \|f\|_{L^2(Q_\mu\times [0,T])}$$
$$\tv f \tv'_T = \sum_{\mu\in \mathbb{Z}^n} \|f\|_{L^2(Q_\mu\times [0,T])}$$
then $\tv \Psi_af \tv_T\leq C\tv f \tv_T$ and $\tv \Psi_af \tv_T' \leq c \tv f \tv_T'$.
\end{corollary}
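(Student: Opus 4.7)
The plan is to handle the three inequalities in turn, with the two $\tv\cdot\tv'$ bounds both following from the $\tv\cdot\tv$-type bound by a duality argument, and the time-integrated bound itself obtained by revisiting the proof of Theorem~\ref{T:8}. For the first bound $\tv \Psi_a f \tv' \leq C \tv f \tv'$, I would use the duality identity
$$\tv h \tv' \;=\; \sup_{\tv g \tv \leq 1} \Big| \int h \bar g \, dx \Big|,$$
which one verifies from $\sum_\mu \int_{Q_\mu} h \bar g \leq \tv h \tv' \tv g \tv$ for the ``$\geq$'' direction and from constructing $g$ piece-by-piece via $g|_{Q_\mu} = h/\|h\|_{L^2(Q_\mu)}$ for equality. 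By Theorem~\ref{T:3}, $\Psi_a^* = \Psi_{a^*}$ with $a^* \in S^0$, so
$$\tv \Psi_a f \tv' = \sup_{\tv g \tv \leq 1} \Big| \int f \, \overline{\Psi_{a^*} g} \, dx \Big| \leq \tv f \tv' \cdot \sup_{\tv g \tv \leq 1} \tv \Psi_{a^*} g \tv \leq C \tv f \tv',$$
the last step by Theorem~\ref{T:8} applied to $a^*$.

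The time-integrated bound $\tv \Psi_a f \tv_T \leq C \tv f \tv_T$ is the main obstacle: a naive integration of Theorem~\ref{T:8} in $t$ produces $\int_0^T \sup_\nu$ on the right, whereas we need $\sup_\nu \int_0^T$. To get around this, I would re-enter the proof of Theorem~\ref{T:8} before the supremum is taken. Fix $m>n$ and a cube $Q_{\mu_0}$ with corner $x_{\mu_0}$, shift the symbol to $b(y,\xi) = a(y+x_{\mu_0},\xi) \in S^0$ (with seminorms uniform in $\mu_0$), and apply Theorem~\ref{T:7}: at each $t$,
$$\|\Psi_a f(\cdot,t)\|_{L^2(Q_{\mu_0})}^2 \;\leq\; C \int \lambda_m(x-x_{\mu_0})\, |f(x,t)|^2 \, dx.$$
Integrating in $t$, swapping order by Fubini, splitting $\mathbb{R}^n = \bigcup_\nu Q_\nu$, and using the geometric estimate $\lambda_m(x-x_{\mu_0}) \leq C \langle \nu - \mu_0 \rangle^{-m}$ on $Q_\nu$, one gets
$$\int_0^T \|\Psi_a f(\cdot,t)\|_{L^2(Q_{\mu_0})}^2 dt \;\leq\; C \sum_\nu \langle \nu - \mu_0 \rangle^{-m} \|f\|_{L^2(Q_\nu \times [0,T])}^2 \;\leq\; C' \tv f \tv_T^2,$$
the sum being finite and independent of $\mu_0$ since $m>n$. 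Taking supremum over $\mu_0$ yields the claim.

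Finally, for $\tv \Psi_a f \tv_T' \leq C \tv f \tv_T'$, I would repeat the duality argument of the first step but with the spacetime pairing $\iint f \bar g \, dx \, dt$, invoking the just-proved time-integrated bound for $\Psi_{a^*}$ in the role that Theorem~\ref{T:8} played in the stationary case. The crucial point throughout is the middle step: exchanging $\sup \int$ for something dominated by $\int \sup$ forces one back into the proof of Theorem~\ref{T:8} to exploit Theorem~\ref{T:7}'s weighted $L^2$ bound, generating the fast-decaying factor $\langle \nu - \mu_0 \rangle^{-m}$ that can be summed uniformly in $\mu_0$.
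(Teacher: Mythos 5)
Your proof is correct and follows the approach the paper intends: the $\tv\cdot\tv'$ bounds by dualizing against $\tv\cdot\tv$ and passing to $\Psi_{a^*}$ via Theorem \ref{T:3}, and the time-integrated $\tv\cdot\tv_T$ bound by re-running the proof of Theorem \ref{T:8} with Fubini before taking the supremum in $\mu$, exploiting that $\Psi_a$ acts only in $x$ and that $\lambda_m$ is comparable to $\langle \nu - \mu_0\rangle^{-m}$ on $Q_\nu$. The paper leaves all of this to the reader (saying only ``by duality'' for the first part), so you have supplied exactly the intended argument.
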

We now turn to our first application of $\Psi$DO to Schr\"{o}dinger equations.  Consider the problem
\begin{equation}
\left\{
\begin{aligned}
\partial_t u &= i\Delta u \\
u\big|_{t=0} &= u_0
\end{aligned}
\right.
\end{equation}
Is is easy to see that $u(x,t) = \int e^{ix\xi} e^{it|\xi|^2} \hat{u}_0(\xi) \, d\xi$ so that $\|u(-,t)\|_{H^s}=\|u_0\|_{H^s}$ for any $s$.  Let 
$$S(t)u_0= u(x,t) = \int e^{ix\xi}e^{it|\xi|^2}\hat{u}_0(\xi)\, d\xi$$
and note that $S(-t)S(t)u_0=u_0$, so that the equation is time reversible.  Hence, if for instance $u_0\in L^2$, $u(-,t)$, $t\neq 0$ cannot in general belong to $H^s$, $s>0$, i.e.\ there is no ``gain of regularity''.  This is in contrast with the heat equation
\begin{equation}
\left\{
\begin{aligned}
\partial_t u &= \Delta u \\
u\big|_{t=0} &= u_0
\end{aligned}
\right.
\end{equation}
where $\hat{u}(\xi,t)=e^{-t|\xi|^2}\hat{u}(\xi)$ and $u(-,t)\in H^s$, $\forall \, s$, $\forall\, t>0$, when $u_0\in L^2$.  Nevertheless, there is a ``local smoothing'' effect for the Schr\"{o}dinger equation, due to Constantin-Saut \cite{CS}, Sj\"olin \cite{Sjolin}, Vega \cite{Vega}, Kenig-Ponce-Vega \cite{KPV91a}, Doi \cite{Do1}, among others, that we are now going to describe. 

\begin{theorem} \label{T:9}
Let $u$ solve
\begin{equation}
\left\{
\begin{aligned}
\partial_t u &= i\Delta u +f \\
u\big|_{t=0} &= u_0
\end{aligned}
\right. \quad \text{in }\mathbb{R}^n\times [0,T]
\end{equation}
Assume that $u_0\in L^2(\mathbb{R}^n)$, $f\in L_t^1 L_x^2 = \int_0^{+\infty} \left( \int |f(x,t)|^2 \, dx \right)^{1/2} \, dt$.  Then: $\forall \, m>1$, 
$$\sup_{0<t<T} \|u(-,t)\|_{L^2}+ \|J^{1/2}u\|_{L^2(\lambda_m(x)dxdt)} \leq C_T \left\{ \|u_0\|_{L^2}+\|f\|_{L_t^1L_x^2} \right\}$$ 
If $J^{-1/2}f\in L_{t,x}^2(\lambda_{-m}(x)dxdt)$, the same estimate holds, i.e.
$$\sup_{0<t<T} \|u(-,t)\|_{L^2}+ \|J^{1/2}u\|_{L^2(\lambda_m(x)dxdt)} \leq C_T \left\{ \|u_0\|_{L^2}+\|J^{-1/2}f\|_{L^2(\lambda_m^{-1}(x)dxdt)} \right\}$$
\end{theorem}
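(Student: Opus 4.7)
The proof has two ingredients: a standard $L^2$ energy estimate for the $\sup_t$-bound, and a pseudo-differential commutator argument of Doi-type that produces the $H^{1/2}$-smoothing term. First, multiplying the equation by $\bar u$, integrating and taking real parts, the skew-adjointness of $i\Delta$ eliminates the Laplacian contribution and leaves $\tfrac{d}{dt}\|u(\cdot,t)\|_{L^2}^2 \leq 2\|f(\cdot,t)\|_{L^2}\|u(\cdot,t)\|_{L^2}$, which integrates to $\sup_{0<t<T}\|u(\cdot,t)\|_{L^2} \leq \|u_0\|_{L^2} + \|f\|_{L^1_tL^2_x}$.

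For the smoothing term, I would construct a $\Psi$DO $P=\Psi_p$ with real-valued symbol $p\in S^0$ (so that $P$ is self-adjoint modulo $S^{-1}$, which is harmless and can be further corrected by Theorem~\ref{T:3}) such that the Poisson bracket of $|\xi|^2$ with $p$ dominates $\lambda_m(x)|\xi|$:
$$2\,\xi\cdot\nabla_x p(x,\xi) \;\geq\; c_0\,\lambda_m(x)|\xi| - C_0.$$
In one dimension I would take $p(x,\xi)=\phi(x)\,\chi(\xi)\,\mathrm{sgn}(\xi)$ with $\phi'=\lambda_m$ (which is bounded, since $m>1$) and $\chi$ a smooth cutoff vanishing near $\xi=0$; in higher dimension I would take $p(x,\xi)=\nabla\phi(x)\cdot\xi/\langle\xi\rangle$, where $\phi$ is a convex radial potential with $\nabla\phi$ bounded and $D^2\phi(x)\geq c\,\lambda_m(x)\,I$, which is possible precisely because $m>1$. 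Using the equation and the symbolic calculus,
$$\frac{d}{dt}\langle Pu,u\rangle \;=\; \langle i[\Delta,P]u,u\rangle + 2\,\mathrm{Re}\,\langle Pf,u\rangle.$$
By Theorem~\ref{T:2} and the commutator remark, the principal symbol of $i[\Delta,P]$ is proportional to $\xi\cdot\nabla_x p$, so by construction and the sharp G\r{a}rding inequality (Theorem~\ref{T:5}) applied to the shifted symbol,
$$\mathrm{Re}\,\langle i[\Delta,P]u,u\rangle \;\geq\; c\,\|J^{1/2}u\|_{L^2(\lambda_m dx)}^2 - C\|u\|_{L^2}^2,$$
where the identification of the positive term with a weighted norm of $J^{1/2}u$ uses that conjugation of $\Psi$DO by $\lambda_m^{1/2}$ and by $J^{1/2}$ stays in the calculus (Theorem~\ref{T:7} and Theorem~\ref{T:2}). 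Integrating in $t$, bounding the boundary terms $\langle Pu,u\rangle$ by $\sup_t\|u(\cdot,t)\|_{L^2}^2$ via $L^2$-boundedness of $P$, and inserting the energy bound yields the first estimate.

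For the second estimate the only new point is handling the forcing when $f\in L^1_tL^2_x$ is replaced by $J^{-1/2}f\in L^2(\lambda_m^{-1}\,dx\,dt)$. I would write $\langle Pf,u\rangle = \langle J^{-1/2}f,\,J^{1/2}P^*u\rangle$ and apply Cauchy-Schwarz with the dual weights to obtain
$$\Bigl|\int_0^T\!\langle Pf,u\rangle\,dt\Bigr| \;\leq\; \|J^{-1/2}f\|_{L^2(\lambda_m^{-1})}\;\|J^{1/2}P^*u\|_{L^2(\lambda_m)};$$
since $J^{1/2}P^*J^{-1/2}\in S^0$, Theorem~\ref{T:7} converts the second factor into $\|J^{1/2}u\|_{L^2(\lambda_m)}$ plus lower-order terms, and the usual $\varepsilon$-absorption closes the estimate. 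The matching $L^\infty_tL^2_x$ energy bound in this setting is obtained the same way: $\tfrac{d}{dt}\|u\|_{L^2}^2=2\,\mathrm{Re}\,\langle f,u\rangle$ integrates to at most $2\|J^{-1/2}f\|_{L^2_{t,x}(\lambda_m^{-1})}\|J^{1/2}u\|_{L^2_{t,x}(\lambda_m)}$ by Cauchy-Schwarz, and is again absorbed into the smoothing term. The main obstacle in the whole argument is the construction of the symbol $p$ with the required Poisson-bracket positivity: in one dimension it is essentially elementary, but in higher dimensions one genuinely needs a convex potential in $x$, and one must check that the lower-order corrections from the symbolic calculus (and from symmetrizing $P$) are dominated by the G\r{a}rding error term $C\|u\|_{L^2}^2$ and hence cause no trouble.
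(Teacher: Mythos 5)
Your proof is correct but takes a genuinely different route: you use the positive-commutator (multiplier/virial) method, differentiating the functional $\langle \Psi_p u, u\rangle$ in time and harvesting the positivity of the principal symbol $\sim 2\xi\cdot\nabla_x p$ of the commutator via sharp G\r{a}rding. The paper instead runs a gauge transformation: it sets $v = \Psi_{c_R}u$ with $c_R = \exp[-\theta_R(\xi)\,p(x,\xi)]$, proves that $\Psi_{c_R}$ is invertible with inverse a $0$th order $\Psi$DO (this uses Beals' Theorem~\ref{T:6}), rewrites the equation for $v$ so that the commutator contribution becomes a dissipative first-order term $\tilde A_1 v$, and then carries out an ordinary $L^2$-energy estimate on $v$. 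Both routes share the two key ingredients --- Doi's Lemma~\ref{L:1} producing a real $S^0$-symbol $p$ with $2\xi\cdot\nabla_x p \ge c_0\lambda_m(x)|\xi| - c_0^{-1}$, and the sharp G\r{a}rding inequality (Theorem~\ref{T:5}) together with the factorization $\lambda_m(x)\langle\xi\rangle = \bigl(\lambda_m^{1/2}(x)\langle\xi\rangle^{1/2}\bigr)^2$ to identify the positive term with $\|J^{1/2}u\|^2_{L^2(\lambda_m dx)}$. Your multiplier version is leaner here, since it avoids the invertibility argument altogether, and your higher-dimensional symbol $p = \nabla\phi(x)\cdot\xi/\langle\xi\rangle$ built from a convex radial potential is valid --- though the paper's coordinate-wise vector field $\Phi_j(x) = \int_0^{x_j}\lambda(|r|)\,dr$, whose symmetrized Jacobian is the diagonal matrix $\mathrm{diag}(\lambda(|x_j|)) \ge \lambda(|x|)I$, does the same job with less bookkeeping, exploiting directly that $\lambda$ is radially decreasing. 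The gauge method, however, is what the later lectures require: the conjugation survives variable and time-dependent top-order coefficients (Theorems~\ref{T:1B} and~\ref{T:2B}), where a bare multiplier functional would not deliver a closed estimate, and the present proof is clearly designed as a warm-up for that machinery. One small wrinkle to straighten out: with the paper's convention, the identity reads $\frac{d}{dt}\langle Pu,u\rangle = \langle i[P,\Delta]u,u\rangle + 2\,\mathrm{Re}\,\langle Pf,u\rangle$, and the principal symbol of $i[P,\Delta]$ is $\sim 2\xi\cdot\nabla_x p$; you have written both the commutator and its symbol with the opposite sign, so the two slips cancel and the positivity claim stands, but the signs should be cleaned up for a presentable argument.
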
  
The main tool in the proof that we are going to give of this theorem is a construction, due to S. Doi \cite{Do1}.

\begin{lemma} \label{L:1}
Let $\lambda$ be radially decreasing, non-negative, with $\int_0^\infty \lambda(r)\, dr < \infty$,  $\lambda \in C^\infty$, $\lambda$ even, and $|\partial_x^\alpha \lambda(x)| \leq C_\alpha \lambda(x)$.  Then, there exists a real valued symbol $p\in S^0$, and a constant $c_0<1$, such that
$$2\xi \cdot \nabla_xp(x,\xi) \geq c_0 \lambda(|x|)|\xi| - \frac{1}{c_0}$$
for all $(x,\xi)\in \mathbb{R}^n$.
\end{lemma}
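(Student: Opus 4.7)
The operator $2\xi\cdot\nabla_x$ is the Hamilton vector field of the principal symbol $h(x,\xi)=|\xi|^2$ of $-i\Delta$, so its integral curves are the straight-line bicharacteristics $t\mapsto(x+2t\xi,\xi)$.  The inequality of Lemma~\ref{L:1} asks for a bounded ``escape function'' whose derivative along these lines is bounded below by the density $c_0\lambda(|x|)|\xi|$.  The natural idea, exploiting the integrability of $\lambda$, is to take $p$ equal to an integral of $\lambda$ along the ray from $x$ in direction $\xi$.

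The plan is to set
\begin{equation*}
\phi(x,\omega):=\int_{0}^{x\cdot\omega}\lambda(|x-s\omega|)\,ds,\qquad p(x,\xi):=\chi(|\xi|)\,\phi(x,\hat\xi),
\end{equation*}
where $\hat\xi=\xi/|\xi|$ and $\chi\in C^\infty(\mathbb{R})$ satisfies $\chi=0$ on $[0,1]$ and $\chi=1$ on $[2,\infty)$.  Using $\partial_s|x-s\omega|=(s-x\cdot\omega)/|x-s\omega|$, a direct computation gives $\omega\cdot\nabla_x\phi(x,\omega)=\lambda(|x|)$: the boundary term $\lambda(|x_\perp|)$ from differentiating the upper limit is cancelled by the integrated derivative $\int_0^{x\cdot\omega}\partial_s\lambda(|x-s\omega|)\,ds=\lambda(|x_\perp|)-\lambda(|x|)$, with $x_\perp=x-(x\cdot\omega)\omega$.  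Hence
\begin{equation*}
2\xi\cdot\nabla_x p = 2|\xi|\chi(|\xi|)\lambda(|x|).
\end{equation*}
For $|\xi|\ge 2$ the right-hand side exceeds $c_0\lambda(|x|)|\xi|$ for any $c_0\le 2$, while for $|\xi|\le 2$ the left side is non-negative and $c_0\lambda(|x|)|\xi|-1/c_0\le 2c_0\lambda(0)-1/c_0$, which is $\le 0$ once $c_0$ is small enough.  A choice such as $c_0=\min\bigl\{1/2,\,(2\lambda(0)+1)^{-1/2}\bigr\}$ works for all $(x,\xi)$.

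Boundedness of $p$ follows from the substitution $u=s-x\cdot\omega$, giving $|\phi(x,\omega)|\le\int_{\mathbb{R}}\lambda(\sqrt{u^2+|x_\perp|^2})\,du\le 2\int_0^\infty\lambda(r)\,dr$ by monotonicity of $\lambda$, uniformly in $(x,\omega)$.  Pure $x$-derivatives of $p$ are handled analogously: each $\partial_{x_j}$ produces boundary values of $\lambda$ (or of its derivatives) together with integrated derivatives of $\lambda$, both of which are uniformly bounded by the hypothesis $|\partial_x^\alpha\lambda|\le C_\alpha\lambda$ and the $L^1$-integrability.

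The main technical obstacle will be verifying the $S^0$ bounds for the $\xi$-derivatives.  Each application of $\partial_\xi$ to $\hat\xi=\xi/|\xi|$ contributes the favorable factor $|\xi|^{-1}$ and yields the expected gain $\langle\xi\rangle^{-|\alpha|}$, but via the chain rule it also releases $\omega$-derivatives of $\phi$, and these in turn produce boundary terms such as $\lambda(|x_\perp|)\,x_k$ whose naive size grows linearly in $|x|$.  To absorb this growth one integrates by parts in $s$, using the identity $(x\cdot\omega-s)\lambda'(|x-s\omega|)/|x-s\omega|=-\partial_s\lambda(|x-s\omega|)$ to trade the troublesome factors of $s$ (which are bounded by $|x\cdot\omega|\le|x|$) for extra derivatives of $\lambda$ that remain under control via $|\lambda'|\le C\lambda$.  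If this direct procedure leaves unbounded residuals at higher orders, one modifies the ansatz, for example by inserting a smooth radial truncation in $|x|/R$ into $\phi$, or (as in Doi's original construction) by replacing the upper limit $x\cdot\omega$ with the smoothed quantity $(x\cdot\omega)/\langle x\rangle$; such adjustments only rescale $c_0$ while killing the linear $|x|$-growth.  The real difficulty is keeping this bookkeeping clean simultaneously for all multi-indices $\alpha,\beta$ and obtaining seminorm bounds depending on finitely many seminorms of $\lambda$.
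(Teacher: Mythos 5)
Your ansatz — integrating $\lambda$ along the ray $s\mapsto x-s\hat\xi$ so that the Hamilton vector field $2\xi\cdot\nabla_x$ sees a fundamental theorem of calculus — is a natural one, and your pointwise computation $\hat\xi\cdot\nabla_x\phi(x,\hat\xi)=\lambda(|x|)$ is correct. This is in spirit the same device used later in the course (the function $q_2$ in the proof of Doi's lemma), where one integrates a cutoff against the bicharacteristic flow. But for the present lemma it leaves a genuine gap that you correctly identify and do not close: $p(x,\xi)=\chi(|\xi|)\phi(x,\hat\xi)$ is \emph{not} in $S^0$ with the construction as written. Differentiating $\hat\xi$ releases $\partial_\omega\phi$, and the boundary contribution from the upper limit $x\cdot\omega$ yields terms of size $x_k\,\lambda(|x_\perp|)$, which are genuinely unbounded (take $x$ parallel to $\omega$, so $x_\perp=0$ and the term is $x_k\lambda(0)$). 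Your proposed repairs do not obviously work here: integrating by parts in $s$ trades $s$-factors for derivatives of $\lambda$, but the boundary term from the upper limit is not removed by that device; and the Doi-type normalization $q\mapsto q/\langle x\rangle$ (or the nonlinear smoothing via $f(|q|)$) would divide the favorable term $2|\xi|\lambda(|x|)$ by $\langle x\rangle$, which destroys the target estimate whenever $\lambda$ decays faster than $\langle x\rangle^{-1}$ — precisely the regime $\lambda=\lambda_m$, $m>1$, used in Theorem \ref{T:9}. So the repair step would require a new idea.

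The paper sidesteps the whole issue by choosing a symbol that is manifestly a tensor product: set $\Phi(x)=(f(x_1),\dots,f(x_n))$ with $f(t)=\int_0^t\lambda(|r|)\,dr$, and
\[
p(x,\xi)=\Phi(x)\cdot\frac{\xi}{\langle\xi\rangle}.
\]
Each $\Phi_j$ is bounded (by $\int_0^\infty\lambda$) with all $x$-derivatives bounded (by $|\partial^\alpha\lambda|\le C_\alpha\lambda\le C_\alpha\lambda(0)$), and $\xi_j/\langle\xi\rangle$ is a classical $S^0$ symbol, so $p\in S^0$ is immediate. Then
\[
2\xi\cdot\nabla_x p=\frac{2}{\langle\xi\rangle}\,\Phi'_{\mathrm{sym}}(x)\xi\cdot\xi
\ge \frac{2\lambda(|x|)|\xi|^2}{\langle\xi\rangle},
\]
since $\Phi'_{\mathrm{sym}}(x)=\mathrm{diag}(\lambda(|x_1|),\dots,\lambda(|x_n|))\ge\lambda(|x|)I$ by radial monotonicity and $|x_j|\le|x|$. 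This decouples the $x$ and $\xi$ dependence completely, so the $S^0$ bookkeeping that derails your ray-integral construction never arises. The price is that the componentwise trick is special to the flat Laplacian and does not generalize to variable coefficients — which is why the later Doi lemma needs the flow-based $q_2$ and the elaborate normalization you were anticipating.
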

\begin{proof}[Proof of Theorem \ref{T:9}]
For our proof, we will choose $\lambda=\lambda_m$, $m>1$.  Let now $c_R(x,\xi) = \exp [-\theta_R(\xi)p(x,\xi)]$, which is another symbol in $S^0$, and we will consider the equation verified by $v=\Psi_{c_R}u$.  Here $\theta_R(\xi)=\theta(\xi/R)$, with $\theta\equiv 1$ for large $\xi$, $\theta\equiv 0$ for small $\xi$.  In order to do this, we need to calculate $i[\Psi_{c_R}\Delta-\Delta\Psi_{c_R}]$.  Recall that the symbol of $\Delta=-|\xi|^2$, and that, if $a$ is of order $2$, $c$ is of order $0$, then $i[\Psi_c\Psi_a-\Psi_a\Psi_c]=A_1+A_0$, where $A_0$ has symbol of order $0$, and $A_1$ has symbol of order $1$, equal to $\sum_{j=1}^n \left\{ \frac{\partial c}{\partial \xi_j} \frac{\partial a}{\partial x_j} - \frac{\partial a}{\partial \xi_j} \frac{\partial c}{\partial x_j} \right\}$.  In our case, this equals 
\begin{align*}
\sum_{j=1}^n  2\xi_j \frac{\partial}{\partial x_j} c_R(x,\xi) &= -\sum_{j=1}^n 2\xi_j \theta_R(\xi) \frac{\partial p}{\partial x_j}(x,\xi) c_R(x,\xi) \\
&\leq -c_0 \theta_R(\xi) \lambda_m(x)|\xi| c_R(x,\xi) + c_0^{-1}\theta_R(\xi)c_R(x,\xi)
\end{align*}
Let us denote by $a_1(x,\xi)$ the symbol of $A_1$, and $a_0(x,\xi)$ a generic symbol of order $0$.  Next, we claim that, for $R$ large, $\Psi_{c_R}$ is invertible, and its inverse, $\Psi_{c_R}^{-1} = \Psi_{c_R^+}+\Psi_s$, where $s$ is of order $-1$, and $c_R^+=\exp (+\theta_R(x)p(x,\xi))$.  Indeed, the calculus gives $\Psi_{c_R}\Psi_{c_R^+}=I+\Psi_{e_R}$, where $e_R$ has order $-1$, and as an operator of order $0$, its seminorms are bounded by negative powers of $R$. (This follows from the following precise version of the composition result.  Suppose $a,b\in S^0$, then $\Psi_a\Psi_b=\Psi_c$, where $c(x,\xi)=a(x,\xi)b(x,\xi)+\sum_{|\gamma|=1} \int_0^1 r_{\gamma,\theta}(x,\xi)\, d\theta$, $r_{\gamma,\theta}(x,\xi)=\iint e^{iy\cdot \eta} \partial_\xi^{(\gamma)}a(x,\xi+\theta\eta)\partial_x^{(\gamma)}b(x+y,\xi) \, dy \, d\eta$, and the seminorms of $r_{\gamma,\theta}$ are bounded by products of the semi-norms of $\partial_\xi^{(\gamma)}a$, $\partial_x^{(\gamma)}b$, uniformly in $\theta\in [0,1]$.  See \cite{Ku})  Therefore, by Theorem \ref{T:6}, $I+\Psi_{e_R}$ is invertible, and its inverse $=\Psi_q$, $q\in S^0$, for $R$ large.  Thus, $\Psi_{c_R}\Psi_{c_R^+}\Psi_q=I$.  Note also that, since $e_R$ is of order $-1$, the symbol $q=1+$ (symbol of order $-1$).  Clearly we have $\Psi_{c_R}^{-1}=\Psi_{c_R^+}\Psi_q$.   We then have
\begin{equation}
\left\{
\begin{aligned}
\partial_tv &= i\Delta v + A_1u+A_0u+F \\
v\big|_{t=0} &= v_0
\end{aligned}
\right.
\end{equation}
where $v_0=\Psi_{c_R}u_0$, $F=\Psi_{c_R}f$.  We rewrite $A_1u = A_1\Psi_{c_R}^{-1}\Psi_{c_R}u=\tilde{A}_1\Psi_{c_R}u+A_0\Psi_{c_R}u$, where $\tilde{A}_1$ has symbol $a_1(x,\xi)c_R^+(x,\xi)$, and $A_0$ is of order $0$.  We also rewrite $A_0u=A_0\Psi_{c_R}^{-1}\Psi_{c_R}u=A_0\Psi_{c_R}u$, so that our equation becomes:
\begin{equation}
\left\{
\begin{aligned}
\partial_tv &= i\Delta v + \tilde{A}_1v+A_0v +F\\
v\big|_{t=0} &= v_0
\end{aligned}
\right.
\end{equation}
We will now prove the desired estimates for $v$, which in light of the invertibility of $\Psi_{c_R}$ and Theorem \ref{T:7} gives the estimate.  We consider the equation obtained by conjugating, multiplying the first one by $\bar{v}$, the second one by $v$, integrating both equations in $x$, and adding.  Note that $R$ is now fixed forever.  We then get, as in the first lecture:
\begin{align*}
\partial_t \int |v|^2 &= i\int [ \Delta v \bar{v} - \Delta \bar{v} v ] + 2 \text{Re}\int \tilde{A}_1v \cdot \bar{v} + 2 \text{Re}\int A_0v \cdot \bar{v} + 2\text{Re }\int F\cdot \bar{v} \\
&= 2\text{Re}\int \tilde{A}_1 v \cdot \bar{v} + 2 \text{Re }\int A_0 v \cdot \bar{v} + 2\text{Re}\int F\cdot \bar{v}
\end{align*}
Recall that $A_0$ is of order $0$, and hence $\left| 2\text{Re}\int A_0v \cdot \bar{v} \right| \leq C\|v\|_{L^2}^2$.  Moreover, recall that the symbol of $\tilde{A}_1$ is $a_1(x,\xi)\exp(+\theta_Rp)$, and that 
\begin{align*}
a_1(x,\xi) &\leq -c_0\theta_R(\xi)\lambda_m(x)|\xi|c_R(x,\xi)+ c_0^{-1}\theta_R(\xi)c_R(x,\xi) \\
&= -c_0 \theta_R(\xi)\lambda_m(x)|\xi|\exp (-\theta_Rp) + c_0^{-1}\theta_R(\xi)\exp(-\theta_Rp)
\end{align*}
so that
$$\tilde{a}_1(x,\xi)\leq -c_0\theta_R(\xi)\lambda_m(x)|\xi|+c_0^{-1}\theta_R(\xi)$$
Consider now the symbol $b(x,\xi)=\frac{c_0}{2}\lambda_m(x)\left< \xi \right>-c_0^{-1}$ which belongs to $S^1$.  Notice that $-b(x,\xi)-\tilde{a}_1(x,\xi) \geq 0$ for $|\xi|\geq R$.  We can therefore apply the sharp G\r{a}rding inequality (Theorem \ref{T:5}) to conclude that $\text{Re}\left< \Psi_{-b}v,v\right> \geq \text{Re}\langle \tilde{A}_1 v, v\rangle - C\|v\|_{L^2}^2$, and hence, using the form of $b$: 
$$\text{Re}\langle \tilde{A}_1v,v\rangle \leq C \|v\|_{L^2}^2-\text{Re}\left< \Psi_{\frac{c_0}{2}\lambda_m(x)\left<\xi\right>}v,v\right>$$
 and so 
$$\partial_t\int|v|^2 \leq 2 \left| \int F\cdot \bar{v} \right| + C \|v\|_{L^2}^2 - \text{Re} \left< \Psi_{\frac{c_0}{2}\lambda_m(x)\left<\xi \right>}v,v\right>$$
  Next we note that the calculus shows that, since 
$$c_0\lambda_m(x)\left<\xi\right>=c_0^{1/2}\lambda_m(x)^{1/2}\left<\xi\right>^{1/2}c_0^{1/2}\lambda_m(x)^{1/2}\left<\xi\right>^{1/2}$$ 
and $c_0^{1/2}\lambda_m(x)^{1/2}\left<\xi\right>^{1/2}$ is  a real valued symbol in $S^{1/2}$, we have 
$$\text{Re}\left< \Psi_{\frac{c_0}{2}\lambda_m(x)\left<\xi\right>}v,v\right> = \frac{1}{2}\|\psi_{c_0^{1/2}\lambda_m^{1/2}(x)\left< \xi\right>^{1/2}}v \|_{L^2}^2+O(\|v\|_{L^2}^2)$$
  Moreover, the first term equals $\frac{c_0}{2}\|J^{1/2}v\|_{L^2(\lambda_m(x)dx)}^2$, and so 
$$\partial_t \int |v|^2 + \frac{c_0}{2} \|J^{1/2}v\|_{L^2(\lambda_m(x)dx)}^2 \leq C\|v\|_{L^2}^2 + 2 \left| \int F\cdot \bar{v} \right|$$
The first estimate now follows immediately by integration in $t$, for $T$ small enough.  For the second one, just write 
$$\int F\cdot \bar{v} = \int J^{-1/2}F \overline{J^{1/2}v} = \int \lambda_m^{-1/2}(x)J^{-1/2}F \lambda_m^{1/2}(x)J^{1/2}\bar{v}$$
 and use Cauchy-Schwarz, and integrate in $t$.  The theorem follows by iterating in $T$.
\end{proof}

It remains to prove the lemma.
\begin{proof}[Proof of Lemma \ref{L:1}]
Define $f(t) =\int_0^t \lambda(|r|)\, dr$, and let $$\Phi(x)=(f(x_1), \ldots, f(x_n))$$ so that $f$ is smooth and bounded.  Let 
$$\Phi'_{\text{sym}}= \tfrac{1}{2}\left[ \partial_{x_j}\Phi_i+\partial_{x_i}\Phi_j\right]= 
\begin{pmatrix} 
\lambda(|x_1|) & 0              & \dots  & 0 \\ 
0              & \lambda(|x_2|) & \dots  & 0 \\
\vdots         & \vdots         & \ddots & \vdots \\
0              & 0              & \dots  & \lambda(|x_n|) 
\end{pmatrix}  
\geq \lambda(|x|)I$$
since $\lambda$ is radially decreasing.  Let $p(x,\xi) = \Phi(x) \cdot \frac{\xi}{\left<\xi \right>} \in S^0$.  Then 
$$2\xi \cdot \nabla_xp(x,\xi)= \frac{\xi}{\left< \xi \right>} \cdot 2\nabla_x \left[ \Phi(x) \cdot \xi \right] = \frac{2}{\left< \xi \right>} \Phi'_{\text{sym}}(x)\xi\cdot \xi \geq 2 \lambda(|x|) \frac{|\xi|^2}{\left< \xi \right>}$$
\end{proof}
Finally, we remark that similar arguments, using Theorem \ref{T:8}, its proof and Corollary \ref{C:1}, and choosing $\lambda \equiv 1$ on $Q_0$, $\text{supp }\lambda \subset 8 Q_0$, give:
\begin{theorem} \label{T:10}
Let $u$ solve 
\begin{equation*}
\left\{ 
\begin{aligned}
\partial_t u &= i\Delta u +f\\
u\big|_{t=0} &=u_0
\end{aligned}
\right. \quad \text{in }\mathbb{R}^n\times [0,T]
\end{equation*}
Then: 
$$\sup_{0<t<T} \| u(t) \|_{L^2} + \tv J^{1/2}u \tv_T \leq C
\begin{cases}
\|u_0\|_{L^2} + \|f\|_{L_t^1L_x^2} \\
\|u_0\|_{L^2} + \tv J^{-1/2}f \tv_T'
\end{cases}
$$
\end{theorem}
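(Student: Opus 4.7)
The proof is the local analogue of Theorem~\ref{T:9}: replace the global weight $\lambda_m$ by a smooth, compactly supported bump $\lambda$ with $\lambda(|x|)\equiv 1$ on a neighbourhood of $Q_0$ and $\mathrm{supp}\,\lambda(|\cdot|)\subset 8Q_0$, and assemble the resulting per-cube bounds using the $\Psi$DO-boundedness on the $\tv\cdot\tv_T$ and $\tv\cdot\tv'_T$ scales (Theorem~\ref{T:8} and Corollary~\ref{C:1}). For each $\mu\in\mathbb Z^n$ set $\lambda_\mu(x)=\lambda(|x-x_\mu|)$, $p_\mu(x,\xi)=p(x-x_\mu,\xi)$ where $p$ comes from Lemma~\ref{L:1} applied to $\lambda$, $c_R^\mu=\exp[-\theta_R(\xi)p_\mu(x,\xi)]\in S^0$, $v^\mu=\Psi_{c_R^\mu}u$, $F^\mu=\Psi_{c_R^\mu}f$. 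Because $x$-translation preserves all $S^0$-seminorms, every $\Psi$DO bound below is uniform in $\mu$.

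\textbf{Energy estimate.} Carrying out the commutator/sharp-G\aa{}rding/factorisation computation of Theorem~\ref{T:9} verbatim produces
\begin{equation*}
\partial_t\|v^\mu\|_{L^2}^2 + \tfrac{c_0}{2}\|J^{1/2}v^\mu\|_{L^2(\lambda_\mu\,dx)}^2 \leq C\|v^\mu\|_{L^2}^2 + 2\Bigl|\int F^\mu\overline{v^\mu}\,dx\Bigr|.
\end{equation*}
Integrating in $t$, using $\lambda_\mu\geq c>0$ on $Q_\mu$ and Gr\"onwall to absorb $C\|v^\mu\|_{L^2}^2$, gives
\begin{equation*}
\sup_{0<t<T}\|v^\mu(t)\|_{L^2}^2 + \|J^{1/2}v^\mu\|_{L^2(Q_\mu\times[0,T])}^2 \leq C_T\|u_0\|_{L^2}^2 + C_T\int_0^T\Bigl|\int F^\mu\overline{v^\mu}\Bigr|\,dt.
\end{equation*}
For the first variant, bound the forcing by $|\int F^\mu\overline{v^\mu}|\leq C\|f\|_{L^2}\|v^\mu\|_{L^2}$ (Theorem~\ref{T:1}), integrate to $C\|f\|_{L^1_tL^2_x}\sup_t\|v^\mu\|_{L^2}$, and absorb by AM--GM. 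For the second, insert $J^{-1/2}J^{1/2}$ and decompose over unit cubes, Cauchy--Schwarzing to obtain
\begin{equation*}
\int_0^T\Bigl|\int F^\mu\overline{v^\mu}\Bigr|\,dt\leq\tv J^{-1/2}F^\mu\tv'_T\cdot\tv J^{1/2}v^\mu\tv_T,
\end{equation*}
with $\tv J^{-1/2}F^\mu\tv'_T\leq C\tv J^{-1/2}f\tv'_T$ following from Corollary~\ref{C:1} applied to $\Psi_{c_R^\mu}$ together with the order-$(-1/2)$ commutator $[J^{-1/2},\Psi_{c_R^\mu}]$.

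\textbf{Return to $u$ and closure.} By Theorem~\ref{T:6}, $\Psi_{c_R^\mu}^{-1}\in\mathrm{Op}(S^0)$ with seminorms uniform in $\mu$; commuting $J^{1/2}$ past it modulo an order-$(-1/2)$ remainder and invoking Theorem~\ref{T:8} yields the uniform equivalences
\begin{equation*}
\tv J^{1/2}v^\mu\tv_T\lesssim\tv J^{1/2}u\tv_T+\tv u\tv_T,\quad \|J^{1/2}u\|_{L^2(Q_\mu\times[0,T])}\lesssim\tv J^{1/2}v^\mu\tv_T+\tv u\tv_T.
\end{equation*}
Taking $\sup_\mu$ in the per-cube bound for the second variant and applying $ab\leq\tfrac12a^2+\tfrac12b^2$ to $C\tv J^{-1/2}f\tv'_T\cdot\tv J^{1/2}u\tv_T$ absorbs the linear factor into the LHS, yielding $\tv J^{1/2}u\tv_T^2\lesssim\|u_0\|_{L^2}^2+\tv J^{-1/2}f\tv'^2_T$ modulo lower-order $\tv u\tv_T^2$ terms, which are in turn controlled by the $\sup_t\|u(t)\|_{L^2}$ bound proved first (by the same argument with the simpler forcing estimate).

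\textbf{Main obstacle.} The real difficulty lies in the reassembly from $v^\mu$ to $u$: since $\Psi_{c_R^\mu}^{-1}$ is \emph{non-local}, one cannot directly compare $\|J^{1/2}u\|_{L^2(Q_\mu)}$ with $\|J^{1/2}v^\mu\|_{L^2(Q_\mu)}$ for a single cube, so the single-cube estimate has to be traded, via Theorem~\ref{T:8}, for a $\tv\cdot\tv_T$-supremum that then must be absorbed. The second variant closes only because the forcing enters the estimate as the product $\tv J^{-1/2}f\tv'_T\cdot\tv J^{1/2}u\tv_T$, which is tamed by AM--GM; the linear entrance of $\tv J^{1/2}u\tv_T$ is what makes the whole scheme close without a smallness-of-$T$ argument on $f$.
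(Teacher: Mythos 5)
The reassembly step from $v^\mu$ back to $u$ does not close. Your per-$\mu$ energy estimate controls $\|J^{1/2}v^\mu\|_{L^2(Q_\mu\times[0,T])}^2$ — a norm over the \emph{single} cube $Q_\mu$ — but the conversion you propose, $\|J^{1/2}u\|_{L^2(Q_\mu\times[0,T])}\lesssim \tv J^{1/2}v^\mu\tv_T + \tv u\tv_T$, has $\tv J^{1/2}v^\mu\tv_T = \sup_\nu\|J^{1/2}v^\mu\|_{L^2(Q_\nu\times[0,T])}$ on the right: a supremum over \emph{all} cubes $\nu$ of the fixed $v^\mu$, which the $\mu$-th energy estimate says nothing about when $\nu\neq\mu$. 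Plugging in your first equivalence $\tv J^{1/2}v^\mu\tv_T\lesssim\tv J^{1/2}u\tv_T+\tv u\tv_T$ only produces $\|J^{1/2}u\|_{L^2(Q_\mu)}\lesssim\tv J^{1/2}u\tv_T+\tv u\tv_T$, which is a tautology, not a per-cube bound on $u$; taking $\sup_\mu$ and applying AM--GM cannot then yield $\tv J^{1/2}u\tv_T\lesssim\|u_0\|_{L^2}+\tv J^{-1/2}f\tv_T'$. In short, since $\Psi_{c_R^\mu}^{-1}$ is non-local, $J^{1/2}u\big|_{Q_\mu}$ depends on $J^{1/2}v^\mu$ on distant cubes, and the compactly supported weight gives you no information there.

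The fix is not to discard the weight but to keep the polynomially decaying one, exactly as in Theorem~\ref{T:9} and as the paper's appendix solution does: run the Doi--G\aa{}rding argument with $\lambda_m$ centred at $x_\mu$ (equivalently, translate $u_0,u,f$ by $-x_\mu$ and work at the origin), obtaining control on $\|J^{1/2}v^\mu\|_{L^2(\lambda_m(\cdot-x_\mu)\,dx\,dt)}$. Since $J^{1/2}\Psi_{c_R^\mu}^{-1}J^{-1/2}$ is an order-$0$ operator, Theorem~\ref{T:7} gives
$\|J^{1/2}u\|_{L^2(\lambda_m(\cdot-x_\mu)\,dx)}\lesssim\|J^{1/2}v^\mu\|_{L^2(\lambda_m(\cdot-x_\mu)\,dx)}$
with constants uniform in $\mu$; then $\lambda_m(\cdot-x_\mu)\gtrsim 1$ on $Q_\mu$ and the polynomial tails absorb the off-diagonal contributions. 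Now you may restrict to $Q_\mu$, take $\sup_\mu$, and the AM--GM absorption for the second variant proceeds just as you planned. The tails of $\lambda_m$ are precisely what makes the non-local gauge invertible on the weighted space, which is also why the proof of Theorem~\ref{T:8} goes through $L^2(\lambda_m\,dx)$; with a compactly supported bump, the weighted $\Psi$DO bound fails and there is no way to recover $u$ cube-by-cube from $v^\mu$. (A smooth compactly supported $\lambda$ also fails the hypothesis $|\partial_x^\alpha\lambda|\leq C_\alpha\lambda$ of Lemma~\ref{L:1} at the edge of its support, though this is a secondary point.)
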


\section*{Problems for Lecture \arabic{chapter}}
\begin{problems}
\item \label{P:1} A function $c(x,y,\xi)$ is called a ``compound symbol'' if it satisfies
$$|\partial_y^\gamma \partial_x^\beta \partial_\xi^\alpha c(x,y,\xi)| \leq C_{\alpha, \beta, \gamma}(1+|\xi|)^{m-|\alpha|}$$
To each such $c$, we associate the operator $\Psi_{[c]}$ given by
\begin{align*}
\Psi_{[c]} f(x) &= \int c(x,y,\xi) e^{i\xi\cdot (x-y)} f(y) \, dy d\xi \\
&= \lim_{\epsilon \to 0^+} \int c(x,y,\xi) \gamma(\epsilon y, \epsilon \xi) e^{i\xi \cdot(x-y)} f(y) dy \, d\xi
\end{align*}
$\gamma\in C_0^\infty(\mathbb{R}^n\times \mathbb{R}^n)$, $\gamma(0,0)=1$.  
\begin{enumerate}
\item Show $\Psi_{[c]}$ is well defined, $\Psi_{[c]}: \mathcal{S} \to \mathcal{S}$.
\item Show that $\exists \; a\in S^m$ such that $\Psi_a=\Psi_{[c]}$, moreover 
$$a(x,\xi) - \sum_{|\alpha|<N} \frac{i^{-|\alpha|}}
{\alpha!} \partial_\xi^\alpha \partial_y^\alpha c(x,y,\xi) \big|_{y=x} \in S^{m-N}$$
for all $N\geq 0$.
\end{enumerate}
\item Use Problem \chapterref{P:1}.\ref{P:1} to study $\Psi_a^\ast$, $a\in S^m$.
\item \underline{The G\r{a}rding inequality}:  Suppose $a\in S^m$, $\text{Re }a(x,\xi) \geq C|\xi|^m$ for $|\xi|$ large.  Then $\text{Re }\langle \Psi_a f , f \rangle \geq C_0 \|f\|_{H^{m/2}}^2 - C_1 \|f\|_{H^s}^2$ for any $s\in \mathbb{R}$.
\item Let $u$ solve
$$\left\{
\begin{aligned}
& \partial_t u = i\Delta u +f \\
& u\big|_{t=0} = u_0
\end{aligned}
\right.
$$
\begin{enumerate}
\item Assume $f=0$, $n=1$.  Show that
$$\|D_x^{1/2} u \|_{L_x^\infty L_t^2} \leq C\|u_0\|_{L^2}$$
(Use the Fourier transform).
\item Use the Fourier transform to show, for $n>1$, that
$$\sup_\alpha \|D_x^{1/2}u \|_{L^2(Q_\alpha\times (-\infty, +\infty))} \leq C \|u_0\|_{L^2}$$
\item Again use the Fourier transform to show, when $u_0=0$, and $n=1$, that
$$\|D_x u \|_{L_x^\infty L_t^2} \leq C\|f\|_{L_x^1L_t^2}$$
and when $n>1$
$$\sup_\alpha \|D_x u \|_{L^2(Q_\alpha\times (-\infty, +\infty))} \leq C\sum_\alpha \|f\|_{L^2(Q_\alpha\times (-\infty, +\infty))}$$
\end{enumerate}
\item Give the proof of Theorem \ref{T:10}, using $\Psi$DO.
\end{problems}

\lecture{The semilinear Schr\"{o}dinger equation}

We are now going to sketch the proof of the fact that those estimates already give non-trivial results for the semi-linear, constant coefficient Cauchy problem.  We need one more estimate:
\begin{equation*}
\left\{ 
\begin{aligned}
\partial_t u &= i \Delta u \\
u\big|_{t=0} &= u_0
\end{aligned}
\right.
\end{equation*}
$u(t)=S(t)u_0$.  Then
\begin{lemma} \label{L:MX}
For any $s>\frac{n}{2}+2$, we have 
$$\|S(t)u_0\|_{l_\mu^2(L^\infty(Q_\mu\times [0,T]))} \leq C_T\|u_0\|_{H^s}$$
For any integer $s> 2n+2$, we have,
$$\| S(t)u_0 \|_{l_\mu^1(L^\infty(Q_\mu\times [0,T]))} \leq C_T \sum_{|\alpha|\leq s} \| (1+|x|^2)^{n+1}\partial_x^\alpha u_0\|_{L^2}$$
\end{lemma}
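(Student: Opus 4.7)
The plan is to reduce both inequalities to Sobolev embedding combined with the $L^2$-isometry of $S(t)$ on each $H^r$, plus (for the second) a weight-propagation argument via the Galilean identity.

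For the first estimate, I would start from the $1$D Sobolev inequality in $t$, applied pointwise in $x$:
$$\sup_{t\in[0,T]}|u(x,t)|^2 \le C_T \|u(x,\cdot)\|_{L^2_t}\bigl(\|u(x,\cdot)\|_{L^2_t}+\|\partial_t u(x,\cdot)\|_{L^2_t}\bigr).$$
Taking the sup over $x\in Q_\mu$, I would dominate $\sup_{x\in Q_\mu}|f(x)|$ by $\|f\|_{H^{s_1}(\tilde Q_\mu)}$ (local Sobolev with $s_1>n/2$ integer, $\tilde Q_\mu$ a slightly enlarged cube), and use $\partial_t u=i\Delta u$ to trade each time derivative for two spatial derivatives. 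After absorbing via Cauchy-Schwarz, this yields
$$\|u\|_{L^\infty(Q_\mu\times[0,T])}^2 \le C_T \int_0^T \|u(s)\|_{H^{s_1+2}(\tilde Q_\mu)}^2\,ds.$$
Summing over $\mu$, using the bounded overlap of $\{\tilde Q_\mu\}$ and the conservation $\|u(s)\|_{H^{s_1+2}}=\|u_0\|_{H^{s_1+2}}$, gives the first estimate for any $s>s_1+2>n/2+2$.

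For the second estimate, I would apply Cauchy-Schwarz in $\mu$ with weight $\langle\mu\rangle^{n+1}$:
$$\sum_\mu \|u\|_{L^\infty(Q_\mu\times[0,T])} \le \Bigl(\sum_\mu \langle \mu\rangle^{-2(n+1)}\Bigr)^{\!1/2}\Bigl(\sum_\mu \langle \mu\rangle^{2(n+1)}\|u\|_{L^\infty(Q_\mu\times[0,T])}^2\Bigr)^{\!1/2}.$$
The first factor is finite since $2(n+1)>n$. For the second, I would rerun the first-estimate proof absorbing $\langle\mu\rangle^{n+1}\sim \langle x\rangle^{n+1}$ (valid on $\tilde Q_\mu$) into the $H^{s_1+2}$ norm, reducing the task to bounding $\|\langle x\rangle^{n+1}u(t)\|_{H^{s_1+2}}$ by a weighted Sobolev norm of $u_0$. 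Using that $[x_j,i\Delta]=-2i\partial_j$, the Galilean operator $\Gamma_j(t)=x_j+2it\partial_j$ commutes with $\partial_t-i\Delta$, so $\Gamma_j(t)u(t)=S(t)(x_j u_0)$ and hence
$$\|x_j u(t)\|_{L^2}\le \|x_j u_0\|_{L^2}+2T\|\partial_j u_0\|_{L^2}.$$
Iterating this identity (and commuting the remaining spatial derivatives past $S(t)$, which is free) produces
$$\|\langle x\rangle^{n+1}u(t)\|_{H^{s_1+2}} \le C_T \sum_{|\gamma|\le s_1+n+3} \|\langle x\rangle^{n+1}\partial^\gamma u_0\|_{L^2},$$
which is dominated by the right-hand side of the stated inequality for any integer $s>2n+2$.

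The main technical obstacle is precisely this weight-propagation step: multiplication by $x_j$ does \emph{not} commute with $i\Delta$, so weighted $L^2$ norms of $u(t)$ are not conserved by $S(t)$, and each additional power of $x$ must be traded for an extra spatial derivative of $u_0$ via the Galilean identity (equivalently, via Duhamel applied to the lower-order commutator $[x_j,i\Delta]=-2i\partial_j$). Keeping accurate book-keeping on this trade-off, and on the extra derivatives spent in the $1$D Sobolev-in-$t$ step, is what fixes the number of derivatives $s$ appearing on the right-hand side.
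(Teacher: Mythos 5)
Your proposal is correct and matches the paper's approach: for the first bound the paper also trades time derivatives for spatial ones via the equation after a local Sobolev embedding in $x$ plus a fundamental-theorem-of-calculus step in $t$ (you apply 1D Sobolev in $t$ first, then Sobolev in $x$; same mechanism, different order), and for the second bound the paper also uses a weighted Cauchy--Schwarz to pass from $\ell^1_\mu$ to weighted $\ell^2_\mu$, and the same Galilean commutation identity $x_jS(t)u_0=S(t)(x_ju_0)-2itS(t)(\partial_{x_j}u_0)$ to propagate powers of $x$ at the cost of extra derivatives on $u_0$.
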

\begin{proof}
For the first inequality, we just need the estimate $\sup_{0<t<T} \|u(-,t)\|_{H^s} \leq C \|u_0\|_{H^s}$ and the Sobolev embedding theorem:  For $s_0>n/2$, we have (assuming $s_0$ is an integer):
\begin{align*}
\|u\|_{L^\infty(Q_\mu\times [0,T])}^2 & \leq C\sup_{[0,T]} \|u(-,t)\|_{H^{s_0}(Q_\mu)}^2 \\
& \leq C_T \int_0^T \|u(-,t)\|^2_{H^{s_0}(Q_\mu)} + \partial_t \|u(-,t)\|_{H^{s_0}(Q_\mu)}^2 \, dt \\
& \leq C_T \int_0^T \biggl\{ 
\begin{aligned}[t]
\|u(-,t)\|_{H^{s_0}(Q_\mu)}^2 + \partial_t \int_{Q_\mu} |u(x,t)|^2 \, dx \\
+ \partial_t \int_{Q_\mu} |\partial_x^{s_0}u(x,t)|^2 \, dx \biggl\} \, dt 
\end{aligned}\\
& \leq C_T \int_0^T \mspace{-6mu} \int_{Q_\mu}|u|^2+|\partial_tu|^2+|\partial_t\partial_x^{s_0}u|^2 + |\partial_x^{s_0}u|^2 \, dx\,dt
\end{align*}
But since $\partial_t u=i\Delta u$, the result follows.  For the second part, we use the inequality $\|w\|_{L^1} \leq C \|(1+|x|^2)^{n+1/4}w\|_{L^2}$, and the identity $x_jS(t)u_0=S(t)(x_ju_0)-2itS(t)(\partial_{x_j}u_0)$ together with the above argument.  To check the identity, apply $(\partial_t-i\Delta)$ to both sides, and use the fact that they are equal at $t=0$.  
\end{proof}

We recall the estimates in Theorem \ref{T:10} (at the $H^s$ level)
\begin{equation*}
\left\{
\begin{aligned}
\partial_t u &= i\Delta u + f \\
u\big|_{t=0} &= u_0
\end{aligned}
\right.
\end{equation*}
Then:
\begin{equation}\label{E:100}
\sup_{0<t<T} \| u(t) \|_{H^s} + \tv J^{s+1/2}u \tv_T \leq C
\begin{cases}
\|u_0\|_{H^s} + \|f\|_{L_t^1H^s_x} \\
\|u_0\|_{H^s} + \tv J^{s-1/2}f \tv_T'
\end{cases}
\end{equation}
Let us consider the IVP
$$ \left\{
\begin{aligned}
\partial_t u &= i \Delta u + P(u, \bar{u}, \nabla_x u, \nabla_x \bar{u} ) \\
u\big|_{t=0} &= u_0
\end{aligned}
\right.$$
Let us assume that $P(z_1,z_2, \ldots, z_{2n+2})=\sum_{d\leq |\alpha|\leq \rho} a_\alpha z^\alpha$ with $\alpha \in \mathbb{N}^{2n+2}$.  We assume $a_{\alpha_0}\neq 0$, $|\alpha_0|=d$, and consider $d\geq 2$.
\begin{theorem} \label{T:S1}
Let $d\geq 3$.  Then $\exists \; \delta=\delta(P)>0$ such that $\forall \, u_0 \in H^s(\mathbb{R}^n)$, $s\geq s_0=n+2+\frac{1}{2}$, $\|u_0\|_{H^{s_0}}\leq \delta$, we have local well-posedness in $H^s$. 
\end{theorem}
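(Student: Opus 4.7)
The plan is to recast (IVP) as the Duhamel integral equation
$$u(t) = \Gamma u(t) := S(t) u_0 + \int_0^t S(t - \tau) P(u, \bar u, \nabla u, \nabla \bar u)(\tau)\, d\tau,$$
and to construct $u$ as a fixed point of $\Gamma$ in a Banach space that combines the three linear estimates proved so far: the energy identity, the local-smoothing estimate \eqref{E:100}, and the maximal-function bound of Lemma \ref{L:MX}. Since $P$ loses one full derivative, only the half-derivative gain of local smoothing together with the $\ell^2_\mu(L^\infty)$ pointwise control from Lemma \ref{L:MX} suffice to close the estimates; the assumption $d \geq 3$ enters by producing at least two ``spare'' factors that can be measured in the small $H^{s_0}$ norm, which is what permits a fixed-point iteration despite the derivative loss.

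For $s \geq s_0 = n + 2 + 1/2$, the space will be $Y_T^s$ with norm
$$\|v\|_{Y_T^s} = \sup_{[0,T]} \|v(t)\|_{H^s} + \tv J^{s + 1/2} v \tv_T + \|\nabla_x v\|_{\ell^2_\mu(L^\infty(Q_\mu \times [0,T]))},$$
and I will iterate inside the intersection of two balls, $B_{T,R}^s = \{v : \|v\|_{Y_T^s} \leq R\}$ and $B_{T,R_0}^{s_0} = \{v : \|v\|_{Y_T^{s_0}} \leq R_0\}$. The linear estimates \eqref{E:100} and Lemma \ref{L:MX} immediately give $\|S(t) u_0\|_{Y_T^s} \leq C_0\|u_0\|_{H^s}$ and
$$\Bigl\| \int_0^\cdot S(\cdot - \tau) F\, d\tau \Bigr\|_{Y_T^s} \leq C_0 \bigl(\tv J^{s-1/2} F \tv'_T + \|F\|_{L^1_t H^s_x}\bigr),$$
so the whole construction reduces to the nonlinear estimate
$$\tv J^{s-1/2} P(u, \bar u, \nabla u, \nabla \bar u) \tv'_T + \|P(\cdots)\|_{L^1_t H^s_x} \leq C\, \|u\|_{Y_T^{s_0}}^{d-1}\,\|u\|_{Y_T^s}. \qquad (\ast)$$
To prove $(\ast)$, expand $J^{s-1/2}$ applied to each monomial of $P$ of length $|\alpha| \geq d \geq 3$ by a fractional Leibniz rule. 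The extreme term places all $s-1/2$ derivatives on a single $\partial_j u$ factor, producing $J^{s+1/2} u$, which goes into $\tv \cdot \tv_T$; a second $\nabla u$ factor is paired with it via $\tv fg \tv'_T \leq \tv f \tv_T \|g\|_{\ell^2_\mu L^\infty}$ (Cauchy--Schwarz in $\mu$) and absorbed into $\|u\|_{Y_T^{s_0}}$ through Lemma \ref{L:MX}; the remaining $|\alpha| - 2 \geq d - 2 \geq 1$ factors are placed in $L^\infty_{t,x}$ via Sobolev embedding $\|u\|_{L^\infty} \leq C \sup_t \|u(t)\|_{H^{s_0}}$. Intermediate terms of the Leibniz expansion are handled identically, always retaining $d-1 \geq 2$ factors in the $s_0$-norm.

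Given $(\ast)$, set $R = 2 C_0 \|u_0\|_{H^s}$ and $R_0 = 2 C_0 \delta$, and choose $\delta = \delta(P) > 0$ so small that $C R_0^{d-1} \leq 1/4$. Then $\Gamma$ maps $B_{T,R}^s \cap B_{T,R_0}^{s_0}$ into itself for every $T > 0$ and contracts in the $Y_T^{s_0}$-metric, giving existence and uniqueness; persistence at regularity $s$ is automatic because $R$ depends linearly on $\|u_0\|_{H^s}$ while the smallness factor $R_0^{d-1}$ does not involve $\|u_0\|_{H^s}$. Continuous dependence $u_0 \in H^s \mapsto u \in C([0,T]; H^s)$ is obtained by the Bona--Smith regularization argument of Lecture 1, Step 4. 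The main obstacle is the term-by-term verification of $(\ast)$: one has to check, in every branch of the Leibniz expansion of $J^{s-1/2} P$, that no monomial is forced to carry two ``top'' derivatives and that at least two low-frequency factors always remain to be absorbed into the small $s_0$-norm. This is the precise point at which $d \geq 3$, together with the choice to measure smallness in $H^{s_0}$ rather than $H^s$, becomes essential.
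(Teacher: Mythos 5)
Your overall scheme — Duhamel plus a contraction in a norm built from the energy, local‑smoothing, and $\ell^2_\mu(L^\infty)$ maximal‑function estimates — is exactly the route taken in the paper (there $Z_T^a$ with $\lambda_1, \lambda_2, \lambda_3$), and your simultaneous use of the two regularities $s$ and $s_0$ is a clean way to phrase persistence at level $s$, so the strategy is sound. The smallness mechanism you invoke (no factor of $T$ in front of the term involving $\tv J^{s+1/2}u\tv_T$, hence one must use smallness of $\ell^2_\mu L^\infty$) is also the paper's.

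However, the central inequality you state in the derivation of $(\ast)$ is wrong as written, and it is precisely the step where $d\geq 3$ enters. You claim
$$\tv fg \tv'_T \leq \tv f \tv_T\, \|g\|_{\ell^2_\mu L^\infty}.$$
This is false: take $f=g$ spread out over $N$ unit cubes; then the left side grows like $N$ while the right side grows like $N^{1/2}$. The correct estimate is
$$\tv fg \tv'_T \leq \tv f \tv_T\, \|g\|_{\ell^1_\mu L^\infty},$$
since $\tv \cdot \tv'_T$ is an $\ell^1_\mu$ sum. To bound $\|g\|_{\ell^1_\mu L^\infty}$ when $g$ is a product of $|\alpha|-1$ factors of $\nabla u$, you must apply Cauchy--Schwarz over $\mu$, which requires placing \emph{two} of those factors into $\ell^2_\mu L^\infty$ (controlled via Lemma \ref{L:MX}) and the remaining $|\alpha|-3$ factors into $L^\infty_{t,x}$. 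The accounting is therefore one top factor (local smoothing) $+$ two spare factors ($\ell^2_\mu L^\infty$) $+$ $|\alpha|-3$ factors ($L^\infty_{t,x}$), which closes if and only if $|\alpha|-3 \geq 0$, i.e.\ $d\geq 3$. Your opening paragraph correctly says ``two spare factors,'' but the detailed paragraph pairs only one and leaves $|\alpha|-2$ for $L^\infty_{t,x}$, which is the off-by-one version of the argument. Once that line is corrected, $(\ast)$ holds (for $\|u\|_{Y_T^{s_0}} \leq 1$, say) and the contraction closes as in the paper.
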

Let $$\|f\|_{H^{s,j}(\mathbb{R}^n)}=\sum_{|\gamma|\leq s} \left( \int |\partial_x^\gamma f|^2(1+|x|^2)^{j/2} \, dx \right)^{1/2}$$
\begin{theorem}\label{T:S2}
Let $d=2$.  Then $\exists \; \delta=\delta(P)$ such that $\forall \; u_0 \in H^s(\mathbb{R}^n)\cap H^{2n+3,2n+2}(\mathbb{R}^n)=G_s$, $s\geq s_0=3n+4+\frac{1}{2}$, $\|u_0\|_{H^{s_0}}+\|u_0\|_{H^{2n+3,2n+2}} \leq \delta$, we have local well-posedness in $G_s$.
\end{theorem}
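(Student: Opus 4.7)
\textbf{Proof proposal for Theorem \ref{T:S2}.}

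The plan is to solve the Cauchy problem by a Banach fixed-point argument applied to the Duhamel operator
\[
\Lambda u(t) = S(t)u_0 + \int_0^t S(t-t')\,P\bigl(u,\bar{u},\nabla_x u,\nabla_x\bar{u}\bigr)(t')\,dt',
\]
on a ball in a space $X_T$ whose norm simultaneously exploits the local-smoothing estimate \eqref{E:100} and both maximal-function estimates of Lemma \ref{L:MX}. The decisive difference from the case $d\geq 3$ treated in Theorem \ref{T:S1} is that a quadratic nonlinearity leaves only one factor available to carry smallness, so the remaining factor must be absorbed via a \emph{summable} $L^\infty$ bound rather than a bounded one. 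This is exactly what the $l^1_\mu L^\infty$ part of Lemma \ref{L:MX} provides, at the price of weighted initial data, which explains the role of $H^{2n+3,2n+2}$ in the hypothesis.

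Concretely, let
\[
\Phi_T(u) = \sup_{[0,T]}\!\bigl(\|u(t)\|_{H^s}+\|u(t)\|_{H^{2n+3,2n+2}}\bigr) + \tv J^{s+1/2}u\tv_T + \|u\|_{l^1_\mu L^\infty(Q_\mu\times[0,T])} + \|\nabla_x u\|_{l^1_\mu L^\infty(Q_\mu\times[0,T])},
\]
and let $X_T(M)$ be the closed ball of radius $M$ in this norm. Using \eqref{E:100} and applying Lemma \ref{L:MX} to both $u_0$ and $\nabla_x u_0$ one obtains
\[
\Phi_T\bigl(S(\cdot)u_0\bigr)\leq C_T\bigl(\|u_0\|_{H^s}+\|u_0\|_{H^{2n+3,2n+2}}\bigr).
\]
The Duhamel contribution to the first two pieces of $\Phi_T(\Lambda u)$ is then controlled by \eqref{E:100} once we bound $\tv J^{s-1/2}P\tv_T'$, while the last two pieces are controlled by Lemma \ref{L:MX} applied to $P(u,\bar u,\nabla u,\nabla\bar u)$ and then integrated in $t'$, provided $P$ lies in the appropriate weighted Sobolev norm with constant proportional to $\Phi_T(u)^2$.

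The heart of the argument is the estimate on a generic degree-two monomial $v\cdot w$ with $v,w\in\{u,\bar u,\partial_{x_j}u,\partial_{x_j}\bar u\}$. Using fractional Leibniz (Kato--Ponce) to place $s-1/2$ derivatives onto the factor that can absorb them into the smoothing norm and pairing the remaining factor with an $l^1_\mu L^\infty$ maximal function,
\begin{align*}
\tv J^{s-1/2}(vw)\tv_T'
&\lesssim \sum_\mu \|J^{s+1/2}u\|_{L^2(Q_\mu\times[0,T])}\,\|\nabla_x u\|_{L^\infty(Q_\mu\times[0,T])} + (\text{lower order}) \\
&\lesssim \tv J^{s+1/2}u\tv_T\cdot \|\nabla_x u\|_{l^1_\mu L^\infty(Q_\mu\times[0,T])} \lesssim \Phi_T(u)^2.
\end{align*}
Higher-degree monomials yield $\Phi_T(u)^k$, $2\leq k\leq\rho$, after pulling extra copies of $u$ out in $L^\infty$ via the embedding $H^{s_0}\hookrightarrow L^\infty$, which forces $s_0$ to be large. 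Summing,
\[
\Phi_T(\Lambda u)\leq C_T\bigl(\|u_0\|_{H^s}+\|u_0\|_{H^{2n+3,2n+2}}\bigr)+C_T\sum_{k=2}^{\rho}\Phi_T(u)^k,
\]
and for $\delta$ small and $M=2C_T\delta$ this forces $\Lambda\colon X_T(M)\to X_T(M)$. A differenced version of the same estimate (replacing one factor at a time by its difference) gives a $\tfrac12$-contraction, hence a unique fixed point $u\in X_T(M)$.

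The main obstacle is propagating the weighted $H^{2n+3,2n+2}$ and the two $l^1_\mu L^\infty$ components of $\Phi_T$ through the Duhamel integral: the commutator $[x_j,i\Delta]=2\partial_{x_j}$ forces a loss of one derivative every time a weight is produced, so closing the nonlinear estimate in weighted Sobolev norms requires both the high total regularity $s_0=3n+4+\tfrac12$ and the weight exponent $2n+2$ demanded by the hypothesis, in order that every term in the Leibniz expansion of $P$ in those norms be dominated by $\Phi_T(u)^2$ (and by $\Phi_T(u)^k$ for the higher monomials). Uniqueness is immediate from the contraction, and continuous dependence $G_s\ni u_0\mapsto u\in C([0,T];G_s)$ is obtained by a Bona--Smith regularization exactly as in Step~4 of the first lecture, since the contraction estimate is stable under perturbations of $u_0$ in $G_s$.
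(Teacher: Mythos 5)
Your proposal matches the paper's approach. The paper gives only a brief sketch of Theorem~\ref{T:S2}, pointing out the decisive modification: when $d=2$ the estimate for $\lambda_2^T$ produces
$\tv \partial_x^\beta \partial_xv\cdot \partial_x v \tv_T' \leq \tv \partial_x^\beta \partial_xv \tv_T \cdot \sum_\mu \|\nabla v\|_{L^\infty(Q_\mu\times[0,T])}$,
so that the $l^2_\mu L^\infty$ maximal norm of Theorem~\ref{T:S1} must be upgraded to $l^1_\mu L^\infty$, and the second half of Lemma~\ref{L:MX} is then invoked (requiring weighted Sobolev data). Your proof reconstructs exactly this: the added $\|\cdot\|_{H^{2n+3,2n+2}}$ and $l^1_\mu L^\infty$ components in $\Phi_T$, the pairing of $\tv J^{s+1/2}u\tv_T$ with $\|\nabla_x u\|_{l^1_\mu L^\infty}$ in the quadratic estimate, the propagation of the weight through Duhamel via the commutator $[x_j, i\Delta]=2\partial_{x_j}$, and the smallness needed because the $l^1_\mu L^\infty$ factor carries no power of $T$. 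One small caution: in the localized estimate you apply the nonlocal operator $J^{s-1/2}$ to the product and then localize to $Q_\mu$; the paper's sketch of Theorem~\ref{T:S1} sidesteps the attendant commutator issue by differentiating with a local operator $\partial_x^\beta$ (with $|\beta|=s_0-\tfrac12$) before localizing and only using $J^{1/2}$ for the global remainder. Your version is morally the same and can be made rigorous with standard commutator estimates, but you should spell that out since it is precisely the kind of technicality the cube decomposition is designed around.
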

\begin{proof}[Sketch of proof of Theorem \ref{T:S1}] 
For simplicity, let $$P(u,\bar{u},\nabla_x u, \nabla_x \bar{u})= \frac{\partial u}{\partial x_l} \frac{\partial u}{\partial x_j} \frac{\partial u}{\partial x_k}$$  Let us take $s_0=n+4+\frac{1}{2}$.  For fixed $u_0\in H^{s_0}(\mathbb{R}^n)$, $\|u_0\|_{H^{s_0}}\leq \delta$, $\delta$ to be determined, consider, for $v$ fixed, a solution to the inhomogeneous linear problem
\begin{equation*}
\left\{
\begin{aligned}
\partial_t u &= i \Delta u + \frac{\partial v}{\partial x_l}\cdot \frac{\partial v}{\partial x_j}\cdot \frac{\partial v}{\partial x_k} \\
u\big|_{t=0} &= u_0
\end{aligned}
\right.
\end{equation*}
for $v\in Z^a_T= \{ v\colon \mathbb{R}^n\times [0,T]\to \mathbb{C} \; \mid \; \lambda_j^T(v) \leq a$, $j=1,2,3$ \} with $T\leq 1$, where 
\begin{align*}
\lambda_1^T(v) &= \sup_{0<t<T} \|v(t) \|_{H^{s_0}} \\
\lambda_2^T(v) &= \sum_{|\beta|=s_0+\frac{1}{2}} \tv \partial_x^\beta v \tv_T \\
\lambda_3^T(v) &= \| \nabla v \|_{l_\mu^2(L^\infty(Q_\mu\times [0,T]))}
\end{align*}
We show that, for $\delta$ small enough, $T=T(\delta)$, $a=a(\delta)$, if $v\in Z_T^a$, so does $u$, and $v\mapsto \Phi(v)=u$ is a contraction in $Z_T^a$, so we find $u$ such that $u=\Phi(u)$ and our non-linear problem is solved.  By Duhamel's formula (variation of the constants),
$$u=S(t)u_0 + \int_0^t S(t-t')[ \partial_{x_l}v \cdot \partial_{x_j}v \cdot \partial_{x_k}v ] \, dt'$$
We first estimate $\lambda_2^T(u)$.  Note that if $|\beta|=s_0-\frac{1}{2}$, then 
\begin{align*}
\partial_x^\beta(\partial_{x_l}v\cdot \partial_{x_j}v \cdot \partial_{x_k}v) &= \partial_x^\beta \partial_{x_l}v \cdot \partial_{x_j}v \cdot \partial_{x_k}v +  \partial_{x_l}v \cdot \partial_x^\beta \partial_{x_j}v \cdot \partial_{x_k}v \\
& \qquad  + \partial_{x_l}v \cdot \partial_{x_j}v \cdot \partial_x^\beta \partial_{x_k}v + R((\partial_x^\gamma v)_{1\leq|\gamma|\leq s_0-1/2})
\end{align*}
We plug this into the Duhamel formula, and we use the first estimate in \eqref{E:100} for $R$, the second one for the main terms, to get:
\begin{align*}
\lambda_2^T(v) &\leq C\|u_0\|_{H^{s_0}}+C\sum_{|\beta|=s_0-1/2} \tv \partial_x^\beta \partial_{x_l} v \cdot \partial_{x_j}v\cdot \partial_{x_k}v \tv_T'+ \cdots  + C \| J^{1/2}R \|_{L_T^1L_x^2}  \\
& \leq 
\begin{aligned}[t]
&C\|u_0\|_{H^{s_0}}+ C \sum_{|\beta|=s_0+1/2} \sup_\mu \|\partial_x^\beta v \|_{L^2( Q_\mu\times [0,T])}\sum_\mu \| |\nabla v |^2 \|_{L^\infty(Q_\mu\times [0,T])}\\
&+CT\sup_{[0,T]}\|v\|_{H^{s_0}}^3 
\end{aligned}\\
& \leq C \|u_0\|_{H^s_0}+CT\sup_{[0,T]} \|v(t)\|_{H^{s_0}}^3 + C \lambda_2^T(v)(\lambda_3^T)^2(v)
\end{align*}
\textit{Note:} To handle $\|J^{1/2}R\|_{L^1_TL_x^2}$, we note that $s_0=n+4+\frac{1}{2}$, $|\beta|=s_0-\frac{1}{2}=n+4$, and 
$$R=\sum_{\substack{ \alpha \leq \gamma\leq \beta\\ |\alpha|\geq 1 }} C_{\alpha\gamma\beta} \partial_x^{\beta-\gamma} (\partial_xv)\partial_x^{\gamma-\alpha}(\partial_xv)\partial_x^\alpha(\partial_x v)$$
where $\partial_x$ refers to $\partial_{x_l}$, $\partial_{x_j}$, or $\partial_{x_k}$.  Note that, if $|\beta-\gamma|\geq \frac{|\beta|}{2}$, since $|\beta-\gamma|+|\gamma-\alpha|+|\alpha|=|\beta|$, we have $|\gamma|\leq \frac{|\beta|}{2}$, $|\alpha| \leq \frac{|\beta|}{2}$, and hence $J^{1/2}\partial_x^{\gamma-\alpha}(\partial_x v)  \in L^\infty$, since $|\beta|-\frac{|\beta|}{2}-\frac{3}{2}=\frac{n}{2}+\frac{1}{2}>\frac{n}{2}$.  A similar remark applies to $\partial_x^\alpha(\partial_x v)$.  Thus, for these terms, we use 
\begin{align*}
\hspace{0.5in}&\hspace{-0.5in}\|J^{1/2}(\partial_x^{\beta-\gamma}(\partial_xv))\cdot \partial_x^{\gamma-\alpha}(\partial_x v) \cdot \partial_x^\alpha(\partial_x v) \|_{L^2} \\
& \leq 
\begin{aligned}[t] 
&\|J^{1/2}(\partial_x^{\beta-\gamma}(\partial_x v)) \|_{L^2} \| \partial_x^{\gamma-\alpha}(\partial_x v) \partial_x^\alpha(\partial_x v) \|_{L^\infty} \\
&+ \|\partial_x^{\beta-\gamma}(\partial_x v)\|_{L^2}\|J^{1/2}(\partial_x^{\gamma-\alpha}(\partial_xv)\cdot \partial_x^\alpha(\partial_x v)\|_{L^\infty}
\end{aligned}
\end{align*}
If on the other hand, $|\beta-\gamma|\leq \frac{|\beta|}{2}$, since we must have either $|\gamma-\alpha|\leq \frac{|\beta|}{2}$ or $|\alpha|\leq \frac{|\beta|}{2}$, we proceed in the same way.

$\lambda_1^T$ is estimated similarly, while for $\lambda_3^T$, we use the maximal function estimate, and Minkowskii's integral inequality, to obtain
$$\lambda_3^T(u) \leq C \|u_0\|_{H^{\frac{n}{2}+3+\frac{1}{2}}} + CT^{1/2}\lambda_1^T(v)^3$$
(We have also used that $H^s$, $s>n/2$, is an algebra, and that $\|J^{1/2}(f\cdot g)\|_{L^2} \leq \|f\|_{L^\infty}\|J^{1/2}g\|_{L^2}+\|g\|_{L^\infty}\|J^{1/2}f\|_{L^2}$).  Gathering terms, we get, with $\lambda^T(u)=\max_{j=1,2,3}\lambda_j^T(u)$ that 
$$\lambda^T(u) \leq C\delta + C\lambda^T(v)^3$$ and $$\lambda^T(u_1-u_2)\leq \lambda^T(v_1-v_2)[\lambda^T(v_1)^2+\lambda^T(v_2)^2]$$
  From this the desired result follows.  To see why we need the small norm, notice that in the estimate for $\lambda_2^T(u_1-u_2)$, we will get a term of the form $(\lambda_3^T(v_1)^2+\lambda_3^T(v_2)^2)\lambda_2^T(v_1-v_2)$, with no factor of $T$ in front.  To have a contraction, we need $\lambda_3^T(v_i)=\|\nabla v_i \|_{l_\mu^2(L^\infty(Q_\mu\times [0,T]))}$ small, which on $v=S(t)u_0$ forces small data.
\end{proof}

To understand the result for $d=2$ (Theorem \ref{T:S2}), let us take $$P(u,\bar{u},\nabla_xu,\nabla_x \bar{u})=|\nabla u|^2$$ for instance, and let's go to the estimate corresponding to $\lambda_2^T$ in the previous proof.  We get now ($|\beta|=s_0-\frac{1}{2}$) $\tv \partial_x^\beta \partial_xv\cdot \partial_x v \tv_T'$,
which is controlled by $\tv \partial_x^\beta \partial_x v \tv_T \cdot l_\mu^1( \|\nabla v \|_{L^\infty(Q_\mu \times [0,T])} )$ so that we need to control the $L^1$ norm of the maximal function.  This leads us to the weights.   This might seem at first an artifice of the proof.  Here it is useful to recall that when one obtains the solution by the contraction principle, the mapping $u_0\mapsto u$ is not only continuous, but in fact real analytic.  We now have the following result: consider when $n=1$,
\begin{equation} \label{E:9}
\left\{ 
\begin{aligned}
\partial_t u &= i\partial_x^2u+u\partial_x u  \\
u\big|_{t=0} &= u_0
\end{aligned}
\right.
\end{equation}
\begin{theorem}[Molinet-Saut-Tzvetkov \cite{MoSaTz}] \label{T:MST}
Fix $s\in \mathbb{R}$.  Then there does not exist a $T>0$ such that  \eqref{E:9} admits a unique local solution in $[0,T]$ such that the mapping $u\big|_{t=0} \mapsto u(t)$ is $C^2$ differentiable at $0$ from $H^s(\mathbb{R})$ to $H^s(\mathbb{R})$.
\end{theorem}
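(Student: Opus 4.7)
I would follow the classical Molinet--Saut--Tzvetkov obstruction-to-analyticity strategy, arguing by contradiction. Suppose for some $s \in \mathbb{R}$ and $T > 0$ there is a local well-posedness theory on $[0, T]$ whose flow map $\Phi_t : u_0 \mapsto u(t)$ is $C^2$ at the origin from $H^s(\mathbb{R})$ to $H^s(\mathbb{R})$ for each $t \in (0, T]$. Uniqueness forces $\Phi_t(\lambda u_0)$ to coincide with the solution of \eqref{E:9} with data $\lambda u_0$, so iterating Duhamel,
\[
\Phi_t(\lambda u_0) = \lambda\, S(t) u_0 + \int_0^t S(t-t')\,\Phi_{t'}(\lambda u_0)\, \partial_x \Phi_{t'}(\lambda u_0)\, dt',
\]
and collecting terms of order $\lambda^2$ identifies the second-order Taylor coefficient of $\lambda \mapsto \Phi_t(\lambda u_0)$ as
\[
B[u_0](t) := \int_0^t S(t-t')\, S(t')u_0\, \partial_x S(t')u_0\, dt' \;=\; \tfrac{1}{2}\int_0^t S(t-t')\, \partial_x \bigl(S(t')u_0\bigr)^2\, dt'.
\]
The $C^2$ assumption then forces the quadratic bound $\|B[u_0](t)\|_{H^s(\mathbb{R})} \leq C_T \|u_0\|_{H^s(\mathbb{R})}^2$ for all $u_0 \in H^s$ and $t \in (0, T]$, which I will contradict.

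Next, I would move to the Fourier side. Using $\widehat{S(t)f}(\xi) = e^{it\xi^2}\hat f(\xi)$ and $u\partial_x u = \tfrac{1}{2}\partial_x(u^2)$, a direct computation gives
\[
\widehat{B[u_0]}(\xi, t) \;=\; \tfrac{i\xi}{2}\, e^{it\xi^2}\int_{\mathbb{R}} \hat u_0(\xi_1)\, \hat u_0(\xi - \xi_1)\, \frac{e^{it\phi(\xi, \xi_1)} - 1}{i\phi(\xi,\xi_1)}\, d\xi_1, \qquad \phi(\xi,\xi_1) := 2\xi_1(\xi_1 - \xi).
\]
The resonance function $\phi$ vanishes on the lines $\xi_1 = 0$ and $\xi_1 = \xi$: if one of the two interacting frequencies is near zero, the time-integration factor $\frac{e^{it\phi}-1}{i\phi}$ does not decay, and no dispersive gain is available. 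This is the resonance to exploit.

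For $N \gg 1$ I would then choose
\[
\hat u_0^N(\xi) \;:=\; N^{-s}\chi_{I_N}(\xi) + N^{1/2}\chi_{J_N}(\xi),\qquad I_N := [N, N+1],\quad J_N := [N^{-1}, 2N^{-1}],
\]
for which a direct check yields $\|u_0^N\|_{H^s(\mathbb{R})} \asymp 1$ uniformly in $N$ (each piece has $H^s$ mass $\asymp 1$). The Fourier support of $\widehat{B[u_0^N]}(\cdot, t)$ splits as the disjoint union $(J_N + J_N) \cup (I_N + J_N) \cup (I_N + I_N)$; on the cross band $\xi \in I_N + J_N \subset [N, N+2]$, the inner integration is over $\xi_1 \in J_N$, where $\phi(\xi,\xi_1) = -2N\xi_1 + O(1)$ lies in a fixed interval bounded away from $0$. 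Selecting any $t_0 \in (0, T]$ outside the discrete zero set of $\phi \mapsto e^{it_0\phi} - 1$ on that interval yields $\bigl|\tfrac{e^{it_0\phi}-1}{i\phi}\bigr| \geq c(t_0) > 0$ uniformly on $J_N$, whence
\[
|\widehat{B[u_0^N]}(\xi, t_0)| \;\gtrsim\; N \cdot N^{-s + 1/2} \cdot |J_N| \cdot c(t_0) \;\asymp\; N^{-s+1/2}
\]
on an interval of length $\asymp 1$ in $\{\xi \asymp N\}$. Consequently $\|B[u_0^N](t_0)\|_{H^s(\mathbb{R})}^2 \gtrsim N^{-2s+1}\cdot N^{2s} = N \to \infty$, contradicting the quadratic bound above.

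The hard part is the scaling. The prefactors $(N^{-s}, N^{1/2})$ are tuned precisely so that both Fourier pieces carry $H^s$ mass $\asymp 1$ for \emph{every} $s \in \mathbb{R}$ simultaneously, and the width $N^{-1}$ of $J_N$ is tuned so that $\phi$ is of order one on the cross band --- rather than of order $N^2$ as on $I_N + I_N$ or of order $N^{-2}$ as on $J_N + J_N$. One then needs to check that the contributions at $\xi \asymp 2N$ and $\xi \asymp N^{-1}$ cannot cancel the cross-band blowup (they cannot: all three Fourier bands are pairwise disjoint, so they enter $\|B[u_0^N]\|_{H^s}^2$ additively), and that the zero set of $\phi \mapsto e^{it_0\phi} - 1$ on the relevant interval can be avoided by a suitable choice of $t_0 \in (0, T]$; both are routine, but the $s$-uniform balance of the scaling is the key idea.
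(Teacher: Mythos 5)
Your approach is the same Molinet--Saut--Tzvetkov/Bourgain near-resonance construction that the paper sketches: take a two-bump initial datum with unit $H^s$ mass, isolate the cross-frequency interaction where the resonance function $\phi(\xi,\xi_1)=2\xi_1(\xi_1-\xi)$ stays bounded, and show the second Duhamel iterate $B[u_0]$ grows unboundedly in $H^s$ relative to $\|u_0\|_{H^s}^2$. The identification of the second-order Taylor coefficient via $\partial_\gamma^2\big|_{\gamma=0}$, the Fourier-side formula for $\widehat{B[u_0]}$, and the frequency-support disjointness observation are all correct and match the paper.

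Where you differ from the paper is the scaling of the low-frequency bump, and this introduces a genuine gap. The paper uses two free parameters: the high bump sits at frequency $N$, and both bumps have width $\alpha$, with the constraint $\alpha N\ll1$ imposed precisely so that $\phi\asymp\alpha N\ll1$ on the cross band; then $\frac{e^{it\phi}-1}{i\phi}\to t$ uniformly and the $\xi_1$-integral has no oscillation to worry about. You instead fix the low bump at the critical width $N^{-1}$, which pins $\phi$ to an interval of length $\asymp2$ (bounded but not small) as $\xi_1$ sweeps $J_N$, because $\partial_{\xi_1}\phi\approx-2N$ and $|J_N|=N^{-1}$. At this point your claim that a \emph{pointwise} bound $\bigl|\frac{e^{it_0\phi}-1}{i\phi}\bigr|\geq c(t_0)$ on $J_N$ yields $|\widehat{B[u_0^N]}(\xi,t_0)|\gtrsim N^{-s+1/2}|J_N|c(t_0)$ is not justified: since $\hat u_0$ is nonnegative, the $\xi_1$-integral over $J_N$ has the \emph{complex-valued} kernel $\frac{e^{it_0\phi}-1}{i\phi}$, which traces a nontrivial arc as $\phi$ varies by $O(1)$, so the pointwise lower bound on the modulus does not control the modulus of the integral --- cancellation is possible. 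Excluding the discrete zero set of $e^{it_0\phi}-1$ does not address this. The fix is easy but must be stated: take $t_0$ small (which is permissible, since $C^2$-regularity at the origin is assumed for all $t\in(0,T]$, and $c(t_0)\asymp t_0$ is still fixed as $N\to\infty$), so that $t_0|\phi|<\pi/2$ on the cross band and the real part $\mathrm{Re}\,\frac{e^{it_0\phi}-1}{i\phi}=\frac{\sin(t_0\phi)}{\phi}\geq\frac{2t_0}{\pi}>0$ has a definite sign; then the integral is bounded below by the integral of the real part. Alternatively, revert to the paper's two-parameter scaling, which sidesteps the oscillation issue entirely by keeping $\phi\ll1$. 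Everything else in your argument (the $H^s$-mass normalization valid for all $s$, the disjointness of the three bands, the final $\gtrsim N$ blowup) is sound.
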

\begin{proof}
Consider 
\begin{equation}
\left\{
\begin{aligned}
\partial_t u &= i \partial_x^2 u + u \partial_x u \\
u\big|_{t=0} &= \gamma u_0, \quad \gamma <<1, \quad u_0 \in H^s(\mathbb{R})
\end{aligned}
\right.
\end{equation}
If $u(\gamma,x,t)$ is a local solution and the flow map is $C^2$ at the origin, 
$$\frac{\partial^2 u}{\partial \gamma^2}(0,t,x) = -2 \int_0^t S(t-t')[S(t')u_0S(t')\partial_x u_0 ] , dt'$$
The assumption of $C^2$ regularity gives
$$\left\| \int_0^t S(t-t')[S(t')u_0S(t')\partial_x u_0] \, dt' \right\|_{H^s} \leq C \|u_0\|_{H^s}^2$$
One then shows directly that this fails for any $s$.  (One chooses $\hat{u}_0(\xi)=\alpha^{-1/2}\chi_{I_1}(\xi)+\alpha^{-1/2}N^{-s}\chi_{I_2}(\xi)$ where $0<\alpha<<1$, $N>>1$, $I_1=[\frac{\alpha}{2},\alpha]$, $I_2=[N,N+\alpha]$, and uses 
\begin{equation*}
\begin{split} \hspace*{0.3in} & \hspace*{-0.3in} \int_0^t S(t-t')[S(t')u_0S(t')\partial_xu_0] \,dt' \\
&= c \int_{\mathbb{R}^2} e^{i(x\xi+tp(\xi))} \xi \hat{u}_0(\xi_1)\hat{u}_0(\xi-\xi_1) \frac{ e^{it[p(\xi_1)+p(\xi-\xi_1)-p(\xi)]}-1}{p(\xi_1)+p(\xi-\xi_1)-p(\xi)} \, d\xi \, d\xi_1
\end{split}
\end{equation*}
where $p(\xi)=\xi^2$). 
\end{proof}

How does one remove the smallness?  Let us go back to the equation that we studied, and rewrite it in the following way:
\begin{equation*}
\left\{
\begin{aligned}
\frac{\partial u}{\partial t} &= i\Delta u + \partial_{x_l}u_0 \partial_{x_k}u_0 \partial_{x_k}u - \underbrace{[\partial_{x_l}u_0 \partial_{x_j}u_0 - \partial_{x_l} u \partial_{x_j} u]}_{\text{ small near $t=0$}}\partial_{x_k} u\\
u\big|_{t=0} &= u_0
\end{aligned}
\right.
\end{equation*}
Consider the new linear equation
$$
\left\{ 
\begin{aligned}
\partial_t w &= i\Delta w + [ \partial_{x_l}u_0 \partial_{x_j} u_0 ] \partial_{x_k} w + f \\
w\big|_{t=0} &= w_0
\end{aligned}
\right.
$$
Suppose that we could prove, for suitably good $u_0$, the same estimates as before.  Then we would be done.  This actually works (Kenig, Ponce, Vega \cite{KPV98}).  We are thus lead to studying linear problems of the form
\begin{equation} \label{E:10}
\left\{
\begin{aligned}
\partial_t u &= i \Delta u + \vec{b}_1(x)\cdot \nabla u + \vec{b}_2(x)\cdot \nabla \bar{u} + c_1(x)u + c_2(x)\bar{u} + f \\
u\big|_{t=0} &= u_0
\end{aligned}
\right.
\end{equation}
and to prove under suitable assumptions on $\vec{b}_1$, $\vec{b}_2$, $c_1$, $c_2$, the estimate 
$$\sup_{|t|\leq T} \|u(t)\|_{H^s} + \tv J^{s+1/2}u\tv_T \leq C_T 
\begin{cases}
\|u_0\|_{H^s} + \tv J^{s-1/2}f \tv_T' \\
\|u_0\|_{H^s} + \| J^s f \|_{L^1_TL_x^2}
\end{cases}$$
I will next make some remarks about \eqref{E:10} when $\vec{b}_2\equiv 0$, $c_1\equiv 0$, $c_2 \equiv 0$.  The first point is that there exists a very revealing necessary condition for the estimate
\begin{equation}\label{E:11}
\sup_{|t|\leq T} \|u(t) \|_{L^2} \leq C_T \{ \|u_0\|_{L^2} + \|f\|_{L^1_TL_x^2} \}
\end{equation}
for solutions to 
$$
\left\{
\begin{aligned}
\partial_t u &= i \Delta u + \vec{b}_1(x)\cdot \nabla u + f \\
u\big|_{t=0} &= u_0 
\end{aligned}
\right.$$
This was discovered by Takeuchi \cite{Ta}.  It is the following: We must have, for all $\omega \in S^{n-1}$, $(x,t)\in \mathbb{R}^n\times \mathbb{R}$
\begin{equation}\label{E:12}
 \left| \text{Im} \int_0^t \vec{b}_1(x+s\omega) \cdot \omega \, ds \right| \leq C
\end{equation}
I will now present a proof of this, due to Mizohata \cite{Mi}.  Let us first explain the condition when $n=1$.  In this case, the equation becomes
$$\left\{
\begin{aligned}
\partial_t u &= i \partial_x^2 u + b_1(x)\partial_x u + f\\
u\big|_{t=0} &= u_0
\end{aligned}
\right.$$
Let now $v(x,t) = p(x)u$, where $p$ will be chosen.  Since $p(x) \partial_x^2 u = \partial_x ^2 (p(x) u) - 2p'(x)\partial_x u - p''(x) u$, if we choose $p(x)$ in such a way that $p(x)b_1(x)-2ip'(x)=0$, (namely, $p(x)=\exp \left( -\frac{i}{2}\int_0^x b_1(y) \, dy \right)$), then our equation for $v$ becomes
$$\partial_tv = i \partial_x^2 v + c(x) v + p(x) f$$
where $c(x)=-ip''(x)/p(x)$.  Thus, if $c_1\leq |p(x)|\leq C_1$, this equation is $L^2$ wellposed, and the condition is equivalent with $\left| \text{Im}\int_0^t b_1(y) \, dy\right| \leq M$.  Note for example, that if $b_1$ is constant, $\text{Im }b_1 = 0$ is necessary (and sufficient) for $L^2$ well-posedness.  

To now show the necessity of \eqref{E:12}, we write $u(x,t) = e^{i\Psi(x,t;\xi)}v(x,t;\xi)$, and apply $L=\partial_t-i\Delta - \vec{b}_1\cdot \nabla$ to obtain:
\begin{align*}
 e^{-i\Psi}L(e^{i\Psi}v) &= i \{ \partial_t \Psi + |\nabla \Psi|^2 \} v +  \\
& \quad + \{ \partial_t v + \Delta \Psi v + 2 \nabla \Psi \cdot \nabla v - i \vec{b}_1\nabla \Psi v \} - i \{ \Delta v + i \vec{b}_1\cdot \nabla v \}
\end{align*}
We first choose $\Psi$ as a solution of $\partial_t\Psi + |\nabla \Psi|^2=0$, so we take $\Psi(x,t;\xi)=x\cdot \xi - t|\xi|^2$.   Observe that $\Delta \Psi=0$, so the equation becomes
$$e^{-i\Psi} L(e^{i\Psi} v) = \{ \partial_t v + 2 \nabla \Psi \cdot \nabla v - i \vec{b}_1 \cdot \nabla \Psi v \} - i \{ \Delta v + i \vec{b}_1 \cdot \nabla v \}$$
We now choose $v$ as a solution of (transport equation)
$$
\left\{
\begin{aligned}
&\partial_t v + 2 \nabla \Psi \cdot \nabla v - i \vec{b}_1\cdot \nabla \Psi v = 0 \\
&v \big|_{t=0} = v_0(x) 
\end{aligned}
\right.
$$
or 
$$\partial_t v + 2 \sum \xi_j \frac{\partial v}{\partial x_j} - i \vec{b}_1(x) \cdot \xi v = 0$$
so that
$$v(x,t; \xi) =  \exp \left[ i \int_0^t \vec{b}_1(x-2\xi s) \cdot \xi \, ds \right] v_0(x-2\xi t )$$
We will show that, if \eqref{E:12} is violated, then \eqref{E:11} fails to hold.  First, note that
$$\int_0^{-t} \vec{b}_1(x+\omega s) \cdot \omega \, ds = \int_0^t \vec{b}_1(x-\omega s)\cdot (-\omega) \, ds$$
$$\int_0^t \vec{b}_1(x+\omega s) \cdot \omega \, ds = -\int_0^t \vec{b}_1(x'-\omega s)\cdot (-\omega) \, ds$$
where $x'=x+t\omega$, and for $\rho>0$
$$ \int_0^t \vec{b}_1(x-\omega s) \cdot \omega \, ds = \int_0^{t/\rho} \vec{b}_1(x-\omega \rho s) \rho \omega \, ds$$
Because of these properties, if \eqref{E:12} fails, we can find $t_0>0$, $\omega^0\in S^{n-1}$, and $x_1\in \mathbb{R}^n$ such that 
$$\text{Re }i\int_0^{t_0} \vec{b}_1(x_1-2\omega^0 s) \cdot \omega^0 \, ds \geq \log 2C_T$$
where $C_T$ is the constant in \eqref{E:11}.  Put now $x_0=x_1-2\omega^0t_0$, $t_0/\rho = t_\rho$.  We can use the third equation to get
$$v(x_1, t_\rho; \rho\omega^0) = \exp \left[ i \int_0^{t_0} \vec{b}_1(x_1-2\omega^0 s) \cdot \omega^0 \, ds \right] v_0(x_0)$$
$$v(x, t_\rho; \rho\omega^0) = \exp \left[ i \int_0^{t_0} \vec{b}_1(x-2\omega^0 s) \cdot \omega^0 \, ds \right] v_0(x-2\omega^0 t_0)$$
We will now choose $v_0$ with small support, near $x=x_0$, satisfying $\int |v_0|^2 \, dx = 1$.  Hence $\|v(-,t_\rho; \rho \omega^0) \|_{L^2} \geq \frac{3}{2}C_T$, since $v_0(x-2\omega^0 t_0)$ has small support around $x=x_1$.  Moreover, $[\Delta v + i \vec{b}_1\cdot \nabla v ]$, $v=v(x,t;\rho \omega^0)$ is uniformly bounded for $\rho\to \infty$, $t\leq t_\rho$ which implies that
$$\int_0^{t_\rho} \left\| i [ \Delta v + i \vec{b}_1\cdot \nabla v] (-,s,\rho\omega^0) \right\|_{L^2} \, ds \to 0 \quad \text{as} \quad \rho \to \infty$$
Moreover, $e^{-i\Psi}L(e^{i\Psi}v)=-i\{ \Delta v + i\vec{b}_1\cdot \nabla v \}$, and $e^{i\Psi} v\big|_{t=0}=e^{i\Psi(x,0,\xi)}v_0$, so that \eqref{E:11} would give $\frac{3}{2}C_T\leq C_T\{ 1+o(1) \}$ as $\rho\to \infty$, a contradiction.  

We now turn to the positive results on \eqref{E:10}.  Here, we follow Kenig, Ponce, Vega \cite{KPV98} \cite{KPV98b} 
\begin{theorem} \label{T:S3}
Consider 
\begin{equation*} 
\left\{
\begin{aligned}
\partial_t u &= i \Delta u + \vec{b}_1(x)\cdot \nabla u + \vec{b}_2(x)\cdot \nabla \bar{u} + c_1(x)u + c_2(x)\bar{u} + f \\
u\big|_{t=0} &= u_0
\end{aligned}
\right.
\end{equation*}
Assume that $\vec{b}_i$, $c_i\in C^N(\mathbb{R}^n)$, $N=N(n)$.
\begin{enumerate}
\item \label{I:1} Assume that $|\text{Im }\vec{b}_1(x) |\leq C\lambda_m(x)= \frac{C}{(1+|x|^2)^{m/2}}$, for some $m>1$.  Then $\exists \, ! \; u$ such that $\forall \; T>0$ we have
\begin{align*}
\hspace{0.5in}&\hspace{-0.5in} \sup_{0<t<T} \|u(t)\|_{L^2} + \|J^{1/2}u\|_{L^2(\mathbb{R}^n\times [0,T]; \lambda_m dx dt)} \\
& \leq C_T
\begin{cases}
\|u_0\|_{L^2} + \|f\|_{L_T^1L_x^2} \\
\|u_0\|_{L^2} + \|J^{-1/2}f \|_{L^2(\mathbb{R}^n\times [0,T], \lambda_m^{-1}(x) dxdt)}
\end{cases}
\end{align*}
\item If $\text{Im }\vec{b}_1(x)= \sum_{\mu \in \mathbb{Z}^n} \alpha_\mu \varphi_\mu(x)$, with $\text{supp }\varphi_\mu \subset Q_\mu^\ast$, $\|\varphi_\mu\|_{C^N} \leq 1$, $\sum_\mu |\alpha_\mu| \leq A$, then we have:
$$\sup_{0<t<T} \|u(t)\|_{L^2} + \tv J^{1/2}u\tv_T \leq C_T
\begin{cases}
\|u_0\|_{L^2} + \|f\|_{L_T^1L_x^2} \\
\|u_0\|_{L^2} + \tv J^{-1/2}f \tv_T'
\end{cases}$$
\end{enumerate}
\end{theorem}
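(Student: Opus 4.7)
My plan is to adapt the Doi-conjugation scheme of Theorem~\ref{T:9} in order to absorb the bad first-order symmetric contribution from $\text{Im}\,\vec{b}_1$ into the smoothing gain produced by the sharp G\r{a}rding inequality. I would set $v=\Psi_{c_R}u$ with $c_R(x,\xi)=\exp(-K\theta_R(\xi)p(x,\xi))$, where $p\in S^0$ is the real symbol furnished by Lemma~\ref{L:1} applied with weight $\lambda=\lambda_m$, and $K>0$ is a large scalar to be chosen. Since Lemma~\ref{L:1} gives $2\xi\cdot\nabla_x p\ge c_0\lambda_m(|x|)|\xi|-1/c_0$, the scaled $Kp\in S^0$ satisfies $2K\xi\cdot\nabla_x p\ge Kc_0\lambda_m|\xi|-K/c_0$, so taking $K$ larger than twice the hypothesis constant controlling $|\text{Im}\,\vec{b}_1|/\lambda_m$ makes the gain dominate the bad term. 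For $R$ large, $\Psi_{c_R}$ is invertible on $L^2$ and on $L^2(\lambda_m^{\pm1}dx)$ by Theorem~\ref{T:6} and Theorem~\ref{T:7}, so deriving the estimate for $v$ suffices.

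Computing the conjugated equation one finds $\partial_t v=i\Delta v+\tilde A v+F'$, where $F'$ gathers $\Psi_{c_R}f$ and all order-zero contributions (the $c_1,c_2$ terms and the symbolic-calculus remainders), and $\tilde A$ is assembled from $[\Psi_{c_R},i\Delta]\Psi_{c_R}^{-1}+\Psi_{c_R}(\vec b_1\cdot\nabla)\Psi_{c_R}^{-1}$, with principal symbol $-2K\theta_R\xi\cdot\nabla_x p+i\vec b_1\cdot\xi$. The symmetric part then has principal symbol $-2K\theta_R\xi\cdot\nabla_x p-\text{Im}\,\vec b_1\cdot\xi$, which by the choice of $K$ lies below $-(c_0K/2)\lambda_m\langle\xi\rangle+C_\ast$ for $|\xi|\ge R$. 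Applying the sharp G\r{a}rding inequality (Theorem~\ref{T:5}) to the $S^1$ symbol $(c_0K/2)\lambda_m\langle\xi\rangle-C_\ast$, and using the $S^{1/2}\circ S^{1/2}$ factorization exactly as in the proof of Theorem~\ref{T:9}, yields
$$2\,\text{Re}\,\langle\tilde A v,v\rangle\le -\tfrac{c_0K}{2}\|J^{1/2}v\|_{L^2(\lambda_m dx)}^2+C\|v\|_{L^2}^2.$$
The remainder of the energy step is identical to Theorem~\ref{T:9}: the $i\Delta$ contribution cancels, the source pairing is bounded by either $\|f\|_{L^2}\|v\|_{L^2}$ or by $\|J^{-1/2}f\|_{L^2(\lambda_m^{-1})}\|J^{1/2}v\|_{L^2(\lambda_m)}$ after moving the weight across $\Psi_{c_R}$ via Theorem~\ref{T:7}, and Gr\"onwall together with the invertibility of $\Psi_{c_R}$ pushes the estimate back to $u$. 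Uniqueness follows from applying the estimate to the difference with zero data, and existence from an artificial-viscosity approximation as in the first lecture, using the uniform-in-$\epsilon$ version of the same estimate.

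The hard part I expect is the anti-linear term $\vec b_2\cdot\nabla\bar u$: after conjugation it contributes a first-order anti-linear operator with principal symbol $i\vec b_2\cdot\xi\,e^{-2K\theta_R p}$, and $\vec b_2$ carries no decay to feed into the scalar G\r{a}rding argument. The resolution I would adopt is to work at the outset with the coupled system for $U=(u,\bar u)^T$, $\partial_t U=iA\Delta U+B\nabla U+CU+F$ with $A=\text{diag}(1,-1)$, and to precede the Doi conjugation by a hyperbolic-style symbolic diagonalization by a matrix-valued $\Psi$DO $Q(x,\xi)\in S^0$, which exists because the principal eigenvalues $\pm|\xi|^2$ of $iA|\xi|^2$ are separated at order $2$ while the off-diagonal $\vec b_2$ perturbation is only order $1$, so the standard iterative construction converges and yields $Q^{-1}(iA\Delta+B\nabla)Q=iA\Delta+\text{diag}(\vec b_1\cdot\nabla,\overline{\vec b_1}\cdot\nabla)+O(1)$; the scalar conjugation above is then applied to each diagonal entry and the estimate is pulled back through $Q$. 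Part (2) follows from the identical scheme with $\lambda$ chosen as in Theorem~\ref{T:10}, namely $\lambda\equiv 1$ on $Q_0$ and $\text{supp}\,\lambda\subset 8Q_0$: one builds the Doi symbol cube by cube, uses $\sum_\mu|\alpha_\mu|\le A$ to control $\text{Im}\,\vec b_1$ globally via the $\ell^1$-localized hypothesis, and replaces Theorem~\ref{T:7} with Theorem~\ref{T:8} and Corollary~\ref{C:1} to express and invert the estimate in the $\tv\cdot\tv_T$ and $\tv\cdot\tv'_T$ norms.
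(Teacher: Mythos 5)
Your plan matches the proof the appendix outlines for this theorem. The core of it is the same Doi gauge conjugation of Theorem~\ref{T:9}, rescaled by a large scalar $K$ (the appendix calls it $M$) so that $K c_0 \lambda_m(x)|\xi|$ dominates $|\text{Im}\,\vec b_1(x)\cdot\xi|$, with the sharp G\r{a}rding inequality applied exactly as you describe; and the anti-linear $\vec b_2\cdot\nabla\bar u$ term is removed first by passing to the $2\times 2$ system and conjugating by $I-S$ with $S$ an order $-1$ off-diagonal operator (your matrix $Q$ is exactly the paper's $\Lambda$, and one step of your ``iterative construction'' suffices because $S\in\mathrm{OPS}^{-1}$ already pushes the anti-linear coupling to order $0$). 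One small bookkeeping point you elide: the two diagonal entries require opposite-sign gauges $e^{\mp K\theta_R p}$, matching the sign of $\pm i\Delta$, so that both diagonal blocks produce negative-definite contributions. None of this changes the structure.

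The place that needs more care is part (2). Taking $\lambda\equiv 1$ on $Q_0$ with $\mathrm{supp}\,\lambda\subset 8Q_0$ and then translating gives the $\tv J^{1/2}u\tv_T$ smoothing when, as in Theorem~\ref{T:10}, there is no first-order term to absorb. But here the gauge gain $K\lambda(x)|\xi|$ vanishes outside $8Q_0$, while $\text{Im}\,\vec b_1$ does not, so the symmetric symbol is no longer nonpositive for $|\xi|\ge R$ and the G\r{a}rding step breaks down away from the distinguished cube. The weight must have a second, globally-supported component adapted to $\{|\alpha_\mu|\}$ so that the gauge gain dominates $|\text{Im}\,\vec b_1|$ everywhere, and the reason this produces a valid $S^0$ symbol is precisely the $\ell^1$ hypothesis: taking $\Phi_j(x_j)=\int_0^{x_j}g_j$ with $g_j(r)\gtrsim\sup_{x:\,x_j=r}\lambda_{\mathrm{abs}}(x)$ makes $\Phi$ bounded because $\sum_k\max_{\mu_j=k}|\alpha_\mu|\le\sum_\mu|\alpha_\mu|\le A$. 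Your phrase ``builds the Doi symbol cube by cube, uses $\sum_\mu|\alpha_\mu|\le A$ to control $\text{Im}\,\vec b_1$ globally'' points at this, but as written the weight you specify is a single cube bump and would not absorb the bad term; you should make the two-piece structure of the weight (one localizing piece for the $\tv\cdot\tv_T$ norm, one absorbing piece from the $|\alpha_\mu|$'s) explicit, and note that the radial-decreasing hypothesis in Lemma~\ref{L:1} must be relaxed to this block structure.
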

This theorem allows us to extend Theorem \ref{T:S1}, Theorem \ref{T:S2} to data of arbitrary size.
\begin{remark}
The condition \eqref{E:12} is an $L^1$ condition and another way to view the need for the weights is to mediate between \eqref{E:12} and $L^2$ conditions on $u$.  This is only needed in ``bilinear settings''.  
\end{remark}
In the next lecture, we will discuss generalizations of Theorem \ref{T:S3} (\ref{I:1}).

\section*{Problems for Lecture \arabic{chapter}}
\begin{problems}
\item Use oscillatory integrals and $T^\ast T$ arguments to show:  Let $u$ solve, for $n=1$
$$\left\{
\begin{aligned}
& \partial_t u = i\Delta u \\
& u \big|_{t=0} = u_0
\end{aligned}
\right.
$$
Then
\begin{enumerate}
\item $\| \sup_t | u(x,t) | \|_{L^4(\mathbb{R}_x)} \leq C \|D_x^{1/4} u_0 \|_{L^2(\mathbb{R})}$
\item $\| \sup_{|t|\leq 1} |u(x,t)| \|_{L^2(\mathbb{R}_x)} \leq C_s \|u_0\|_{H^s(\mathbb{R})}$, $s>\frac{1}{2}$.
\item $\| u \|_{L_t^4L_x^\infty} \leq C \|u_0\|_{L^2(\mathbb{R})}$
\end{enumerate}
\item Prove Theorem \ref{T:S1} for non-polynomial non-linearities.  Do the same for Theorem \ref{T:S2}.
\item Carry out the proof of the Molinet-Saut-Tzvetkov example, outlined in the lecture.
\item \label{P:4} Prove Theorem \ref{T:S3} using the sharp G\r{a}rding inequality, when $\vec{b}_2 \equiv 0$.  Use the pseudo-differential proofs of the local smoothing effect as a model.
\end{problems}

\lecture[Variable coefficient linear Schr\"{o}dinger, I]{The linear Schr\"{o}dinger equation with variable coefficients, Part I}

In order to study the Cauchy problem in the quasi-linear setting, we need to understand linear problems where the top order coefficients are also variable.  In order to begin our study of such problems, we need to introduce some new terminology.  Let $a(x,\xi)$ be a second order, real valued symbol in $S^2$.  The Hamiltonian vector field, associated to $a$, on $\mathbb{R}^n\times \mathbb{R}^n$, $H_a$ is given by
$$H_a(\phi)= \sum_{j=1}^n \left[ \partial_{\xi_j}a(x,\xi)\partial_{x_j}\phi - \partial_{x_j}a(x,\xi)\partial_{\xi_j}\phi \right]$$
and the bi-characteristic flow, denoted by $(X(s;x_0,\xi_0), \Xi(s;x_0,\xi_0))$ is its flow, i.e.\ the solution of the Hamilton-Jacobi equations
\begin{equation*}
\left\{
\begin{aligned}
\frac{d}{ds}X_j(s;x_0,\xi_0) &= \partial_{\xi_j}a(X,\Xi) \\
\frac{d}{ds}\Xi_j(s;x_0,\xi_0) &= -\partial_{x_j}a(X,\Xi)
\end{aligned}
\right.
\end{equation*}
for $j=1,\ldots, n$, and with data $(X(0;x_0,\xi_0), \Xi(0;x_0,\xi_0))=(x_0,\xi_0)$.  ODE theory implies that the bi-characteristic flow always exists and is unique, in a maximal interval, $s\in (-\delta,\delta)$, with $\delta=\delta(x_0,\xi_0)$, depending continuously on $(x_0,\xi_0)$.  Note that when $a(\xi)=-|\xi|^2$ (the case of the Laplacian), we have that 
$$(X(s;x_0,\xi_0),\Xi(s;x_0,\xi_0))=(x_0+2s\xi_0,\xi_0)$$
and it exists for all $s$.

Suppose now that $a(x,\xi)=-\sum a_{jk}(x)\xi_j\xi_k$, where the matrix $A(x)=(a_{jk}(x))$ is elliptic and real valued, i.e.\
$$\lambda^{-1}|\xi|^2 \leq a_{jk}(x)\xi_j\xi_k \leq \lambda |\xi|^2 \quad \forall \; \xi\in \mathbb{R}^n$$
Then, since 
$$H_a(\phi)(x_0,\xi_0) = \frac{d}{ds} \phi(X(s; x_0, \xi_0), \Xi(s; x_0,\xi_0))\Big|_{s=0}$$
we have, with $a(x,\xi)=-\sum a_{jk}(x) \xi_j \xi_k$, that $H_a(a)=0$, and hence we see that 
$$\lambda^{-2}|\xi_0|^2 \leq |\Xi(s; x_0, \xi_0)|^2 \leq \lambda^2|\xi_0|^2$$
This immediately gives that $\delta=+\infty$, i.e.\ the bi-characteristic flow exists for all times.  Since $a(x,\xi)$ is homogeneous of degree $2$ in $\xi$, uniqueness for ODE gives:
\begin{equation*}
\left\{
\begin{aligned}
X(s; x_0, r\xi_0) &= X(sr; x_0, \xi_0) \\
\Xi(s; x_0, r\xi_0) &= r \Xi(sr; x_0, \xi_0)
\end{aligned}
\right.
\end{equation*}
The role of the Hamiltonian vector field in our context can be understood from the following consequence of the calculus of $\Psi$DO:  The symbol of $i[\Psi_a\Psi_\phi - \Psi_\phi\Psi_a]=H_a(\phi)$ modulo lower order symbols.

In order to explain the relevance of this to our context, we first recall a result of Ichinose \cite{Ic}, which generalized the Takeuchi-Mizohata condition:
Consider the Cauchy problem
\begin{equation*}
\left\{
\begin{aligned}
\partial_t u &= i \partial_{x_k} a_{k,j}(x)\partial_{x_j}u + \vec{b}_1(x)\cdot \nabla u + f \\
u\big|_{t=0} &= u_0
\end{aligned}
\right.
\end{equation*}
where $(a_{k,j}(x))$ is elliptic, and ``asymptotically flat'' (i.e.\ $|a_{k,j}-\delta_{kj}| \leq \frac{C}{\langle x \rangle^{1+\eta}}$, $\eta>0$, as $x\to \infty$).  Then, a necessary condition for the estimate
$$\sup_{0<t<T} \|u(t)\|_{L^2} \leq C_T \left\{ \|u_0\|_{L^2} + \|f\|_{L_T^1L_x^2} \right\}$$
to hold  is:
$$\sup_{\substack{ x_0\in \mathbb{R}^n \\ \xi_0\in S^{n-1} \\ t_0\in \mathbb{R} }} \left| \int_0^{t_0} \text{Im }\vec{b}_1(X(s; x_0,\xi_0)) \cdot \Xi(s; x_0, \xi_0) \, ds \right| < +\infty$$
This is a direct generalization of the Takeuchi-Mizohata condition.  Notice that, by ellipticity, if $\xi_0\in S^{n-1}$, $|\Xi(s; x_0, \xi_0)| \simeq 1$.  Thus, a very bad situation would arise if for a fixed $(x_0,\xi_0)$, $X(s; x_0, \xi_0)$ was contained, $\forall \; s$, in a fixed compact $K$.  One would then choose $\vec{b}_1\in C_0^\infty$, but identically constant in $K$, and trouble would arise as $t_0\to \infty$ for the boundedness of this integral.  This leads us to a non-trapping condition:  For each $(x_0,\xi_0)\in \mathbb{R}^n\times \mathbb{R}^n\backslash \{ 0 \}$, the set $\{ X(s; x_0, \xi_0) \}$ is unbounded in $\mathbb{R}^n$.  The non-trapping condition, in fact, is also closely connected to the ``local smoothing'' estimate.  In fact, the role of this property in the study of local smoothing effects first appeared in the works of Kapitanski-Safarov\cite{KS}, Craig-Kappeler-Strauss \cite{CKS}, and Doi \cite{Do1}\cite{Do2}, from the early to mid 90's.  They showed that, under appropriate smoothness assumptions, ellipticity and asymptotic flatness assumptions, if the non-trapping condition holds, solutions to 
\begin{equation}
\left\{
\begin{aligned}
\partial_t u &= i \partial_{x_k}a_{kj}(x)\partial_{x_j}u \\
u\big|_{t=0} &= u_0
\end{aligned}
\right.
\end{equation}
verify the estimate $\|J^{1/2}u\|_{L^2(\mathbb{R}^n\times [0,T], \lambda_m(x)dxdt)}\leq C_T\|u_0\|_{L^2}$, $m>1$.  Moreover, Doi \cite{Do3} showed that, under the same conditions, if the above estimate holds, the non-trapping assumption must hold.

In our work on quasi-linear equations, we need to study equations whose coefficients depend also on $t$.  Moreover, we need to introduce an ``artificial viscosity'' term $\epsilon \Delta^2$, and establish a family of uniform estimates.  We thus need to study the following family of linear Cauchy problems:
\begin{equation*}
 \left\{ 
\begin{aligned}
\partial_t u &= -\epsilon \Delta^2 u + ia_{lk}(x,t) \partial_{x_lx_k}^2u + ib_{lk}(x,t) \partial_{x_lx_k}^2\bar{u} \\
& \quad + \vec{b}_1(x,t)\cdot \nabla_xu + \vec{b}_2(x,t)\cdot \nabla_x\bar{u} \\
& \quad + c_1(x,t)u+c_2(x,t)\bar{u} + f(x,t) \\
u\big|_{t=0} &= u_0 
\end{aligned}
\right. 
\end{equation*}
where $a_{lk}(x,t)$ are real valued, and the other coefficients may be complex.  In order to clarify the presentation, we will first deal with $b_{lk}(x,t)\equiv 0$, and then treat the general case.  We thus study, for $0\leq \epsilon \leq 1$:
\begin{equation} \label{E:13}
 \left\{ 
\begin{aligned}
\partial_t u &= -\epsilon \Delta^2 u + ia_{lk}(x,t) \partial_{x_lx_k}^2u \\
& \quad + \vec{b}_1(x,t)\cdot \nabla_xu + \vec{b}_2(x,t)\cdot \nabla_x\bar{u} \\
& \quad + c_1(x,t)u+c_2(x,t)\bar{u} + f(x,t) \\
u\big|_{t=0} &= u_0 
\end{aligned}
\right. 
\end{equation}
Our hypotheses on the coefficients are:\\

\noindent ($\text{H}_{1,l}$) \underline{Ellipticity}. $(a_{lk}(x,t))$ is real valued, and with $h(x,\xi)=a_{lk}(x,0)\xi_l\xi_k$, we have $h(x,\xi) \geq \gamma |\xi|^2$, $\gamma >0$.

\noindent ($\text{H}_{2,l}$) \underline{Regularity}. There exists $N=N(n)$, and $C>0$ so that $a_{lk}$, $\vec{b}_1$, $\vec{b}_2$, $c_1$, $c_2 \in C_b^N(\mathbb{R}^n\times \mathbb{R})$ with norm bounded by $C$.  Moreover, there exists $C_1>0$, so that the corresponding norms, at $t=0$, as functions of $x$ in $C_b^N(\mathbb{R}^n)$, are bounded by $C_1$. 

\noindent $(\text{H}_{3,l})$ \underline{Asymptotic flatness}.  There exists $C>0$, $C_1>0$ such that, for any $(x,t)\in \mathbb{R}^n\times \mathbb{R}$, we have 
$$|\nabla_x a_{lk}(x,t)| + |\partial_ta_{lk}(x,t)|+|\partial_{x_j} \partial_{x_r} a_{lk}(x,t)| + |\partial_{x_j} \partial_t a_{lk}(x,t)| \leq \frac{C}{\langle x \rangle^2}$$
$$|\nabla_x a_{lk}(x,0)| \leq \frac{C_1}{\langle x \rangle^2}$$

\noindent $(\text{H}_{4,l})$ \underline{Growth of 1st order coefficients}.  There exists $C, C_1>0$ such that, for $(x,t)\in \mathbb{R}^n\times \mathbb{R}$,
$$|\partial_t \text{Im }\vec{b}_1(x,t)| \leq \frac{C}{\langle x \rangle^2}$$
$$|\text{Im }\vec{b}_1(x,0)| \leq \frac{C_1}{\langle x \rangle^2}$$

\noindent $(\text{H}_{5,l})$ \underline{Non-trapping}.  The symbol $h(x,\xi)=a_{lk}(x,0)\xi_l\xi_k$ gives rise to a ``non-trapping'' bi-characteristic flow, with non-trapping character controlled by $C_1$. (We will explain this quantitative dependence very soon).

\begin{theorem} \label{T:1B}
There exist $N=N(n)$, $T_0>0$, $T_0$ depending only on $C$, $C_1$, $(\text{H}_{5,l})$, and $A>0$, depending only on $\gamma$, $C_1$ and $(\text{H}_{5,l})$, so that, for any $T\in [0,T_0]$, $\epsilon\in [0,1]$, we have, for any $u$ a solution of \eqref{E:13}:
\begin{align*}
 \hspace{0.5in}&\hspace{-0.5in} \sup_{0<t<T} \|u(t)\|_{L^2} + \left( \int_0^T \int \lambda_2 |J^{1/2}u|^2 \, dx\, dt \right)^{1/2} \\
&\leq A 
\begin{cases}
\|u_0\|_{L^2} + \|f\|_{L_T^1L_x^2} \\
\|u_0\|_{L^2} + \|J^{-1/2}f\|_{L^2(\mathbb{R}^n\times [0,T], \lambda_2^{-1}(x)dxdt)}
\end{cases}
\end{align*}
\end{theorem}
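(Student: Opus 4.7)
My plan is to adapt the proof of Theorem \ref{T:9} to the variable coefficient setting, replacing the flat symbol $-|\xi|^2$ by $h(x,\xi)=a_{lk}(x,0)\xi_l\xi_k$ and strengthening Doi's Lemma \ref{L:1} so that it compensates both the non-flat geometry of $h$ and the first-order imaginary coefficient $\mathrm{Im}\,\vec b_1(x,0)$ at once. The technical heart is the construction of a real symbol $p\in S^0$ satisfying
\[
H_h(p)(x,\xi)+2\,\xi\cdot\mathrm{Im}\,\vec b_1(x,0)\;\geq\; c_0\,\lambda_2(x)\,\langle\xi\rangle-\tfrac{1}{c_0},
\]
with $c_0>0$ controlled solely by $\gamma$, $C_1$, and the quantitative non-trapping character in $(\mathrm{H}_{5,l})$. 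The natural ansatz, as in Doi, is to integrate the two terms $\lambda_2|\Xi|$ and $2\,\Xi\cdot\mathrm{Im}\,\vec b_1(\cdot,0)$ along the bi-characteristic flow of $h$ on the sphere $|\xi|\asymp 1$, extend by homogeneity, and smoothly truncate so that the result lives in $S^0$. The non-trapping hypothesis guarantees that $X(s;x_0,\xi_0)$ leaves every compact set, and the $\langle x\rangle^{-2}$ decays in $(\mathrm{H}_{3,l})$ and $(\mathrm{H}_{4,l})$ make both integrals absolutely convergent uniformly in $(x_0,\xi_0)$, giving a bona fide $S^0$ symbol.

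With $p$ in hand I set $c_R(x,\xi)=\exp(-\theta_R(\xi)p(x,\xi))\in S^0$ and $v=\Psi_{c_R}u$. By Theorem \ref{T:6}, $\Psi_{c_R}$ is invertible for $R$ large with inverse $\Psi_{c_R^+}\Psi_q$, $q\in 1+S^{-1}$. Using the composition and commutator formulas from Theorem \ref{T:2} and its remark, the equation satisfied by $v$ takes the form
\[
\partial_t v=-\epsilon\Delta^2 v+i\Psi_{h_t}v+\widetilde A_1 v+\widetilde B_1 v+A_0 v+R_\epsilon v+\Psi_{c_R}f,
\]
where $h_t(x,\xi)=a_{lk}(x,t)\xi_l\xi_k$, the principal symbol of $\widetilde A_1$ is a constant multiple of $-\theta_R(\xi)H_h(p)(x,\xi)$, $\widetilde B_1$ is the conjugate of the $\vec b_1\cdot\nabla+\vec b_2\cdot\nabla(\bar{\,\cdot\,})$ drift (whose self-adjoint part has principal symbol $-2\,\xi\cdot\mathrm{Im}\,\vec b_1$), $A_0$ is of order zero, and $R_\epsilon$ is the $\epsilon$-commutator, which by direct calculation is a zero-order operator with bound uniform in $\epsilon\in[0,1]$.

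Multiplying by $\bar v$, adding the conjugate equation multiplied by $v$ and integrating in $x$ kills the anti-self-adjoint $i\Psi_{h_t}$ piece (modulo an $O(t)$ error from freezing $h_t$ to $h$, which is controlled by $(\mathrm{H}_{3,l})$ and $(\mathrm{H}_{2,l})$) and produces the non-negative $2\epsilon\int|\Delta v|^2$. The combined principal self-adjoint symbol of $\widetilde A_1+\widetilde B_1$ is then bounded above by $-c_0\theta_R(\xi)\lambda_2(x)\langle\xi\rangle+c_0^{-1}\theta_R(\xi)$ by the defining property of $p$, so applying the sharp G\r{a}rding inequality (Theorem \ref{T:5}) to minus this quantity yields
\[
\partial_t\|v\|_{L^2}^2+\tfrac{c_0}{2}\bigl\|J^{1/2}v\bigr\|_{L^2(\lambda_2 dx)}^2\leq C\|v\|_{L^2}^2+Ct\bigl\|J^{1/2}v\bigr\|_{L^2(\lambda_2 dx)}^2+2\bigl|\langle\Psi_{c_R}f,v\rangle\bigr|,
\]
with $T_0$ chosen small enough to absorb the $Ct$ term on the left. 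Gronwall then closes the $L^2$ supremum together with the weighted $H^{1/2}$ integral for $v$, and Theorem \ref{T:7} transfers the bounds from $v$ back to $u$ since $\Psi_{c_R}^{\pm 1}$ are bounded on $L^2(\lambda_m dx)$. The two right-hand side options are handled exactly as at the end of the proof of Theorem \ref{T:9}: the $L^1_tL^2_x$ version is immediate from Cauchy--Schwarz in $t$, while the $J^{-1/2}f$ version comes from rewriting $\int F\bar v=\int\lambda_2^{-1/2}J^{-1/2}F\cdot\lambda_2^{1/2}J^{1/2}\bar v$ and applying Cauchy--Schwarz in $(x,t)$.

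The main obstacle is squarely the first step: producing the Doi-type symbol $p$ whose constants depend only on $\gamma$, $C_1$, and the non-trapping character, not on higher-order bounds of the coefficients. Once that symbol is in hand, everything else is systematic symbolic calculus combined with a single application of sharp G\r{a}rding, and the $\epsilon$-uniformity is automatic thanks to the explicit non-negativity of the $\epsilon\Delta^2$ contribution after symmetrization.
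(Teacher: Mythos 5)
Your proposal captures the right global shape — a Doi-type escape symbol $p$, the exponential gauge $c_R=\exp(-\theta_R p)$, symbolic calculus to convert the commutator with the principal part into a weighted first-order operator, and one application of sharp G\r{a}rding to close the energy estimate — but it skips the step the paper devotes Step~2 to, and it misstates two other points. These are genuine gaps, not presentational choices.

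\textbf{The $\vec b_2\cdot\nabla\bar u$ coupling is not handled.} After gauging, the term $\Psi_{c_R}(\vec b_2\cdot\nabla\bar u)$ involves $\bar u$, not $\bar v$, and $\overline{\Psi_{c_R}u}$ is \emph{not} $\Psi_{c_R}\bar u$ because Doi's $p(x,\xi)$ is neither even nor odd in $\xi$ (the piece $q_2$ built from the forward flow integral breaks the symmetry). In the system formulation this is an off-diagonal order-1 block $B_{\mathrm{anti}}$. Since the gauge $\Psi=\mathrm{diag}(\Psi_{r_1},\Psi_{r_2})$ is diagonal, the energy computation produces a cross term $\mathrm{Re}\langle\Psi_{r_1}B_{12}\Psi_{r_2}^{-1}\alpha_2,\alpha_1\rangle$, a first-order \emph{bilinear} form between two functions that are no longer conjugates; it cannot be killed by integration by parts (the conjugate symmetry is gone) nor by sharp G\r{a}rding (which needs a sesquilinear form $\langle\Psi_a f,f\rangle$ with $a$ of definite sign). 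Note also that $(\mathrm{H}_{4,l})$ imposes no decay on $\vec b_2$, only regularity, so you cannot simply Cauchy--Schwarz it against the smoothing gain. The paper removes $B_{\mathrm{anti}}$ first, via the Calder\'on-style diagonalizer $\Lambda=I-S$ with $S=\begin{pmatrix}0 & s_{12}\\ s_{21}&0\end{pmatrix}$ of order $-1$ built from $\vec b_2\tilde{\mathcal L}$ (using ellipticity of $\mathcal L$), at the cost of new order-0 terms only. Without some version of this step the energy estimate does not close.

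\textbf{The Doi modification misses the $\tilde b$ remainder.} Even though $h_t$ is real, $\Psi_{h_t}$ is not self-adjoint; $h_t^*-h_t$ contributes a first-order symbol $\tilde b(x,t,\xi)=\sum_j\partial_{\xi_j}\partial_{x_j}h_t$, bounded by $C_1|\xi|/\langle x\rangle^2$, and it is present already at $t=0$ — it is not an $O(t)$ freezing error. It must be absorbed by the Doi-controlled part together with $\mathrm{Im}\,\vec b_1$. The paper handles both at once by taking the gauge $\exp(-M\theta_R p)$ with $M$ large (so that $MB>6C_1+1$): the Doi inequality scales with $M$ while the drift contributions do not. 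Your scheme produces a \emph{fixed} $c_0(\gamma,C_1,\text{non-trapping})$; there is no reason $c_0$ dominates $C_1$, and since the extra term $2\xi\cdot\mathrm{Im}\,\vec b_1$ is folded \emph{into} the flow integral, rescaling $p$ by $M$ no longer matches the coefficient $1$ in the actual first-order drift. Folding $\mathrm{Im}\,\vec b_1$ into the escape construction is a workable alternative in principle (absolute convergence of the extra flow integral is fine given $(\mathrm{H}_{4,l})$ and non-trapping), but you must also include $\tilde b$ in that integral, and verify the resulting $S^0$ seminorm bounds.

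\textbf{The $\epsilon$-commutator is not order zero uniformly in $\epsilon$.} $[\Psi_{c_R},\epsilon\Delta^2]$ is $\epsilon$ times an order-3 operator, so $\mathrm{Re}\langle R_\epsilon v,v\rangle$ scales like $\epsilon\|v\|_{H^{3/2}}^2$, which is not $\le C\|v\|_{L^2}^2$. The paper's fix is the interpolation $\|v\|_{H^{3/2}}^2\le\eta_0\|\Delta v\|_{L^2}^2+C_n\eta_0^{-1}\|v\|_{L^2}^2$, with the first piece absorbed into the good $-2\epsilon\|\Delta v\|_{L^2}^2$ term coming from $-\epsilon\Delta^2$. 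Without this absorption, $\epsilon$-uniformity is lost.
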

We will see that $\lambda_2$ here can be replaced by $\lambda_m$, $m>1$.  We will sketch the proof of the first inequality, the second one being similar.  One of our main tools will be the following lemma due to S. Doi \cite{Do2}.

\begin{lemma} Let $h$ be as above.  Then, there exists a real valued symbol $p\in S^0$, with seminorms for $|\alpha|+|\beta|\leq M(N)$ bounded in terms of ``the non-trapping character of $h$'', $C_1$, and $\gamma$, by a constant $C_1^\ast$, and a constant $B$, $1>B>0$, with the same dependence, such that
$$H_hp \geq \frac{B|\xi|}{\langle x \rangle^2}-\frac{1}{B}, \quad \forall \; (x,\xi)\in \mathbb{R}^n\times \mathbb{R}$$
\end{lemma}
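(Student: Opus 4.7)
The plan is to build $p$ as a Doi-style \emph{escape function}, obtained by integrating a positive $x$-weight along the Hamilton flow of $h$. The starting observation is that if I define $p(x,\xi) = F(x,\xi/|\xi|)$ for a function $F$ on $\mathbb{R}^n\times S^{n-1}$, then, using the $\xi$-homogeneities $X(s;x,r\omega)=X(sr;x,\omega)$ and $\Xi(s;x,r\omega)=r\,\Xi(sr;x,\omega)$ together with the chain rule,
$$H_h p(x_0,\xi_0) \;=\; |\xi_0|\,\frac{d}{d\tilde\tau}\Big|_{\tilde\tau=0} F\!\left(X(\tilde\tau;x_0,\omega_0),\,\frac{\Xi(\tilde\tau;x_0,\omega_0)}{|\Xi(\tilde\tau;x_0,\omega_0)|}\right),\qquad \omega_0=\xi_0/|\xi_0|.$$
So if $F$ is chosen so that the derivative on the right equals a positive weight $\lambda(x_0)$, then $H_h p = \lambda(x)\,|\xi|$; taking $\lambda(x) = C_0\langle x\rangle^{-2}$ then produces exactly the leading positive term of the claimed inequality.

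To produce such an $F$, I would set
$$F(x,\omega) \;=\; \int_{-\infty}^{0} \lambda\big(X(s;x,\omega)\big)\,\big|\Xi(s;x,\omega)\big|\,ds,\qquad \omega \in S^{n-1}.$$
A short computation using the group property of the flow together with the homogeneity shows that $F\!\left(X(\tilde\tau;x,\omega), \Xi(\tilde\tau;x,\omega)/|\Xi(\tilde\tau;x,\omega)|\right) = \int_{-\infty}^{\tilde\tau}\lambda(X(u;x,\omega))\,|\Xi(u;x,\omega)|\,du$, whose $\tilde\tau$-derivative at $0$ is exactly $\lambda(x)$ since $|\omega|=1$. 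Convergence of the defining integral comes from two ingredients: ellipticity of $h$ and conservation of $h$ along its flow, which pin $|\Xi(s;x,\omega)|$ between two positive constants uniformly in $s$; and the quantitative non-trapping hypothesis $(\text{H}_{5,l})$ together with the asymptotic flatness $|\nabla_x a_{lk}(x,0)|\leq C_1\langle x\rangle^{-2}$ from $(\text{H}_{3,l})$, which force $\langle X(s;x,\omega)\rangle$ to grow linearly in $|s|$ once $|s|$ exceeds a uniform escape time (uniform even for large $|x|$, since there the flow is nearly Euclidean). Thus $F$ is uniformly bounded by a constant depending only on $\gamma$, $C_1$ and the non-trapping data.

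The main technical obstacle is verifying $p \in S^0$. I would multiply by a smooth cutoff $\chi(|\xi|)$ supported in $|\xi|\geq 1$ (needed only to kill the singularity of $\xi/|\xi|$ at the origin), after which the symbol bounds for $p(x,\xi) = \chi(|\xi|)\,F(x,\xi/|\xi|)$ reduce to uniform $(x,\omega)$-derivative bounds for $F$ of every order. Differentiating under the integral sign, these bounds involve the first and higher variations of the Hamilton flow, which solve linear ODEs driven by the Hessians of $h$. Here the second part of $(\text{H}_{3,l})$, $|\partial_{x_j}\partial_{x_r}a_{lk}(x,0)|\leq C\langle x\rangle^{-2}$, is decisive: Gr\"onwall along the flow, combined with the integrability of $\langle X(s;x,\omega)\rangle^{-2}$ already exploited above, yields uniform polynomial bounds on $\partial_x X,\partial_x\Xi,\partial_\omega X,\partial_\omega\Xi$ and their iterates; these paired with the decay $|\nabla\lambda(y)|=O(\langle y\rangle^{-3})$ give bounded derivatives of $F$ of all orders. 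Finally, the contribution to $H_h p$ from differentiating $\chi$ is bounded, since its $\xi$-support is compact and $\partial_x h$ grows at most as $|\xi|^2\langle x\rangle^{-2}$; and on $|\xi|\leq 2$ the missing part of $\lambda(x)|\xi|$ is itself uniformly bounded. Both errors absorb into the $-1/B$ term, and a suitable choice of $C_0$ then delivers $H_h p \geq B|\xi|/\langle x\rangle^{2} - 1/B$ with $B\in(0,1)$ depending quantitatively on $\gamma$, $C_1$ and the non-trapping character, as required.
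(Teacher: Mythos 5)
Your plan is conceptually attractive and genuinely different from the paper's route: you try to build $p$ in one shot by integrating the \emph{target} weight $\lambda(x)=C_0\langle x\rangle^{-2}$ directly along the flow, whereas the paper first constructs an auxiliary escape function $q$ that is allowed to grow like $\langle x\rangle$, split into a far-field convexity piece $q_1\sim\langle\xi\rangle^{-1}\psi(|x|^2)H_h(|x|^2)$ and a near-field piece $q_2=-\int_0^\infty\phi_1(X(s))\langle\Xi(s)\rangle\,ds$ with a \emph{compactly supported} cutoff $\phi_1$, and only afterward compresses $q$ into $p\in S^0$ via the device $p=\frac{q}{\langle x\rangle}\Psi_0+[f(|q|)+2\epsilon](\Psi_+-\Psi_-)$, the key being that $f'(|q|)\geq\langle x\rangle^{-2}$. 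Your reduction of $H_hp$ to $|\xi|\cdot\frac{d}{d\tilde\tau}F(\cdot)$ and the verification that this derivative equals $\lambda(x)$ are both correct, and if $F$ could be shown to lie in $S^0$ your $p$ would actually satisfy the slightly stronger identity $H_hp=\lambda(x)|\xi|$ away from $|\xi|\lesssim 1$.

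However, I see two real problems, and the second one is fatal to the argument as you have sketched it. First, the uniform bound $\sup_{x,\omega}\int_0^{\infty}\langle X(s;x,\omega)\rangle^{-2}\,ds<\infty$ does \emph{not} follow from ``a uniform escape time (uniform even for large $|x|$).'' The escape time from a fixed ball is not uniform in $x$: for $|x|$ large and $\omega$ pointing inward, the backward trajectory first reaches the ball after time comparable to $|x|$. What actually saves boundedness of $F$ is the quadratic lower bound $|X(s)|^2\geq \mu_0^2+\tfrac{c}{2}(s-s^\ast)^2$ valid outside a large ball, which comes from the convexity $H_h^2(|x|^2)\gtrsim|\xi|^2$ that asymptotic flatness provides — exactly the ingredient the paper isolates in $q_1$. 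So even the $|\alpha|=|\beta|=0$ seminorm already requires more than you say.

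Second, and more seriously, the symbol estimates for the higher seminorms do not follow from ``polynomial bounds on flow derivatives paired with the decay of $\nabla\lambda$.'' Concretely, the first $\omega$-derivative of $F$ involves $\int_0^\infty\nabla\lambda(X(s))\cdot\partial_\omega X(s)\,|\Xi(s)|\,ds$, and the bounds you invoke, namely $|\nabla\lambda(y)|\lesssim\langle y\rangle^{-3}$ and $|\partial_\omega X(s)|\lesssim|s|$ (which is sharp: already in the flat case $\partial_\omega X(s)=2sI$), yield the integrand bound $\langle X(s)\rangle^{-3}|s|$. This is \emph{not} uniformly integrable. Take $h=|\xi|^2$, $x=(R,\delta,0,\dots)$ with $\delta=O(1)$ fixed and $R\to\infty$, and $\omega=(1,0,\dots)$, so the backward trajectory $X(-\sigma)=x-2\sigma\omega$ grazes the origin at time $\sigma\sim R$; then $\int_0^\infty\langle X(-\sigma)\rangle^{-3}\sigma\,d\sigma\sim R\to\infty$. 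Nonetheless $\partial_\omega F$ \emph{is} uniformly bounded in this example — a direct computation gives $\partial_{\omega_j}F=O(1)$ — but only because of a sign cancellation in $\nabla\lambda(X)\cdot\partial_\omega X$ across the closest-approach time, which your size estimate throws away. Establishing this cancellation in the general variable-coefficient setting is a nontrivial additional argument that your proposal does not supply. The paper sidesteps the issue entirely: its $q_2$ integrates a compactly supported weight, so only finite-time flow derivatives enter the seminorm estimates, and the far-field weight $\langle x\rangle^{-2}$ is manufactured \emph{after the fact} through the compression $q\mapsto p$ rather than inserted into the flow integral. To make your one-formula approach rigorous you would have to either exploit the cancellation explicitly, or revert to some version of the paper's two-regime decomposition and compression.
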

We will quantify the ``non-trapping'' character of $h$ in terms of the constants $C_1^\ast$ and $B$ in the above lemma.
\begin{remark} 
The fact that the constant $A$ in Theorem \ref{T:1B} depends only on the coefficients at $t=0$ will be a crucial point in the application of this result to the non-linear problem. 
\end{remark}
 We will take Doi's lemma for granted, and use it to prove Theorem \ref{T:1B}.  At the end, we will prove the lemma.  We proceed in several steps.

\underline{Step 1. Reduction to a system}. We look at \eqref{E:13} and its complex conjugate, to obtain, with $\vec{w}= \left( \begin{matrix} u \\ \bar{u} \end{matrix} \right)$, $\vec{f} = \left( \begin{matrix} f \\ \bar{f} \end{matrix} \right)$, $\vec{w}_0 = \begin{pmatrix} u_0 \\ \bar{u}_0 \end{pmatrix}$, the system
\begin{equation*}
\left\{
\begin{aligned}
\partial_t \vec{w} &= -\epsilon \Delta^2I\vec{w} + [iH+B+C]\vec{w}+\vec{f} \\
\vec{w}\big|_{t=0} &= \vec{w}_0
\end{aligned}
\right.
\end{equation*}
where \hfill $H=\begin{pmatrix} \mathcal{L} & 0 \\ 0 & -\mathcal{L} \end{pmatrix}$, \hfill $\mathcal{L}=a_{lk}(x,t)\partial_{x_lx_k}^2$, \hfill $B=\begin{pmatrix} \vec{b}_1\cdot \nabla & \vec{b}_2\cdot \nabla \\ \overline{\vec{b}}_2\cdot\nabla & \overline{\vec{b}}_1\cdot \nabla \end{pmatrix} = \begin{pmatrix} B_{11} & B_{12} \\ B_{21} & B_{22} \end{pmatrix}$, \\ $C= \begin{pmatrix} c_1 & c_2 \\ \overline{c}_2 & \overline{c}_1 \end{pmatrix}$.

\underline{Step 2. Diagonalization of the 1st order terms}.  We first note that, given $(\text{H}_{1,l})$, $(\text{H}_{2,l})$, $\mathcal{L}$ is elliptic, with ellipticity constant $\gamma/2$, for $0<t<T$, $T$ sufficiently small, depending on $C$:
$$a_{lk}(x,t)\xi_l\xi_k = a_{lk}(x,0)\xi_l\xi_k + [a_{lk}(x,t)-a_{lk}(x,0)]\xi_l\xi_k\geq \gamma|\xi|^2-CT|\xi|^2$$ because of our bounds on $\partial_t a_{lk}$.  This type of argument is used frequently.  We write 
$$B=B_{\text{diag}}+B_{\text{anti}}= \begin{pmatrix} B_{11} & 0 \\ 0 & B_{22} \end{pmatrix}+ \begin{pmatrix} 0 & B_{12} \\ B_{21} & 0 \end{pmatrix}$$
 and our goal in this step is to eliminate $B_{\text{anti}}$.  To do this, we will set $\Lambda=I-S$, where $S= \begin{pmatrix} 0 & s_{12} \\ s_{21} & 0 \end{pmatrix}$, and $S$ is of order $-1$, and write our system in the new variable $\vec{z}=\Lambda \vec{w}$, for appropriately chosen $S$, so that $B_{\text{anti}}$ is eliminated, and $\Lambda$ is invertible.  Estimates on $\vec{z}$ are then equivalent to estimates on $\vec{w}$.  Let $h(x,t,\xi)=a_{lk}(x,t)\xi_l\xi_k$, so that, for each $t$, $\mathcal{L}=\Psi_{-h}$.  Choose $\Phi\in C_0^\infty(\mathbb{R}^n)$, $\Phi(y)\equiv 1$ for $|y|\leq 1$, $\Phi(y)\equiv 0$ for $|y|\geq 2$, $\theta_R(\xi)= [1-\Phi(\xi/R)]$, and let $\tilde{h}(x,t,\xi)=-h^{-1}(x,t,\xi)\cdot\theta_R(\xi)$.  Let $\tilde{\mathcal{L}}=\Psi_{\tilde{h}}$, so that $\tilde{h}\in S^{-2}$, uniformly in $t$, and $\tilde{\mathcal{L}}\mathcal{L}=I+\Psi_{r_1}$, $r_1\in S^{-1}$, uniformly in $t$.  We now define $s_{12}=-\frac{1}{2}iB_{12}\tilde{\mathcal{L}}$, $s_{21}=+\frac{1}{2}iB_{21}\tilde{\mathcal{L}}$, $S=\begin{pmatrix} 0 & s_{12} \\ s_{21} & 0 \end{pmatrix}$, $\Lambda=I-S$.  Notice that the entries of $S$ are of order $-1$, and we can choose $R$ so large that $\Lambda^{-1}$ is a 0-th order $\Psi$DO, for each $t$, and so that $\Lambda$ has operator norm in $H^{1/2}(\langle x \rangle^2dx)$, $L^2(\mathbb{R}^n)$ between $(\frac{1}{2},2)$, and so does $\Lambda^{-1}$.  Let us first consider 
\begin{align*}
i\begin{pmatrix} \mathcal{L} & 0 \\ 0 & -\mathcal{L} \end{pmatrix}\Lambda-\Lambda i\begin{pmatrix} \mathcal{L} & 0 \\ 0 & -\mathcal{L} \end{pmatrix} & = i \begin{pmatrix} \mathcal{L} & 0 \\ 0 & -\mathcal{L} \end{pmatrix} \begin{pmatrix} 0 & -s_{12} \\ -s_{21} & 0 \end{pmatrix} + i\begin{pmatrix} 0 & s_{12} \\ s_{21} & 0 \end{pmatrix} \begin{pmatrix} \mathcal{L} & 0 \\ 0 & -\mathcal{L} \end{pmatrix}\\
& = \begin{pmatrix} 0 & -i\mathcal{L}s_{12}-is_{12}\mathcal{L} \\ i\mathcal{L}s_{21}+is_{21}\mathcal{L} & 0 \end{pmatrix}
\end{align*}
Now, 
$$-i\mathcal{L}s_{12}-is_{12}\mathcal{L}= -\frac{1}{2}\mathcal{L}B_{12}\tilde{\mathcal{L}}-\frac{1}{2}B_{12}\tilde{\mathcal{L}}\mathcal{L}=-B_{12}+\text{order }0$$
and similarly 
$$i(\mathcal{L}s_{21}+s_{21}\mathcal{L})= -B_{21}+\text{order }0$$
Thus, $i(\Lambda H - H \Lambda)=-B_{\text{anti}}+C$, $C$ of order $0$.  Observe also that 
\begin{align*}
\Lambda B_{\text{diag}} & =(I-S)B_{\text{diag}}=B_{\text{diag}}-SB_{\text{diag}}=B_{\text{diag}}\Lambda +B_{\text{diag}}S-SB_{\text{diag}} \\
& =B_{\text{diag}}\Lambda+ \{ \underbrace{(B_{\text{diag}}S-SB_{\text{diag}})\Lambda^{-1}}_{\text{order }0} \} \Lambda = B_{\text{diag}}\Lambda + C\Lambda
\end{align*}
where $C$ is of order $0$, uniformly in $t$.  Also, $\Lambda B_{\text{anti}} = B_{\text{anti}} - S B_{\text{anti}} = B_{\text{anti}}+C\Lambda$, $C$ of order $0$, and $\Lambda C = (\Lambda C \Lambda^{-1})\Lambda = \tilde{C}\Lambda$, where $\tilde{C}$ is again of order $0$.  It remains to study $\Lambda \Delta^2I$, and $\Lambda \partial_t \vec{w}$.  The latter one equals $\partial_t \Lambda \vec{w}-(\partial_t \Lambda)\vec{w}=\partial_t\Lambda\vec{w}-(\partial_t\Lambda)\Lambda^{-1}\Lambda \vec{w}$, and $(\partial_t \Lambda)\Lambda^{-1}$ is of order $0$, uniformly in $t$.  For,
$$\Lambda \Delta^2 I = \Delta^2I- \begin{pmatrix} 0 & s_{12} \\ s_{21} & 0 \end{pmatrix} \Delta^2I=\Delta^2I- \begin{pmatrix} 0 & s_{12}\Delta^2 \\ s_{21}\Delta^2 & 0 \end{pmatrix}$$
 and $s_{12}\Delta^2 = \Delta^2s_{12}+R_{12}$, where $R_{12}$ is of order $2$.  Thus, $\epsilon\Lambda\Delta^2I=\epsilon \Delta^2 \Lambda I+\epsilon \tilde{R}$, where the entries of $\tilde{R}$ are of order $2$.  Writing $\tilde{R}=(\tilde{R}\Lambda^{-1})\Lambda = R\Lambda$, where the entries of $R$ are of order $2$, we obtain
$$
\left\{
\begin{aligned}
\partial_t \vec{z} &= -\epsilon \Delta^2 I \vec{z} + \epsilon R \vec{z} + i H \vec{z} + B_{\text{diag}}\vec{z}+C\vec{z}+\vec{F} \\
z\big|_{t=0} &= z_0
\end{aligned}
\right.
$$
where $\Lambda \vec{f}=\vec{F}$, $\Lambda \vec{w}_0 = z_0$, $R$ is of order 2, $C$ is of order $0$, uniformly in $t$.

\underline{Step 3. Construction of a ``gauged'' system}.  Recall that our ``non-trapping'' assumption is on $h(x,\xi)=a_{lk}(x,0)\xi_l\xi_k$, and that the symbol of $\mathcal{L}$ is $-h(x,t,\xi)=-a_{lk}(x,t)\xi_l\xi_k$.  Let $p \in S^0$ be the symbol associated to $h$, through Doi's Lemma, so that, 
$$H_hp \geq \frac{B|\xi|}{\langle x \rangle^2} - \frac{1}{B}$$
 and recall that, the symbol of $i[\Psi_c \Psi_a - \Psi_a \Psi_c]=-H_a(c)$, modulo lower order terms.  Note that, if $h_t(x,\xi)=h(x,t,\xi)$, 
\begin{align*}
H_{h_t}p &= \sum \frac{\partial h_t}{\partial \xi_j} \frac{\partial p}{\partial x_j} - \frac{\partial h_t}{\partial x_j} \frac{\partial p}{\partial \xi_j} \\
&= \sum \frac{\partial h}{\partial \xi_j}\frac{\partial p}{\partial x_j} - \frac{\partial h}{\partial x_j}  \frac{\partial p}{\partial \xi_j}+ \\
& \qquad + \sum \{ a_{lk}(x,t)-a_{lk}(x,0) \} \frac{\partial}{\partial \xi_j} (\xi_l \xi_k) \frac{\partial p }{\partial x_j} \\
& \qquad - \sum \left\{ \frac{\partial}{\partial x_j} a_{lk}(x,t)-\frac{\partial}{\partial x_j} a_{lk}(x,0) \right\} \xi_l \xi_k \frac{\partial p}{\partial \xi_j}
\end{align*}
so that, by our asymptotic flatness assumption $(\text{H}_{3,l})$, for small $T_0$, (depending on $C, C_1$), we have, $t<T_0$, 
$$H_{h_t}p \geq \frac{B}{2}\cdot \frac{|\xi|}{\langle x \rangle^2} - \frac{2}{B}$$
 We now define, for $M$ large, $R$ large to be chosen, $r_1(x,\xi)=\exp( ( -Mp(x,\xi)\theta_R(\xi) )$, $r_2(x,\xi)=\exp(+Mp(x,\xi)\theta_R(\xi))$, so that $\Psi_{r_1}$, $\Psi_{r_2}$ depend only on $M$, $h(x,\xi)$, $R$.  Note that $\Psi_{r_1}\Psi_{r_2}=I+T_{R_{-1}^1}$; $\Psi_{r_2}\Psi_{r_1} = I + T_{R_{-1}^2}$, where $T_{R_{-1}^i}$ are of order $-1$, with $S^0$ seminorms small in $R$.  Thus, for $R$ large, $\Psi_{r_1}$, $\Psi_{r_2}$ are invertible and their inverses are operators of order $0$.  This fixes $R$, depending only on $h(x,\xi)$, $M$, and controls the norm of $\Psi_{r_i}$, $\Psi_{r_i}^{-1}$ in various spaces, only in terms of those quantities.  Note that, modulo 0th order operators, the symbol of 
$$ i[\Psi_{r_1}\mathcal{L}-\mathcal{L}\Psi_{r_1}]  =i[\Psi_{r_1}\Psi_{-h_t}-\Psi_{-h_t}\Psi_{r_1} ] = -i[ \Psi_{r_1}\Psi_{h_t}-\Psi_{h_t}\Psi_{r_1}] $$ 
is  $H_{h_t}r_1 = -MH_{h_t}(p\theta_R)r_1$.
A similar computation gives that the symbol of $i[-\Psi_{r_2}\mathcal{L}+\mathcal{L}\Psi_{r_2}]=-MH_{h_t}(p\theta_R)r_2$, modulo 0th order operators.  Let now $\Psi = \begin{pmatrix} \Psi_{r_1} & 0 \\ 0 & \Psi_{r_2} \end{pmatrix}$, and define $\vec{\alpha}=\Psi\vec{z}$.  We will write the system for $\vec{\alpha}$.  The constant $M$, and hence $R$, will be eventually chosen depending only on $C_1$.
\begin{align*}
i[\Psi H - H \Psi] &= i \left[ \begin{pmatrix} \Psi_{r_1} & 0 \\ 0 & \Psi_{r_2} \end{pmatrix} \begin{pmatrix} \mathcal{L} & 0 \\ 0 & -\mathcal{L} \end{pmatrix} - \begin{pmatrix} \mathcal{L} & 0 \\ 0 & -\mathcal{L} \end{pmatrix}\begin{pmatrix} \Psi_{r_1} & 0 \\ 0 & \Psi_{r_2} \end{pmatrix} \right] \\
& = \begin{pmatrix} -M \Psi_{H_{h_t}(\theta_R p)} & 0 \\ 0 & -M\Psi_{H_{h_t}(\theta_R p)} \end{pmatrix} \Psi + \tilde{C}, \quad \tilde{C} \text{ order }0 \\
&= \begin{pmatrix} -M\Psi_{H_{h_t}(\theta_R p)} & 0 \\ 0 & -M\Psi_{H_{h_t}(\theta_R p)} \end{pmatrix} \Psi + C\Psi, \quad C \text{ order }0 
\end{align*}
Next, 
\begin{align*}
\Psi B_{\text{diag}} &= \begin{pmatrix} \Psi_{r_1}B_{11} & 0 \\ 0 & \Psi_{r_2}B_{22} \end{pmatrix} = \begin{pmatrix} B_{11}\Psi_{r_1} & 0 \\ 0 & B_{22}\Psi_{r_2} \end{pmatrix} + \text{$0$th order} \\
&= B_{\text{diag}}\Psi+C\Psi, \quad C \text{ order }0 \\
\Psi C &= (\Psi C\Psi^{-1})\Psi
\end{align*}
Since $\Psi$ is $t$ independent, $\Psi\partial_t = \partial_t\Psi$, so it only remains to study $\Psi R$, $R$ of order $2$, and $\Psi\Delta^2I$.  We just write $\Psi R= (\Psi R\Psi^{-1})\Psi$, and note that 
$$\Psi\Delta^2 I  = \begin{pmatrix} \Psi_{r_1}\Delta^2 & 0 \\ 0 & \Psi_{r_2}\Delta^2 \end{pmatrix} = \begin{pmatrix}\Delta^2\Psi_{r_1} & 0 \\ 0 & \Delta^2\Psi_{r_2} \end{pmatrix} +E$$
$E$ of order $3$.  All in all, we obtain, (grouping the order $2$ and order $3$ terms together)
$$\left\{
\begin{aligned}
\partial_t \vec{\alpha} &= 
\begin{aligned}[t]
&-\epsilon \Delta^2I\vec{\alpha} + \epsilon E\vec{\alpha} + iH\vec{\alpha} + B_{\text{diag}}\vec{\alpha} \\
&-M \begin{pmatrix} \Psi_{H_{h_t}(\theta_R p)} & 0 \\ 0 & \Psi_{H_{h_t}(\theta_R p)} \end{pmatrix} \vec{\alpha} + C\vec{\alpha}+\vec{G}
\end{aligned} \\
\vec{\alpha}\big|_{t=0} & = \vec{\alpha}_0
\end{aligned}
\right.$$
where $E$ is of order $3$, uniformly in $t$, $C$ is of order 0, $B_{\text{diag}}= \begin{pmatrix} B_{11} & 0 \\ 0 & B_{22} \end{pmatrix}$, $B_{11}=\vec{b}_1\cdot \nabla$, $B_{22}=\overline{\vec{b}}_1 \cdot \nabla$, $\vec{G}=\Psi \vec{F}$, $\vec{\alpha}_0 = \Psi \vec{w}_0$, and where $M$ and hence $R$ are still to be chosen, depending only on $C_1$. (Proof to be continued in Lecture 5).\\

\section*{Problems for Lecture \arabic{chapter}}
\begin{problems}
\item Prove that, under ellipticity, the Hamiltonian flow exists for all $s$.
\item Prove that, under ellipticity and asymptotic flatness, the non-trapping condition implies the following ``uniform non-trapping'':  Let  $K\subset \mathbb{R}^n\times \mathbb{R}^n\backslash \{ 0\}$ be a compact set.  Then, given $\mu>0$, $\exists \; s_0 = s_0(K, \mu)$ such that $|X(s; x_0, \xi_0)| \geq \mu$, $\forall \; |s|\geq |s_0|$, $\forall \; (x_0, \xi_0)\in K$.
\item Combine Step 2 in the proof of Theorem \ref{T:1B} with the technique used for the proof of Problem \chapterref{P:4}.\ref{P:4}, to give a proof of Theorem \ref{T:S3}.
\item  Use the problem before, and the techniques in Lecture 3, to remove the smallness assumption in Theorems \ref{T:S1} and \ref{T:S2}.
\end{problems}

\lecture[Variable coefficient linear Schr\"{o}dinger, II]{The linear Schr\"{o}dinger equation with variable coefficients, Part II}
We continue with our proof of Theorem \ref{T:1B}.  

\underline{Step 4.  Energy estimates for the ``gauged'' system}.  We will now choose $M$ large, depending on $C_1$, so that $\vec{\alpha}$ verifies the estimates in Theorem \ref{T:1B}.  This will, in turn, imply Theorem \ref{T:1B}.  We define $\langle \vec{\alpha}, \vec{\beta} \rangle = \int \alpha_1\bar{\beta}_1 + \alpha_2\bar{\beta}_2$, and set out to compute 
\begin{align*}
\partial_t \langle \vec{\alpha}, \vec{\alpha} \rangle 
&= -\epsilon \langle \Delta^2 I \vec{\alpha}, \vec{\alpha} \rangle - \epsilon \langle \vec{\alpha}, \Delta^2 I \vec{\alpha} \rangle + \epsilon \langle E \vec{\alpha}, \vec{\alpha} \rangle + \epsilon \langle \vec{\alpha}, E\vec{\alpha} \rangle \\
& \qquad + \langle iH\vec{\alpha}, \vec{\alpha} \rangle + \langle \vec{\alpha}, iH\vec{\alpha} \rangle + \langle B_{\text{diag}}\vec{\alpha}, \vec{\alpha} \rangle + \langle \vec{\alpha}, B_{\text{diag}}\vec{\alpha} \rangle \\
& \qquad -M  \left< \begin{pmatrix} \Psi_{H_{h_t}(\theta_R p)} & 0 \\ 0 & \Psi_{H_{h_t}(\theta_R p)} \end{pmatrix} \vec{\alpha}, \vec{\alpha} \right> \\
& \qquad -M \left< \vec{\alpha}, \begin{pmatrix} \Psi_{H_{h_t}(\theta_R p)} & 0 \\ 0 & \Psi_{H_{h_t}(\theta_R p)} \end{pmatrix} \vec{\alpha} \right>  \\
& \qquad + \langle C\vec{\alpha}, \vec{\alpha} \rangle + \langle \vec{\alpha}, C\vec{\alpha} \rangle + \langle \vec{G}, \vec{\alpha} \rangle + \langle \vec{\alpha}, \vec{G} \rangle \\
&= -2\epsilon \text{Re }\langle \Delta^2I\vec{\alpha},  \vec{\alpha} \rangle + 2\epsilon \text{Re  }\langle E\vec{\alpha}, \vec{\alpha} \rangle + i[ \langle H\vec{\alpha}, \vec{\alpha}\rangle - \langle \vec{\alpha}, H\vec{\alpha} \rangle ] \\
& \qquad + 2 \text{Re} \langle B_{\text{diag}}\vec{\alpha}, \vec{\alpha} \rangle -2M \text{Re } \left< \begin{pmatrix} \Psi_{H_{h_t}(\theta_Rp)} & 0 \\ 0 & \Psi_{H_{h_t}(\theta_R p)}  \end{pmatrix} \vec{\alpha}, \vec{\alpha} \right> \\
& \qquad + 2\text{Re }\langle C\vec{\alpha}, \vec{\alpha} \rangle + 2\text{Re }\langle \vec{G}, \vec{\alpha} \rangle
\end{align*}
We start analyzing terms:\\
\noindent \textbf{Term I}. $ -2\epsilon \text{Re } \langle \Delta^2 I \vec{\alpha}, \vec{\alpha} \rangle = -2\epsilon \|\Delta \vec{\alpha} \|_{L^2}^2$.

\noindent \textbf{Term II}.  
$$ \epsilon |\text{Re}\langle E\vec{\alpha} , \vec{\alpha} \rangle | = \epsilon |\text{Re}\langle J^{3/2}J^{-3/2}E\vec{\alpha}, \vec{\alpha} \rangle| = \epsilon |\text{Re}\langle J^{-3/2}E\vec{\alpha}, J^{3/2}\vec{\alpha} \rangle| \leq C\epsilon \| \vec{\alpha}\|_{H^{3/2}}^2$$
since $J^{-3/2}E$ is of order $3/2$.  We now invoke the interpolation inequality$$\|\vec{\alpha}\|_{H^{3/2}}^2 \leq \eta_0 \|\vec{\alpha} \|_{H^2}^2 + \frac{1}{\eta_0} \|\vec{\alpha} \|_{L^2} \leq \eta_0\|\Delta \vec{\alpha} \|_{L^2}^2 + \frac{C_n}{\eta_0} \|\vec{\alpha} \|_{L^2}$$
where $\eta_0>0$ is arbitrary.  If we now choose $\eta_0 = \eta_0(C, n)$ so small that $C\eta_0 \leq 1$, we obtain that the sum of Terms I and II is smaller than $-\epsilon \|\Delta \vec{\alpha} \|_{L^2}^2 + C\epsilon \|\vec{\alpha} \|_{L^2}^2$.

\noindent \textbf{Term III}.  Next, 
$$ i[ \langle H \vec{\alpha}, \vec{\alpha} \rangle - \langle \vec{\alpha}, H \vec{\alpha} \rangle] = i [ \langle (H-H^\ast)\vec{\alpha}, \vec{\alpha} \rangle ]$$
Since $H= \begin{pmatrix} \mathcal{L} & 0 \\ 0 & -\mathcal{L} \end{pmatrix}$, $H-H^\ast = \begin{pmatrix} \mathcal{L} - \mathcal{L}^\ast & 0 \\ 0 & \mathcal{L}^\ast - \mathcal{L} \end{pmatrix}$.  Recall that $\mathcal{L}=\Psi_{-h_t}$, $h_t(x,\xi)= h(x,t,\xi)=a_{kl}(x,t)\xi_k\xi_l$.  Thus, by the calculus, since $h_t$ is real valued, the symbol of $\mathcal{L}^\ast$ is 
$$-a_{kl}(x,t)\xi_k\xi_l-i\sum_{j=1}^n \partial_{\xi_j} \partial_{x_j} [ a_{kl}(x,t)\xi_k\xi_l ] + \text{order }0$$
and hence, if $\tilde{b}(x,t,\xi) = \sum_{j=1}^n \partial_{\xi_j} \partial_{x_j} [ a_{kl}(x,t)\xi_k\xi_l ]$, $|\partial_t \tilde{b}(x,t,\xi)| \leq C \frac{|\xi|}{\langle x \rangle^2}$, and $|\tilde{b}(x,0,\xi)|\leq C_1 \frac{|\xi|}{\langle x \rangle^2}$, and $C^N$ norms of $\tilde{b}(x,t,\xi)$, $\tilde{b}(x,0,\xi)$ have similar bounds, because of $(\text{H}_{3,l})$, $(\text{H}_{2,l})$.  We have then 
$$i[ \langle (H-H^\ast)\vec{\alpha}, \vec{\alpha} \rangle ] =  \left< \begin{pmatrix} \Psi_{-\tilde{b}} & 0 \\ 0 & \Psi_{\tilde{b}} \end{pmatrix} \vec{\alpha}, \vec{\alpha} \right>  + \langle C \vec{\alpha}, \vec{\alpha} \rangle$$
where $C$ is of order $0$, and, since the symbol $\tilde{b}$ is real valued,
$$\left< \begin{pmatrix} \Psi_{-\tilde{b}} & 0 \\ 0 & \Psi_{\tilde{b}} \end{pmatrix} \vec{\alpha}, \vec{\alpha} \right> = \text{Re } \left< \begin{pmatrix} \Psi_{-\tilde{b}} & 0 \\ 0 & \Psi_{\tilde{b}} \end{pmatrix}\vec{\alpha}, \vec{\alpha} \right> + \langle C \vec{\alpha}, \vec{\alpha} \rangle, \quad C \text{ order }0$$
\noindent \textbf{Term IV}.  We next consider the terms
\begin{align*}
\text{Re} \left< \begin{pmatrix} \Psi_{-\tilde{b}} & 0 \\ 0 & \Psi_{\tilde{b}} \end{pmatrix} \vec{\alpha}, \vec{\alpha} \right> + 2\text{Re} \langle B_{\text{diag}} \vec{\alpha}, \vec{\alpha} \rangle \hspace{-1in}&\\
-2M \text{Re} \left< \begin{pmatrix} \Psi_{H_{h_t}(\theta_R p)} & 0 \\ 0 & \Psi_{H_{h_t}(\theta_R p)} \end{pmatrix} \vec{\alpha}, \vec{\alpha} \right> \hspace{-1in}& \\
&= \begin{aligned}[t]
&\text{Re} \left< \begin{pmatrix} \Psi_{-\tilde{b}}+2B_{11} & 0 \\ 0 & \Psi_{\tilde{b}} + 2B_{22} \end{pmatrix} \vec{\alpha}, \vec{\alpha} \right> \\
&+ \text{Re} \left< \begin{pmatrix} -2M\Psi_{H_{h_t}(\theta_R p)} & 0 \\ 0 & -2M\Psi_{H_{h_t}(\theta_R p)} \end{pmatrix} \vec{\alpha}, \vec{\alpha} \right>
\end{aligned}\\
&=\begin{aligned}[t] 
&\text{Re} \int [ \Psi_{-\tilde{b}} + 2B_{11}-2M\Psi_{H_{h_t}(\theta_R p)}] (\alpha_1) \, \overline{\alpha}_1  \\
&+ \text{Re} \int [ \Psi_{\tilde{b}} + 2B_{22}-2M\Psi_{H_{h_t}(\theta_R p)} ] (\alpha_2) \, \overline{\alpha}_2
\end{aligned}
\end{align*}
Now, 
$$H_{h_t}(\theta_R p)= \theta_R(\xi) H_{h_t}(p) + p H_{h_t}(\theta_R)$$
and $H_{h_t}(\theta_R) = -\partial_{x_j}h(x,t,\xi) \partial_{\xi_j}\theta_R$, so that $|H_{h_t}(\theta_R)p|\leq C$, for $R\geq 1$.  Also, $B_{11} = \vec{b}_1 \cdot \nabla$, $B_{22}= \overline{\vec{b}}_1 \cdot \nabla$, so that $B_{11}=\Psi_{i\vec{b}_1(x,t)\cdot \xi}$, $B_{22}=\Psi_{i\overline{\vec{b}}_1(x,t)\cdot \xi}$.  Consider $\text{Re }(i\vec{b}_1(x,t)\cdot \xi)=-\text{Im }(\vec{b}_1(x,t)\cdot \xi)$.  Now $|\text{Im }\vec{b}_1(x,0,\xi)| \leq \frac{C_1}{\langle x \rangle^2}$, and  $|\partial_t \text{Im } \vec{b}_1(x,t,\xi) | \leq \frac{C}{\langle x \rangle^2}$ implies that, for $0<t<T_0$, $T_0=T_0(C)$, we have 
$$|\text{Im }\vec{b}_1(x,t,\xi)| \leq \frac{2C_1}{\langle x \rangle^2}$$
  A similar estimate holds for $\tilde{b}$.  Recall also from Doi's lemma and our previous remark that, for $0<t<T_0$, 
$$H_{h_t}p \geq \frac{B}{2}\frac{|\xi|}{\langle x \rangle^2} - \frac{2}{B}$$
  Then, we have
\begin{align*}
\hspace{.5in}&\hspace{-.5in}\text{Re }[ -\tilde{b}(x,t,\xi) + 2i\vec{b}_1(x,t)\cdot \xi -2M H_{h_t}(\theta_R p) ] \\
& \leq  \frac{6C_1 |\xi|}{\langle x \rangle^2}+\frac{4M}{B} \theta_R(\xi) + 2MC- MB\frac{|\xi|}{\langle x \rangle^2}\theta_R(\xi)
\end{align*}
Now, choose $M$ so that $MB>6C_1+1$.  Now, choose $R$, depending on $M$, so that $\Psi_{r_i}$ in the construction of the gauged system are invertible.  We then have, for $|\xi|\geq R$, that 
$$\text{Re} [ -\tilde{b}(x,t,\xi) + 2i\vec{b}_1(x,t)\cdot \xi - 2M H_{h_t}(\theta_R p) ] \leq 2MC - \frac{|\xi|}{\langle x \rangle^2}$$
and so, by the sharp G\r{a}rding inequality, we have
$$\text{Re} \int [ \Psi_{-\tilde{b}}+2B_{11}-2M\Psi_{H_{h_t}(\theta_Rp)}]\alpha_1 \cdot \bar{\alpha}_1  \leq C \|\alpha_1\|_{L^2}^2 - \text{Re}\left< \Psi_{\langle \xi \rangle/\langle x \rangle^2} \alpha_1, \alpha_1 \right>$$
Thus, our sum of terms is bounded by 
$$C\|\vec{\alpha}\|_{L^2}^2 - \text{Re} \left< \begin{pmatrix} \Psi_{\langle \xi \rangle/\langle x \rangle^2} & 0 \\ 0 & \Psi_{\langle \xi \rangle/\langle x \rangle^2} \end{pmatrix} \vec{\alpha}, \vec{\alpha} \right>$$
But $\Psi_{\langle \xi \rangle/\langle x \rangle^2} = \Psi^\ast_{\langle \xi \rangle^{1/2}/\langle x \rangle} \circ \Psi_{\langle \xi \rangle^{1/2}/\langle x \rangle} + C$, $C$ of order $0$, and $$\left< \Psi^\ast_{\langle \xi \rangle^{1/2}/ \langle x \rangle} \circ \Psi_{\langle \xi \rangle^{1/2}/\langle x \rangle} f, f \right>= \|J^{1/2}f\|_{L^2(dx/\langle x \rangle^2)}^2$$  

Gathering all the terms, we obtain
$$\frac{d}{dt}\langle \vec{\alpha}, \vec{\alpha} \rangle + \epsilon \|\Delta \vec{\alpha} \|_{L^2}^2 + \|J^{1/2}\vec{\alpha} \|_{L^2(dx/\langle x \rangle^2)}^2 \leq C \|\vec{\alpha}\|_{L^2}^2 + 2 |\langle \vec{G}, \vec{\alpha} \rangle|$$
To obtain the first bound in Theorem \ref{T:1B}, we use $|\langle \vec{G}, \vec{\alpha} \rangle | \leq \|\vec{G}\|_{L^2}\|\vec{\alpha}\|_{L^2}$ while for the second one, we use 
$$|\langle \vec{G}, \vec{\alpha} \rangle| \leq \|J^{-1/2}\vec{G}\|_{L^2(\langle x \rangle^2dx)} \cdot \|J^{1/2} \vec{\alpha} \|_{L^2(dx/\langle x \rangle^2)}$$
Let us complete the proof of the first estimate: Fix $T<T_0$, and, for $t<T$, consider
\begin{align*}
&\|\vec{\alpha}(t) \|_{L^2}^2 + \epsilon \int_0^t \|\Delta \vec{\alpha} \|_{L^2}^2 + \int_0^t \|J^{1/2}\vec{\alpha} \|_{L^2(dx/\langle x \rangle^2)}^2\\
&= \|\vec{\alpha}(0)\|_{L^2}^2 + \int_0^t \left[ \frac{\partial}{\partial t} \|\vec{\alpha} \|_{L^2}^2 +\epsilon \|\Delta \vec{\alpha} \|_{L^2}^2 + \|J^{1/2}\vec{\alpha} \|_{L^2(dx / \langle x \rangle^2)}^2 \right] \, dt\\
&\leq \|\vec{\alpha}(0)\|_{L^2}^2 + \int_0^t \left[ C \|\vec{\alpha} \|_{L^2}^2 + 2\|\vec{G}\|_{L_x^2}\|\vec{\alpha}\|_{L_x^2}\right] \, dt \\
&\leq \|\vec{\alpha}(0)\|_{L^2}^2 + CT \sup_{0<t<T} \|\vec{\alpha}\|_{L^2}^2 + 2\|\vec{G}\|_{L_T^1L_x^2} \sup_{0<t<T} \|\vec{\alpha}\|_{L^2}
\end{align*}
If $CT<\frac{1}{2}$, we are done.
\begin{remark} \label{R:51}
Note that the above proof carries over, almost verbatim, if $(\text{H}_{5,l})$ is replaced by $(\text{H}_{5,l})'$: Let $A(x)=a_{kl}(x,0)$.  Then, $A(x)=A_0(x)+\eta A_1(x)$, where $A_0$ verifies $(\text{H}_{1,l})-(\text{H}_{5,l})$, and $|A_1(x)|\leq \frac{B_1}{\langle x \rangle^2}$, $|\nabla A_1(x)|\leq \frac{B_1}{\langle x \rangle^2}$, where $0\leq \eta\leq \eta_0$, for $\eta_0$ small enough depending on the constants in Doi's Lemma for $A_0$, on $C_1$, and on $B_1$.
\end{remark}
\begin{remark} \label{R:52}
The first order terms $\vec{b}_1(x,t) \cdot \nabla u$, $\vec{b}_2(x,t) \cdot \nabla \bar{u}$ can be replaced by $\Psi$DO $B_i$ of order $1$, depending in a $C^1$ fashion on $t$, and whose symbols $b_1(x,t,\xi)$, $b_2(x,t,\xi)$ verify estimates like those in $(\text{H}_{2,l})$, and $\text{Re }b_1(x,t,\xi)$ verifies estimates like those in $(\text{H}_{4,l})$.  
\end{remark}
Remark \ref{R:51} follows because the $p$ in Doi's Lemma that works for $A_0$, for small $\eta_0$, will work for $A$, and the proof is then identical.  Remark \ref{R:52} follows by using the same proof.

To complete our proof, we sketch the proof of Doi's Lemma.  Thus, $h(x,\xi)=a_{kl}(x,0)\xi_k\xi_l$ is assumed to verify $(\text{H}_{1,l})$ (ellipticity), $(\text{H}_{2,l})$ (regularity), $(\text{H}_{3,l})$ (asymptotic flatness), and $(\text{H}_{5,l})$, the non-trapping condition for the associated Hamiltonian vector field, i.e.\ the solutions of
$$
\left\{
\begin{aligned}
\frac{d}{ds} X_j(s; x_0, \xi_0) &= \partial_{\xi_j}h(X,\Xi) \\
\frac{d}{ds} \Xi_j(s;x_0,\xi_0) &= -\partial_{x_j}h(X,\Xi) \\
X(0; x_0, \xi_0) &= x_0 \\
\Xi(0;x_0,\xi_0) &=\xi_0
\end{aligned}
\right.
$$
have the property that $|X(s;x_0,\xi_0)| \to \infty$ as $s\to \infty$, $\forall \; x_0, \; \xi_0\neq 0$.  Recall also the homogeneity properties
\begin{align*}
X(s; x_0, r\xi_0) &= X(rs_0; x_0, \xi_0) \\
\Xi(s; x_0, r\xi_0) &= r\Xi(rs; x_0, \xi_0)
\end{align*}
We have to find $p(x,\xi)\in S^0$, real valued, such that 
$$H_hp\geq \frac{B|\xi|}{\langle x \rangle^2} - \frac{1}{B},  \quad \forall \; (x,\xi)\in \mathbb{R}^n\times \mathbb{R}^n$$
The first reduction is the following: Assume $\exists \; q$ such that $$|\partial_x^\alpha \partial_\xi^\beta q(x,\xi)| \leq C_{\alpha \beta} \langle x \rangle \langle \xi \rangle^{-|\beta|}$$ and $$H_hq \geq B_1|\xi|-B_2, \quad \forall \; x,\xi$$  Then we can construct a $p$ as we desire: let $K\geq 1$ be such that $|q(x,\xi)| \leq K\langle x \rangle$.  Define now $f(t)=2K^2 \int_0^t \frac{ds}{\langle s \rangle^2}$, so that $f'(|q|) \geq \frac{1}{\langle x \rangle^2}$, for all $(x,\xi)\in \mathbb{R}^n\times \mathbb{R}^n$.  Let $\phi(t)\in C^\infty(\mathbb{R})$ be such that $\phi(t)=0$ if $t\leq 1$ and $\phi(t)=1$ if $t\geq 2$, $\phi'(t)\geq 0$.  Let $\phi_+(t)=\phi(\frac{t}{\epsilon})$, $\phi_-(t)=\phi_+(-t)$, $\phi_0=1-\phi_+-\phi_-$. Define $\Psi_0$, $\Psi_\pm\in S^0$ by $\Psi_0=\phi_0(\frac{q}{\langle x \rangle})$, $\Psi_\pm = \phi_\pm(\frac{q}{\langle x \rangle})$.  By our construction of $f$, $|\partial_x^\beta \partial_\xi^\alpha f(|q(x,\xi)|)|\leq C_{\alpha \beta} \langle \xi \rangle^{-|\alpha|}$ on $\text{supp }\Psi_+\cup\text{ supp }\Psi_-$, given the estimates on $q$.  We now put $$p=\frac{q}{\langle x \rangle}\Psi_0 + [ f(|q|)+2\epsilon][\Psi_+-\Psi_-]\in S^0$$ and check that, for $\epsilon$ small, it has the desired estimates: In fact, on support of $\Psi_0$ (i.e.\ $|q|\leq \epsilon \langle x \rangle$), for $\epsilon$ small enough,
$$H_h\Bigl(\frac{q}{\langle x \rangle}\Bigr) = \frac{H_h q}{\langle x \rangle} - q \frac{x}{\langle x \rangle} \cdot \frac{\nabla_\xi h(x,\xi)}{\langle x \rangle^2} \geq \tilde{B}_1 \frac{|\xi|}{\langle x \rangle} - \tilde{B}_2$$
Fix such $\epsilon$, then
\begin{equation*}
H_h(p)= \begin{aligned}[t]
&H_h\Bigl(\frac{q}{\langle x \rangle}\Bigr) \Psi_0 + \frac{q}{\langle x \rangle} \phi_0'\Bigl(\frac{q}{\langle x \rangle} \Bigr) H_h\Bigl(\frac{q}{\langle x \rangle}\Bigr) + f'(|q|) H_h(|q|)(\Psi_+-\Psi_-) \\
&+ [ f(|q|)+2\epsilon]\left\{ \phi_+'\Bigl( \frac{q}{\langle x \rangle} \Bigr)-\phi_-'\Bigl( \frac{q}{\langle x \rangle} \Bigr) \right\} H_h\Bigl(\frac{q}{\langle x \rangle }\Bigr)
\end{aligned}
\end{equation*}
Note that $\phi_0'=-\phi_+'-\phi_-'$ and
$$H_h(|q|)\{ \Psi_+-\Psi_- \}= (\text{sign }q)H_h(q)\{ \Psi_+-\Psi_- \} = H_h(q)\{ \Psi_+ + \Psi_- \}$$
 so that 
\begin{equation*}
H_h(p)= 
\begin{aligned}[t]
&H_h\Bigl(\frac{q}{\langle x \rangle}\Bigr) \Psi_0 + f'(|q|)H_h(q)\{ \Psi_+ + \Psi_- \} \\
&+ \left(f(|q|) + 2\epsilon - \frac{|q|}{\langle x \rangle}\right) \left\{ \phi_+'\Bigl(\frac{q}{\langle x \rangle}\Bigr) - \phi_-'\Bigl(\frac{q}{\langle x \rangle}\Bigr) \right\}H_h\Bigl(\frac{q}{\langle x \rangle}\Bigr)
\end{aligned}
\end{equation*}
Note that $\phi_+'-\phi_-'\geq 0$, and on $\text{supp }\phi_+'( \frac{q}{\langle x \rangle} ) \cup \text{ supp } \phi_-'( \frac{q}{\langle x \rangle} ) $, we have 
$$f(|q|)+2\epsilon-\frac{|q|}{\langle x \rangle} \geq 0$$ and 
$$H_h\Bigl(\frac{q}{\langle x \rangle}\Bigr)\geq \tilde{B}_1 \frac{|\xi|}{|x|} - \tilde{B}_2$$
Thus, we get a lower bound of 
$$\tilde{B}_1 \frac{|\xi|}{\langle x \rangle }\Psi_0 + \frac{B_1|\xi|}{\langle x \rangle^2}\{ \Psi_+ + \Psi_- \} - \tilde{B}_2$$
 which gives the desired bound.  

We now turn to the construction of $q$:  Recall $h(x,\xi)=a_{kl}(x)\xi_k\xi_l = \left< A(x)\xi, \xi \right>$, where $A(x)=(a_{kl}(x))$ and that 
$$H_h\phi = \sum_{j=1}^n \partial_{\xi_j}h(x,\xi)\partial_{x_j}\phi - \partial_{x_j}h(x,\xi) \partial_{\xi_j}\phi$$
For $M$ large to be chosen, let $\psi\in C^\infty(\mathbb{R})$, $\psi\equiv 0$ for $t\leq M^2$, $\psi(t)=1$ for $t\geq (M+1)^2$, $\psi'\geq 0$.  Let 
$$q_1(x,\xi)=\langle \xi \rangle^{-1} \psi(|x|^2) H_h(|x|^2) = -4 \langle \xi \rangle^{-1} \psi(|x|^2) \langle A(x)\xi, x \rangle$$
By calculation, we have 
\begin{align*}
H_hq_1 = &\langle \xi \rangle^{-1}\psi'(|x|^2) H_h(|x|^2)^2 \\
&+ \langle \xi \rangle^{-1}\psi(|x|^2)
\begin{aligned}[t]
\Big[ & 8|A(x)\xi|^2+8\sum_{j,k,l,m}x_l\partial_{x_j}a_{lm}(x)a_{jk}(x)\xi_k\xi_m \\
& -4\sum_{j,k,l,m,p} x_ja_{jk}(x)\partial_{x_l}a_{mp}(x)\frac{\xi_k\xi_l\xi_m\xi_p}{\langle \xi \rangle^2} \Big]
\end{aligned}
\end{align*}
Because of our assumptions on $A(x)$, one can fix $M$ sufficiently large so that
$$H_hq_1 \geq c \psi(|x|^2)\frac{|\xi|^2}{\langle \xi \rangle} \quad \; x,\xi \in \mathbb{R}^n$$
Next we will use the following fact about the non-trapping condition, in the presence of asymptotic flatness:

\begin{lemma} Let $K\subset \mathbb{R}^n\times \mathbb{R}^n\backslash \{ 0 \}$ be a compact set.  Then, given $\mu >0$, $\exists \; s_0=s_0(K,\mu)$ such that $|X(s;x_0,\xi_0)|\geq \mu$, $\forall \; |s|\geq |s_0|$, $\forall \; (x_0,\xi_0) \in K$.  
\end{lemma}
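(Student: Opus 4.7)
My plan is to deduce the uniform statement from the pointwise non-trapping assumption by combining a convexity-type "outgoing" property (coming from asymptotic flatness and ellipticity) with an upper semi-continuity argument on the first escape time (coming from continuous dependence of the Hamiltonian flow on initial data). Set $A(x) = (a_{kl}(x,0))$ and, for a fixed $(x_0,\xi_0)$, write $f(s) = |X(s;x_0,\xi_0)|^2$. Using $\dot X = 2A(X)\Xi$ and $\dot\Xi = -\nabla_x h(X,\Xi)$ one computes $f''(s) = 8|A(X)\Xi|^2 + 4\langle X, \tfrac{d}{ds}(A(X)\Xi)\rangle$. Ellipticity gives $|A\Xi|^2 \geq \gamma^2|\Xi|^2$, while asymptotic flatness $|\nabla_x A|\leq C\langle X\rangle^{-2}$, combined with the bounds $|\dot X| \leq C|\Xi|$ and $|\dot\Xi|\leq C|\Xi|^2\langle X\rangle^{-2}$, controls the second term by $C|\Xi|^2/\langle X\rangle$. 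Conservation of $h$ along the flow together with ellipticity yields $|\Xi(s)|^2 \geq (\gamma/\Lambda)|\xi_0|^2$, and compactness of $K$ in $\mathbb{R}^n\times(\mathbb{R}^n\setminus\{0\})$ gives $|\xi_0|\geq c_K>0$ on $K$. Thus there exists $R_0 = R_0(K,\mu)\geq\mu$ such that $f''(s)\geq c_K'>0$ whenever $|X(s)|\geq R_0$. In particular, if $|X(s_1)|\geq R_0$ and $f'(s_1)\geq 0$, both inequalities propagate forward, so $|X(s)|$ is non-decreasing on $[s_1,\infty)$ and stays $\geq |X(s_1)|$ forever.

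For each $(x_0,\xi_0)\in K$, the non-trapping hypothesis supplies a sequence $s_n\to +\infty$ with $|X(s_n)|\to\infty$. By continuity of $s\mapsto|X(s)|$ and the intermediate value theorem, $s^+(x_0,\xi_0) := \inf\{s\geq 0 : |X(s;x_0,\xi_0)|=R_0+1\}$ is well defined and finite. At $s=s^+$ the trajectory first reaches the sphere of radius $R_0+1$, so $f'(s^+)\geq 0$. By the previous paragraph, $|X(s;x_0,\xi_0)| \geq R_0+1\geq \mu$ for all $s\geq s^+(x_0,\xi_0)$.

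It remains to bound $s^+$ uniformly on $K$. The function $s^+:K\to[0,\infty)$ is upper semi-continuous: if $s^+(x_0,\xi_0)=t_0$, strict convexity from the first paragraph gives $|X(t_0+\delta;x_0,\xi_0)| > R_0+1$ for small $\delta>0$, and continuous dependence of the Hamiltonian ODE on initial data then gives the same strict inequality for all $(x',\xi')$ in a neighborhood of $(x_0,\xi_0)$, forcing $s^+(x',\xi')\leq t_0+\delta$. An upper semi-continuous function on a compact set attains its maximum, and this maximum is the desired $s_0 = s_0(K,\mu)$. The backward statement for $s\leq -s_0$ follows by applying the same argument to the compact set $\{(x_0,-\xi_0) : (x_0,\xi_0)\in K\}$ and using the reversibility identity $X(-s;x_0,\xi_0) = X(s;x_0,-\xi_0)$, which holds because $h$ is even in $\xi$.

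The main obstacle is the first step: one must check that the decay rate $\langle X\rangle^{-2}$ of $\nabla_xA$ furnished by $(H_{3,l})$ is exactly strong enough to dominate the error term $4\langle X,\tfrac{d}{ds}(A\Xi)\rangle$ by an $O(|\Xi|^2/\langle X\rangle)$ quantity, so that the positive contribution $8\gamma^2|\Xi|^2$ prevails outside a sufficiently large ball. Equally crucial is that the resulting convexity constant be uniform across $K$: this relies on the conservation of $h$ to control $|\Xi(s)|$ in terms of $|\xi_0|$, and on the compactness of $K$ in $\mathbb{R}^n\times(\mathbb{R}^n\setminus\{0\})$ to bound $|\xi_0|$ away from zero. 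These ingredients together convert the qualitative non-trapping hypothesis into the quantitative uniform estimate asserted by the lemma.
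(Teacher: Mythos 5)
Your proof is correct (modulo one small fix, noted below) but takes a genuinely different route from the paper. The paper's argument (in the solution to Problem 2 of Lecture 4) builds a global escape function $q = 2\lambda_1^{-2}q_1 + q_2$ satisfying $H_h q \geq 1$ on the invariant slab $G = \{\lambda_1 \leq |\xi| \leq \lambda_2\}$ together with $|q| \leq C(1+|x|)$, and then integrates $\tfrac{d}{dt}\,q(X(t),\Xi(t)) \geq 1$ to get a linear lower bound $|X(t)| \geq C^{-1}(t + \inf_K q) - 1$. Its far-field piece $q_1 = \theta(|x|^2)H_h|x|^2$ is driven by essentially the same computation as your $f''(s)$ (note that $H_h^2|x|^2$ is precisely $\tfrac{d^2}{ds^2}|X(s)|^2$), while the compact piece $q_2$ is built by integrating a partition of unity backwards along the flow, using local escape times supplied by non-trapping. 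You instead work directly with $f(s) = |X(s)|^2$, proving strict convexity outside a ball so that outward motion persists once begun, using non-trapping to produce a finite first escape time $s^+$, and invoking upper semi-continuity of $s^+$ on the compact $K$. Your route is shorter and more elementary if the lemma is the sole goal; the paper's escape function is more work but yields a quantitative linear escape rate and, more to the point, the ingredients $q_1$, $q_2$ are exactly what gets reused to build the symbol $p$ in Doi's Lemma later in the same lecture, so the extra effort is amortized.

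One small fix to your argument: as written, $s^+ = \inf\{s \geq 0 : |X(s;x_0,\xi_0)| = R_0+1\}$ together with the claim $f'(s^+) \geq 0$ presupposes that the trajectory first meets the sphere of radius $R_0+1$ from inside. If $|x_0| > R_0+1$, the first hitting time could be inward-pointing, or the trajectory might never touch that sphere at all and $s^+$ would not be defined. Since $K$ is compact, simply enlarge $R_0$ so that $R_0 > \max\bigl(\mu,\, \sup_{(x_0,\xi_0)\in K}|x_0|\bigr) + 1$; then every trajectory starts strictly inside the ball of radius $R_0+1$, the first hitting time exists by non-trapping and the intermediate value theorem, and $f'(s^+) \geq 0$ is automatic because $f(s) < f(s^+)$ for $0 \leq s < s^+$.
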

\noindent Now choose $\phi_1\in C_0^\infty(\mathbb{R}^n)$, $\phi_1\equiv 1$ on $|x|<M+1$.  For $\xi\neq 0$, let 
$$q_2(x,\xi) = - \int_0^\infty \phi_1(X(s;x,\xi)) \langle \Xi(s; x, \xi) \rangle \, ds$$
Note that by the lemma, for each $(x_0,\xi_0)\in \mathbb{R}^n\times \mathbb{R}^n\backslash \{ 0 \}$, there is a neighborhood $U$ of $(x_0,\xi_0)\in \mathbb{R}^n\times \mathbb{R}^n\backslash \{ 0 \}$, such that $\forall (x,\xi) \in U$, the integral defining $q_2$ is taken over a fixed compact interval of $s$, and hence $q_2$ is smooth.  Furthermore, by homogeneity of the flow, and a change of variables, 
$$q_2(x,\xi)=-|\xi|^{-1}\int_0^\infty \phi_1(X(s;x,\tfrac{\xi}{|\xi|})) \langle |\xi| \Xi(s;x,\tfrac{\xi}{|\xi|}) \rangle \, ds$$
Choose now $\phi_2\in C^\infty(\mathbb{R}^n)$, $\phi_2\equiv 0$ for $|\xi|\leq 1$, $\phi_2\equiv 1$ for $|\xi|\geq 2$.  Let $q_3(x,\xi)=\phi_1(x)\phi_2(\xi)q_2(x,\xi)$, for $(x,\xi)\in \mathbb{R}^n$.  Then $q_3\in S^0$, and 
\begin{align*}
H_hq_3(x,\xi) = & \left[ 2 \sum a_{jk}(x)\xi_k \partial_{x_j}\phi_1(x) \right] \phi_2(\xi)q_2(x,\xi) \\
&+\phi_1(x)H_h\phi_2(\xi)q_2(x,\xi)+\phi_1(x)^2 \phi_2(\xi)\langle \xi \rangle
\end{align*}
We now let $q(x,\xi)=Nq_1(x,\xi)+q_3(x,\xi)$, for $N$ large.  Then $|\partial_x^\alpha\partial_\xi^\beta q(x,\xi)|\leq C_{\alpha,\beta}\langle x \rangle \langle \xi \rangle^{-|\beta|}$, and $H_hq(x,\xi) \geq c|\xi|-d$, $\forall \; x,\xi\in\mathbb{R}^n$.  This finishes the proof of Doi's Lemma.  

Finally, we turn to the general case, i.e.
$$\left\{
\begin{aligned}
\partial_t u &= -\epsilon \Delta^2 u + i a_{lk}(x,t) \partial_{x_lx_k}^2 u + ib_{lk}(x,t)\partial_{x_l x_k}^2 \bar{u} + \vec{b}_1(x,t) \cdot \nabla u + \vec{b}_2(x,t) \cdot \nabla \bar{u} \\
& \quad + c_1(x,t)u + c_2(x,t) \bar{u} + f(x,t) \\
u\big|_{t=0} &= u_0
\end{aligned}
\right.
$$
where the $a_{kl}$ are real valued, and all the other coefficients may be complex.  Our assumptions are:

\noindent $(\text{H}_{1,\text{gl}})$ \underline{Ellipticity}. There exists $\gamma >0$, so that $a_{lk}(x,0)\xi_l\xi_k - |b_{lk}(x,0)\xi_l\xi_k|\geq \gamma |\xi|^2$.  (Thus, $a_{lk}(x,0)\xi_l\xi_k \geq \gamma |\xi|^2$, and
$$h(x,\xi) = \sqrt{ [a_{lk}(x,0)\xi_l\xi_k ]^2 - |b_{lk}(x,0) \xi_l\xi_k|^2}$$
verifies $h(x,\xi)\geq \gamma |\xi|^2$).

\noindent $(\text{H}_{2,\text{gl}})$ \underline{Regularity}.  $a_{lk}$, $b_{lk}$, $\vec{b}_1$, $\vec{b}_2$, $c_1$, $c_2$, verify $(\text{H}_{2,\text{l}})$.

\noindent $(\text{H}_{3,\text{gl}})$ \underline{Asymptotic flatness}. Both $a_{lk}$, $b_{lk}$ verify the ``asymptotic flatness'' assumption $(\text{H}_{3,\text{l}})$. 

\noindent $(\text{H}_{4,\text{gl}})$ \underline{Growth of the 1st order coefficients}.
\begin{align*}
|\partial_t\vec{b}_1(x,t)| \leq \frac{C}{\langle x \rangle^2} && |\partial_t \vec{b}_2(x,t)| \leq \frac{C}{\langle x \rangle^2} \\
|\vec{b}_1(x,0)| \leq \frac{C_1}{\langle x \rangle^2} && |\vec{b}_2(x,0)| \leq \frac{C_1}{\langle x \rangle^2}
\end{align*}

\noindent $(\text{H}_{5,\text{gl}})$ \underline{Approximate non-trapping}. The function $h(x,\xi)$, defined in $(\text{H}_{1,\text{gl}})$, which is real valued, homogeneous of degree $2$, and elliptic, is ``approximately non-trapping'', i.e.\ we can write $h(x,\xi) = a(x,\xi) + \eta a_1(x,\xi)$, $0\leq \eta \leq \eta_0$, where $a(x,\xi)$ is real, homogeneous of degree $2$, with $\partial_x^\beta a(x,\xi)\in C^{1,1}(\mathbb{R}^n\times \mathbb{R}^n)$, $|\beta| \leq N(n)$, with norm bounded by $C_1$, and $a(x,\xi)\in C^{N(n)}(\mathbb{R}^n\times \mathbb{R}^n\backslash \{ |\xi| < 1 \} )$, with norm bounded by $C_1$, and with $a_1$ verifying similar properties and estimates, and in addition
$$|a_1(x,\xi)| + |\nabla_xa_1(x,\xi)| \leq C_1 \frac{|\xi|^2}{\langle x \rangle^2}$$
and the Hamiltonian flow $H_a$, is non-trapping for each $(x_0,\xi_0)$, $\xi_0\neq 0$, where $\eta_0$ is small enough, depending only on $\gamma$, $C_1$, and the non-trapping character of $a$ (in terms of the bounds in the analog of Doi's lemma for $H_a$)

We then have:
\begin{theorem} \label{T:2B}
There exist $N=N(n)$, $T_0>0$, with $T_0$ depending on $C$, $C_1$, $(\text{H}_{5, \text{gl}})$ and $A>0$, depending only on $\gamma, C_1$ and $(\text{H}_{5,\text{gl}})$, so that, for any $T\in [0,T_0]$, $\epsilon \in [0,1]$, we have, for any solution $u$
\begin{align*}
\hspace{.5in}&\hspace{-.5in} \sup_{0<t<T} \|u(t)\|_{L^2} + \left( \int_0^T \int \langle x \rangle^{-2} |J^{1/2} u |^2 \, dxdt \right)^{1/2} \\
&\leq A 
\begin{cases}
\|u_0\|_{L^2} + \|f\|_{L_T^1L_x^2} \\
\|u_0\|_{L^2} + \|J^{-1/2}f \|_{L^2(\langle x \rangle^2 dxdt)}
\end{cases}
\end{align*}
\end{theorem}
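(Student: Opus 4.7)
The plan is to reduce Theorem \ref{T:2B} to Theorem \ref{T:1B} by diagonalizing, at the principal-symbol level, the $2\times 2$ matrix second-order operator that arises when we pass to the system for $\vec{w}=(u,\bar u)^T$. First I would form the system exactly as in Step 1 of the proof of Theorem \ref{T:1B}, obtaining
$$\partial_t\vec w = -\epsilon\Delta^2 I\vec w + iH\vec w + B\vec w + C\vec w + \vec f,\qquad \vec w\big|_{t=0}=\vec w_0,$$
where now
$$H=\begin{pmatrix} \mathcal{L}_a & \mathcal{L}_b \\ -\overline{\mathcal{L}_b} & -\mathcal{L}_a \end{pmatrix},\qquad \mathcal{L}_a=a_{lk}\partial_{x_lx_k}^2,\ \ \mathcal{L}_b=b_{lk}\partial_{x_lx_k}^2,$$
is a matrix of second-order $\Psi$DOs whose principal symbol matrix $M(x,t,\xi)=\bigl(\begin{smallmatrix}-a & -b\\ \bar b & a\end{smallmatrix}\bigr)$, with $a=a_{lk}(x,t)\xi_l\xi_k$ and $b=b_{lk}(x,t)\xi_l\xi_k$, has eigenvalues $\pm h(x,t,\xi)$ for $h=\sqrt{a^2-|b|^2}$. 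By $(\mathrm{H}_{1,\mathrm{gl}})$, $h(x,0,\xi)\geq\gamma|\xi|^2$.

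Next I would construct a matrix-valued $\Psi$DO $\Lambda_0$ of order $0$, with matrix symbol $P(x,t,\xi)$ built out of the eigenvectors of $M$, suitably renormalized and cut off by $\theta_R(\xi)$ for small $|\xi|$, so that $P^{-1}MP=\mathrm{diag}(-h,h)$ modulo a contribution supported in $\{|\xi|\leq R\}$. The Kohn--Nirenberg calculus then yields
$$\Lambda_0^{-1}H\Lambda_0=\mathrm{diag}(\Psi_{-h},\Psi_h)+R_1,$$
where each entry of $R_1$ is of order at most $1$. Setting $\vec z=\Lambda_0^{-1}\vec w$, the system becomes
$$\partial_t\vec z=-\epsilon\Delta^2 I\vec z+i\,\mathrm{diag}(\Psi_{-h},\Psi_h)\vec z+\widetilde B\vec z+\widetilde C\vec z+\epsilon R\vec z+\vec g,$$
with $\widetilde B$ of order $1$, $\widetilde C$ of order $0$, and $R$ of order $2$ (absorbable into $-\epsilon\Delta^2$ by the interpolation argument used in Term II of Step 4 of Theorem \ref{T:1B}). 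Invertibility of $\Lambda_0$ and boundedness of $\Lambda_0,\Lambda_0^{-1}$ on $L^2$ and $H^{1/2}(\langle x\rangle^2\,dx)$ depend only on the coefficients at $t=0$, which is what is needed for the quantitative hypothesis in the theorem.

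From here the proof proceeds exactly along Steps 2--4 of Theorem \ref{T:1B}, applied in parallel to the two decoupled diagonal components with principal symbols $\mp h(x,t,\xi)$. The Hamiltonian flow governing the gauge construction is generated by $h$ at $t=0$, and hypothesis $(\mathrm{H}_{5,\mathrm{gl}})$ provides the decomposition $h=a+\eta a_1$ with $a$ non-trapping and $a_1$ of asymptotically-flat size; for $\eta\leq\eta_0$ small, Remark \ref{R:51} supplies the requisite form of Doi's Lemma for $h$. The first-order reduction $\Lambda=I-S$ (Step 2 of Theorem \ref{T:1B}) now eliminates the off-diagonal first-order terms inside $\widetilde B$ that are created both by the original $\vec b_2$ and by the diagonalization $\Lambda_0$; the required bounds on $\mathrm{Im}\,\vec b_1$ and $\mathrm{Im}\,\vec b_2$ come from $(\mathrm{H}_{4,\mathrm{gl}})$, noting that the first-order terms produced by conjugation by $\Lambda_0$ are themselves of the form allowed in Remark \ref{R:52}. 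The $\Psi$-gauge based on Doi's Lemma applied to $h$, combined with the sharp G\r{a}rding inequality exactly as in Step 4, then yields the matched energy--smoothing estimate for $\vec z$, which transfers back to $\vec w$ and hence to $u$ using boundedness of $\Lambda_0$ and Theorem \ref{T:7}.

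The main obstacle is the symbol-level diagonalization: one must produce $P(x,t,\xi)\in S^0$ (together with $P^{-1}$) that is genuinely smooth and uniformly invertible despite possible vanishing of $b$ and the lack of smoothness of $h$ at $\xi=0$, while ensuring that the resulting commutator errors $R_1$ are cleanly of order $\leq 1$ and that all constants depend only on $t=0$ data. A standard route is to factor the diagonalization into two pieces---one that handles the regime $|b|\lesssim |a|/2$ (where $M$ is a small perturbation of the already-diagonal $\mathrm{diag}(-a,a)$ and $P$ can be built by Neumann series) and one that handles $|b|\gtrsim |a|/2$ (where the explicit eigenvector formulas are stable)---glued together by a partition of unity on the Grassmannian parameter $|b|/|a|$. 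Once the diagonalization is in place, every remaining step parallels the argument already given for Theorem \ref{T:1B}.
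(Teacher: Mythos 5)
Your overall strategy is the paper's strategy: pass to the system in $\vec w=(u,\bar u)^T$, diagonalize the matrix of second-order operators at the principal-symbol level, and then run the diagonalization of first-order terms / gauge construction / sharp G\r{a}rding argument of Steps 2--4 of Theorem \ref{T:1B} in parallel on the two decoupled components, invoking the pseudo-differential variant of Doi's Lemma (Chihara; Problem \chapterref{P:5.4}.\ref{P:5.4}) for the nonsmooth principal symbol $h=\sqrt{a^2-|b|^2}$ together with $(\mathrm{H}_{5,\mathrm{gl}})$ and Remark \ref{R:51}.

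However, the ``main obstacle'' you identify in your final paragraph is not an obstacle, and the two-regime gluing via a partition of unity on $|b|/|a|$ is unnecessary. The paper uses a single, explicit Calder\'on-type symmetrizer
$$S(x,t,\xi)=\frac{1}{\gamma|\xi|^2}\begin{pmatrix}\bar b & \lambda_+ + a\\ -\lambda_+ - a & -b\end{pmatrix},\qquad a=a_{kl}\xi_k\xi_l,\quad b=b_{kl}\xi_k\xi_l,\quad\lambda_+=\sqrt{a^2-|b|^2},$$
which satisfies $SM=\mathrm{diag}(\lambda_+,-\lambda_+)\,S$. The point you are missing is that ellipticity $(\mathrm{H}_{1,\mathrm{gl}})$ forces \emph{both} $a\geq\gamma|\xi|^2$ and $\lambda_+\geq\gamma|\xi|^2$, so
$$\det S=\frac{(\lambda_++a)^2-|b|^2}{\gamma^2|\xi|^4}=\frac{2\lambda_+^2+2\lambda_+ a}{\gamma^2|\xi|^4}\geq 4,$$
uniformly over $\mathbb{R}^n\times(\mathbb{R}^n\setminus\{0\})$ and uniformly as $b\to 0$ (in that limit $S$ just tends to a constant multiple of the antidiagonal permutation matrix, and the entry $\lambda_++a\geq 2\gamma|\xi|^2$ never degenerates). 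Hence $S$ is a globally well-defined, homogeneous degree-$0$, uniformly invertible matrix symbol, and after multiplication by $\theta_R(\xi)$ to fix matters near $\xi=0$ it gives the invertible order-$0$ $\Psi$DO directly, with no Neumann series, no eigenvector selection issues, and no gluing. This is precisely why $(\mathrm{H}_{1,\mathrm{gl}})$ is formulated as it is: it makes the Calder\'on symmetrizer work in one stroke. With $S$ in hand, the rest of your sketch agrees with the paper.
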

The explicit dependence on $(\text{H}_{5,\text{gl}})$ is through the constants in Doi's Lemma for $H_a$.  Next time we will sketch the proof of Theorem \ref{T:2B}.

\section*{Problems for Lecture \arabic{chapter}}
\begin{problems}
\item Prove the commutator estimate
$$\| J^s(fg)-fJ^sg \|_{L^2} \leq C \|g\|_{L^\infty} \|J^s f \|_{L^2}$$
for $0<s<1$.
\item Formulate the analog of Theorem \ref{T:1B}, with data in $H^s$, $s\geq 0$.  Carry out the proof, also showing that the interval of existence can be taken to be independent of $s$, for $s>0$.
\item Verify Remark \ref{R:51}, \ref{R:52}.
\item \label{P:5.4}Check that Doi's Lemma still works for $h(x,\xi)$ real valued, elliptic, homogeneous of degree $2$ in $\xi$, with $\partial_x^\alpha h(x,\xi)\in C^{1,1}(\mathbb{R}^n\times \mathbb{R}^n)$, $|\alpha|\leq N(n)$, and $\partial_x^\alpha \partial_\xi^\beta h \in C^{N(n)}(\mathbb{R}^n\times \mathbb{R}^n \backslash |\xi|<1)$.  In fact, show then that if $H_h$ is non-trapping, and $\theta\in C^\infty$, $\theta\equiv 0$ for $|\xi|\leq 1$, $\theta \equiv 1$ for $|\xi|\geq 2$, we can construct $p\in S^0$ so that 
$$H_{\theta h} p \geq \frac{B|\xi|}{\langle x \rangle^2} - \frac{1}{B}$$
(We also need to assume that $h$ is ``asymptotically flat''.)
\item Prove that if $n=1$, $(\text{H}_{1,\text{l}})-(\text{H}_{4,\text{l}})$ imply $(\text{H}_{5,\text{l}})$.
\end{problems}

\lecture{The quasilinear Schr\"{o}dinger equation}
We first start out by sketching the new ideas that are needed for the proof of Theorem \ref{T:2B}, and at the same time explain the ellipticity condition $(\text{H}_{1, \text{gl}})$.  The key extra ingredient is a ``symmetrization argument'' analogous to the one introduced by A.P. Calder\'{o}n in his work on symmetrizable hyperbolic systems \cite{Ca}.  We first write our equation as a system in $\begin{pmatrix} u \\ \bar{u} \end{pmatrix}=\vec{w}$.  This now becomes
$$\left\{ 
\begin{aligned}
\partial_t \vec{w} &= -\epsilon \Delta^2I\vec{w} + (iH_0+B+C)\vec{w}+\vec{f}\\
\vec{w}\big|_{t=0} &= \vec{w}_0
\end{aligned}
\right.
$$
where now $H_0 = \begin{pmatrix} \mathcal{L} & \mathcal{L}_B \\ -\mathcal{L}_{\bar{B}} & -\mathcal{L} \end{pmatrix}$, where $\mathcal{L}_B=b_{kl}(x,t)\partial_{x_kx_l}^2$, $\mathcal{L}_{\bar{B}}= \bar{b}_{kl}(x,t)\partial_{x_kx_l}^2$.  The first step is to diagonalize $H_0$.  It is in this step that the ellipticity hypothesis $(\text{H}_{1,\text{gl}})$ appears.  The symbol of $H_0$ is the matrix $$M= \begin{pmatrix} -a_{kl}(x,t)\xi_k\xi_l & -b_{kl}(x,t)\xi_k\xi_l \\ \bar{b}_{kl}(x,t)\xi_k\xi_l & a_{kl}(x,\xi) \xi_k \xi_l \end{pmatrix}$$ whose eigenvalues are the roots of $(\lambda + a_{lk}(x,t)\xi_k\xi_l)(\lambda-a_{lk}(x,t)\xi_k\xi_l)+|b_{kl}(x,t)\xi_k\xi_l|^2$ or $\lambda^2-[a_{lk}(x,t)\xi_k\xi_l]^2+|b_{kl}(x,t)\xi_k\xi_l|^2=0$, i.e.\ the eigenvalues are $\lambda_{\pm}(x,t,\xi)=\pm h(x,t,\xi)$, which explains our ellipticity hypothesis, since $\lambda_+(x,t,\xi)\geq \gamma|\xi|^2$, and $\lambda_+$ is real valued, homogeneous of degree 2.  A computation shows that, if we define
$$S=\frac{1}{\gamma |\xi|^2} \begin{pmatrix} \overline{b_{kl}(x,t)}\xi_k\xi_l & \lambda_++a_{kl}(x,t)\xi_k\xi_l \\ -\lambda_+-a_{kl}(x,t)\xi_k\xi_l & -b_{kl}(x,t)\xi_k\xi_l \end{pmatrix}$$
then we have
$$SM= \begin{pmatrix} \lambda_+ & 0 \\ 0 & -\lambda_+ \end{pmatrix}S$$
Note also that $\det S = \frac{2\lambda_+^2+2\lambda_+a}{\gamma^2|\xi|^4}$, and since $\lambda_+\geq \gamma|\xi|^2$, $a_{kl}(x,t)\xi_k\xi_l \geq \gamma|\xi|^2$, one has $\det S \geq 4$.  Moreover, $S$ is homogeneous of degree $0$, and thus defines a 0th order invertible $\Psi$DO.  (We must multiply $S$ by $\theta_R(\xi)$, $R$ large, but this is a technical detail).  We then rewrite our system in $\vec{z}=S\vec{w}$, which now gives
$$
\left\{
\begin{aligned}
\partial_t \vec{z} &= -\epsilon \Delta^2 I \vec{z} + \epsilon E_3 \vec{z} + i \begin{pmatrix} \Psi_{-\lambda_+} & 0 \\ 0 & \Psi_{\lambda_+} \end{pmatrix} \vec{z} + \tilde{B}\vec{z} + C\vec{z} + \vec{F} \\
z\big|_{t=0} &= \vec{z}_0
\end{aligned}
\right.
$$
where $E_3$ is of order $3$.  The strengthened decay assumptions $(\text{H}_{3,\text{gl}})$ guarantee that the entries of $\tilde{B}$ (the order $1$ part) still have decay.  Moreover, $\lambda_+$ is real valued, elliptic, homogeneous of degree $2$, but now pseudo-differential, but the proof proceeds exactly as before, using the analog of Doi's lemma for pseudo-differential operators $h$.  (This was actually carried out by Chihara \cite{Chi}.)  The proof then proceeds as before.  

We now finish the course by turning to the application to  quasi-linear problems.  We are thus going to study equations of the form
\begin{equation}\label{E:QLCP2}
\text{(QLCP)} \left\{ 
\begin{aligned}
\partial_t u &= ia_{lk}(x,t;u,\bar{u},\nabla_xu,\nabla_x\bar{u}) \partial_{x_lx_k}^2u \\
& \quad + ib_{lk}(x,t;u,\bar{u},\nabla_xu,\nabla_x\bar{u}) \partial_{x_lx_k}^2\bar{u} \\
& \quad + \vec{b}_1(x,t;u,\bar{u},\nabla_xu,\nabla_x\bar{u})\cdot \nabla_xu \\
&\quad + \vec{b}_2(x,t;u,\bar{u},\nabla_xu,\nabla_x\bar{u})\cdot \nabla_x\bar{u} \\
& \quad + c_1(x,t;u,\bar{u})u+c_2(x,t;u,\bar{u})\bar{u} \\
& \quad + f(x,t) \\
u\big|_{t=0} &= u_0 
\end{aligned}
\right. \qquad  x\in \mathbb{R}^n, \; t\in [0,T]
\end{equation}
We saw already that when $a_{lk}=\Delta$, $b_{lk}\equiv 0$, $\vec{b}_1\equiv 0$, $\vec{b}_2\equiv 0$, $f\equiv 0$, this is locally well-posed in $H^s(\mathbb{R}^n)$, $s>n/2$, and that when $a_{lk}=\Delta$, $b_{lk}\equiv 0$, $\vec{b}_i$ are independent of $(x,t)$, and $\vec{b}_i=O(|u|^2)$, we have local well-posedness in $H^s(\mathbb{R}^n)$, $s$ large, and when $\vec{b}_i=O(|u|)$, we have local well-posedness in $H^s(\mathbb{R}^n)\cap L^2(|x|^{2N}dx)$, $s$ large, $N$ large, by Picard iteration, and thus, the flow map is real analytic.  We also saw that (Molinet-Saut-Tzvetkov \cite{MoSaTz}) for $n=1$, $\partial_t u =i\partial_x^2 u + u\partial_x u$, the flow map is not $C^2$ for any $H^s$, and hence we cannot have solvability by Picard iteration.  We also have seen that for $b_{lk}\equiv 0$, $\vec{b}_2\equiv 0$, $c_1\equiv c_2 \equiv 0$, and $a_{lk}$ elliptic, independent of $t$, $u$, $\bar{u}$, $\nabla_x u$, $\nabla_x \bar{u}$, and $\vec{b}_1=\vec{b}_1(x)\in C_0^\infty$, the ``non-trapping'' condition is ``necessary'' (Ichinose \cite{Ic}).  Moreover, Doi proved its necessity for the ``local smoothing'' effect \cite{Do3}.  In the context of non-linear problems, its relevance can be seen, because, one of the worst forms of its failure, i.e.\ periodic orbits, yields ill-posedness.  In fact, Chihara \cite{Chi02} has shown that for semi-linear problems
$$\left\{
\begin{aligned}
\partial_t u &= i\Delta u + \text{div } \vec{G}(u) \\
u\big|_{t=0} &= u_0
\end{aligned}
\right. \qquad x\in \mathbb{T}^n, \; t\in [0,T]
$$
where $\vec{G}\nequiv 0$, and $\vec{G}=(G_1, \ldots, G_n)$, $G_i$ holomorphic, we have ill-posedness in any Sobolev space $H^s(\mathbb{T}^n)$.  

The first general results on quasilinear problems were obtained in $n=1$.  In this case (QLCP) takes the form
$$
\left\{ 
\begin{aligned}
\partial_t u &= ia(x,t;u,\bar{u},\nabla_xu,\nabla_x\bar{u}) \partial_x^2u \\
& \quad + ib(x,t;u,\bar{u},\nabla_xu,\nabla_x\bar{u}) \partial_x^2\bar{u} \\
& \quad + b_1(x,t;u,\bar{u},\nabla_xu,\nabla_x\bar{u}) \partial_x u \\
&\quad + b_2(x,t;u,\bar{u},\nabla_xu,\nabla_x\bar{u}) \partial_x\bar{u} \\
& \quad + c_1(x,t;u,\bar{u})u+c_2(x,t;u,\bar{u})\bar{u} \\
& \quad + f(x,t) \\
u\big|_{t=0} &= u_0 
\end{aligned}
\right. \qquad  x\in \mathbb{R}, \; t\in [0,T]
$$
For coefficients independent of $(x,t)$, such problems were studied by Poppenberg \cite{Po}, who showed that, under ellipticity:
$(\text{H}1)$.  $a$ is real valued, and for $|(z_1, z_2, z_3, z_4)|\leq R$, there exists $\lambda(R)>0$ such that $$a(z_1, z_2, z_3, z_4)-|b(z_1,z_2,z_3,z_4)|\geq \lambda(R)$$ and if $\frac{\partial a}{\partial z}(0,0,0,0)=\frac{\partial b}{\partial z}(0,0,0,0) = 0$, and $b_1$, $b_2$ vanish quadratically at $(0,0,0,0)$, then the above problem is locally well-posed in $H^\infty(\mathbb{R})= \cap_{s\geq 0} H^s(\mathbb{R})$, using the Nash-Moser iteration scheme.  In \cite{LiPo}, Lim and Ponce showed, in the $(x,t)$ dependent setting, that, under Poppenberg's hypothesis, one has local well-posedness in $H^{s_0}(\mathbb{R})$, $s_0$ large, and if $b_1$, $b_2$ vanish linearly or $\frac{\partial a}{\partial z}\neq 0$, or $\frac{\partial b}{\partial z} \neq 0$, this holds in $H^{s_0}(\mathbb{R})\cap L^2(|x|^{m_0}dx)$.  To clarify the ellipticity condition $(\text{H}1)$, note that when $b\equiv 0$, this is the usual condition, and in general it says that $\partial_x^2 u$ ``dominates'' $\partial_x^2 \bar{u}$.  This is certainly needed.  For example, the problem
$$\left\{
\begin{aligned}
\partial_t u &= i\Delta \bar{u} \\
u\big|_{t=0}
\end{aligned}
\right.
$$
 is the backward heat equation in disguise, and hence, it is ill-posed on any Sobolev space.  

We now turn to our positive results on (QLCP), for $n>1$.  The coefficients $a_{lk}$ are real valued, all others may be complex valued.  We let $\vec{z}=(z_1,z_2,\vec{z}_1, \vec{z}_2)=(u,\bar{u},\nabla_xu, \nabla_x\bar{u})$.  Our assumptions are

\noindent $(\text{H}1)$ \underline{Ellipticity}.  Given $R>0$, there exist $\gamma_R>0$ such that $\left< a_{lk}(x,0;\vec{z})\xi, \xi \right> - | \left< b_{lk}(x,0;\vec{z})\xi,\xi \right> | \geq \gamma_R|\xi|^2$, for all $\xi\in \mathbb{R}^n$, $|\vec{z}|\leq R$.

\noindent $(\text{H}2)$ \underline{Regularity}. For any $N\in \mathbb{N}$, $R>0$, the coefficients $a_{lk}$, $b_{lk}$, $\vec{b}_1$, $\vec{b}_2$, $c_1$, $c_2\in C_b^N(\mathbb{R}^n\times \mathbb{R}^n\times |\vec{z}|\leq R)$.

\noindent $(\text{H}3)$ \underline{Asymptotic flatness}.  There exists $C>0$ such that $\forall \; (x,t) \in \mathbb{R}^n\times \mathbb{R}$, $|\partial_{x_j}a_{lk}(x,t;\vec{0})| + |\partial_t a_{lk}(x,t; \vec{0})| + |\partial_{x_j}\partial_{x_r}a_{lk}(x,t; \vec{0})| + |\partial_t \partial_{x_j} a_{lk}(x,t; \vec{0})| \leq \frac{C}{\langle x \rangle^2}$, and similarly for $b_{lk}$.

\noindent $(\text{H}4)$ \underline{Growth of the first order coefficients}.  There exists $C$, $C_1>0$ such that, for $(x,t)\in \mathbb{R}^n\times \mathbb{R}$,
$$|\vec{b}_i(x,0;\vec{0})| \leq \frac{C_1}{\langle x \rangle^2}$$
$$|\partial_t \vec{b}_i(x,t;\vec{0})| \leq \frac{C}{\langle x \rangle^2}$$

\noindent $(\text{H}5)$ \underline{Approximate non-trapping}.  Fix an initial data $u_0\in H^r(\mathbb{R}^n)$, $r>\frac{n}{2}+2$, $r$ large.  Define 
\begin{gather*}
h_1(x,\xi)=a_{lk}(x,0;u_0,\bar{u}_0,\nabla_x u_0, \nabla_x \bar{u}_0)\xi_l\xi_k \\
h_2(x,\xi)=b_{lk}(x,0; u_0, \bar{u}_0, \nabla_x u_0, \nabla_x \bar{u}_0)\xi_l\xi_k \\
h(x,\xi)= \sqrt{ h_1^2(x,\xi)-|h_2|^2(x,\xi)}
\end{gather*}
  Note that by $(\text{H}1)$, $h(x,\xi) \geq \gamma |\xi|^2$, $\gamma=\gamma(\|u_0\|_{H^r})$ and that $h$ is positive, homogeneous of degree $2$.   Suppose that there exists $0\leq \eta <1$ such that $h(x,\xi)=a(x,\xi)+\eta a_1(x,\xi)$, where $a(x,\xi)$ is real, homogeneous of degree $2$, with $\partial_x^\beta a(x,\xi)\in C^{1,1}(\mathbb{R}^n\times \mathbb{R}^n)$, $|\beta|\leq N(n)$, $a(x,\xi)\in C^{N(n)}(\mathbb{R}^n\times \mathbb{R}^n \backslash \{ |\xi| < 1 \} )$, where $N(n)$ is as in Theorem \ref{T:2B}, with $a_1$ verifying similar estimates, and $|a_1(x,\xi)| + |\nabla_xa_1(x,\xi)| \leq \frac{C|\xi|^2}{\langle x \rangle}$, and the Hamiltonian $H_a$ is non-trapping, and $\eta\leq \eta_0$, where $\eta_0$ is as in Theorem \ref{T:2B}.

Then we have
\begin{theorem}
Under $(\text{H}1)-(\text{H}5)$, given $u_0\in H^s(\mathbb{R}^n)$, $\langle x \rangle^2 \partial_x^\alpha u_0\in L^2(\mathbb{R}^n)$, $|\alpha|\leq s_1$, and $f\in L^\infty(\mathbb{R}; H^s(\mathbb{R}^n))$, $\langle x \rangle^2\partial_x^\alpha f\in L^\infty(\mathbb{R}; L^2(\mathbb{R}^n))$, $|\alpha|\leq s_1$, where $s_1\geq \frac{n}{2}+7$, $s\geq \max \{ s_1+4, N(n)+n+3 \}$, where $N=N(n)$ is as in Theorem \ref{T:2B}, then there exists $T_0>0$, depending only on $(\text{H}1)-(\text{H}5)$, and on 
$$\lambda=\|u_0\|_{H^s}+\sum_{|\alpha|\leq s_1} \|\langle x \rangle^2 \partial_x^\alpha u_0 \|_{L^2} + \|f(t)\|_{L_t^\infty H^s} + \sum_{|\alpha|\leq s_1} \|\langle x \rangle^2 \partial_x^\alpha f \|_{L_t^\infty L^2_x}$$
 so that (QLCP) is locally well-posed in $(0,T_0)$, in the obvious space.
\end{theorem}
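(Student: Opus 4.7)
The proof proceeds by an iteration scheme combined with the artificial-viscosity regularization of Remark~\ref{R:1}, with all estimates closed through the linear theorem~\ref{T:2B}. First I would regularize the data $u_0^\delta = \varphi_\delta \ast u_0$ as in Step~4 of Lecture~1, fix $\epsilon \in (0,1]$, and construct inductively a sequence $\{u^{(j)}\}$, starting from $u^{(0)} \equiv u_0^\delta$, where $u^{(j+1)}$ solves the linear problem obtained from \eqref{E:QLCP2} by evaluating all coefficients at $u^{(j)}, \bar{u}^{(j)}, \nabla u^{(j)}, \nabla \bar{u}^{(j)}$ and adding the parabolic term $-\epsilon \Delta^2 u^{(j+1)}$. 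The regularized equation is of the form covered by (the parabolic extension of) Theorem~\ref{T:2B}, and existence on some interval follows from a Duhamel contraction as in Step~2 of Lecture~1.

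The core step is a uniform a priori estimate. Set
$$
\Lambda^{(j)}(T) = \sup_{[0,T]} \|u^{(j)}(t)\|_{H^s} + \sum_{|\alpha|\leq s_1} \sup_{[0,T]} \|\langle x\rangle^2 \partial_x^\alpha u^{(j)}(t)\|_{L^2} + \|J^{s+1/2} u^{(j)}\|_{L^2(\langle x\rangle^{-2}\,dx\,dt)}.
$$
The plan is to produce $T_0, M$ depending only on $\lambda$ and on (H1)--(H5) such that $\Lambda^{(0)}(T_0) \leq M/2$ and $\Lambda^{(j)}(T_0) \leq M$ forces $\Lambda^{(j+1)}(T_0) \leq M$, uniformly in $\epsilon, \delta, j$. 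The inductive hypothesis and Sobolev embedding $H^{s_1} \hookrightarrow C^2_b$ (which uses $s_1 > n/2 + 2$) guarantee that the composite coefficients satisfy $(\text{H}_{1,\text{gl}})$--$(\text{H}_{4,\text{gl}})$ with constants depending only on $\lambda$, after $T_0$ is shrunk to absorb time derivatives. The decisive point for $(\text{H}_{5,\text{gl}})$ is that $u^{(j)}(x,0) = u_0^\delta$ for every $j$, so the symbol $h^{(j)}(x,\xi)$ at $t=0$ is independent of $j$; for $\delta$ small it is a $C^{N(n)}$-small perturbation of the symbol attached to $u_0$ by (H5), and the stability of Doi's lemma (Problem~\chapterref{P:5.4}.\ref{P:5.4} and Remark~\ref{R:51}) gives $(\text{H}_{5,\text{gl}})$ with uniform constants. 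Applying Theorem~\ref{T:2B} to the equation for $u^{(j+1)}$, together with Moser-type nonlinear estimates for the commutators $[J^s, a_{lk}(u^{(j)},\dots)]\partial^2 u^{(j+1)}$ and the weighted $\Psi$DO mapping from Theorem~\ref{T:7} for the $\langle x\rangle^2 \partial_x^\alpha$ pieces, closes the induction.

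For convergence, set $v^{(j)} = u^{(j+1)} - u^{(j)}$; it satisfies a linear Schr\"odinger equation with coefficients evaluated at $u^{(j)}$, whose forcing is bounded in the weighted $L^\infty_T L^2$ norm by $C\|v^{(j-1)}\|_{L^\infty_T H^2}$. Applying Theorem~\ref{T:2B} at the $L^2$ level and interpolating with the uniform $H^s$ bound gives contraction in $C([0,T_0]; L^2) \cap L^2([0,T_0]; H^{1/2}(\langle x\rangle^{-2}dx))$, shrinking $T_0$ if necessary. Weak-$\ast$ compactness lifts the limit to $L^\infty_T H^s$, and interpolation with the equation itself yields the required $C([0,T_0]; H^s)$ regularity, producing a solution $u^{\epsilon,\delta}$ of the regularized (QLCP). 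The same comparison scheme passes to the limit $\epsilon \to 0$, giving $u^\delta$, and the Bona--Smith method of Lecture~1, Step~4 passes from $u^\delta$ to $u$ via the bounds $\|u^\delta\|_{H^{s+\ell}} \leq C\delta^{-\ell}$ and $\|u_0^\delta - u_0^{\delta'}\|_{L^2} = o(\delta^s)$. Uniqueness comes from an analogous $L^2$-level comparison, and continuous dependence on data from a further application of Bona--Smith.

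The main obstacle is closing the $H^s$ a priori estimate in the presence of top-order coefficients depending on $u$ and $\nabla u$. Commuting $J^s$ past $a_{lk}(\,\cdot\,; u^{(j)}, \nabla u^{(j)}, \dots)\partial^2 u^{(j+1)}$ produces commutators of formal order $s+1$, which no $L^2$-based energy method can absorb. The only available mechanism is the half-derivative gain $\|J^{s+1/2} u\|_{L^2(\langle x\rangle^{-2}dx\,dt)}$ furnished by Theorem~\ref{T:2B}, and absorbing a commutator by this norm demands that it carry a compensating decay $\langle x\rangle^{-1}$. This is precisely where the asymptotic flatness $(\text{H}3)$ at $\vec{z}=\vec 0$ and the weighted control $\langle x\rangle^2 \partial_x^\alpha u^{(j)} \in L^\infty_T L^2$ become indispensable: each $\nabla_x$ falling on $a_{lk}(\,\cdot\,; u^{(j)}, \dots)$ produces either $\nabla_x a_{lk}(\,\cdot\,;\vec 0) = O(\langle x\rangle^{-2})$ or a factor of $\nabla u^{(j)}$, which is absorbed by the weighted norm. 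Arranging this weighted-derivative bookkeeping so that the resulting commutators fit exactly into the G\r{a}rding-type structure of Step~4 of Lecture~5 is the technical heart of the proof.
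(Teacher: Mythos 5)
Your proposal is correct, but it organizes the argument around a different fixed-point scheme than the one the paper uses. You construct a linearized (Kato-type) iteration $u^{(j)} \to u^{(j+1)}$ in which each $u^{(j+1)}$ solves the regularized linear problem with top-order coefficients frozen at $u^{(j)}$, then establish uniform bounds and contraction in a low norm before passing $j\to\infty$, $\epsilon\to 0$, $\delta\to 0$ in turn. The paper instead solves the full nonlinear parabolic problem $(\text{IVP})_\epsilon$ directly: the Duhamel map $\Gamma(w)=e^{-\epsilon t\Delta^2}u_0+\int_0^t e^{-\epsilon(t-t')\Delta^2}[\mathcal{L}(w)w]\,dt'$ is shown to be a contraction on $X_{T_\epsilon,M_0}$ with $T_\epsilon = O(\epsilon)$ purely from the parabolic smoothing, the a priori inequality \eqref{E:20} is then proved \emph{for the actual solution} $u^\epsilon$ by applying Theorem \ref{T:2B} to the linear equation satisfied by $J^{2m}u^\epsilon$, and this is used to re-iterate the short-time existence up to a time $\tilde T$ independent of $\epsilon$; the Bona--Smith regularization of the data enters only at the very end for continuity, not in the existence argument. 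The core analytic content — commuting $J^s$ past the quasilinear principal part, rewriting the resulting $(s{+}1)$-order pieces as first-order $\Psi$DOs with decaying coefficients acting on $J^s u$ so that they fall under the first-order hypothesis $(\text{H}_{4,\text{gl}})$ of Theorem \ref{T:2B}, and the crucial observation that the non-trapping character at $t=0$ is frozen by $u_0$ so that the constant $A$ is uniform — is identical in both, and you identify it accurately. The trade-off is that your Kato-style scheme cleanly separates ``solve a linear problem'' from ``iterate,'' at the cost of tracking convergence of the sequence $u^{(j)}$ in a low norm and then lifting; the paper's scheme avoids the iteration index entirely (the a priori estimate applies to one object) but must exploit the parabolic $T_\epsilon=O(\epsilon)$ short-time argument and re-iterate in time. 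One small caution with your early mollification $u_0^\delta$: you then need the non-trapping hypothesis $(\text{H}5)$, stated for $u_0$, to transfer to $u_0^\delta$, which costs an appeal to stability of Doi's construction (Remark \ref{R:51}, Problem 5.4); the paper's ordering avoids this extra step by keeping $u_0$ fixed through the $\epsilon$-limit and invoking Bona--Smith only for continuous dependence.
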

  Before sketching the proof of the theorem, we will make some remarks.
\begin{remark}
When $n=1$, it is not difficult to show that ellipticity implies non-trapping, and hence $(\text{H}5)$ is not needed.
\end{remark}
\begin{remark}
For $n>1$, if $h_{\vec{0}}(x,\xi)=\sqrt{ (a_{lk}(x,0;\vec{0})\xi_l\xi_k)^2-|b_{lk}(x,0;\vec{0})\xi_l\xi_k|^2}$  is non-trapping, then we obtain local well-posedness for small data, since $(\text{H}5)$ is automatic.  This holds for instance, if $h_{\vec{0}}(x,\xi)=h_{\vec{0}}(\xi)$ is independent of $x$.
\end{remark}
\begin{remark} \label{R:c}
The proof actually gives the ``local-smoothing'' estimate for the solution, namely $J^{s+\frac{1}{2}}u\in L^2(\mathbb{R}^n\times [0,T_0], \langle x \rangle^{-2} \, dx dt)$
\end{remark}
\begin{remark} $\langle x \rangle^2$ can be replaced by $\langle x \rangle^{1+\epsilon}$, $\epsilon>0$.
\end{remark}
\begin{remark}[Koch-Tataru \cite{KoTa}] The solution map is not $C^2$, and hence the result cannot be proved by Picard iteration.  
\end{remark}
The key step in the proof is the \textit{a priori} linear estimate, provided by Theorem \ref{T:2B}.  Let us now sketch the proof of the theorem, when $s$ and $s_1$ are assumed to be even integers, $f\equiv 0$, $s_1\geq \frac{n}{2}+7$, $s\geq \max\{ s_1+4, N(n)+n+3 \}$.  We first consider the non-linear parabolic IVP
\begin{equation*}
(\text{IVP})_\epsilon \left\{
\begin{aligned}
\partial_t u &= -\epsilon \Delta^2 u + \mathcal{L}(u)u \\
u(x,0) &= u_0(x)
\end{aligned}
\right. \quad \epsilon\in (0,1)
\end{equation*}
where 
\begin{equation*}
\begin{split}
\mathcal{L}(u)(v) &= ia_{lk}(x,t; u,\bar{u},\nabla_x u, \nabla_x \bar{u})\partial_{x_kx_l}^2 v \\
& \quad + ib_{lk}(x,t; u, \bar{u}, \nabla_x u, \nabla_x \bar{u}) \partial_{x_lx_k}^2 \bar{v} \\
& \quad +\vec{b}_1(x,t; u, \bar{u}, \nabla_x u, \nabla_x \bar{u})\nabla v \\
& \quad +\vec{b}_2(x,t; u, \bar{u}, \nabla_x u, \nabla_x \bar{u})\nabla \bar{v} \\
& \quad +c_1(x,t; u,\bar{u})v + c_2(x,t;u,\bar{u})\bar{v} 
\end{split}
\end{equation*}
Let $\lambda = \tv u_0 \tv = \| u_0 \|_{H^s} + \sum_{|\alpha|\leq s_1} \|\langle x \rangle^2 \partial_x^\alpha u_0 \|_{L^2}$, where $s>s_1$.  We also consider, for $T>0$, $M_0>0$, 
\begin{equation*}
\begin{split}
X_{T,M_0} = \{ \; & v: \mathbb{R}^n\times [0,T]\to \mathbb{C}, \; v\in C([0,T]; H^s) , \;  \langle x \rangle^2 \partial_x^\alpha v \in C([0,T],L^2), \\
& |\alpha|\leq s_1, \; v(0)=v_0, \; \tv v \tv_T \leq M_0 \; \}
\end{split}
\end{equation*}
where $\tv v \tv_T = \sup_{0<t<T} \tv v(-,t) \tv$.  One then uses standard parabolic theory, using Picard iteration, to show: If $\tv v_0 \tv \leq \frac{M_0}{2}$, $\exists \; T_\epsilon = O(\epsilon)$, and a unique solution $v^\epsilon$ to $(\text{IVP})_\epsilon$ in $X_{T_\epsilon, M_0}$.  We in fact consider the integral equation version of $(\text{IVP})_\epsilon$, namely
$$v(t) = e^{-\epsilon t \Delta^2}v_0 + \int_0^t e^{-\epsilon(t-t')\Delta^2} [ \mathcal{L}(v) v ] \,dt'$$
and define 
$$\Gamma(w)(t) = e^{-\epsilon t \Delta^2}v_0 + \int_0^t e^{-\epsilon (t-t') \Delta^2}[\mathcal{L}(w)w] \, dt'$$
We then show that for appropriate $T_\epsilon=O(\epsilon)$, $\Gamma$ is a contraction on $X_{T_\epsilon, M_0}$.  For this one only needs to use the estimate
$\| \Delta e^{-\epsilon t \Delta^2} g \|_{L^2} \leq \frac{1}{\epsilon^{1/2}t^{1/2}} \|g\|_{L^2}$, and to deal with the weights, the identities 
$$x_r \Delta^2\Gamma(w)=\Delta^2(x_r\Gamma w) - 2 \Delta \partial_{x_r}\Gamma w$$
$$ x_ra_{lk}\partial_{x_lx_k}^2 w = a_{lk} \partial_{x_lx_k}^2(x_r w) - 2 a_{lr}\partial_{x_l}w$$
$$x_r \vec{b}_1 \cdot \nabla w = \vec{b}_1 \cdot \nabla (x_r w) - b_{1,r} w$$
$$x_r \vec{b}_2 \cdot \nabla \bar{w} = \vec{b}_2 \nabla (\overline{x_r w}) - b_{2,r}\bar{w}$$
$T_\epsilon=O(\epsilon)$ depends on $M_0$, $(\text{H}1)-(\text{H}4)$.  We next fix $M_0=20A\lambda$, where $A$ is the constant in Theorem \ref{T:2B}, for linear operators satisfying $(\text{H}_{1,\text{gl}})-(\text{H}_{5,\text{gl}})$, with a fixed $C_1$ and a fixed ``non-trapping'' character, which will depend only on $u_0$, $s$, $s_1$, $(\text{H}1)-(\text{H}5)$.  With such choices made, we will next show that there exist $T>0$, independent of $\epsilon$, and a solution to $(\text{IVP})_\epsilon$ in $(0,T)$, such that $\tv u^\epsilon \tv_T$ is uniformly bounded for $\epsilon\in (0,1)$.  The key claim is the following:

\begin{claim} If $u_\epsilon$ is a solution to $(\text{IVP})_\epsilon$, then in an interval $(0,T)$, and for which $\tv u_\epsilon \tv_T\leq M_0$, we have that there exists $\tilde{T}\in (0,T)$, independent of $\epsilon$, so that, for a fixed increasing function $R$, which depends only on $(\text{H}1)-(\text{H}4)$, $s$, $s_1$, $n$, we have
\begin{equation} \label{E:20}
\tv u^\epsilon \tv_{\tilde{T}} \leq A[ \lambda+\tilde{T}R(M_0)]
\end{equation}
\end{claim}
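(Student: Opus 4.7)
The plan is to view $u^\epsilon$ as a solution to a \emph{linear} equation with coefficients frozen at $u^\epsilon$ itself, and then iterate the a priori estimate of Theorem \ref{T:2B} over the derivatives $\partial_x^\alpha u^\epsilon$ for $|\alpha|\le s$ and $\langle x\rangle^2\partial_x^\alpha u^\epsilon$ for $|\alpha|\le s_1$.  The key point is that the constant $A$ in Theorem \ref{T:2B} depends \emph{only on the coefficients evaluated at $t=0$}, and at $t=0$ the frozen coefficients coincide with $a_{lk}(x,0;u_0,\bar u_0,\nabla u_0,\nabla\bar u_0)$ etc.  Hence by $(\text{H}1)$--$(\text{H}5)$ and Sobolev embedding ($s_1\ge n/2+7$ gives $L^\infty$ control of $\nabla u_0$ and its first few derivatives), the frozen linear system satisfies $(\text{H}_{1,\text{gl}})$--$(\text{H}_{5,\text{gl}})$ with fixed constants $\gamma,C_1$ and fixed non-trapping character depending only on $u_0$.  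In particular, $A$ is independent of $\epsilon$ and of $M_0$.

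\textbf{Step 1 (time-dependence of the frozen coefficients).} The chain rule gives $\partial_t\tilde a_{lk}=\partial_t a_{lk}+\sum_j (\partial_{z_j}a_{lk})\,\partial_t z_j$, and using the equation $(\text{IVP})_\epsilon$ to replace $\partial_t u^\epsilon$ (and its derivatives in $z_j$), these are controlled polynomially by $\tv u^\epsilon\tv_T\le M_0$; they enter only through the ``$C$'' constants of $(\text{H}_{2,\text{gl}})$--$(\text{H}_{4,\text{gl}})$, affecting $T_0$ (giving $\tilde T=\tilde T(M_0,u_0)>0$) but not $A$.

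\textbf{Step 2 ($H^s$ estimate).} For each $|\alpha|\le s$, set $v_\alpha=\partial_x^\alpha u^\epsilon$ and differentiate $(\text{IVP})_\epsilon$.  One gets
\[
\partial_t v_\alpha = -\epsilon\Delta^2 v_\alpha+\tilde a_{lk}\partial_{x_lx_k}^2 v_\alpha+\tilde b_{lk}\partial_{x_lx_k}^2\bar v_\alpha+\tilde{\vec b}_1\cdot\nabla v_\alpha+\tilde{\vec b}_2\cdot\nabla\bar v_\alpha+\tilde c_1 v_\alpha+\tilde c_2\bar v_\alpha+F_\alpha ,
\]
where $F_\alpha$ collects the commutator terms arising when $\partial_x^\alpha$ hits the coefficients.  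The main danger is the commutator $[\partial_x^\alpha,\tilde a_{lk}]\partial_{x_lx_k}^2 u^\epsilon$ at $|\alpha|=s$, which contains a term of order $s+1$ in $u^\epsilon$.  I would split this into (i) a term where at least $\lfloor s/2\rfloor+1$ derivatives fall on $\tilde a_{lk}$ -- estimable in $L^1_T L^2_x$ by $\tilde T\,R(M_0)$ using the algebra/Moser estimates on $H^{s-1}$ -- and (ii) the top-derivative piece, which carries the asymptotic-flatness factor $|\nabla_x\tilde a_{lk}|\lesssim C/\langle x\rangle^2$ inherited from $(\text{H}3)$ composed with the bound on $u^\epsilon$.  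This (ii) piece is controlled in $\|J^{-1/2}F_\alpha\|_{L^2(\langle x\rangle^2\,dxdt)}$ and thus absorbed via the \emph{second} form of Theorem \ref{T:2B} against the local smoothing quantity $\int\langle x\rangle^{-2}|J^{1/2}v_\alpha|^2\,dxdt$ on the left.  This is exactly the ``one extra derivative'' that Theorem \ref{T:2B} provides.

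\textbf{Step 3 (weighted estimate).} For $|\alpha|\le s_1$, apply $\langle x\rangle^2\partial_x^\alpha$ to $(\text{IVP})_\epsilon$ and use the identities $x_r a_{lk}\partial^2_{x_lx_k}w=a_{lk}\partial^2_{x_lx_k}(x_r w)-2a_{lr}\partial_{x_l}w$ (and the analogues for $\Delta^2$ and for first-order terms) to reduce to an $L^2$-energy estimate for $\langle x\rangle^2 v_\alpha$, whose right-hand side involves at worst $s_1+1\le s-3$ derivatives of $u^\epsilon$ in the unweighted $L^2$.  The top-order commutator here is tame (no derivative loss in $|\alpha|\le s_1<s$), so Theorem \ref{T:2B} in its first form gives a bound of $A\lambda+A\tilde T\,R(M_0)$.

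\textbf{Step 4 (summing and closing).} Square-summing the estimates from Steps 2 and 3, taking the supremum in $t\in(0,\tilde T)$, and using that the $\epsilon\|\Delta v_\alpha\|_{L^2}^2$ contributions are nonnegative and can be discarded on the left, one obtains
\[
\tv u^\epsilon\tv_{\tilde T}\le A\tv u_0\tv+A\tilde T\,R(M_0)=A[\lambda+\tilde T\,R(M_0)],
\]
for some increasing $R$ depending only on $(\text{H}1)$--$(\text{H}4),s,s_1,n$, as desired.

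\textbf{Main obstacle.} The crux is Step 2(ii): showing that the $s+1$-derivative commutator $[\partial_x^\alpha,\tilde a_{lk}]\partial_{x_lx_k}^2 u^\epsilon$ at top order is really controllable.  Without the asymptotic-flatness decay $|\nabla_x\tilde a_{lk}|\lesssim\langle x\rangle^{-2}$, this would be an irretrievable derivative loss.  With it, one gains the factor $\langle x\rangle^{-2}$, which pairs precisely with the local-smoothing weight $\langle x\rangle^{-2}$ on the left of Theorem \ref{T:2B} via Cauchy--Schwarz and the second form of the estimate ($\|J^{-1/2}f\|_{L^2(\langle x\rangle^2 dxdt)}$).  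This is the whole point of having put (H3) into the hypotheses and of having proved Theorem \ref{T:2B} with smoothing rather than just an $L^\infty_t L^2_x$ bound.
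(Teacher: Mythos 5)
Your overall plan — freeze the coefficients at $u^\epsilon$, observe that $A$ in Theorem \ref{T:2B} depends only on the data at $t=0$, and iterate the linear estimate over the derivatives of $u^\epsilon$ — matches the paper, and Steps 1, 3, and 4 are in the right spirit. But Step 2(ii), which you correctly identify as the crux, is not carried out correctly.

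You put the top-order commutator $(\nabla_x\tilde a_{lk})\,\partial^{s+1}u^\epsilon$ into the forcing term $F_\alpha$, bound it by
$\|J^{-1/2}F_\alpha\|_{L^2(\langle x\rangle^2\,dxdt)}\lesssim C\,\|J^{1/2}v_\alpha\|_{L^2(\langle x\rangle^{-2}\,dxdt)}$,
and then try to absorb this against the local smoothing quantity on the left of the second form of Theorem \ref{T:2B}. That produces a self-referential inequality with the coefficient $AC$ in front, and there is no smallness to make it close: $A$ depends on the ellipticity constant $\gamma$, on $C_1$, and on the non-trapping character, while $C$ and $C_1$ in $(\text{H}3)$--$(\text{H}4)$ are arbitrary fixed constants, not assumed small. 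Worse, the gauge transformation behind Theorem \ref{T:2B} uses a weight $\exp(Mp\theta_R)$ whose exponent $M$ is chosen large in terms of $C_1$, so $A$ \emph{grows} with the decay constant; there is no regime with $AC<1$. The absorption fails.

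The paper routes this term differently: it is not a forcing term at all. After applying $J^{s}=(I-\Delta)^{s/2}$, the order-$(s{+}1)$ commutator is rewritten, with the help of fixed $0$th-order $\Psi$DO $R_j$, as a \emph{first-order operator} $b_{2m,1,j}R_j\partial_{x_j}$ acting on $J^s u^\epsilon$, with coefficient $b_{2m,1,j}$ built from $\nabla_x a_{lk}(\cdots)$. By $(\text{H}3)$ this coefficient inherits exactly the $\langle x\rangle^{-2}$ decay required by $(\text{H}_{4,\text{gl}})$, so the term is absorbed into the \emph{hypotheses} of Theorem \ref{T:2B} (via Remark \ref{R:52}), with the gauge transformation handling it for free. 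The asymptotic-flatness decay is used to verify $(\text{H}_{4,\text{gl}})$ for the linearized operator, not to produce a weight to pair against the smoothing estimate. Once this is done, the remaining forcing $f_{2m}$ depends only on derivatives of $u^\epsilon$ of order $\le s-1$, hence $\sup_t\|f_{2m}\|_{L^2_x}\le P(M_0)$, and the \emph{first} form (the $L^1_TL^2_x$ form) of Theorem \ref{T:2B} gives \eqref{E:20} directly, with the factor $\tilde T$ coming from $\|f_{2m}\|_{L^1_TL^2_x}\le \tilde T P(M_0)$; no absorption is needed anywhere.
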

Assume \eqref{E:20}, and choose $\tilde{T}$ in addition, so small that $A [ \lambda + \tilde{T}R(M_0)] \leq \frac{M_0}{4} = 5A\lambda$.  Then, we can apply the existence theorem for $(\text{IVP})_\epsilon$ in the interval $(0,T_\epsilon)$, and reapply it in $[T_\epsilon, 2T_\epsilon]$, since $\tv u^\epsilon(T_\epsilon) \tv \leq \frac{M_0}{4}\leq \frac{M_0}{2}$.  We then get $\tv u^\epsilon(2T_\epsilon) \tv \leq \frac{M_0}{4}$, and we can continue $k$ times as long as $kT_\epsilon \leq \tilde{T}$.  We then obtain a solution $u^\epsilon$, with $\tv u^\epsilon \tv_{\tilde{T}}\leq \tv u^\epsilon \tv_{kT_\epsilon +T_\epsilon}\leq M_0$, where $k$ is such that $kT_\epsilon \leq \tilde{T}< (k+1)T_\epsilon$.

In order to establish \eqref{E:20}, we first show that if $\tv u^\epsilon \tv_T \leq M_0 = 20A\lambda$, the coefficients of the linear equations verified by $J^{2m}u = (I-\Delta)^mu$, $2m=s$, and $|x|^2J^{2m_1}u$, $2m_1=s_1$, can be written so that the constants $C_1, \gamma$ appearing in $(\text{H}_{1, \text{gl}})-(\text{H}_{5,\text{gl}})$ depend only on $u_0$, $s$, $s_1$, $n$, $(\text{H}1)-(\text{H}5)$ (and hence determine $A$), and the constants $C$ in $(\text{H}_{1, \text{gl}})-(\text{H}_{5,\text{gl}})$, and the ``right-hand sides'' $f$, depend only on $M_0$, $(\text{H}1)-(\text{H}5)$, $s$, $s_1$, $n$.  In order to show this, we first make a 
\begin{remark}
There exists an increasing function $Q$ depending only on the coefficients, such that, if $w\in X_{T,M_0}$, $T>0$, is a solution to $(\text{IVP})_\epsilon$, 
$$\sup_{[0,T]} \sum_{|\alpha|\leq s_1-4} \|\langle x \rangle^2 \partial_x^\alpha \partial_t w \|_{L^2} \leq Q(M_0)$$
\end{remark}
This is clear because $\partial_t w = -\epsilon \Delta^2 w + \mathcal{L}(w)w$.
We now write the equation obtained after applying $J^{2m}$ to the equation in $(\text{IVP})_\epsilon$, after some calculations: $(2m=s)$
$$\partial_t J^{2m}u = -\epsilon \Delta^2J^{2m}u + i \mathcal{L}_{2m}(u)J^{2m}u + f_{2m}(x,t; (\partial_x^\beta u)_{|\beta|\leq 2m-1}; (\partial_x^\beta\bar{u})_{|\beta|\leq 2m-1} )$$
where 
\begin{equation*}
\begin{split}
\mathcal{L}_{2m}(u)v = &ia_{lk}(x,t;u,\bar{u}, \nabla_x u, \nabla_x \bar{u} )\partial_{x_lx_k}^2v \\
&+ ib_{lk}(x,t; u,\bar{u},\nabla_xu,\nabla_x \bar{u})\partial_{x_lx_k}^2\bar{v} \\
& + b_{2m,1,j}(x,t;(\partial_x^\alpha u)_{|\alpha|\leq 2}, (\partial_x^\alpha \bar{u})_{|\alpha|\leq 2}) R_j\partial_{x_j} v \\
& + b_{2m,2,j}(x,t;(\partial_x^\alpha u)_{|\alpha|\leq 2}, (\partial_x^\alpha \bar{u})_{|\alpha|\leq 2}) \tilde{R}_j\partial_{x_j} \bar{v} \\
& + c_{1,2m}(x,t;(\partial_x^\beta u)_{|\beta|\leq 3}, (\partial_x^\beta \bar{u})_{|\beta|\leq 3}) R_{2m,1} v \\
& + c_{2,2m}(x,t;(\partial_x^\beta u)_{|\beta|\leq 3}, (\partial_x^\beta \bar{u})_{|\beta|\leq 3}) R_{2m,2} \bar{v} \\
\end{split}
\end{equation*}
where $R_j$, $\tilde{R}_j$, $R_{2m,1}$, $R_{2m,2}$ are fixed 0th order $\Psi$DO.  The principal part of $\mathcal{L}_{2m}(u)$ is independent of $m$.  The coefficients $b_{2m,1,j}$, $b_{2m,2,j}$ depend on $m$ as a multiplicative constant, and on the original coefficients $a_{lk}$, $b_{lk}$, $\vec{b}_1$, $\vec{b}_2$, and their first derivatives.  They verify the asymptotic flatness, and the required decay property in $(\text{H}_{3,\text{gl}})$, $(\text{H}_{4,\text{gl}})$ by inspection, using the remark, and $(\text{H}1)-(\text{H}4)$.

It is then clear that $\mathcal{L}_{2m}(u)$ verifies the desired property.  Moreover, since $\tv u^\epsilon \tv_T\leq M_0$, it is easy to show that $$\sup_{0<t<T} \|f_{2m}(x,t;(\partial_x^\beta u)_{|\beta|\leq 2m-1}, (\partial_x^\beta \bar{u})_{|\beta|\leq 2m-1}) \|_{L^2} \leq P(M_0)$$ for some fixed increasing function $P$ of $M_0$.  Moreover, $x_lJ^{2m_1}u$ verifies similar equations.  One can then apply the first estimate in Theorem \ref{T:2B}, to obtain \eqref{E:20}.  Once a solution to $(\text{IVP})_\epsilon$ is constructed in $(0,\tilde{T})$ with $\tv u^\epsilon \tv_{\tilde{T}} \leq M_0$, $M_0$, $\tilde{T}$ independent of $\epsilon$, by considering the equation verified by $u^\epsilon-u^{\epsilon'}$, and using similar arguments, we obtain the existence of the limit as $\epsilon \to 0$, and its uniqueness in $C([0,\tilde{T}]; H^{s-1})\cap \{ u: \langle x \rangle^2 \partial_x^\alpha u\in C([0,\tilde{T}], L^2) \}$.  The solution $u$, in addition, belongs to $L^\infty([0,\tilde{T}], H^s)$.  To show that $u\in C([0,\tilde{T}]; H^s)$ and the continuous dependence, one uses the ``Bona-Smith regularization'' argument.

\section*{Problems for Lecture 6}
\begin{problems}
\item Carry out in detail the proof of Theorem \ref{T:2B}.  The key idea is to introduce the symmetrizer $S$ as in Lecture 6, and then to look at the system verified by $\vec{\beta}_1=\Psi_S \Psi_{\theta_{2R}}\vec{w}$, $\vec{\beta}_2=\Psi_{\phi_{2R}}\vec{w}$, where $\theta_{2R}=1-\phi_{2R}$, $\phi_{2R}\in C_0^\infty$, $\text{supp }\phi_{2R} \subset \{ \; |\xi| < 4R \; \}$, $\phi_{2R}\equiv 1$ on $\{ \; |\xi|< 2R \; \}.$  One chooses $R$ large, and uses the fact that if $T = \theta_R(\xi) S^{-1}$, $\Psi_T\Psi_S \Psi_{\theta_{2R}}\vec{w}=\Psi_{\theta_{2R}}\vec{w} + L_{-1}\Psi_{\theta_{2R}}\vec{w}$, where $L_{-1}$ is of order $-1$, with $S^0$ seminorms for its symbol small as $R\to +\infty$.  Thus, $I+L_{-1}$ is invertible, with inverse $N$ a 0th order $\Psi$DO.  We have also $N\Psi_+\Psi_S\Psi_{\theta_{2R}}\vec{w}=\Psi_{\theta_{2R}}\vec{w}$.  We also use Problem \chapterref{P:5.4}.\ref{P:5.4}.
\item Show that $(\text{H}_{4,\text{gl}})$ in Theorem \ref{T:2B} can be replaced by\\
$(\text{H}_{4,\text{gl}}')$:  $|\partial_t \text{Im }\vec{b_1}(x,t)| \leq \frac{C}{\langle x \rangle^2}$, $|\text{Im }\vec{b}_1(x,0)| \leq \frac{C_1}{\langle x \rangle^2}$, and
$$|\partial_t \text{Im}(b_{jk}(x,t)\xi_j\xi_k i \overline{\vec{b}_{2,l}}(x,t)\xi_l)| \leq \frac{C|\xi|^3}{\langle x \rangle^2}$$
and
$$| \text{Im}(b_{jk}(x,0)\xi_j \xi_k i \overline{ \vec{b}_{2,l}}(x,0)\xi_l)| \leq C_1 \frac{|\xi|^3}{\langle x \rangle^2}$$
\item \label{P:6.3}Formulate and prove variants of Theorems \ref{T:1B}, \ref{T:2B}, where the asymptotic flatness and decay of first order term conditions are of the type used in Theorem \ref{T:S3} (2).
\item Verify the details of the proof of the Theorem in Lecture 6.  Verify that Remark \ref{R:c} holds.
\item Use Problem \chapterref{P:6.3}.\ref{P:6.3} above to prove that, when $\frac{\partial}{\partial z} a_{kl}(x,0; 0, \ldots, 0)=0$, $\frac{\partial}{\partial z}b_{kl}(x,0;\ldots,0) =0$ and $\vec{b}_1$, $\vec{b}_2$ vanish quadratically at $\vec{z}=0$, the analog of the Theorem in Lecture 6 holds, without the use of weighted Sobolev spaces.
\end{problems}

\lecture[Solutions to problems (by Justin Holmer)]{Solutions to selected problems (An appendix by Justin Holmer)}

\section{Solutions to Lecture 1 problems}

\noindent \textbf{Problem 1a}.  
$$\|u\|_{L^\infty} \leq c \int |\hat{u}(\xi)| \, d\xi \leq c\left( \int |\hat{u}(\xi)|^2 (1+|\xi|)^{2s} \, d\xi \right)^{1/2} \left( \int (1+|\xi|)^{-2s} \, d\xi \right)^{1/2}$$

\noindent \textbf{Problem 1b}.  
We shall show that (where $\widehat{D^sf}(\xi)=|\xi|^s\hat{f}(\xi)$)
$$\|D^s(fg)\|_{L^2} \leq C\|f\|_{H^s}\|g\|_{H^s}$$
Let $F$ and $G$ be defined by $\hat{F}(\xi) = |\hat{f}(\xi)|$, $\hat{G}(\xi)=|\hat{g}(\xi)|$.  Note that 
$$\|F\|_{L^\infty} \leq \left( \int (1+|\xi|)^{2s} |\hat{F}(\xi)|^2 d\xi \right)^{1/2} = \left( \int (1+|\xi|)^{2s} |\hat{f}(\xi)|^2 d\xi \right)^{1/2}$$
by (1a), and similarly for $G$.  By Plancherel,
\begin{align*}
\|D^s(fg)\|_{L^2} &= \left( \int_\xi |\xi|^{2s} \left| \int_{\xi=\xi_1+\xi_2} \hat{f}(\xi_1)\hat{g}(\xi_2) \, d\xi_1 \, d\xi_2 \right|^2 \, d\xi\right)^{1/2}\\
& \leq \left( \int_\xi  |\xi|^{2s} \left| \int_{\xi=\xi_1+\xi_2} \hat{F}(\xi_1)\hat{G}(\xi_2) \, d\xi_1 \, d\xi_2 \right|^2 \, d\xi \right)^{1/2}\\ 
&\leq  
\begin{aligned}[t]
& \left( \int_\xi  \left| \int_{\substack{\xi=\xi_1+\xi_2\\ |\xi_1|\geq |\xi_2|}}  |\xi_1|^{2s} \hat{F}(\xi_1)\hat{G}(\xi_2) \, d\xi_1 \, d\xi_2 \right|^2 \, d\xi\right)^{1/2} \\
&+ \left( \int_\xi \left| \int_{\substack{\xi=\xi_1+\xi_2\\ |\xi_1|\leq |\xi_2|}} \hat{F}(\xi_1) |\xi_2|^{2s}\hat{G}(\xi_2) \, d\xi_1 \, d\xi_2 \right|^2 \, d\xi\right)^{1/2}
\end{aligned}\\
& \leq \| D^sF \cdot G \|_{L^2} + \|F \cdot D^s G \|_{L^2} \\
& \leq \| D^sF\|_{L^2}\|G\|_{L^\infty} + \|F\|_{L^\infty} \|D^sG\|_{L^2}
\end{align*}

\noindent \textbf{Problem 1c}.  First, consider the case $s=k$ an integer.  The chain rule is (for $\alpha$ a nonzero multiindex, $|\alpha|\leq k$, and $\alpha_1, \ldots \alpha_j$ multi-indices)
$$\partial^\alpha f(u) = \sum_{\substack{ \alpha_1+\cdots +\alpha_j=\alpha \\ 1\leq j \leq k}} C_{\alpha_i,j} f^{(j)}(u)\partial^{\alpha_1}u \cdots \partial^{\alpha_j}u$$
Then
$$\| \partial^\alpha f(u)\|_{L^2} \leq  \sum_{\substack{ \alpha_1+\cdots +\alpha_j=\alpha \\ 1\leq j \leq k}} C_{\alpha_i,j} \| f^{(j)}(u)\|_{L^\infty} \|\partial^{\alpha_1}u \cdots \partial^{\alpha_j}u\|_{L^2}$$
We then use that
$$\|f^{(j)}(u)\|_{L^\infty} \leq R_j(\|u\|_{L^\infty})$$
where 
$$R_j(r)=\sup_{|y|\leq r} |f^{(j)}(y)|$$
and also derive a mulitilinear generalization of the estimate in (1b):
$$\|\partial^{\alpha_1}u_1 \cdots \partial^{\alpha_j}u_j \|_{L^2} \leq C \|u_1\|_{H^k} \cdots \|u_j\|_{H^k}$$
This gives that
$$\| \partial^\alpha f(u)\|_{L^2} \leq  \sum_{j=1}^k C_jR_j(\|u\|_{H^k}) \|u\|_{H^k}^j =: R(\|u\|_{H^k})$$
where $R(r)$ is an increasing function with $R(0)=0$.
Also,
$$\|f(u)\|_{L^2} \leq C R_0(\|u\|_{H^k})$$
Note that $R_0(0) = 0$ since $f(0)=0$.
To handle the case of fractional $s$, use the Leibniz rule and chain rule for fractional derivatives.  The following 1-D statements appear in the appendix of \cite{KPV93}.
\begin{theorem}[Leibniz rule 1]  Let $\sigma \in (0,1)$, $\sigma_1, \sigma_2 \in [0,\sigma]$ with $\sigma=\sigma_1+\sigma_2$.  Let $p,p_1,p_2 \in (1,\infty)$ be such that 
$$\frac{1}{p}=\frac{1}{p_1}+\frac{1}{p_2}$$
Then
$$\| D^\sigma(fg) - fD^\sigma g - g D^\sigma f \|_{L^p} \leq c \| D^{\sigma_1} f \|_{L^{p_1}} \| D^{\sigma_2} g \|_{L^{p_2}}$$
\end{theorem}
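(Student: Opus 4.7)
The plan is to base the proof on a singular-integral representation of the fractional derivative, which makes the cancellation among the three terms on the left-hand side visible as a bilinear pointwise identity. For $\sigma\in(0,1)$ one has, up to a dimensional constant,
$$D^\sigma u(x) = c_{n,\sigma}\,\mathrm{p.v.}\!\int_{\mathbb{R}^n}\frac{u(x)-u(x+y)}{|y|^{n+\sigma}}\,dy.$$
A direct algebraic manipulation (add and subtract $f(x)g(x+y)$ in the numerator and collect terms) yields the key identity
$$D^\sigma(fg)(x) - f(x)D^\sigma g(x) - g(x)D^\sigma f(x) = -c_{n,\sigma}\int_{\mathbb{R}^n}\frac{[f(x+y)-f(x)]\,[g(x+y)-g(x)]}{|y|^{n+\sigma}}\,dy,$$
where the integral is now absolutely convergent at $y=0$ because the product of two first-order differences kills the singularity.

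The estimate will follow from a Cauchy-Schwarz in the $y$ variable after splitting the weight as $|y|^{-n-\sigma} = |y|^{-(n/2+\sigma_1)}\cdot|y|^{-(n/2+\sigma_2)}$, using exactly that $\sigma_1+\sigma_2=\sigma$. This produces the pointwise bound
$$\bigl|D^\sigma(fg)(x) - f(x)D^\sigma g(x) - g(x)D^\sigma f(x)\bigr| \leq C\,S_{\sigma_1}f(x)\,S_{\sigma_2}g(x),$$
where $S_\tau$ is the Stein--Strichartz square function
$$S_\tau h(x) = \left(\int_{\mathbb{R}^n}\frac{|h(x+y)-h(x)|^2}{|y|^{n+2\tau}}\,dy\right)^{1/2}.$$
Taking $L^p$ norms and applying H\"older in $x$ gives
$$\|D^\sigma(fg) - fD^\sigma g - gD^\sigma f\|_{L^p} \leq C\,\|S_{\sigma_1}f\|_{L^{p_1}}\,\|S_{\sigma_2}g\|_{L^{p_2}},$$
and one concludes by invoking the classical square-function characterization $\|S_\tau h\|_{L^q}\sim\|D^\tau h\|_{L^q}$ valid for $0<\tau<1$, $1<q<\infty$ (proved via Littlewood--Paley theory).

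The main obstacle is the endpoint $\sigma_1=0$ (equivalently $\sigma_2=\sigma$), where one of the square functions degenerates and the Cauchy--Schwarz step above fails to give the correct norm on $f$. For that case the plan is to switch to a Littlewood--Paley paraproduct analysis of the bilinear symbol $m(\xi_1,\xi_2)=|\xi_1+\xi_2|^\sigma-|\xi_1|^\sigma-|\xi_2|^\sigma$. Decomposing $fg$ into high--low, low--high, and high--high pieces via $P_jf\cdot P_kg$, one checks that in the high--low regime $|\xi_1|\gg|\xi_2|$ a Taylor expansion of $|\xi_1+\xi_2|^\sigma$ around $\xi_1$ places the remaining symbol in the Coifman--Meyer class at the right homogeneity, while in the high--high regime $|\xi_1|\sim|\xi_2|$ the output frequency is smaller than either input frequency, so one gains derivatives for free. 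Each piece is then estimated by the Coifman--Meyer multiplier theorem combined with H\"older, giving the required $\|f\|_{L^{p_1}}\|D^\sigma g\|_{L^{p_2}}$ bound and completing the proof.
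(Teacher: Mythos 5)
The paper does not prove this statement; it is quoted (in the appendix) directly from the appendix of \cite{KPV93}, whose argument is a Littlewood--Paley / paraproduct analysis in the spirit of Coifman--Meyer, close in flavor to the proof of the commutator estimate given in the solution to Problem 5.1 in these notes. Your primary approach is genuinely different and quite appealing: the pointwise bilinear identity
$$D^\sigma(fg)(x)-f(x)D^\sigma g(x)-g(x)D^\sigma f(x)=-c\int\frac{[f(x+y)-f(x)][g(x+y)-g(x)]}{|y|^{n+\sigma}}\,dy$$
is correct, and the Cauchy--Schwarz splitting of the weight produces the clean pointwise bound $|\cdot|\leq C\,S_{\sigma_1}f\,S_{\sigma_2}g$.

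The gap is in the final invocation of the Strichartz square-function characterization. The equivalence $\|S_\tau h\|_{L^q}\sim\|D^\tau h\|_{L^q}$ is \emph{not} valid in the full range $1<q<\infty$: Stein's theorem (Singular Integrals, Ch.~V, \S 5.3, Theorem 3) requires $q>\max\{1,\,2n/(n+2\tau)\}$, and the direction you need, $\|S_\tau h\|_{L^q}\lesssim\|D^\tau h\|_{L^q}$, genuinely fails below that threshold --- for example, for a fixed Schwartz $h$, $S_\tau h(x)\sim |x|^{-n/2-\tau}$ at infinity, which is not in $L^q$ once $q\leq 2n/(n+2\tau)$. In one dimension this means your argument only covers $p_i>2/(1+2\sigma_i)$, which is a restriction whenever $\sigma_i<1/2$; the trouble is therefore not confined to the endpoint $\sigma_1=0$ that you flag, but occurs on an open set of exponents. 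Your proposed fallback via Littlewood--Paley paraproducts is the right cure (and is essentially what the cited KPV93 appendix does), but as written it is invoked only for $\sigma_1=0$, and one should also be careful that the ``normalized'' bilinear symbol $\bigl(|\xi_1+\xi_2|^\sigma-|\xi_1|^\sigma-|\xi_2|^\sigma\bigr)/(|\xi_1|^{\sigma_1}|\xi_2|^{\sigma_2})$ is not literally a Coifman--Meyer symbol --- its $\xi_2$-derivatives blow up like $|\xi_2|^{-1+\text{(something)}}$ faster than $(|\xi_1|+|\xi_2|)^{-1}$ near $\xi_2=0$ --- so the paraproduct argument must treat the high--low, low--high, and high--high pieces separately with the frequency-localized symbol on each, rather than appeal to a black-box bilinear multiplier theorem. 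As it stands, the proposal is incomplete precisely where the statement is allowed to reach ($p_i$ near $1$, $\sigma_i$ small).
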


\begin{theorem}[Leibniz rule 2]
Let $\sigma \in (0,1)$ and $p\in (1,\infty)$.  Then
$$\| D^\sigma (fg) - fD^\sigma g - g D^\sigma f \|_p \leq c \|g\|_\infty \|D^\sigma f \|_p$$
\end{theorem}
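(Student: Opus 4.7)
The plan is to reduce the claim to a pointwise commutator identity and then to an $L^p$-estimate via Stein's square function. First, for $\sigma\in(0,1)$ and Schwartz $h$, I would use the pointwise representation
\[
D^\sigma h(x) = c_\sigma\,\mathrm{p.v.}\!\int \frac{h(x)-h(y)}{|x-y|^{1+\sigma}}\,dy,
\]
verified directly from $\widehat{|x|^{-1-\sigma}}=c|\xi|^\sigma$ as tempered distributions (the principal value makes the integral well defined for $\sigma\in(0,1)$). Applying this to each of $D^\sigma(fg)$, $fD^\sigma g$, and $gD^\sigma f$, and using the telescoping algebraic identity
\[
f(x)g(x)-f(y)g(y) = f(x)[g(x)-g(y)]+g(x)[f(x)-f(y)]-[f(x)-f(y)][g(x)-g(y)],
\]
yields the pointwise commutator formula
\[
\mathcal{C}(x) := D^\sigma(fg)(x)-f(x)D^\sigma g(x)-g(x)D^\sigma f(x) = -c_\sigma\!\int \frac{[f(x)-f(y)][g(x)-g(y)]}{|x-y|^{1+\sigma}}\,dy,
\]
with the integral now absolutely convergent thanks to the product of two differences.

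The main task is to bound $\|\mathcal{C}\|_{L^p}\le C\|g\|_\infty\|D^\sigma f\|_{L^p}$. Using $|g(x)-g(y)|\le 2\|g\|_\infty$ reduces matters to controlling $\int |f(x)-f(y)||x-y|^{-1-\sigma}\,dy$ in $L^p$, which however is not directly dominated by $\|D^\sigma f\|_{L^p}$. To fix this, I would invoke the Littlewood--Paley--Stein characterization
\[
\|D^\sigma f\|_{L^p}\sim\|\mathcal{S}_\sigma f\|_{L^p},\qquad \mathcal{S}_\sigma f(x) := \Bigl(\int \frac{|f(x+h)-f(x)|^2}{|h|^{1+2\sigma}}\,dh\Bigr)^{1/2},
\]
valid for $p\in(1,\infty)$, and split the $y$-integral at a scale $|x-y|=t$: on the near region $|x-y|<t$ apply Cauchy--Schwarz pairing the $f$-difference against the weight $|x-y|^{-(1+2\sigma)/2}$ to recover $\mathcal{S}_\sigma f(x)$, using $|g(x)-g(y)|\le 2\|g\|_\infty$ only to finitize the $g$-factor; on the far region $|x-y|\ge t$ use $|g(x)-g(y)|\le 2\|g\|_\infty$ directly and bound the tail $\int_{|x-y|\ge t}|f(x)-f(y)||x-y|^{-1-\sigma}\,dy$ by $t^{-\sigma}$ times a maximal-function or square-function quantity. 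Choosing $t$ judiciously (either independent of $x$ or scaled to balance the two pieces) should yield a pointwise estimate of the form $|\mathcal{C}(x)|\lesssim\|g\|_\infty\,\mathcal{S}_\sigma f(x)$, after which the theorem follows by the square-function equivalence.

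The main obstacle is calibrating the splitting so that only $\mathcal{S}_\sigma f$ (and not the Hardy--Littlewood maximal $Mf$ or a square function of a higher-order derivative $D^{\sigma+\epsilon}f$) survives in the bound, since neither of those auxiliary quantities is controlled by $\|D^\sigma f\|_{L^p}$ alone. If the direct pointwise splitting proves too lossy, the cleaner fallback is the Coifman--Meyer bilinear Fourier multiplier approach: identify $\mathcal{C}=T_m(f,g)$ with $m(\xi_1,\xi_2)=|\xi_1+\xi_2|^\sigma-|\xi_1|^\sigma-|\xi_2|^\sigma$, factor out $|\xi_1|^\sigma$ to reduce to the bilinear multiplier $\tilde m=m/|\xi_1|^\sigma$ acting on $(D^\sigma f, g)$, and verify that $\tilde m$ (after smooth cutoffs handling the singular locus $\xi_1=0$) satisfies the Mikhlin--Hörmander hypothesis, so that the standard $L^p\times L^\infty\to L^p$ boundedness for Coifman--Meyer operators applies. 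Either route, the bilinear structure together with square-function/Littlewood--Paley machinery is essential.
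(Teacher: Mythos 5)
Your pointwise commutator identity
\[
D^\sigma(fg)(x)-f(x)D^\sigma g(x)-g(x)D^\sigma f(x)
=-c_\sigma\int\frac{[f(x)-f(y)][g(x)-g(y)]}{|x-y|^{1+\sigma}}\,dy
\]
is correct and is indeed the natural starting point; however, the near/far splitting you propose does not close, and you have in fact put your finger on exactly where it breaks without resolving it. In the near region $|x-y|<t$, pairing the $f$-difference against $|x-y|^{-(1+2\sigma)/2}$ leaves, after Cauchy--Schwarz, the factor $\bigl(\int_{|x-y|<t}|g(x)-g(y)|^2|x-y|^{-1}\,dy\bigr)^{1/2}$; bounding $|g(x)-g(y)|$ by $2\|g\|_\infty$ gives $\bigl(\int_{|x-y|<t}|x-y|^{-1}\,dy\bigr)^{1/2}=+\infty$, because the remaining weight $|x-y|^{-1/2}$ is not square-integrable near $y=x$. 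Shifting weight onto $f$ so that the $g$-factor converges forces the $f$-factor to become a square function $\mathcal{S}_{\sigma+\epsilon}f$, which is not controlled by $\|D^\sigma f\|_p$. Meanwhile the far region gives $t^{-\sigma}(|f(x)|+Mf(x))$, and $\|Mf\|_p\lesssim\|f\|_p$ has the wrong scaling: under $f\mapsto f(\lambda\cdot)$, $\|D^\sigma f\|_p$ carries an extra factor $\lambda^\sigma$ that $\|f\|_p$ does not, so no choice of $t$ (constant or $x$-dependent) can reconcile the two pieces.

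The Coifman--Meyer fallback also fails as stated. With $\tilde m(\xi_1,\xi_2)=\bigl(|\xi_1+\xi_2|^\sigma-|\xi_1|^\sigma-|\xi_2|^\sigma\bigr)/|\xi_1|^\sigma$, a Taylor expansion near $\xi_1=0$ (with $|\xi_2|\sim 1$) gives $\tilde m=-1+\sigma|\xi_2|^{\sigma-1}|\xi_1|^{1-\sigma}\operatorname{sgn}\xi_1+O(|\xi_1|^{2-\sigma})$, so $|\partial_{\xi_1}\tilde m|\sim|\xi_1|^{-\sigma}$ there; this violates the Mikhlin--H\"ormander bound $|\partial_{\xi_1}\tilde m|\lesssim(|\xi_1|+|\xi_2|)^{-1}$. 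The singular locus $\xi_1=0$ is not removable by smooth cutoffs: the cutoff region near $\xi_1=0$ is the low-high paraproduct region, precisely where the commutator structure is essential, and one must estimate that piece by hand rather than by a blanket multiplier theorem. (Also note: the required $L^p\times L^\infty\to L^p$ endpoint is itself outside the classical Coifman--Meyer range, even for genuinely smooth symbols.) For the record, these notes do not prove the statement but cite the appendix of Kenig--Ponce--Vega \cite{KPV93}, where the argument is carried out via a more delicate decomposition (Littlewood--Paley/paraproduct with a separate treatment of the low-high, high-low and diagonal pieces, in the spirit of the solution to Problem 5.1 above) rather than by the crude pointwise bound $|g(x)-g(y)|\le 2\|g\|_\infty$, which, as you observed, is too lossy.
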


\begin{theorem}[Chain rule 1]
Let $\sigma\in (0,1) $ and $p,p_1,p_2 \in (1,\infty)$,  such that
$$ \frac{1}{p}=\frac{1}{p_1}+\frac{1}{p_2} \quad $$
Then
$$\| D^\sigma F(f)\|_{L^p} \leq c\|F'(f)\|_{L^{p_1}}\|D^\sigma f\|_{L^{p_2}}$$
\end{theorem}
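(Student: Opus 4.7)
I propose to follow the Christ--Weinstein strategy based on the pointwise singular-integral representation of $D^\sigma$. For $\sigma\in(0,1)$, one has the formula
$$D^\sigma g(x) = c_\sigma\,\mathrm{p.v.}\!\int_{\mathbb{R}}\frac{g(x)-g(y)}{|x-y|^{1+\sigma}}\,dy,$$
valid for Schwartz $g$. Applying this to $g = F(f)$ and using the fundamental theorem of calculus
$$F(f(x)) - F(f(y)) = (f(x)-f(y))\int_0^1 F'\bigl((1-\tau)f(y)+\tau f(x)\bigr)\,d\tau,$$
one obtains
$$D^\sigma F(f)(x) = c_\sigma\int_0^1\!\!\int_{\mathbb{R}}\frac{f(x)-f(y)}{|x-y|^{1+\sigma}}\,F'\bigl(\Phi_\tau(x,y)\bigr)\,dy\,d\tau,$$
with $\Phi_\tau(x,y):=(1-\tau)f(y)+\tau f(x)$.

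The next step is to establish a pointwise estimate of the form
$$|D^\sigma F(f)(x)| \leq C\bigl(|F'(f(x))| + \mathcal{M}(F'(f))(x)\bigr)\cdot \mathcal{G}_\sigma f(x),$$
where $\mathcal{M}$ is an appropriate Hardy--Littlewood-type maximal operator, and $\mathcal{G}_\sigma f$ is a sharp-function pointwise controlling $|D^\sigma f|$ in the sense that $\|\mathcal{G}_\sigma f\|_{L^{p_2}}\lesssim \|D^\sigma f\|_{L^{p_2}}$ (via the Littlewood--Paley square function characterization of Riesz potential spaces). This is done by decomposing the $y$-integral dyadically, $|x-y|\sim 2^k$, and analyzing $F'(\Phi_\tau(x,y))$ on each annulus. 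Once the pointwise bound is in place, Holder's inequality with exponents $p_1$ and $p_2$, combined with the $L^{p_i}$-boundedness of $\mathcal{M}$ (valid since $p_1,p_2>1$), immediately yields the desired inequality.

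The main obstacle, and the technical heart of the proof, is the control of $F'(\Phi_\tau(x,y))$ in terms of a maximal function of the composition $F'\circ f$. The quantity $\Phi_\tau(x,y)$ is a convex combination of $f(x)$ and $f(y)$, not a value of $f$ at some intermediate point, so one cannot simply invoke the intermediate value theorem unless $f$ is locally monotone. To circumvent this, I would use a layer-cake/level-set argument: writing $|F'(\Phi_\tau(x,y))| = \int_0^\infty \mathbf{1}_{\{|F'|>\lambda\}}(\Phi_\tau(x,y))\,d\lambda$, one pulls the sets $\{\xi:|F'(\xi)|>\lambda\}$ back under $f$ and estimates them, on each dyadic annulus $|x-y|\sim 2^k$, by averages of $\mathbf{1}_{\{|F'\circ f|>\lambda\}}$ over balls of scale $2^k$ centered at $x$. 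Summing in $k$ and integrating in $\lambda$ recovers $\mathcal{M}(F'(f))(x)$; this decoupling of the non-linear coupling between $F'$ and $f$ is what forces the maximal operator into the pointwise estimate and, in turn, forces the restriction $p_1, p_2 > 1$. Absent this subtlety, one would obtain only the much cruder estimate $\|D^\sigma F(f)\|_{L^p}\leq \|F'\|_{L^\infty}\|D^\sigma f\|_{L^p}$.
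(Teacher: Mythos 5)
There is a genuine gap at the layer-cake step, and it is fatal without an additional hypothesis on $F$. After the fundamental-theorem-of-calculus step you must control $F'(\Phi_\tau(x,y))$ with $\Phi_\tau(x,y)=(1-\tau)f(y)+\tau f(x)$, a convex combination of two \emph{values} of $f$. You propose the co-area identity $|F'(\Phi_\tau(x,y))|=\int_0^\infty\mathbf{1}_{\{|F'|>\lambda\}}(\Phi_\tau(x,y))\,d\lambda$ and then, on each dyadic annulus $|x-y|\sim 2^k$, to estimate the measure in $y$ of $\{\Phi_\tau(x,y)\in\{|F'|>\lambda\}\}$ by an average of $\mathbf{1}_{\{|F'\circ f|>\lambda\}}$ over $B(x,2^k)$. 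No such bound is available: $\Phi_\tau(x,y)$ lying in a superlevel set of $F'$ conveys no information about where $f$ itself lands in that set near $x$; the pull-back $\{z:|F'(f(z))|>\lambda\}$ simply does not detect values of $F'$ taken strictly between $f(x)$ and $f(y)$. If a bound of the kind you want did hold, it would prove the inequality for every $F\in C^1$ with $F(0)=0$, and at that level of generality the estimate is in fact \emph{false}. Take $F'\geq 0$ a smooth bump supported in $(\tfrac{1}{4},\tfrac{3}{4})$ with $F(0)=0$, and let $f_\epsilon$ be a smoothed indicator of $[0,1]$ transitioning over windows of length $\epsilon$. For $\sigma p_2<1$ one has $\|D^\sigma f_\epsilon\|_{L^{p_2}}$ bounded as $\epsilon\to 0$, and $F(f_\epsilon)$ is a constant multiple of a smoothed indicator of the same interval, so $\|D^\sigma F(f_\epsilon)\|_{L^p}$ stays bounded below; yet $F'(f_\epsilon)$ vanishes off the two length-$O(\epsilon)$ transition windows, so $\|F'(f_\epsilon)\|_{L^{p_1}}\to 0$, and the inequality fails.

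The result, as proved in the appendix of Kenig--Ponce--Vega \cite{KPV93} and earlier by Christ--Weinstein, carries a structural hypothesis on $F$: essentially $|F'(\alpha a+(1-\alpha)b)|\lesssim|F'(a)|+|F'(b)|$ uniformly in $\alpha\in[0,1]$, which holds for the model nonlinearities $F(u)=|u|^{\rho}u$ since there $F'$ is monotone in $|u|$. That hypothesis exists precisely to dispose of the difficulty you isolated: it gives $|F'(\Phi_\tau(x,y))|\lesssim|F'(f(x))|+|F'(f(y))|$ directly, after which your maximal-function device and H\"older's inequality close the argument with no level-set machinery at all. Your scheme is sound in outline and you have correctly located the hard point, but you are trying to manufacture a bound on $F'(\Phi_\tau)$ out of nothing, and this cannot succeed. (A secondary caveat: the quantity $\int |f(x)-f(y)|\,|x-y|^{-1-\sigma}\,dy$ that emerges after this step is not itself dominated in $L^{p_2}$ by $\|D^\sigma f\|_{L^{p_2}}$ — it sees no cancellation — so the ``$\mathcal{G}_\sigma$'' in your pointwise estimate must be set up as a genuine square function in $y$, e.g.\ via Cauchy--Schwarz across the dyadic scales, before the Littlewood--Paley equivalence you invoke applies.)
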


\begin{theorem}[Chain rule 2]
If $p\in (1,\infty)$, $r>1$, and $h\in L_{\text{loc}}^{rp}(\mathbb{R})$.  Then
$$\| D^\sigma F(f) h \|_p \leq c \|F'(f)\|_\infty \|D^\sigma(f) M(h^{rp})^{1/rp}\|_p$$
where $M$ is the Hardy-Littlewood maximal operator.
\end{theorem}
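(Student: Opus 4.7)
My plan is to reduce Chain Rule 2 to a pointwise composition estimate for a positive-kernel representation of $D^\sigma$, and then to absorb the weight $h$ via a Lebesgue differentiation bound. For $0 < \sigma < 1$ and $1 < p < \infty$, I would work through the Stein--Wainger square function
$$S_\sigma g(x) := \left( \int \frac{|g(x) - g(y)|^2}{|x - y|^{1+2\sigma}}\, dy \right)^{1/2},$$
which satisfies $\|D^\sigma g\|_p \sim \|S_\sigma g\|_p$. Because its kernel is non-negative, $S_\sigma$ responds cleanly to the mean value inequality $|F(a) - F(b)| \leq \|F'\|_\infty |a - b|$, giving the pointwise composition bound
$$S_\sigma F(f)(x) \leq \|F'(f)\|_\infty\, S_\sigma f(x).$$
This is precisely the step that sign cancellation makes unavailable for $D^\sigma$ directly.

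To bring in the weight, observe that $h \in L^{rp}_{\mathrm{loc}}$ with $r > 1$ implies, via Lebesgue differentiation, $|h(x)|^{rp} \leq M(|h|^{rp})(x)$ almost everywhere, hence $|h(x)| \leq M(|h|^{rp})^{1/rp}(x)$. Combining the two pointwise facts and taking the $L^p$ norm yields
$$\|h\, D^\sigma F(f)\|_p \lesssim \|h\, S_\sigma F(f)\|_p \leq \|F'(f)\|_\infty\, \| M(|h|^{rp})^{1/rp}\, S_\sigma f\|_p.$$

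The main obstacle is the last step: replacing $S_\sigma f$ by $D^\sigma f$ on the right-hand side in the presence of the weight $w := M(|h|^{rp})^{1/rp}$. In unweighted $L^p$ this is the standard Stein--Wainger equivalence; here one needs its weighted analogue, valid when $w^p$ lies in a Muckenhoupt class $A_p$. The hypothesis $r > 1$ is designed precisely to supply this condition: writing $w^p = (M(|h|^{rp}))^{1/r}$ and invoking the Coifman--Rochberg lemma $(Mg)^\alpha \in A_1 \subset A_p$ for $0 \leq \alpha < 1$, with $\alpha = 1/r$, one obtains $w^p \in A_p$, and the weighted Stein--Wainger characterization then closes the estimate. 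An alternative path, avoiding weighted Muckenhoupt theory, is to work directly from the singular integral formula $D^\sigma g(x) = c\,\mathrm{p.v.}\!\int (g(x)-g(y))|x-y|^{-1-\sigma}\,dy$, dominate the numerator by its absolute value, and perform a Hedberg-style splitting at a free scale $\delta > 0$: the near-diagonal contribution generates the maximal-function factor, the far contribution is controlled by $D^\sigma f$, and optimization in $\delta$ produces the stated inequality.
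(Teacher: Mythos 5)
Your main path is the right strategy: the $L^2$-based Stein square function $S_\sigma$ has a positive kernel, so $S_\sigma F(f) \le \|F'(f)\|_\infty S_\sigma f$ pointwise; Lebesgue differentiation gives $|h| \le M(|h|^{rp})^{1/rp}$ a.e.; and the Coifman--Rochberg lemma (which is exactly what the hypothesis $r>1$ buys) puts the weight $w^p = M(|h|^{rp})^{1/r}$ in $A_1\subset A_p$. You rightly flag the weighted two-sided equivalence $\|D^\sigma g\|_{L^p(w^p)}\sim\|S_\sigma g\|_{L^p(w^p)}$ as the crux, and you should note that it must actually be proved, not merely cited (one direction is a weighted vector-valued Calder\'on--Zygmund bound, the other a converse), and that the unweighted Stein characterization (Chapter V of \cite{St1}) carries the restriction $p > 2/(1+2\sigma)$ in one dimension, which is nontrivial when $\sigma < 1/2$; a complete argument must address this range.

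The alternative path fails, and it is worth seeing exactly where. Once you move the absolute value inside $D^\sigma g(x)=c\int (g(x)-g(y))\,|x-y|^{-1-\sigma}\,dy$, the positive quantity $T_\sigma g(x):=\int|g(x)-g(y)|\,|x-y|^{-1-\sigma}\,dy$ dominates $|D^\sigma g|$ pointwise but is \emph{not} two-sidedly comparable to $D^\sigma g$ in $L^p$: by Minkowski's inequality $\|T_\sigma g\|_p\le\int\|g(\cdot+y)-g\|_p\,|y|^{-1-\sigma}\,dy$, an $\ell^1$-over-dyadic-scales (Besov $\dot B^\sigma_{p,1}$) quantity, whereas $\|D^\sigma g\|_p$ is an $\ell^2$-over-scales (Triebel--Lizorkin $\dot F^\sigma_{p,2}$) quantity, and the $\ell^1$ sum can exceed the $\ell^2$ sum by $\sqrt N$ over $N$ active frequency scales. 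So after the mean value theorem there is no way to control $\int_{|x-y|\ge\delta}|f(x)-f(y)|\,|x-y|^{-1-\sigma}\,dy$ by $D^\sigma f$, pointwise or in $L^p$, and the Hedberg optimization cannot close. The Hedberg splitting is built for fractional \emph{integrals} of nonnegative data, where no cancellation is at stake; it does not transfer to the roughening operator $D^\sigma$ acting on differences, where all the cancellation lives. This failure is precisely what forces the $L^2$-based square function in the first place.
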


\noindent \textbf{Problem 2}.  I will give an elaboration of the Bona-Smith method, which appears as Step 4 in the notes, however I will instead work with the equation
$$\partial_t u =i\partial_x^2 u + iu\partial_x\bar{u}$$
in order to underscore the wide range of applicability of the method.  First, we apply $\partial_x^k$ for $k\geq 3$:
\begin{equation}
\label{E:aux1}
\partial_t (\partial_x^k u) = i\partial_x^2 (\partial_x^ku) + u \partial_x^{k+1}\bar{u} + \text{lower order terms}
\end{equation}
(we shall drop the lower order terms in the remainder of the exposition). Pairing with $\partial_x^k \bar{u}$, integrating in $x$, and taking the real part gives
\begin{equation}
\label{E:aux2}
\partial_t \int_x |\partial_x^k u|^2 = 2\text{Re }i\int_x u\, \partial_x^{k+1}\bar{u}\, \partial_x^k\bar{u} =  -\text{Re }i\int_x \partial_x u \,  (\partial_x^k\bar{u})^2
\end{equation}
If $\|u_0\|_{H^k} \leq R$, we can integrate in time to obtain $T=T(R)>0$ for which $\sup_{[0,T]}\|u(t)\|_{H^k}\leq 2\|u_0\|_{H^k}$ is \textit{a priori} bounded.  Thus existence and uniqueness of a solution on $[0,T]$ for this equation follows by the techniques of Step 1-3 in Lecture 1.  Now we use the Bona-Smith method to show that the ``data to solution'' map is continuous as a map from $H^k$ to $C([0,T];\, H^k)$.  Set $u_0^\delta = \varphi_\delta\ast u_0$, where $\varphi\in \mathcal{S}(\mathbb{R}^n)$, $\int \varphi =1$, $\int x^\alpha \varphi(x)dx =0$ for $|\alpha|\neq 0$.  (Get $\varphi$ by taking $\hat{\varphi}(\xi)=1$ on $|\xi|\leq 1$, $\hat{\varphi}(\xi)=0$ on $|\xi|\geq 2$.)  Then let $u^\delta$ be the solution corresponding to $u_0^\delta$. 

\noindent \underline{Step A}.  For $l\geq 0$, $\sup_{[0,T]}\|u^\delta(t)\|_{H^{k+l}} \leq 2R\delta^{-l}$.  This is obtained from \eqref{E:aux1}, \eqref{E:aux2} with $k$ replaced by $k+l$ and also noting that $\|u_0^\delta\|_{H^{k+l}} \leq \delta^{-l}\|u_0\|_{H^k}$.

\noindent \underline{Step B}.  $\sup_{[0,T]} \|(u^\delta-u)(t)\|_{L^2} \leq 2\|u_0^\delta - u_0\|_{L^2} \leq \delta^kh(\delta)$, where $h(\delta) \to 0$ and $|h(\delta)|\leq R$.
From the equation,
\begin{align*}
\partial_t(u^\delta-u) &= i\partial_x^2(u^\delta - u) + iu^\delta \partial_x \bar u^\delta - iu\partial_x \bar{u} \\
&= i\partial_x^2(u^\delta - u) + i(u^\delta-u)\partial_x\bar{u}^\delta +iu\partial_x(\overline{u^\delta - u})
\end{align*}
Pair with $\overline{u^\delta-u}$, integrate in $x$, take the real part, integrate in time to obtain:
$$\| (u^\delta - u)(t)\|_{L_x^2}^2 \leq \|u_0^\delta - u_0\|_{L^2}^2 + T(\|\partial_x u^\delta \|_{L_T^\infty L_x^\infty}+ \|\partial_x u\|_{L_T^\infty L_x^\infty})\|u^\delta - u\|_{L_T^\infty L_x^2}^2  $$
where, to estimate the last nonlinear term, we used that $\partial_x (\overline{u^\delta-u}) \, (\overline{u^\delta-u}) = \frac{1}{2} \partial_x (\overline{u^\delta-u})^2$ and integration by parts.  Thus, by suitable choice of $T=T(R)>0$,
$$\|(u^\delta-u)(t)\|_{L_x^2} \leq 2\|u_0^\delta-u_0\|_{L^2}$$
Now observe
$$|\hat{\varphi}(\delta\xi) - 1| \leq \delta |\xi|  \sup_{[0,\delta\xi]}|(\partial_\xi \hat{\varphi})(\eta)|$$
However, because $\partial_\xi\hat{\varphi}(0) =0$, we also have
$$|\partial_{\xi}\hat{\varphi}(\eta) | \leq \delta|\xi| \sup_{[0,\delta\xi]}|\partial_\xi^2\hat{\varphi}(\eta) |$$
Continuing, we have $\forall$ integer $k$,
$$|\hat{\varphi}(\delta\xi)-1| \leq \delta^k |\xi|^k \sup_{[0,\delta\xi]}|(\partial_\xi^k\hat{\varphi})(\eta)|$$
and thus
$$\left( \int \left| \hat{\varphi}(\delta\xi)-1\right|^2 |\hat{u}_0(\xi)|^2 \, d\xi \right)^{1/2} \leq \delta^k \underbrace{\left( \int_\xi \sup_{[0,\delta\xi]}|\partial_\xi^k\hat{\varphi}(\eta)|^2 |\xi|^{2k}|\hat{u}_0(\xi)|^2 \, d\xi \right)^{1/2}}_{h(\delta)}$$
with $\lim_{\delta\to 0}h(\delta)=0$ by dominated convergence.

\noindent \underline{Step C}. For $r\leq k$,  $\sup_{[0,T]} \|(u^\delta-u)(t)\|_{H^r} \leq R^\frac{r}{k}\delta^{k-r}h(\delta)^\frac{k-r}{k}$.  This follows from Step B by interpolation:
$$\|u^\delta - u\|_{H^r} \leq \|u^\delta - u\|_{L^2}^\frac{k-r}{k} \|u^\delta-u\|_{H^k}^\frac{r}{k}$$

\noindent \underline{Step D}.   $\sup_{[0,T]} \|(u^\delta-u)(t)\|_{H^k} \leq 2\|u_0^\delta - u_0\|_{H^k}$.  By \eqref{E:aux1} for $u$ and $u^\delta$,
\begin{align*}
\partial_t\partial_x^k(u^\delta-u) &= i\partial_x^2\partial_x^k(u^\delta-u) + iu^\delta \partial_x^{k+1}\bar{u}^\delta-iu\partial_x^{k+1}\bar{u}\\
&= i\partial_x^2\partial_x^k(u^\delta-u) +i(u^\delta-u)\partial_x^{k+1}\bar{u}^\delta +iu\partial_x^{k+1}(\overline{u^\delta-u})
\end{align*}
and thus
$$\| \partial_x^k (u^\delta-u)(t)\|_{L_x^2}^2 \leq 
\begin{aligned}[t]
&\|\partial_x^k(u_0^\delta-u_0)\|_{L^2}^2 \\
&+ T\|u^\delta - u\|_{L_T^\infty L_x^2}\|\partial_x^{k+1}u^\delta\|_{L_T^\infty L_x^\infty}\|\partial_x^k(u^\delta-u)\|_{L_T^\infty L_x^2} \\
&+ T\|\partial_x u\|_{L_x^\infty}\|\partial_x^k(u^\delta-u)\|_{L_x^2}^2
\end{aligned}$$
We further estimate the first nonlinear piece using Step A and C (with $r=0$ in this case) to obtain
$$\| \partial_x^k(u^\delta - u)(t)\|_{L_x^2} \leq 2\|\partial_x^k(u_0^\delta - u_0)\|_{L^2} + TR\delta^{k-2}h(\delta)$$

\noindent \underline{Step E}.  If both $\|u_{10}\|_{H^k}\leq R$ and $\|u_{20}\|_{H^k}\leq R$, then $\sup_{[0,T]} \|(u_1^\delta - u_2^\delta)(t) \|_{H^k} \leq 2\|u_{10}-u_{20}\|_{H^k}$, where $T=T(R)$. This follows by the above techniques.

We can now complete the argument.  Let $\epsilon>0$, and suppose $u_{10}$ and $u_{20}$ are such that $\|u_{10}\|_{H^k}\leq R$, $\|u_{20}\|_{H^k}\leq R$ and $\|u_{10}-u_{20}\|_{H^k} \leq \frac{\epsilon}{10}$.  Then obtain $\delta=\delta(u_{10},u_{20})$ such that $\|u_{10}^\delta-u_{10}\|_{H^k}\leq \frac{\epsilon}{10}$ and $\|u_{20}^\delta-u_{20}\|_{H^k} \leq \frac{\epsilon}{10}$.  Let $T=T(R)$ (independent of $\delta$) be such that the claims in Steps A-E hold; then the results of Steps A-E give that
$$\sup_{[0,T]}\|(u^1-u^2)(t)\|_{H^k} \leq \epsilon$$

\noindent \textbf{Problem 3}.  The following proof seems only to apply to $k> \frac{n}{2}+2$.  In the presentation, I shall restrict to the case $s=k$ integer and to $n=1$ (1-D), and to monomial nonlinearity, i.e.\
$$ F(u,\bar{u}, \partial_x \bar{u}) = u^\alpha\bar{u}^\beta (\partial_x\bar{u})^\gamma$$
Then the equation takes the form
$$\partial_t u = i\partial_x^2 u + u^\alpha \bar{u}^\beta (\partial_x \bar{u})^\gamma$$
Apply $\partial_x^k$, $k\geq 3$, and separate terms in the Leibniz expansion of $F$
\begin{align}
\partial_t (\partial_x^ku) &= i\partial_x^2 (\partial_x^ku) + \sum_{j=0}^{k-1} C_j \partial_x^{k-j}(u^\alpha\bar{u}^\beta) \partial_x^j(\partial_x \bar{u})^\gamma + u^\alpha \bar{u}^\beta \partial_x^k(\partial_x \bar{u})^\gamma \notag\\
&= 
\begin{aligned}[t]
&i\partial_x^2 (\partial_x^ku) + \sum_{j=0}^{k-1} C_j \partial_x^{k-1-j}[(\alpha-1)u^{\alpha-1}(\partial_xu)\bar{u}^\beta+(\beta-1)u^\alpha \bar{u}^{\beta-1} (\partial_x \bar{u})] \partial_x^j(\partial_x \bar{u})^\gamma \\
&+ u^\alpha \bar{u}^\beta \partial_x^k(\partial_x \bar{u})^\gamma 
\end{aligned}\notag\\
&= i\partial_x^2(\partial_x^k u) + \text{I} + \text{II} \label{PE:50}
\end{align}
We further separate term II as:
\begin{align*}
\text{II} &= \gamma u^\alpha \bar{u}^\beta (\partial_x \bar{u})^{\gamma-1} \partial_x^{k+1}\bar{u}+ u^\alpha\bar{u}^\beta \sum_{\substack{\nu\geq 2\\ j_1\geq 1, \ldots, j_\nu\geq 1 \\ j_1+\cdots+j_\nu=k}} C_{j,\nu}(\partial_x \bar{u})^{\gamma-\nu} \partial_x^{j_1+1} \bar{u} \cdots \partial_x^{j_\nu+1} \bar{u}\\
&= \text{II}_1 + \text{II}_2
\end{align*}
Pair \eqref{PE:50} with $\partial_x^k \bar{u}$, integrate, and take the real part.  For term I
\begin{align*}
&\sum_{j=0}^{k-1} C_j \text{Re}\int \partial_x^{k-1-j}[(\alpha-1)u^{\alpha-1}(\partial_xu)\bar{u}^\beta+(\beta-1)u^\alpha \bar{u}^{\beta-1} \partial_x \bar{u}] \partial_x^j(\partial_x \bar{u})^\gamma \partial_x^k \bar{u}\\
& \leq C \| \partial_x^{k-1-j}[(\alpha-1)u^{\alpha-1}(\partial_xu)\bar{u}^\beta+(\beta-1)u^\alpha \bar{u}^{\beta-1} \partial_x \bar{u}]\partial_x^j(\partial_x \bar{u})^\gamma \|_{L^2} \| \partial_x^k \bar{u}\|_{L^2}\\
&\leq C(\|u^{\alpha-1}(\partial_xu)\bar{u}^\beta \|_{H^{k-1}} + \|u^\alpha\bar{u}^{\beta-1}(\partial_x \bar{u})\|_{H^{k-1}})\|(\partial_x\bar{u})^\gamma\|_{H^{k-1}}
\end{align*}
and use that $H^{k-1}$ is an algebra. For term $\text{II}_1$, 
$$\text{Re}\int \gamma u^\alpha \bar{u}^\beta (\partial_x \bar{u})^{\gamma-1} \partial_x^{k+1}\bar{u}\partial_x^k\bar{u}$$
Use that $\partial_x^{k+1}\bar{u} \partial_x^k \bar{u} = \frac{1}{2} \partial_x (\partial_x^k \bar{u})^2$, and integrate by parts.  For term $\text{II}_2$, 
\begin{align*}
&\text{Re}\int \sum_{\substack{\nu\geq 2\\ j_1\geq 1, \ldots, j_\nu\geq 1 \\ j_1+\cdots +j_\nu=k}} u^\alpha \bar{u}^\beta (\partial_x \bar{u})^{\gamma-\nu} \partial_x^{j_1+1} \bar{u} \cdots \partial_x^{j_\nu+1} \bar{u}\partial_x^k\bar{u}\\
& \leq \sum \| u^\alpha \bar{u}^\beta (\partial_x \bar{u})^{\gamma-\nu}\|_{L^\infty} \| \partial_x^{j_1+1} \bar{u} \cdots \partial_x^{j_\nu+1}\bar{u}\|_{L^2} \| \partial_x^k\bar{u}\|_{L^2}\\
& \leq \sum \| u^\alpha \bar{u}^\beta (\partial_x \bar{u})^{\gamma-\nu}\|_{L^\infty} \| [\partial_x^{j_1-1} (\partial_x^2\bar{u})] \cdots [\partial_x^{j_\nu-1} (\partial_x^2 \bar{u})]\|_{L^2} \| \partial_x^k\bar{u}\|_{L^2}
\end{align*}
Since $(j_1-1) + \cdots + (j_\nu-1) =k-\nu \leq k-2$, use that $H^{k-2}$ is an algebra.\\

\noindent\textbf{Problem 4a}.  (Taken from \cite{St2}).  Use that $\|T^\ast\|=\|T\|$ to show that $\|T^\ast T \|=\|T\|^2$.  This shows that, for $B$ self-adjoint, $\|B^2\|=\|B\|^2$, and we can thus deduce (by induction for $m=2^k$, then an interpolation-type argument for arbitrary $m$) that 
$$\|\underbrace{T^*T \cdots T^*T}_{m \text{ copies}}\|=\|T\|^{2m}$$
Now let $T=\sum_{j=-N}^{N}T_j$, and note that
$$\underbrace{T^*T \cdots T^*T}_{m \text{ copies}} = \sum_{\substack {j_1, \ldots, j_m \\ k_1, \cdots, k_m }} T^\ast_{j_1}T_{k_1} \cdots T^\ast_{j_m}T_{k_m}$$
Grouping terms as $(T_{j_1}^\ast T_{k_1})\cdots (T_{j_m}^\ast T_{k_m})$,
$$\|T^\ast_{j_1}T_{k_1} \cdots T^\ast_{j_m}T_{k_m}\| \leq \gamma(j_1-k_1)^2 \cdots \gamma(j_m-k_m)^2 $$
Grouping terms as $T_{j_1}^\ast (T_{k_1}T_{j_2}^\ast) \cdots (T_{k_{m-1}}T_{j_m}^\ast) T_{k_m}$,
$$\|T^\ast_{j_1}T_{k_1} \cdots T^\ast_{j_m}T_{k_m}\|\leq  \gamma(0)\gamma(k_1-j_2)^2 \cdots \gamma(k_{m-1}-j_m)^2 \gamma(0)$$
and taking the geometric mean
$$\|T^\ast_{j_1}T_{k_1} \cdots T^\ast_{j_m}T_{k_m}\| \leq \gamma(0) \gamma(j_1-k_1)\gamma(k_1-j_2) \cdots \gamma(k_{m-1}-j_m)\gamma(j_m-k_m)$$
Thus
$$\|T\|^{2m} \leq \sum_{\substack {j_1, \ldots, j_m \\ k_1, \cdots, k_m }} \gamma(0) \gamma(j_1-k_1)\gamma(k_1-j_2) \cdots \gamma(k_{m-1}-j_m)\gamma(j_m-k_m)\leq (2N+1)\gamma(0)A^{2m-1}$$
which gives
$$\|T\| \leq [(2N+1)\gamma(0)]^{\frac{1}{2m}}A^\frac{2m-1}{2m}$$
Letting $m\to \infty$, we get
$$\|T\| \leq A$$\\

\noindent\textbf{Problem 4b}.  (Taken from \cite{St2}).  By Plancherel, it suffices to prove $L^2$ boundedness of the operator defined by
$$Tf(x) = \int_\xi e^{ix\cdot \xi} a(x,\xi) f(\xi) \, d\xi$$
Let $\phi(x)$ be a smooth function supported in $Q_0 = \{ \; x\in \mathbb{R}^n \; \big| \; |x_j| \leq 1, j=1,\ldots , n \; \}$ such that, for each $x$,  
$$ \sum_{i\in \mathbb{Z}^n} \phi(x-i) = 1$$
To construct such a $\phi$, take a $\phi_0(x)$ such that $\phi_0(x)=1$ on $\frac{1}{2}Q_0$ with $\text{supp }\phi_0(x) \subset Q_0$. Then, for each $x$, $\sum_{i\in \mathbb{Z}^n} \phi_0(x-i)$ is $\geq 1$ since integer translates of $\frac{1}{2}Q_0$ cover $\mathbb{R}^n$, and is a finite sum with $\leq 3^n$ terms for each $x$.  Set 
$$\phi(x) = \frac{\phi_0(x)}{\sum_{i\in \mathbb{Z}^n} \phi_0(x-i)} $$
Let 
$$T_{ij}f(x) = \phi(x-i) \int_\xi e^{ix\cdot \xi} \phi(\xi-j) a(x,\xi) f(\xi) \, d\xi$$
The adjoint $T_{ij}^\ast$ is
$$T_{ij}^\ast f(\xi) = \phi(\xi-j) \int_x \phi(x-i) e^{-ix\cdot \xi} \overline{a(x,\xi)} f(x) \, dx$$
Thus,
$$T_{ij}T_{i'j'}^\ast f(x) =
 \int_y K(x,y) f(y) \, dy$$
where $$K(x,y) = \int_\xi e^{i(x-y)\cdot \xi} \phi(x-i)\phi(\xi-j)\phi(\xi-j')\phi(y-i')a(x,\xi)\overline{a(y,\xi)} \, d\xi$$
By integration by parts,
$$|K(x,y)| \leq 
\left\{
\begin{aligned}
&C_N (1+|x-y|)^{-N} \phi(x-i)\phi(y-i')& \text{if }|j-j'|\leq 1\\
&0 & \text{if }|j-j'|\geq 2
\end{aligned}
\right.
$$
and hence
$$|K(x,y)|\leq C_N (1+|i-i'|)^{-N}(1+|j-j'|)^{-N} \phi(x-i)\phi(y-i')$$
We thus have
$$\| T_{ij}T_{i'j'}^\ast f \|_{L^2} \leq C(1+|i-i'|)^{-N}(1+|j-j'|)^{-N}\|f\|_{L^2}$$
A similar calculation shows that
$$\| T_{i'j'}^\ast T_{ij}f\|_{L^2} \leq C(1+|i-i'|)^{-N}(1+|j-j'|)^{-N}\|f\|_{L^2}$$
We then apply Cotlar-Stein.\\

\noindent\textbf{Problem 5}.  (Taken from \cite{St2}).  \textit{Note: }Actually, to give the following formulas meaning as absolutely convergent integrals and to carry out the following computation rigorously, we need to truncate the symbols $a(x,\xi)$, $b(x,\xi)$ by replacing them with $\gamma(\epsilon x, \epsilon \xi)a(x,\xi)$ and $\gamma(\epsilon x, \epsilon \xi)b(x,\xi)$ where $\gamma(0,0)=1$ and $\gamma\in C_0^\infty(\mathbb{R}^n\times \mathbb{R}^n)$.  The estimate that we will obtain will be independent of $\epsilon$ and then we can pass to the limit $\epsilon\to 0$ at the end.  \\
First assume that $b(x,\xi)$ has compact $x$-support.  We have
$$T_bf(y) = \int_\xi e^{iy\cdot \xi} b(y,\xi) \, \hat{f}(\xi) \, d\xi$$
and therefore
$$\widehat{T_bf}(\eta) = \int_y e^{-iy\cdot \eta} \int_\xi e^{iy\cdot \xi} b(y,\xi) \, \hat{f}(\xi) \, d\xi\, dy = \int_\xi \hat{b}(\eta-\xi,\xi) \hat{f}(\xi)\, d\xi$$
Also
$$T_aT_bf(x) = \int_\eta e^{ix\cdot \eta} a(x,\eta) \widehat{T_bf}(\eta) \, d\eta$$
Substituting, we get
$$T_aT_bf(x) = \int_\xi e^{ix\cdot \xi}\left[ \underbrace{\int_\eta e^{ix\cdot (\eta-\xi)} a(x,\eta) \hat{b}(\eta-\xi,\xi) \, d\eta}_{\text{call this }c(x,\xi)} \right] \hat{f}(\xi) \, d\xi$$
We have
$$c(x,\xi) = \int_\eta e^{ix\cdot \eta} a(x,\xi+\eta)\hat{b}(\eta,\xi) \, d\eta$$
Now use the Taylor expansion for $a(x,\xi+\eta)$ around $a(x,\xi)$:
$$a(x,\xi+\eta) = \sum_{|\alpha|\leq N-1} \partial_\xi^\alpha a(x,\xi) \frac{\eta^\alpha}{\alpha !}+R_N(x,\xi,\eta)$$
and thus
\begin{align*}
c(x,\xi) &= \sum_{|\alpha|\leq N-1} \partial_\xi^\alpha a(x,\xi) \frac{(-i)^{|\alpha|}}{\alpha !} \int_\eta (i\eta)^\alpha \hat{b}(\eta,\xi) \, d\eta + \int e^{ix\cdot \eta} R_N(x,\xi,\eta) \hat{b}(\eta,\xi) \, d\eta \\
&\leq \sum_{|\alpha|\leq N-1}  \frac{(-i)^{|\alpha|}}{\alpha !} \partial_\xi^\alpha a(x,\xi)\partial_x^\alpha b(x,\xi) + \int e^{ix\cdot \eta} R_N(x,\xi,\eta) \hat{b}(\eta,\xi) \, d\eta
\end{align*}
For the remainder, we have the estimate
\begin{align*}
|R_N(x,\xi,\eta)| & \leq \frac{1}{(N-1)!} \sum_{|\alpha|\leq N} \sup_{0\leq t\leq 1} |\partial_\xi^\alpha a(x,\xi+t\eta)| |\eta|^N \\
& \leq C_N \sup_{0\leq t \leq 1} (1+|\xi+t\eta|)^{m_1-N}|\eta|^N \\
& \leq \left\{
\begin{aligned}
& c_N (1+|\xi|)^{m_1-N}|\eta|^N && \text{if }|\xi|\geq 2|\eta| \\
& c_N |\eta|^N && \text{if } |\xi|\leq 2|\eta| \text{ at least if }m_1-N \leq 0
\end{aligned}
\right.
\end{align*}
By integration by parts, and the assumption that $b(x,\xi)$ has compact $x$ support,
$$|\hat{b}(\eta,\xi)| \leq c_M (1+|x|)^{-M}(1+|\xi|)^{m_2}$$
and so
$$\left| \int_\eta e^{ix\cdot \eta} R_N(x,\xi,\eta)\hat{b}(\eta,\xi) \, d\eta \right| \leq \left\{ 
\begin{aligned}
&c_N(1+|\xi|)^{m_1-N}(1+|\xi|)^{m_2} && \text{if }|\xi|\geq 2|\eta| \\
& c_N (1+|\xi|)^{m_2} \int_{|\eta|\geq \frac{|\xi|}{2}} (1+|\eta|)^{-M} \,d\eta && \text{if }|\xi|\leq 2|\eta|
\end{aligned}
\right.
$$
To handle the non-compact case, we realize that it suffices to establish the formula in the neighborhood of an arbitary, but fixed, point $x_0$.  Let $\rho(x)=1$ for $|x-x_0|\leq 1$, and $\rho(x)=0$ for $|x-x_0|\geq 2$.  Let $b(x,\xi) = \rho(x) b(x,\xi) + (1-\rho(x))b(x,\xi)=b_1(x,\xi)+b_2(x,\xi)$.  We then obtain symbols $c_1(x,\xi)$ such that $T_{c_1} = T_{a}\circ T_{b_1}$ and $T_{c_2}=T_a\circ T_{b_2}$, where $c_1(x,\xi)$ has the correct expansion and
$$c_2(x,\xi) = \int_\eta \int_y e^{i(x-y)\cdot (\eta-\xi)} a(x,\eta) (1-\rho(y)) b(y,\xi) \, d\eta dy$$
which, we shall show belongs to $S^{-\infty}$, at least if $|x-x_0|\leq \frac{1}{2}$.  Suppose $|x-x_0|\leq \frac{1}{2}$.
\begin{align*}
c_2(x,\xi) &= \int_\eta \int_y e^{i(x-y)\cdot(\eta-\xi)} |x-y|^{-2N} \Delta_\eta^Na(x,\eta) (1-\rho(y)) b(y,\xi) \, d\eta \, dy\\
&= \int_\eta \int_y 
\begin{aligned}[t]
&e^{i(x-y)\cdot (\eta-\xi)} (1+|\eta-\xi|^2)^{-N} \Delta_\eta^Na(x,\eta) \\
&\cdot (I-\Delta_y)^N \left[ |x-y|^{-2N} (1-\rho(y)) b(y,\xi) \right] \, d\eta \, dy
\end{aligned}
\end{align*}
Since $|x-x_0|\leq \frac{1}{2}$, $|y-x_0|\geq \frac{1}{2}$, $|x-y|^{-2N}\leq c(1+|x-y|)^{-2N}$, 
$$|c_2(x,\xi)| \leq \int_\eta \int_y (1+|\eta-\xi|^2)^{-N} (1+|x-y|^2)^{-N} (1+|\eta|)^{m_1-N} \, d\eta \, dy \leq C_N(1+|\xi|)^{-N}$$\\

\noindent \textbf{Problem 6}. We are given symbols $a_j(x,\xi)$ such that
$|\partial_x^\alpha \partial_\xi^\beta a_j(x,\xi)| \leq C_{\alpha,\beta,j} \langle \xi \rangle^{m_j-|\beta|}$.
Let us treat only the 1D case to simplify notation.  Replace $C_{\alpha,\beta, j}$ with
$$C_{\alpha,\beta,j} \equiv \sup_{0 \leq \alpha' \leq \alpha , 0 \leq \beta' \leq \beta} C_{\alpha',\beta',j}$$
Let $\varphi(\xi) = 1 $ for $|\xi|\geq 2$ and $\varphi(\xi)=0$ for $|\xi|\leq 1$.
Fix a particular pair $(\alpha,\beta)$.  We shall explain how to obtain an increasing sequence $H_j$ tending to $\infty$ such that, if $R_j \geq H_j$ for large $j$, then $a(x,\xi)$ defined by
$$a(x,\xi) \equiv \sum_{j=0}^\infty \varphi\left(\frac{\xi}{R_j}\right) a_j(x,\xi)$$
satisfies
$$\left| \partial_x^\alpha\partial_\xi^\beta \left[ a(x,\xi) - \sum_{j=0}^k a(x,\xi) \right] \right|  \leq A \langle \xi \rangle^{m_{k}}$$
Indeed, with this definition, we have
$$\partial_x^\alpha\partial_\xi^\beta \left[ a(x,\xi) - \sum_{j=0}^ka_j(x,\xi) \right]  = \partial_x^\alpha \partial_\xi^\beta \left[ \underbrace{-\sum_{j=0}^k \left( 1- \varphi\left( \frac{\xi}{R_j} \right)\right) a_j(x,\xi)}_{\text{in }S^{-\infty}} + \sum_{j=k+1}^\infty \varphi \left( \frac{\xi}{R_j} \right) a_j(x,\xi)\right]$$
\begin{align*}
\partial_x^\alpha \partial_\xi^\beta \sum_{j=k+1}^\infty \varphi \left( \frac{\xi}{R_j} \right) a_j(x,\xi) &=  \sum_{j=k+1}^\infty \sum_{\sigma=0}^\beta \binom{\beta}{\sigma} \partial_\xi^\sigma \left[ \varphi\left(\frac{\xi}{R_j} \right) \right] \partial_x^\alpha \partial_\xi^{\beta-\sigma}a_j(x,\xi)  \\
&= \sum_{j=k+1}^\infty \sum_{\sigma =0}^\beta \binom{\beta}{\sigma} \left[ \left( \frac{\xi}{R_j} \right)^\sigma \partial_\xi^\sigma \varphi \left( \frac{\xi}{R_j} \right)\right] \xi^{-\sigma} \partial_x^\alpha \partial_\xi^{\beta-\sigma}a_j(x,\xi)
\end{align*}
Let 
$$B_\beta = \sum_{\sigma=0}^\beta \binom{\beta}{\sigma} \sup_\xi |\xi^\sigma \partial_\xi^\sigma \varphi (\xi)|$$
Then
\begin{align*}
\left| \partial_x^\alpha \partial_\xi^\beta \sum_{j=k+1}^\infty \varphi \left( \frac{\xi}{R_j} \right) a_j(x,\xi) \right| &\leq \sum_{j=k+1}^\infty B_\beta C_{\alpha,\beta,j} \langle \xi \rangle ^{m_j-\beta} \chi_{|\xi|\geq R_{j}}\\
& \leq \sum_{j=k+1}^\infty B_\beta C_{\alpha,\beta,j} R_j^{m_j-m_{j-1}} \langle \xi \rangle ^{m_k-\beta}
\end{align*}
Thus we see it suffices to take $H_j$ such that
$$C_{\alpha,\beta,j}H_j^{m_j-m_{j-1}} \leq 2^{-j}$$
Let $(\alpha_l,\beta_l)$ be an enumeration of the pairs in $\mathbb{Z}_{\geq 0}^2$.  For each $l$, obtain $H_{l,j}$ as above, with the additional requirement that $H_{l+1,j}\geq H_{l,j}$, $\forall \; l, j$.  Then take the diagonal, i.e. set $R_j=H_{j,j}$, and define $a(x,\xi)$ as above.

\section{Solutions to Lecture 2 problems}

\noindent \textbf{Problems 1b}. (Taken from \cite{St2}, p. 258-259)
\begin{align*}
\Psi_{[c]}f(x) &= \int_y \int_\xi e^{i\xi \cdot (x-y)} c(x,y,\xi) f(y) \, dy d\xi \\
&= \int_y \int_\eta e^{i\eta\cdot(x-y)} c(x,y,\eta) f(y) dy d\eta \\
&= \int_y \int_\eta e^{i\eta\cdot(x-y)} c(x,y,\eta) \left[ \int_\xi e^{iy\cdot \xi}\hat{f}(\xi) \, d\xi \right] dy d\eta \\
&= \int_\xi e^{ix\cdot \xi} \left[ \int_y \int_\eta c(x,y,\eta) e^{i(\eta-\xi)\cdot(x-y)} \, dy d\eta \right] \hat{f}(\xi) \, d\xi
\end{align*}
Thus, we should set
\begin{equation} \label{PE:10}
a(x,\xi) = \int_y \int_\eta c(x,y,\eta) e^{i(\eta-\xi)\cdot(x-y)} \, dy d\eta
\end{equation}
Assume first that $c(x,y,\xi)$ has compact $y$-support.  Then we have
$$a(x,\xi) = \int_\eta e^{ix\cdot \eta} \hat{c}(x,\eta,\eta+\xi) \, d\eta$$
where $\hat{c}(x,\eta,\eta+\xi)$ denotes the Fourier transform of $c(x,y,\eta+\xi)$ in the $y$-variable.  By Taylor's formula,
$$\hat{c}(x,\eta,\eta+\xi) = \sum_{|\alpha|\leq N-1} \frac{1}{\alpha!} \partial_\xi^\alpha \hat{c}(x,\eta,\xi) \eta^\alpha + R_N(x,\eta, \xi)$$
Plugging the first piece into \eqref{PE:10}, we get
$$\int_\eta e^{ix\cdot \eta}  \sum_{|\alpha|\leq N-1} \frac{1}{\alpha!} \partial_\xi^\alpha \hat{c}(x,\eta,\xi) \eta^\alpha \, d\eta = \sum_{|\alpha|\leq N-1} \frac{i^{|\alpha|}}{\alpha!} \partial_\xi^\alpha \partial_y^\alpha c(x,y,\xi) \big|_{y=x}$$
We next estimate the remainder:
\begin{align*}
|R_N(x,\eta,\xi)| &\leq \frac{1}{(N-1)!} \sum_{|\alpha|=N} \sup_{0\leq t \leq 1} |\partial_\xi^\alpha \hat{c}(x,\eta, \xi+t\eta)| |\eta|^N \\
& \leq 
\begin{cases}
C |\eta|^N (1+|\eta|)^{-M} (1+|\xi|)^{m-N} & \text{if } |\xi|\geq 2|\eta|\\
C |\eta|^N(1+|\eta|)^{-M} & \text{if } |\xi| \leq 2|\eta|
\end{cases}
\end{align*}
for $M$ large.  Plugging this into \eqref{PE:10} gives the desired estimate, completing the proof in the case where $c(x,y,\xi)$ has compact $y$-support.   For the case where $c(x,y,\xi)$ does not have compact $y$-support, use the method explained in the solution to Problem \chapterref{P:1.5}.\ref{P:1.5}.\\

\noindent \textbf{Problem 2}.  (Taken from \cite{St2}, p. 259).   We first compute $\Psi_a^\ast$.
$$\int_x \Psi_af(x) \overline{g(x)} \, dx = \int_\xi \hat{f}(\xi) \overline{ \int_x e^{-ix\cdot \xi} \overline{a(x,\xi)} g(x) \, dx} \, d\xi$$
and so
$$\widehat{\Psi_a^\ast g}(\xi) = \int_y e^{-iy\cdot \xi}\overline{a(y,\xi)}g(y)\, dy$$
and therefore
$$\Psi_a^\ast g(x) = \int_y \int_\xi e^{i(x-y)\cdot \xi} \overline{a(y,\xi)} g(y) \, dy \, d\xi$$
Set $c(x,y,\xi) = \overline{a(y,\xi)}$, and we get by (1b) that $\exists \; a^\ast(x,\xi)$ such that
$$a^\ast(x,\xi) - \sum_{|\alpha|\leq N-1} \frac{i^{-|\alpha|}}{\alpha!} \partial_\xi^\alpha \partial_x^\alpha \bar{a}(x,\xi) \in S^{m-N}$$\\

\noindent \textbf{Problem 3}.  (Taken from \cite{T2}).  \\
\textit{Step 1}.  Replace $a(x,\xi)$ by $(1+|\xi|)^{-m/2} a(x,\xi) (1+|\xi|)^{-m/2}$ so that we may assume w.l.o.g.\ that $m=0$.  \\
\textit{Step 2}.  Suppose $\text{Re }a(x,\xi)\geq C$.  Set 
$$b(x,\xi) = (\text{Re }a(x,\xi) - \tfrac{1}{2}C)^{1/2} \in S^0$$
Then 
$$\Psi_b^\ast \Psi_b = \tfrac{1}{2} (\Psi_a + \Psi_a^\ast) - \tfrac{1}{2}CI + E$$
where $E$ has symbol in $S^{-1}$, by the Kohn-Nirenberg calculus.  This gives
$$\| \Psi_b u\|_{L^2}^2  = \text{Re }\langle \Psi_a u, u \rangle - \tfrac{1}{2}C \| u \|_{L^2}^2 + \langle Eu, u \rangle$$
which gives 
$$ \text{Re }\langle \Psi_a u, u \rangle \geq \tfrac{1}{2}C\|u\|_{L^2}^2 - \langle E u, u \rangle $$
When then estimate 
$$\langle E u, u \rangle \leq \frac{1}{\epsilon}\|Eu\|_{L^2}^2 + \epsilon \|u\|_{L^2}^2 \leq \frac{C}{\epsilon} \|u\|_{H^{-1}}^2 + \epsilon \|u\|_{L^2}^2$$
Now we use an interpolation inequality to bound $\|u\|_{H^{-1}}^2$ in terms of a large constant times $\|u\|^2_{H^s}$ and a small constant times $\|u\|^2_{L^2}$.\\

\noindent \textbf{Problem 4a}.  
$$D_x^{1/2} e^{it\partial_x^2} u_0 = \int_\xi e^{ix\xi} |\xi|^{1/2} e^{-it\xi^2} \hat{u}_0(\xi) \, d\xi$$
Change variables to $\eta=-\xi^2$ (actually, we should consider $\xi \geq 0$ and $\xi\leq 0$ separately, but the proof is written assuming $\xi\geq 0$), to obtain
$$D_x^{1/2} e^{it\partial_x^2} u_0 = c\int_\eta e^{it\eta} e^{ix(-\eta)^{1/2}} \hat{u}_0((-\eta)^{1/2}) (-\eta)^{-1/4} \, d\eta$$
Apply the $L_t^2$ norm to both sides, and apply Plancherel in $t$.  Then change variables back to $\xi=(-\eta)^{1/2}$.\\

\noindent \textbf{Problem 4b}.  (The following solution is drawn from \cite{KPV91a}, where a more general result appears as Theorem 4.1.)  We will consider only the case $n=2$, but the proof for general $n\geq 2$ is similar.  Divide the frequency space $\xi=(\xi_1, \xi_2)$ into four overlapping conical regions:  $\xi_i \geq \frac{1}{2}|\xi|$, $\xi_i \leq -\frac{1}{2}|\xi|$, $i=1,2$.  
$$D_x^{1/2}u= \iint_{\xi_1, \xi_2} |\xi|^{1/2}e^{ix_1\xi_1}e^{ix_2\xi_2}e^{-it|\xi|^2} \hat{u}_0(\xi_1, \xi_2) \, d\xi_1 d\xi_2 $$
It suffices to deduce the bound for each region separately.  We shall consider only the case $\xi_1 \geq \frac{1}{2}|\xi|$ (the other three are similar).  Hence, we need to show that 
$$\| D^{1/2}_x w(x,t) \|_{L^2(Q\times (-\infty,+\infty))}\leq c\|u_0\|_{L^2}$$
where
$$w(x,t) = \iint_{\substack{\xi_1, \xi_2 \\ \xi_1 \geq \frac{1}{2}|\xi|}} e^{ix_1\xi_1}e^{ix_2\xi_2}e^{-it|\xi|^2} |\xi|^{1/2} \hat{u}_0(\xi_1, \xi_2) \, d\xi_1 d\xi_2 $$
Make the change of variables $(\xi_1,\xi_2) \to (\xi_1^2+\xi_2^2, \xi_2)$.  The Jacobian is $2\xi_1$, so letting $r=\xi_1^2+\xi_2^2$, we have
$$w(x,t) = \int_r e^{-itr} \int_{\xi_2} e^{ix_1\sqrt{r-\xi_2^2}}e^{ix_2\xi_2}\frac{r^{1/4}}{2\sqrt{r-\xi_2^2}} \hat{u}_0(\sqrt{r-\xi_2^2}, \xi_2) \, d\xi_2\, dr $$
Applying Plancherel in $t$,
$$\|w(x,t)\|_{L^2_t}^2 = \int_{r=0}^\infty \left| \int_{\xi_2} e^{ix_2\xi_2} e^{ix_1\sqrt{r-\xi_2^2}}\frac{r^{1/4}}{2\sqrt{r-\xi_2^2}} \hat{u}_0(\sqrt{r-\xi_2^2}, \xi_2) \, d\xi_2 \right|^2 dr $$
If $Q=(\sigma_1,\sigma_1+1)\times(\sigma_2,\sigma_2+1)$, then
\begin{align*}
& \|w(x,t)\|_{L^2(Q\times (-\infty,+\infty))}^2 \\
& = \int_{x_1=\sigma_1}^{\sigma_1+1} \int_{r=0}^\infty \int_{x_2=\sigma_2}^{\sigma_2+1} \left| \int_{\xi_2} e^{ix_2\xi_2} e^{ix_1\sqrt{r-\xi_2^2}}\frac{r^{1/4}}{2\sqrt{r-\xi_2^2}} \hat{u}_0(\sqrt{r-\xi_2^2}, \xi_2) \, d\xi_2 \right|^2 dx_2 \, dr dx_1\\
& \leq \int_{x_1=\sigma_1}^{\sigma_1+1} \int_{r=0}^\infty \int_{\xi_2} \left|  e^{ix_1\sqrt{r-\xi_2^2}}\frac{r^{1/4}}{2\sqrt{r-\xi_2^2}} \hat{u}_0(\sqrt{r-\xi_2^2}, \xi_2) \, \right|^2 d\xi_2 \, dr dx_1  \qquad (\text{by Plancherel in }x_2)\\
& \leq  \int_{r=0}^\infty \int_{\xi_2}  \frac{1}{|\xi_1|} \left| \hat{u}_0(\sqrt{r-\xi_2^2}, \xi_2) \, \right|^2 d\xi_2 \, dr  \qquad (\text{since }r^{1/2}\leq 2|\xi_1| \text{ in this cone})\\
& = \iint_{\xi_1, \xi_2} | \hat{u}_0(\xi_1, \xi_2)|^2 \, d\xi_1 \, d\xi_2 \qquad (\text{changing variables back})
\end{align*}\\

\noindent \textbf{Problem 4c}. First show 
\begin{equation} \label{PE:11}
\left\| \int_{t'=-\infty}^{+\infty} D_x^{1/2}e^{-it'\partial_x^2}f(x,t') \, dt' \right\|_{L_x^2} \leq C\|f\|_{L_x^1L_t^2}
\end{equation}
To prove this, multiply the expression by $\bar{u}_0(x)$, integrate in $x$, apply H\"{o}lder, and use Problem (4a).  Next show
\begin{equation} \label{PE:12}
 \left\| D_x \int_{t'=-\infty}^{+\infty} e^{i(t-t')\partial_x^2} f(x,t') \, dt' \right\|_{L_x^\infty L_t^2} \leq C\|f\|_{L_x^1L_t^2}
\end{equation}
To prove this, write
\begin{align*}
&\int_x \int_t \left[ D_x \int_{t'} e^{i(t-t')\partial_x^2} f(x,t') \, dt' \right] \bar{g}(x,t) \, dx dt \\
&= \int_x \left[ \int_{t'} D_x^{1/2}e^{-it'\partial_x^2}f(x,t') \, dt' \right] \left[ \overline{\int_t D_x^{1/2} e^{-it\partial_x^2}g(x,t) \, dt} \right] \, dx
\end{align*}
apply Cauchy-Schwarz, and \eqref{PE:11}.  By writing $\chi_{(0,T)}(t') = 1-2\chi_{(-\infty,0)}(t')+\text{sgn }(t-t')$, and using that $\widehat{\text{sgn }t'}(\tau) = \text{pv } \dfrac{1}{\tau}$, deduce the identity, (for $g(x,t) = \sum_{j=1}^N g_j(x)h_j(t)$, with $g_j\in C^\infty_0$, $h_j\in C^\infty_0$, i.e. the tensor product space)
\begin{align*}
&\int_0^t e^{i(t-t')\partial_x^2}g(x,t') \, dt' \\
&= 
\begin{aligned}[t]
&\lim_{\epsilon \to 0^+} \iint_{|\tau-\xi^2|>\epsilon} e^{i(x\xi+t\tau)} \frac{\hat{g}(\xi,\tau)}{\tau-\xi^2} \, d\xi d\tau  + \int_{-\infty}^{+\infty} e^{i(t-t')\partial_x^2} g(x,t') \, dt' \\
& - 2 \int_{-\infty}^0 e^{i(t-t')\partial_x^2} g(x,t') \, dt'
\end{aligned}
\end{align*}
(We assumed that $g$ was in the tensor product space so that we could transfer the principal value from the $\tau$ integral to the $\xi$ integral, i.e.\ pull the $\lim_{\epsilon \to 0^+}$ out of the $\xi$ integral by dominated convergence).
The above work handles the last two terms on the right-hand side of this expression.  To complete the proof, we show
$$\left\| \int_\tau \int_\xi e^{it\tau}e^{ix\xi} \frac{|\xi|\hat{f}(\xi,\tau)}{\tau-\xi^2} \, d\xi d\tau \right\|_{L_x^\infty L_t^2} \leq C\|f\|_{L_x^1L_t^2}$$
This is done by first applying Plancherel in $t$ to obtain
$$\left\| \int_\tau \int_\xi e^{it\tau}e^{ix\xi} \frac{|\xi|\hat{f}(\xi,\tau)}{\tau-\xi^2} \, d\xi d\tau \right\|_{ L_t^2}^2 \leq \int_\tau \left| \int_\xi e^{ix\xi} \frac{|\xi|\hat{f}(\xi,\tau)}{\tau-\xi^2} \, d\xi \right|^2 d\tau $$
Let 
$$K(x,\tau) = \int_\xi e^{ix\xi} \frac{\xi}{\tau-\xi^2} \, d\xi$$
We now explain how to show that $|K(x,\tau)| \leq C$ (some details omitted).  If $\tau\leq 0$, 
\begin{equation}\label{PE:300}
K(x,\tau) = \int e^{ix(-\tau)^{1/2}\xi} \frac{\xi}{1-\xi^2} \, d\xi
\end{equation}
Use 
$$\frac{2\xi}{1-\xi^2}=\frac{1}{1-\xi}-\frac{1}{1+\xi}$$
which shows that \eqref{PE:300} is a difference of two phase shifted sgn functions.  If $\tau\geq 0$, then
\begin{equation}\label{PE:301}
K(x,\tau) = \int e^{ix\tau^{1/2}\xi}\frac{\xi}{1+\xi^2} \, d\xi
\end{equation}
For $|\xi|\geq 1$ (use a smooth cut-off), use
\begin{equation}\label{PE:302}
\frac{\xi}{1+\xi^2} = \frac{1}{\xi}-\frac{1}{\xi(1+\xi^2)}
\end{equation}
Since the second piece is (away from $0$) in $L^1$, when \eqref{PE:302} is substituted into \eqref{PE:301}, it gives the difference of a sgn function and another bounded function.  We can now complete the proof.  By Minkowskii's integral inequality and Plancherel in $\tau$, (where $\sphat$ represents Fourier transform in $t$ variable only)
$$\left( \int_\tau \left| \int_y K(x-y,\tau) \hat{f}(y,\tau) \, dy \right|^2 \, d\tau \right)^{1/2} \leq C \int_y \left( \int_\tau |\hat{f}(y,\tau)|^2 \, d\tau \right)^{1/2} \, dy = C \|f\|_{L_x^1L_t^2}$$

We now proceed to the $n\geq 2$ case.  First show
\begin{equation} \label{PE:13}
\left\| \int_{t'} D_x^{1/2} e^{-it'\Delta}f(x,t') \, dt' \right\|_{L^2(\mathbb{R}^n)} \leq \sum_\alpha \|f\|_{L^2(Q_\alpha\times (-\infty, +\infty))} 
\end{equation}
To show this, take $u_0\in L^2(\mathbb{R}^n)$, and compute
\begin{align*}
& \int_{x}\int_{t'} D_x^{1/2}e^{-it'\Delta}f(x,t')\, dt' \; \overline{u_0(x)} \, dx \\
&= \int_{x}\int_{t'} f(x,t') \; \overline{D_x^{1/2}e^{it'\Delta}u_0(x)} \, dx \, dt' \\
& \leq \sum_\alpha \|f\|_{L^2(Q_\alpha\times (-\infty,+\infty))} \|D_x^{1/2} e^{it\Delta} u_0 \|_{L^2(Q_\alpha \times (-\infty, +\infty))} \\
& \leq \sum_\alpha \|f \|_{L^2(Q_\alpha\times (-\infty,+\infty))} \|u_0\|_{L^2}
\end{align*}
The next step is to show that if $Q$ is a dyadic cube,
\begin{equation} \label{PE:14}
\left\| \int_{t'} D_x e^{i(t-t')\Delta} f(x,t') \, dt' \right\|_{L^2(Q\times (-\infty, +\infty))} \leq \sum_\alpha \|f\|_{L^2(Q_\alpha \times (-\infty, +\infty))}
\end{equation}
To show this, let $g(x,t) \in L^2(Q\times (-\infty, +\infty))$ and compute:
\begin{align*}
&\int_x \int_t \left[\int_{t'} D_x e^{i(t-t')\Delta} f(x,t') \, dt'\right] \; \overline{g(x,t)} \, dx \, dt \\
& = \int_x \int_{t'} D_x^{1/2}e^{-it'\Delta}f(x,t') \, dt' \; \overline{ \int_{t} D_x^{1/2} e^{-it\Delta}g(x,t) \, dt} \, dx\\
& \leq \left\| \int_{t'} D_x^{1/2}e^{-it'\Delta}f(x,t') \, dt' \right\|_{L^2(\mathbb{R}^n)} \left\| \int_{t} D_x^{1/2} e^{-it\Delta}g(x,t) \, dt \right\|_{L^2(\mathbb{R}^n)} \\
&\leq \sum_\alpha \|f\|_{L^2(Q_\alpha\times (-\infty,+\infty))} \sum_\alpha \|g\|_{L^2(Q_\alpha\times (-\infty,+\infty))} \\
&\leq \sum_\alpha \|f\|_{L^2(Q_\alpha\times (-\infty,+\infty))} \|g\|_{L^2(Q \times (-\infty,+\infty))} 
\end{align*}
Take the sup over all $Q$ in \eqref{PE:14}.  The remainder of the proof is taken from \cite{KPV93a}.  Again, we appeal to the fact
\begin{align*}
&\int_0^t e^{i(t-t')\Delta}f(x,t') \, dt' \\
&= 
\begin{aligned}[t]
&\lim_{\epsilon \to 0^+} \iint_{|\tau-|\xi|^2|>\epsilon} e^{i(x\cdot \xi+t\tau)} \frac{\hat{f}(\xi,\tau)}{\tau-|\xi|^2} \, d\xi d\tau  + \int_{-\infty}^{+\infty} e^{i(t-t')\Delta} f(x,t') \, dt' \\
& - 2 \int_{-\infty}^0 e^{i(t-t')\Delta} f(x,t') \, dt'
\end{aligned}
\end{align*}
It remains to treat the term
$$\iint e^{it\tau} e^{ix\cdot \xi} \frac{\xi}{\tau-|\xi|^2} \hat{f}(\xi,\tau) \, d\xi \, d\tau$$
Let $f_\alpha = f\chi_{Q_\alpha}$, and set
$$u_\alpha(x,t) = \iint e^{ix\cdot \xi}e^{it\tau} \frac{\xi\hat{f}_\alpha(\xi,\tau)}{|\xi|^2-\tau} \, d\xi \, d\tau$$
so that
$$u(x,t) \sum_\alpha u_\alpha(x,t)$$
We shall need the following lemma
\begin{lemma}[\cite{KPV93a}]If $$\widehat{Th}(\xi) = \frac{\xi}{|\xi|^2-1} \hat{h}(\xi)$$ then $\forall \; r>0$,
$$\sup_{\alpha \in \mathbb{Z}^n} \left( \int_{rQ_\alpha} |T(g\chi_{rQ_\beta})|^2 \, dx \right)^{1/2} \leq c r \left( \int_{rQ_\beta} |g|^2 \, dx \right)^{1/2}$$
with $c$ independent of $r$.
\end{lemma}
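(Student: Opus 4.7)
The strategy is to split the multiplier
$m(\xi) = \xi/(|\xi|^2-1)$ (interpreted in principal value) into a part
localized near the resonant sphere $|\xi|=1$ and a smooth remainder,
analyze the two by different tools, and exploit the compact support of
$g\chi_{rQ_\beta}$ (via Paley--Wiener) to absorb the singularity.
Fix $\chi \in C_0^\infty([1/2, 2])$ with $\chi \equiv 1$ on $[3/4, 3/2]$
and decompose $m = m_1 + m_2$ with $m_1(\xi) = \chi(|\xi|)m(\xi)$ and
$m_2(\xi) = (1-\chi(|\xi|))m(\xi)$; then $m_1$ is compactly supported in
$\xi$ and carries the principal-value pole, while $m_2 \in S^{-1}$
satisfies $|\partial_\xi^\alpha m_2(\xi)| \leq C_\alpha \langle\xi\rangle^{-1-|\alpha|}$.

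For the regular piece, $T_2 = \Psi_{m_2}$ is bounded $L^2 \to H^1$ by
Theorem~\ref{T:4}. Combining this with Sobolev embedding
$H^1 \hookrightarrow L^{2n/(n-2)}$ (for $n \geq 3$; with standard
modifications via improved embeddings in the lower-dimensional cases)
and H\"older's inequality on $rQ_\alpha$ then gives
$\|T_2(g\chi_{rQ_\beta})\|_{L^2(rQ_\alpha)} \leq |rQ_\alpha|^{1/n}\|T_2(g\chi_{rQ_\beta})\|_{L^{2n/(n-2)}}
\leq cr\|g\|_{L^2(rQ_\beta)}$, uniformly in $\alpha$. For the singular
piece, the identity $\xi/(|\xi|^2-1) = \tfrac12(\xi/|\xi|)\bigl(1/(|\xi|-1) + 1/(|\xi|+1)\bigr)$
isolates a one-dimensional Hilbert-transform-type operator in the radial
variable $|\xi|$ applied to a spherical Fourier average. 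Stationary phase
on the sphere $|\xi|=1$ yields the oscillatory kernel bound
$|K_1(x)| \leq C(1+|x|)^{-(n-1)/2}$ for the convolution kernel of $T_1$,
and combining this with the Paley--Wiener estimate $\|\widehat{g\chi_{rQ_\beta}}\|_{L^\infty}
\leq Cr^{n/2}\|g\|_{L^2(rQ_\beta)}$ and a Schur-test argument on the
bilinear pairing $\langle T_1(g\chi_{rQ_\beta}), f\chi_{rQ_\alpha}\rangle$
produces the matching $cr$ bound.

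The principal obstacle is that $T$ is unbounded on $L^2(\mathbb{R}^n)$
because of the resonance at $|\xi|=1$: Plancherel is unavailable, and
one is forced to use the compact support of $g\chi_{rQ_\beta}$ (and hence
the smoothness of $\hat{g}$) to make sense of the principal-value integral
at all. The linear $r$-factor on the right-hand side reflects the
length scale of the support $rQ_\beta$; extracting exactly this
power requires careful bookkeeping in both steps, since in the
pseudo-differential step the Sobolev exponent must be chosen so that
$|rQ_\alpha|^{1/n}$ contributes precisely one factor of $r$, while in the
stationary-phase step the kernel decay $(1+|x|)^{-(n-1)/2}$ must be
coupled correctly with the Paley--Wiener pointwise bound to avoid loss.
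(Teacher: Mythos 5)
The paper does not actually prove this lemma; the appendix quotes it from \cite{KPV93a} and refers the reader there for the proof. Your proposal must therefore be judged on its own logic, and I see a genuine gap in the treatment of the singular piece.

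Your decomposition $m=m_1+m_2$ and the handling of $m_2\in S^{-1}$ are reasonable (modulo the acknowledged endpoint issue at $n=2$, where $H^1\not\hookrightarrow L^\infty$). For the singular piece, the kernel bound $|K_1(x)|\lesssim(1+|x|)^{-(n-1)/2}$ is correct, but it discards the oscillation $e^{\pm i|x|}$ of the Helmholtz-type kernel, and the size alone cannot produce the factor $r$ once $r\geq 1$. A Schur test (or, equivalently, Cauchy--Schwarz on the bilinear pairing you describe) with this bound gives
\begin{equation*}
\sup_{x\in rQ_\alpha}\int_{rQ_\beta}|K_1(x-y)|\,dy\;\lesssim\;\int_{|z|\lesssim r}(1+|z|)^{-(n-1)/2}\,dz\;\sim\;r^{(n+1)/2}\qquad (r\geq 1),
\end{equation*}
so the operator norm comes out as $O(r^{(n+1)/2})$, exceeding $cr$ by $r^{(n-1)/2}$ for every $n\geq 2$ --- and $n\geq 2$ is exactly the regime where this lemma is invoked in the appendix. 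The Paley--Wiener bound $\|\widehat{g\chi_{rQ_\beta}}\|_\infty\lesssim r^{n/2}\|g\|_{L^2}$ does not rescue this: the $L^\infty\to L^2(rQ_\alpha)$ step loses a further $r^{n/2}$, and the principal value produces additional powers of $r$ from $\nabla_\xi\hat{h}$ and from the phase $e^{i\rho\, x\cdot\omega}$. (Your plan does work for $r\leq 1$, where the same Schur test gives $r^{n}\leq r$, so the obstruction is entirely in the large-$r$ regime.) What is missing is the use of the oscillation: $T_1$ is, modulo the radial Hilbert transform and smooth factors, the composition of restriction of $\hat{h}$ to the unit sphere with Fourier extension, and the single power $r$ is obtained by combining two $r^{1/2}$-gains --- the trace estimate $\|\hat{h}|_{S^{n-1}}\|_{L^2(S^{n-1})}^2\lesssim\|\hat{h}\|_{L^2}\,\|\nabla_\xi\hat{h}\|_{L^2}\lesssim r\,\|h\|_{L^2}^2$ for $h$ supported on a cube of side $r$, and the Agmon--H\"ormander extension bound $\|\widehat{g\,d\sigma}\|_{L^2(B_R(x_0))}\lesssim R^{1/2}\|g\|_{L^2(S^{n-1})}$, $R\geq 1$, uniformly in the center $x_0$. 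Kernel decay plus Schur cannot reach the claimed estimate for $r>1$; some version of this restriction--extension mechanism is needed.
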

For the proof, see \cite{KPV93a}.  By the triangle inequality  and Plancherel in $t$,
\begin{align}
&\sup_\alpha \left( \int_{Q_\alpha} \int_t |u(x,t)|^2 \, dt \, dx \right)^{1/2} \notag\\
& \leq \sup_\alpha \sum_\beta \left( \int_{x\in Q_\alpha} \int_\tau \left| \int_\xi e^{ix\cdot \xi} \frac{\xi\hat{f}_\beta(\xi,\tau)}{|\xi|^2 - \tau} \, d\xi \right|^2 \, d\tau dx \right)^{1/2} \notag\\
& \leq \sum_\beta \left( \int_\tau \left[ \sup_{Q_\alpha} \int_{x\in Q_\alpha}  \left| \int_\xi e^{ix\cdot \xi} \frac{\xi \hat{f}_\beta(\xi,\tau)}{|\xi|^2 - \tau} \, d\xi \right|^2 \, dx \right] d\tau \right)^{1/2} \label{PE:205}
\end{align}
Assume $\tau>0$, and we shall examine the piece 
$$\sup_{\alpha} \int_{x\in Q_\alpha}  \left| \int_\xi e^{ix\cdot \xi} \frac{\xi\hat{f}_\beta(\xi,\tau)}{|\xi|^2 - \tau} \, d\xi \right|^2 \, dx $$
Change variables as  $\tau^{1/2}\eta=\xi$ and then change variables as $y=x\tau^{1/2}$, and this becomes
$$\tau^{\frac{n}{2}-1} \sup_\alpha \int_{y\in \tau^{1/2}Q_\alpha} \left| \int_{\eta\in \mathbb{R}^n} e^{iy\cdot \eta} \frac{ \eta \hat{f}_\beta(\tau^{1/2}\eta, \tau)}{|\eta|^2-1} \, d\eta \right|^2 \, dy$$
Let $\hat{g}(\eta) = \hat{f}_\beta(\tau^{1/2}\eta, \tau)$, so that 
$g(x)=\tau^{-n/2}\hat{f}_\beta(x\tau^{-1/2},\tau)$ where $\sphat$ denotes the Fourier transform in the $t$ variable only.  Therefore, $g=g\chi_{\tau^{1/2}Q_\beta}$, and the above is
$$ \tau^{\frac{n}{2}-1} \sup_\alpha \int_{y \in \tau^{1/2}Q_\alpha} |T(g\chi_{\tau^{1/2}Q_\beta})|^2 \, dy $$
and by the lemma, this is bounded by
$$\tau^{\frac{n}{2}} \int_x |g|^2 \, dx$$
and by Plancherel this is
$$\tau^{\frac{n}{2}} \int_\eta |\hat{f}_\beta(\tau^{1/2}\eta,\tau)|^2 \, d\eta$$and changing variables back we have
$$\int_\xi |\hat{f}_\beta(\xi,\tau)|^2 \, d\xi$$
Plugging into \eqref{PE:205}, we have
$$\eqref{PE:205} \leq c \sum_\beta \left( \int_\tau \int_\xi |\hat{f}_\beta(\xi,\tau)|^2 \, d\xi\, d\tau\right)^{1/2}$$
and we complete the argument using Plancherel.  The case where $\tau < 0$ corresponds to the multiplier $\frac{\eta}{|\eta|^2+1}$, which is easier to handle since there is no singularity.\\

\noindent \textbf{Problem 5}.  Let $Q$ be the cube centered at the origin.  Since
$$\|g \|_{L^2(Q\times [0,T])} \leq c\|g\|_{L^2(\lambda_m(x)dx \times [0,T])}$$
we have, by the proof given in the lecture notes,
$$\|v\|_{L_T^\infty L_x^2}^2 + c_1 \|J^{1/2}v \|_{L^2(Q\times [0,T])}^2 \leq \|v_0\|_{L_x^2}^2 + 2 \int_0^T \left| \int_x F \cdot \bar{v} \, dx \right| \, dt$$
We estimate:
\begin{align*}
\int_0^T\left| \int_x F \cdot \bar{v} \, dx \right|\, dt &= \int_0^T \left| \int_x J^{-1/2}F \cdot \overline{J^{1/2}v} \, dx \right|\, dt \\
& \leq \tv J^{1/2}v \tv_T \; \tv J^{-1/2}F \tv_T' \\
& \leq \frac{c_1}{2} \tv J^{1/2}v \tv_T^2 + \frac{2}{c_1} \tv J^{-1/2}F \tv_T'^2\end{align*}
and then convert back to $u$, $f$ to obtain the estimate
$$\|u\|_{L_T^\infty L_x^2}^2 + c_1 \|J^{1/2}u \|_{L^2(Q\times [0,T])}^2 \leq C\|u_0\|_{L_x^2}^2+ \frac{c_1}{2}\tv J^{1/2} u \tv_T^2 + C \tv J^{-1/2} f \tv_T'^2$$
Let $x_0$ be the center of the cube $Q_\alpha$.  Since $u$ solves
$$\left\{
\begin{aligned}
&\partial_t u = i \Delta u +f \\
&u\big|_{t=0} = u_0 
\end{aligned}
\right.
$$
if we set $w(x)=u(x+x_0)$, $g(x)=f(x+x_0)$, $w_0(x)=u_0(x+x_0)$, then $w$ solves$$\left\{
\begin{aligned}
&\partial_t w = i \Delta w +g \\
&w\big|_{t=0} = w_0 
\end{aligned}
\right.
$$
Hence we have the estimate
$$\|w\|_{L_T^\infty L_x^2}^2 + c_1 \|J^{1/2}w \|_{L^2(Q\times [0,T])}^2 \leq C\|w_0\|_{L_x^2}^2+ \frac{c_1}{2}\tv J^{1/2} w \tv_T^2 + C \tv J^{-1/2} g \tv_T'^2$$
which, when converted back to $u$ is
$$\|u\|_{L_T^\infty L_x^2}^2 + c_1 \|J^{1/2}u \|_{L^2(Q_\alpha\times [0,T])}^2 \leq C\|u_0\|_{L_x^2}^2+\frac{c_1}{2}\tv J^{1/2} u \tv_T^2 + C \tv J^{-1/2} f \tv_T'^2$$
Take the sup over $\alpha$ to obtain the result.

\section{Solutions to Lecture 3 problems}

\textbf{Problem 1a}.  By dualizing, we see the given estimate is equivalent to
\begin{equation} \label{PE:20}
\left\| \int_t D_x^{-1/4} e^{-it\partial_x^2} g(x,t) \, dt \right\|_{L_x^2} \leq C\|g\|_{L_x^{4/3}L_t^1}
\end{equation}
To prove \eqref{PE:20}, write:
\begin{align*}
&\left\| \int_t D_x^{-1/4} e^{-it\partial_x^2} g(x,t) \, dt \right\|_{L_x^2}^2 \\
& =\int_x \int_t D_x^{-1/4} e^{-it\partial_x^2} g(x,t) \, dt \overline{ \int_s D_x^{-1/4} e^{-is\partial_x^2} g(x,s) \, ds} \, dx \\
& \leq \int_x \int_t g(x,t) \overline{ \int_s D_x^{-1/2} e^{i(t-s)\partial_x^2} g(x,s) \, ds } \, dx\, dt\\
& \leq \|g\|_{L_x^{4/3}L_t^1}\left\|  \int_s D_x^{-1/2} e^{i(t-s)\partial_x^2} g(x,s) \, ds \right\|_{L_x^4L_t^\infty} 
\end{align*}
so it suffices to show
\begin{equation} \label{PE:22}
\left\| \int_s D_x^{-1/2}e^{i(t-s)\partial_x^2}g(x,s) \, ds \right\|_{L_x^4L_t^\infty} \leq C\|g\|_{L_x^{4/3}L_t^1}
\end{equation}
Let
$$K(x,t) = \int_\xi e^{ix\xi} e^{it\xi^2} |\xi|^{-1/2} \, d\xi = (\text{sgn }x)|x|^{-1/2}\int_\xi e^{i\xi}e^{i\frac{t}{x^2}\xi^2} |\xi|^{-1/2} \, d\xi$$
We shall show, by the Van der Corput Lemma, that 
\begin{equation} \label{PE:21}
|K(x,t)| \leq c|x|^{-1/2}
\end{equation}
\begin{lemma}[Van der Corput] If $\phi\in C^2(\mathbb{R})$ is real valued, $\phi''\geq 1$ on $(a,b)$, then
$$ \left| \int_a^b e^{i\lambda \phi(\xi)} \psi(\xi) \, d\xi \right| \leq \frac{c}{|\lambda|^{1/2}} \left( |\psi(b)| + \int_a^b |\psi'(x)| \, dx \right)$$
\end{lemma}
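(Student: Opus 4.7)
The plan is to reduce to the amplitude-free case $\psi \equiv 1$ first, and then recover the general statement by a single integration by parts in $\psi$. This is the classical two-step strategy for the second-derivative van der Corput lemma.

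\textbf{Step 1: The amplitude-free estimate.} I would first establish
\[
\Bigl| \int_a^b e^{i\lambda \phi(\xi)} \, d\xi \Bigr| \leq \frac{C}{|\lambda|^{1/2}}
\]
under the sole hypothesis $\phi'' \geq 1$. Since $\phi''\geq 1$, the function $\phi'$ is strictly increasing, so the sublevel set $E_\delta = \{\xi \in (a,b) : |\phi'(\xi)| < \delta\}$ is an interval; by the mean value theorem applied to $\phi'$, its length is at most $2\delta$. I would split the integral into $E_\delta$ and its complement. On $E_\delta$, the trivial bound gives a contribution of at most $2\delta$. On $(a,b)\setminus E_\delta$, which consists of at most two subintervals on which $\phi'$ is monotone and $|\phi'| \geq \delta$, I would integrate by parts by writing $e^{i\lambda\phi} = (i\lambda\phi')^{-1}\frac{d}{d\xi}e^{i\lambda\phi}$:
\[
\int e^{i\lambda\phi(\xi)}\,d\xi = \frac{e^{i\lambda\phi(\xi)}}{i\lambda\phi'(\xi)}\Big|_{\text{endpoints}} + \frac{1}{i\lambda}\int e^{i\lambda\phi(\xi)} \frac{\phi''(\xi)}{\phi'(\xi)^2}\,d\xi.
\]
The boundary terms are bounded by $C/(|\lambda|\delta)$ using $|\phi'|\geq \delta$, and because $\phi''/\phi'^2$ has a constant sign (as $\phi''>0$ and $\phi'$ doesn't vanish on the complement), the integral equals the total variation of $-1/\phi'$, which is at most $2/\delta$. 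Hence this piece is bounded by $C/(|\lambda|\delta)$ as well. Optimizing $\delta + (|\lambda|\delta)^{-1}$ by choosing $\delta = |\lambda|^{-1/2}$ yields the claimed $|\lambda|^{-1/2}$ decay.

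\textbf{Step 2: Integration by parts in the amplitude.} With Step~1 in hand, define
\[
F(\eta) = \int_a^\eta e^{i\lambda\phi(\xi)}\,d\xi, \qquad \eta \in [a,b].
\]
Applying Step~1 on each subinterval $(a,\eta)$ (noting that the constant depends only on the lower bound on $\phi''$, not on the endpoints) gives $\|F\|_{L^\infty[a,b]} \leq C|\lambda|^{-1/2}$. Since $F'(\xi) = e^{i\lambda\phi(\xi)}$ and $F(a)=0$, integration by parts yields
\[
\int_a^b e^{i\lambda\phi(\xi)}\psi(\xi)\,d\xi = F(b)\psi(b) - \int_a^b F(\xi)\psi'(\xi)\,d\xi,
\]
and the triangle inequality combined with the sup bound on $F$ gives exactly the stated inequality.

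\textbf{The main obstacle} is Step~1, specifically the integration-by-parts bookkeeping on $(a,b)\setminus E_\delta$: one must simultaneously handle the boundary contributions (using $|\phi'|\geq \delta$) and the variation of $1/\phi'$ (using monotonicity of $\phi'$ to convert the absolute-value integral of $\phi''/\phi'^2$ into a telescoping difference). Once this is set up cleanly, the optimization $\delta = |\lambda|^{-1/2}$ is automatic and Step~2 is a one-line integration by parts.
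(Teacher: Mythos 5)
Your proof is correct and is essentially the argument in Stein, \emph{Harmonic Analysis}, pp.\ 332--334, which is the reference the paper gives in lieu of writing out a proof. The only cosmetic difference is that you inline the first-derivative van der Corput estimate on $(a,b)\setminus E_\delta$ via direct integration by parts, rather than invoking it as a separately stated $k=1$ lemma as Stein does; all the details (length of $E_\delta$ by the mean value theorem, constant sign of $\phi''/(\phi')^2$ giving the telescoping bound $2/\delta$, and uniformity of the Step~1 constant in the endpoints for the sup bound on $F$) check out.
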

For a proof see \cite{St2}, pp. 332-334.  We want to show
$$\int e^{i(\xi+t\xi^2)}|\xi|^{-1/2} \, d\xi$$
is bounded (independent of $t$).  First, put $\psi_0(\xi)+\psi_1(\xi)=1$, where $\psi_0(\xi)=1$ for $|\xi|\leq 1$ and $\text{supp }\psi_0 \subset [-2,2]$.  Clearly
$$\int e^{i(\xi+t\xi^2)}|\xi|^{-1/2} \psi_0(\xi)\, d\xi$$
is bounded, and it remains to show
\begin{equation} \label{PE:23}
\int e^{i(\xi+t\xi^2)}|\xi|^{-1/2} \psi_1(\xi)\, d\xi
\end{equation}
is bounded.  Let $\phi_t(\xi)=\xi+t\xi^2$.  Then $\phi_t'(\xi)=1+2t\xi$.  If $|\phi_t'(\xi)|\geq \frac{1}{2}$, then we can prove \eqref{PE:23} is bounded using integration by parts.  If $|\phi_t'(\xi)|\leq \frac{1}{2}$, then $2t\xi\sim -1$, and we write
$$\eqref{PE:23}= t^{1/2}\int e^{it(\frac{\xi}{t}+\xi^2)} \psi_1(\xi) |t\xi|^{-1/2} \, d\xi $$
and apply the Van der Corput lemma with $\phi(\xi)=\frac{\xi}{t}+\xi^2$.  We now complete the proof.  
\begin{align*}
&\int_s D_x^{-1/2} e^{i(t-s)\partial_x^2}g(x,s) \, ds \\
&= \int_s \int_\xi e^{ix\xi} |\xi|^{-1/2} e^{-i(t-s)\xi^2} \hat{g}(\xi,s) \, ds \\
&= \int_s \int_y K(x-y,s-t)g(y,s) \, dy \, ds
\end{align*}
By \eqref{PE:21},
$$\left| \int_\xi D_x^{-1/2}e^{i(t-s)\partial_x^2}g(x,s), ds \right| \leq c \int_y |x-y|^{-1/2} \left( \int_\xi |g(y,s)|\, ds \right) \, dy$$
Apply the $L_x^4$ norm to both sides and use the theorem on fractional integration, see \cite{St1}, pp. 119-121. \\

\noindent \textbf{Problem 1b}. We will need the following lemma.
\begin{lemma}[Prop 2.6 in \cite{KPV91}]  Let $\psi(\xi)\in C^\infty(\mathbb{R})$, $\text{supp }\psi(\xi) \subset [2^{k-1},2^{k+1}]$, $k \in \mathbb{N}$.  Then, for $|t|\leq 2$
\begin{equation}\label{PE:200}
\left| \int_{-\infty}^{+\infty} e^{it\xi^2}e^{ix\xi} \psi(\xi) \, d\xi \right| \leq cH_k(x)
\end{equation}
where
$$H_k(x) =
\left\{\begin{aligned}
&2^k && \text{if }|x|\leq 1 \\
&2^{k/2}|x|^{-1/2} && \text{if }1\leq |x|\leq c2^k \\
&(1+x^2)^{-1} && \text{if }|x|\geq c2^k
\end{aligned}
\right.
$$
where $c$ in \eqref{PE:200} is independent of $t$, $k$.
\end{lemma}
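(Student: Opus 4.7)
The plan is a standard oscillatory-integral analysis, splitting into three regimes according to the size of $|x|$ relative to $2^k$ and working throughout with the phase $\phi(\xi)=t\xi^2+x\xi$, whose derivatives are $\phi'(\xi)=2t\xi+x$ and $\phi''(\xi)=2t$. Throughout we use the dyadic-scaling bounds $\|\psi\|_\infty + \|\psi'\|_1 \leq C$ and $|\mathrm{supp}\,\psi|\leq 3\cdot 2^{k-1}$ (with $C$ independent of $k$), together with the hypothesis $|t|\leq 2$.

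For $|x|\leq 1$ the trivial $L^1$ bound gives $|I|\leq \|\psi\|_\infty\cdot|\mathrm{supp}\,\psi|\leq C\cdot 2^k$, matching $H_k(x)=2^k$. For $|x|\geq c\, 2^k$ with $c$ chosen large enough (say $c=16$; since $|t|\leq 2$ and $|\xi|\leq 2^{k+1}$ on the support we have $|2t\xi|\leq 2^{k+3}$, hence $|\phi'|\geq |x|/2$), integration by parts $N$ times --- writing $e^{i\phi}=(i\phi')^{-1}\partial_\xi e^{i\phi}$ and transferring $\partial_\xi$ onto $\psi/\phi'$ --- gives $|I|\leq C_N 2^k/|x|^N$; taking $N=3$ and using $|x|\geq 2^k$ yields $|I|\leq C/|x|^2$, which is controlled by $(1+x^2)^{-1}$ as required.

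The main work is the intermediate regime $1\leq |x|\leq c\, 2^k$, and the core obstacle is that the critical point $\xi_*=-x/(2t)$ of $\phi$ may lie inside $\mathrm{supp}\,\psi$, in which case the phase is genuinely stationary on the support and first-derivative IBP degenerates. To handle this I split on whether $|t|\sim |x|/2^k$ or not. In the nonstationary subregime where $|t|\geq 10|x|/2^k$ or $|t|\leq |x|/(10\cdot 2^k)$, a short computation verifies $|\phi'(\xi)|\geq c|x|$ uniformly on $\mathrm{supp}\,\psi$, and one integration by parts gives $|I|\leq C/|x|$, which is bounded by $C\, 2^{k/2}|x|^{-1/2}$ because $|x|\geq 1\geq 2^{-k}$. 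In the stationary-phase window $|x|/(10\cdot 2^k)\leq |t|\leq 10|x|/2^k$, one has $|\phi''|=2|t|\sim |x|/2^k$ uniformly, and the second-derivative Van der Corput lemma (applicable thanks to the amplitude bound $\|\psi\|_\infty+\|\psi'\|_1\leq C$) gives $|I|\leq C|\phi''|^{-1/2}(\|\psi\|_\infty+\|\psi'\|_1)\leq C\, 2^{k/2}|x|^{-1/2}$.

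The delicate point worth emphasizing is that the precise form $2^{k/2}|x|^{-1/2}$ of the middle bound is exactly the stationary-phase amplitude $|\phi''(\xi_*)|^{-1/2}$ evaluated on the locus where the critical point sits in the dyadic shell (i.e.\ $|t|\sim |x|/2^k$). Away from that locus a cheaper IBP estimate beats it, and the overall bound $H_k(x)$ is the envelope of the three regime-specific estimates, glued together by the choice of threshold $c$ in regime~3 and by the tenfold buffer in the subcase split of regime~2.
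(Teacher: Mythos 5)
Your proof is correct and follows the same overall strategy as the paper (stationary-phase dichotomy: Van der Corput near the critical point of $\phi(\xi)=t\xi^2+x\xi$, integration by parts away from it), but the decomposition you use is organized differently and is worth contrasting. The paper keeps $|x|\geq 1$ as a single regime and performs a \emph{spatial} (in $\xi$) decomposition: it introduces a smooth cutoff $\varphi$ supported on $\Omega=\{\xi\in\text{supp}\,\psi : |2t\xi+x|\leq |x|/2\}$, applies Van der Corput to $\varphi\psi$ (noting that $\Omega\neq\varnothing$ forces $|x|\lesssim 2^k$ and $\phi''=2t\sim |x|/2^k$), and integrates by parts twice on $(1-\varphi)\psi$ where $|\phi'|\geq |x|/3$. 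You instead carve out the far regime $|x|\geq c\,2^k$ at the outset and, in the middle regime, split on the \emph{parameter} $t$: the stationary window $|t|\sim |x|/2^k$ is exactly the condition $\Omega\neq\varnothing$, and there you apply Van der Corput to the entire integral (legitimate because $\phi''$ is constant, so no cutoff is needed to ensure a definite sign of $\phi''$); outside that window $|\phi'|\geq c|x|$ holds uniformly on $\text{supp}\,\psi$, and a single integration by parts suffices. Your version avoids constructing the auxiliary cutoff $\varphi$ and is perhaps a touch cleaner, at the cost of verifying the uniform nondegeneracy of $\phi'$ across the whole dyadic shell in the nonstationary subcases.

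One point you should make explicit rather than leave implicit: after one integration by parts in the nonstationary subregime, the boundary-free remainder is $\int e^{i\phi}\bigl(\psi'/\phi' - \psi\,\phi''/(\phi')^2\bigr)$, and while $\int|\psi'/\phi'|\leq C/|x|$ is immediate from $\|\psi'\|_1\leq C$, the second piece requires the observation that $\int_{\text{supp}\,\psi}|\phi''|/(\phi')^2\,d\xi\leq 2/\min|\phi'|\leq C/|x|$ because $\phi'$ is affine (so $1/\phi'$ telescopes); a crude bound $|\phi''|\cdot|\text{supp}\,\psi|/\min|\phi'|^2\sim 2^k|t|/|x|^2$ would not close the argument on the whole middle regime. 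This is precisely the content of the first-derivative van der Corput estimate (with $\phi'$ monotone), so citing it, or the telescoping, would make the step airtight.
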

\begin{proof}
The bound of $2^k$ is immediate for all $x$, since $\text{supp }\psi \subset [2^{k-1},2^{k+1}]$.  In the case $|x|\leq 1$, we can do no better.  Suppose $|x|\geq 1$, and assume $0\leq t \leq 2$ to simplify the exposition.  Let $\phi_x(\xi) = t\xi^2 +x\xi$, and note that $\phi_x'(\xi) = 2t\xi + x$ and $\phi_x''(\xi) = 2t$.  Let
$$\Omega = \left\{\; \xi \in \text{supp }\psi \; \Big| \; |2t\xi + x| \leq \frac{|x|}{2} \; \right\}$$
and choose $\varphi(\xi) \in C^\infty$ supported in $\Omega$ such that $\varphi(\xi)=1$ when $|2t\xi + x | \leq \frac{|x|}{3}$.  If $\xi \in \Omega$, then 
\begin{equation} \label{PE:201}
\frac{|x|}{2} \leq 2t|\xi| \leq \frac{3|x|}{2}
\end{equation}
 and thus $|x| \leq 2^{k+4}$, so we are in the second region in the definition of $H_k(x)$.  Also, \eqref{PE:201} implies $2^{-k-2}|x| \leq \phi_x''(\xi) \leq 2^{-k+2}|x|$, and therefore, by the van der Corput lemma,
$$\left| \int e^{i\phi_x(\xi)} \varphi(\xi) \psi(\xi) \, d\xi \right| \leq c2^{k/2}|x|^{-1/2}$$
Now suppose $\xi \in \text{supp }(1-\varphi(\xi))$, so that $|\phi_x'(\xi)| \geq \frac{|x|}{3}$.  Note that $x$ may still lie in either the second region or the third region in the definition of $H_k(x)$.  Applying integration by parts twice,
$$\int e^{it\xi^2}e^{ix\xi} (1-\varphi(\xi))\psi(\xi) \, d\xi = -\int e^{i\phi_x(\xi)} \partial_\xi \frac{1}{\phi_x'(\xi)} \partial_\xi \left[ \frac{(1-\varphi(\xi))\psi(\xi)}{\phi_x'(\xi)} \right] \, d\xi$$
from which we can deduce the necessary bound.
\end{proof}
Now we turn to the solution of Problem 1b, using this lemma.  Let $\psi_k(\xi)$, $k=0,1,2, \ldots$ be smooth functions, $\leq 1$, such that,
$$\text{supp }\psi_k(\xi) \subset [2^{k-1},2^k], \; k=1,2,\ldots, \qquad  \text{supp }\psi_0(\xi) \subset [-2,2], \qquad \sum_{k=0}^{+\infty}\psi_k(\xi) = 1 \; \; \forall \; \xi\in\mathbb{R}^+  $$
We claim that to prove 
\begin{equation}\label{PE:210}
\left\| e^{it\partial_x^2}u_0 \right\|_{L^2_xL^\infty_{t\in [-1,1]}} \leq c\|u_0\|_{H^s}
\end{equation}
for $s>\frac{1}{2}$, it suffices to prove, for each $k=0,1,\ldots$ that
\begin{equation}\label{PE:211}
\left\| \int_\xi e^{ix\xi}e^{it\xi^2}\psi_k(|\xi|)v(\xi) \, d\xi\right\|_{L^2_xL^\infty_{t\in [-1,1]}} \leq c 2^{k/2}\|v\|_{L^2}
\end{equation}
Indeed, suppose \eqref{PE:211}.  Then (letting $\widehat{E_kv}(\xi) = \chi_{2^{k-1}\leq |\xi|\leq 2^{k+1}}(\xi)\hat{v}(\xi)$),
\begin{align*}
&\left\| \int_\xi e^{ix\xi} e^{it\xi^2} \hat{u_0}(\xi) \, d\xi \right\|_{L^2_xL^\infty_{t\in [-1,1]}} \\
& =  \left\| \sum_k \int_\xi e^{ix\xi} e^{it\xi^2} \psi_k(|\xi|) \widehat{E_ku_0}(\xi) d\xi \right\|_{L^2_xL^\infty_{t\in [-1,1]}} \\
& \leq \sum_k \left\| \int_\xi e^{ix\xi} e^{it\xi^2} \psi_k(|\xi|) \widehat{E_ku_0}(\xi) d\xi \right\|_{L^2_xL^\infty_{t\in [-1,1]}} \qquad \text{by triangle inequality}\\
& \leq \sum_k 2^{k/2} \|E_ku_0\|_{L^2} \qquad \text{by \eqref{PE:211}}\\
& \leq c_\epsilon \left( \sum_k 2^{k(1+\epsilon)} \|E_ku_0\|_{L^2}^2 \right)^{1/2}\qquad \text{where $\epsilon = 2s-1$, by Cauchy-Schwarz}\\
& \leq c_\epsilon \|u_0\|_{H^s}
\end{align*}
which is \eqref{PE:210}.  To prove \eqref{PE:211}, it suffices to prove, by duality, that (with $\widehat{F_kv}(\xi)=\psi_k(\xi)\hat{v}(\xi)$), 
\begin{equation}\label{PE:212}
\left\| \int_{t=-1}^{t=1} e^{it\partial_x^2} F_kg(x,t) \, dt \right\|_{L^2_x} \leq 2^{k/2}\|g\|_{L_x^2L_{t\in [-1,1]}^1}
\end{equation}
By a $T^\ast T$ argument, to prove \eqref{PE:212} it suffices to prove
\begin{equation}\label{PE:213}
\left\| \int_{s=-1}^1 e^{i(t-s)\partial_x^2} F_kg(x,s) \, ds \right\|_{L_x^2L_{t\in [-1,1]}^\infty} \leq c2^k \|g\|_{L_x^2L_{t\in [-1,1]}^1}
\end{equation}
Let 
$$K_k(x,t) = \int_\xi e^{ix\xi}e^{it\xi^2} \psi_k(\xi) \, d\xi $$
By the lemma, $|K_k(x,t)| \leq cH_k(x)$. We have
$$
\int_{s=-1}^1 e^{i(t-s)\partial_x^2} F_kg(x,s) \, ds = \int_{s=-1}^1 \int_y K_k(x-y,t-s)g(y,s) \, dy \, ds
$$
and hence
$$\left\| \int_{s=-1}^1 e^{i(t-s)\partial_x^2} F_kg(x,s) \, ds \right\|_{L_{t\in [-1,1]}^\infty} \leq \int_y H_k(x-y) \int_{s=-1}^1 |g(y,s)| \, ds \, dy$$
and therefore
$$\left\| \int_{s=-1}^1 e^{i(t-s)\partial_x^2} F_kg(x,s) \, ds \right\|_{L_x^2 L_{t\in [-1,1]}^\infty} \leq c \left( \int H_k(y) \, dy \right) \; \|g\|_{L_x^2L_{t\in [-1,1]}^1}$$
and one can verify that $\int H_k(y) \, dy \leq c2^k$.\\

\noindent \textbf{Problem 1c}.  By dualizing, we can show that the given estimate is equivalent to
\begin{equation} \label{PE:24}
\left\| \int_{t'} e^{it'\partial_x^2} f(x,t') \, dt' \right\|_{L_x^2} \leq c \|f\|_{L_t^{4/3}L_x^1}
\end{equation}
To prove \eqref{PE:24}, write
\begin{align*}
&\left\| \int_{t'} e^{it'\partial_x^2} f(x,t') \, dt' \right\|_{L_x^2}^2 \\
& =\int_x \int_t e^{it\partial_x^2} f(x,t) \, dt  \overline{ \int_{s} e^{is\partial_x^2} f(x,s) \, ds} \, dx \\
& = \int_x \int_t f(x,t) \overline{ \int_s e^{i(t-s)\partial_x^2} f(x,s) \, ds} \, dx\, dt\\
& \leq \|f\|_{L_t^{4/3}L_x^1} \left\| \int_s e^{i(t-s)\partial_x^2} f(x,s) \, ds \right\|_{L_t^4L_x^\infty}
\end{align*}
and thus it suffices to show that
$$\left\| \int_s e^{i(t-s)\partial_x^2} f(x,s) \, ds \right\|_{L_t^4L_x^\infty} \leq C\|f\|_{L_t^{4/3}L_x^1}$$
We have
$$\widehat{ \frac{1}{t^{1/2}} e^{i\frac{x^2}{t}}}(\xi) = e^{-it\xi^2}$$
so
$$\int_s e^{i(t-s)\partial_x^2} f(x,s) \, ds = \int_s \int_y \frac{1}{(t-s)^{1/2}} e^{i\frac{(x-y)^2}{t-s}} f(y,s) \, dy \, ds$$
Apply the theorem on fractional integration.\\

\noindent \textbf{Problem 3}.  We have
$$u\big|_{\gamma=0}(t) = 0 \qquad \partial_xu\big|_{\gamma=0}(t) = 0 \qquad \partial_x^2u\big|_{\gamma=0}(t) = 0$$
Applying $\partial_\gamma$ to the integral equation, we get
$$\partial_\gamma u(t) = e^{it\partial_x^2} u_0(x) + \int_0^t e^{i(t-t')\partial_x^2} \left[ \partial_\gamma u \partial_x u + u \partial_\gamma \partial_x u \right] \, dt'$$
which gives
$$\partial_\gamma u\big|_{\gamma=0}(t) = e^{it\partial_x^2}u_0(x) \qquad \partial_\gamma \partial_x u \big|_{\gamma=0}(t) = \partial_x e^{it\partial_x^2} u_0(x)$$
Applying $\partial_\gamma^2$ to the equation gives
$$\partial_\gamma^2 u (t) =\int_0^t e^{i(t-t')\partial_x^2} \left[ \partial_\gamma^2 u \partial_x u + 2 \partial_\gamma u  \partial_\gamma \partial_xu + u \partial_{\gamma}^2 \partial_x u \right] \, dt'$$
which gives
$$\partial_\gamma^2 u \big|_{\gamma=0} = 2 \int_0^t e^{i(t-t')\partial_x^2} \left[ e^{it'\partial_x^2}u_0 \partial_x e^{it'\partial_x^2}u_0 \right] \, dt'$$
Let $F: H^s \to H^s$ be the solution map.  Let $G: \mathbb{R} \to H^s$ be the map $G(\gamma)=\gamma u_0$.  Let $H: \mathbb{R} \to H^s$ be the composition map $H(\gamma) =F\circ G(\gamma)$.  Then $H'(\gamma)\in \mathcal{L}(\mathbb{R};H^s)$ is given by
$$H'(\gamma)(t) = \underbrace{DF(\gamma u_0)}_{\in \mathcal{L}(H^s;H^s)}(tu_0)$$
and $H''(\gamma)\in \mathcal{L}(\mathbb{R}\times \mathbb{R}; H^s)$ is given by
$$H''(\gamma)(t_1,t_2) = \underbrace{D^2F(\gamma u_0)}_{\in \mathcal{L}(H^s\times H^s ; H^s)}(t_1 u_0, t_2 u_0)$$
Setting $\gamma =0$, $t_1=1$, $t_2=1$, we get
$$\partial_\gamma^2 u \big|_{\gamma=0} = 2 \int_0^t e^{i(t-t')\partial_x^2} \left[ e^{it'\partial_x^2}u_0 \partial_x e^{it'\partial_x^2}u_0 \right] \, dt' = H''(0)(1,1) = D^2F(0)(u_0,u_0)$$
and by hypothesis, 
$$\|D^2F(0)(u_0,u_0)\|\leq C\|u_0\|^2$$
By writing $\chi_{(0,t)}(t')= \text{sgn }(t')+\text{sgn }(t-t')$, using $\widehat{\text{sgn }t'}(\tau) = \text{pv }\frac{1}{\tau}$, one can deduce the standard formula
$$\int_0^t e^{i(t-t')\partial_x^2}h(x,t') \, dt' = \int_\xi \int_\tau e^{ix\xi} \frac{e^{it\tau}-e^{it\xi^2}}{\tau-\xi^2}\hat{h}(\xi,\tau) \, d\tau, d\xi$$
If $h(x,t)=\partial_x [ S(t)u_0 ]^2$, then ($\sphat \;$ is the Fourier transform in $x$ only)
$$\hat{h}(\xi,t) = \xi \int_\xi e^{it(\xi-\xi_1)^2} \hat{u}_0(\xi-\xi_1) e^{it\xi_1^2}\hat{u}_0(\xi_1)\, d\xi_1$$
and so ($\sphat \;$ is the Fourier transform in both $x$ and $t$)
$$\hat{h}(\xi,\tau) = \xi \int_\xi \delta( (\xi-\xi_1)^2 + \xi_1^2-\tau) \hat{u}_0(\xi-\xi_1) \hat{u}_0(\xi_1) \, d\xi_1$$
and when plugged in we get
$$\int_0^t e^{i(t-t')\partial_x^2} \partial_x [S(t)u_0]^2 \, dt' = \int_\xi \int_{\xi_1} e^{i(x\xi+t\xi^2)}\xi \hat{u}_0(\xi_1) \hat{u}_0(\xi-\xi_1) \frac{ e^{it[\xi_1^2+(\xi-\xi_1)^2-\xi_2]}-1}{\xi_1^2+(\xi-\xi_1)^2-\xi_2} \, d\xi_1 \, d\xi$$
We now follow the suggested method in the lecture notes for showing this is a contradiction. Set $\hat{u}_0(\xi)= \alpha^{-1/2}\chi_{I_1}(\xi) + \alpha^{-1/2}N^{-s}\chi_{I_2}(\xi)$, where $I_1 = [\frac{1}{2}\alpha,\alpha]$, $I_2=[N,N+\alpha]$, where $\alpha << 1$ and $N>>1$. 
By Plancherel and the formula 
\begin{align}
 &\left\| \int_0^t e^{i(t-t')\partial_x^2} \left[ e^{it'\partial_x^2}u_0 \partial_x e^{it'\partial_x^2}u_0 \right] \, dt' \right\|_{H^s}^2 \notag\\
&=\int_{\xi=-\infty}^{+\infty} \left| \int_{\xi_1} (1+|\xi|)^s \xi \hat{u}_0(\xi_1)\hat{u}_0(\xi-\xi_1)\frac{e^{2it\xi_1(\xi-\xi_1)}-1}{2it\xi_1(\xi-\xi_1)} \, d\xi_1 \right|^2 \, d\xi\notag\\
&\geq \int_{\xi=N+\alpha}^{N+\frac{3\alpha}{2}} \left| q(\xi,\xi_1) \right|^2 \, d\xi \label{PE:30}
\end{align}
where 
$$q(\xi,\xi_1) = \int_{\xi_1} (1+|\xi|)^s \xi \hat{u}_0(\xi_1)\hat{u}_0(\xi-\xi_1)\frac{e^{2it\xi_1(\xi-\xi_1)}-1}{2it\xi_1(\xi-\xi_1)} \, d\xi_1$$
If $N+\alpha \leq \xi \leq N+\frac{3\alpha}{2}$, then
\begin{align*}
\frac{\alpha}{2} \leq \xi_1 \leq \alpha \quad &\Rightarrow \quad N \leq \xi-\xi_1 \leq N+\alpha\\
\frac{\alpha}{2} \leq \xi-\xi_1 \leq \alpha \quad &\Rightarrow \quad N \leq \xi_1 \leq N+\alpha
\end{align*}
In either case, $\xi_1(\xi-\xi_1)\sim N\alpha$, and we shall require that $N\alpha << 1$ so that 
$$\frac{e^{2it\xi_1(\xi-\xi_1)}-1}{2\xi_1(\xi-\xi_1)} \sim it$$
Also
\begin{equation}\label{PE:31}
\eqref{PE:30} \geq \int_{\xi=N+\alpha}^{N+\frac{3\alpha}{2}} \left| \int_{\xi_1=\frac{\alpha}{2}}^\alpha q(\xi,\xi_1) \, d\xi_1 + \int_{\xi_1=\xi-\alpha}^{\xi-\frac{\alpha}{2}} q(\xi,\xi_1) \, d\xi_1\right|^2 \, d\xi
\end{equation}
Now
$$\int_{\xi_1=\frac{\alpha}{2}}^\alpha q(\xi,\xi_1) \, d\xi_1 \sim \int_{\xi_1=\frac{\alpha}{2}}^\alpha N^s N \alpha^{-1/2} \alpha^{-1/2}N^{-s} i t \;d\xi_1 \sim itN$$
and similarly
$$\int_{\xi_1=\xi-\alpha}^{\xi-\frac{\alpha}{2}} q(\xi,\xi_1) \, d\xi_1 \sim itN$$
and thus
$$\eqref{PE:31} \geq \sim \alpha t^2 N^2$$
Take $\alpha$, $N$ such that $\alpha N << 1$ and $\alpha N^2 t^2 >> 1$.  We compute
$$\|u_0\|_{H^s}^2 \sim 1$$
thus obtaining a contradiction.\\

\noindent \textbf{Problem 4}.  Consider Case (1), i.e.\ $|\text{Im }\vec{b}_1(x)|\leq C\lambda_m(x)$.  We are considering the equation
$$\partial_t u = i\Delta u + \vec{b}_1(x) \cdot \nabla u + c_1(x) u + c_2(x) \bar{u} + f$$
The proof proceeds along lines similar to the proof of Theorem \ref{T:9}, except that we set $c_{M,R}(x,\xi) = \exp \left[ -M \theta_R(\xi) p(x,\xi) \right]$, where $M$ is to be chosen large in terms of $\text{Im }\vec{b}_1(x)$.  Let $v=\Psi_{c_{M,R}} u$.  Now $i[ \Psi_{c_{M,R}} \Delta - \Delta \Psi_{c_{M,R}}]$ has symbol
$$\leq -c_0 M \theta_R(\xi) \lambda_m(x) |\xi| c_{M,R}(x,\xi) + Mc_0^{-1}\theta_R(\xi) c_{M,R}(x,\xi)$$
Also,
\begin{align*}
&\Psi_{c_{M,R}}([ \text{Re }\vec{b}_1(x) + i \text{Im }\vec{b}_1(x)] \cdot \nabla) - ([\text{Re }\vec{b}_1(x) + i \text{Im }\vec{b}_1(x)] \cdot \nabla) \Psi_{c_{M,R}}\\
& =i[ \Psi_{c_{M,R}} (\text{Im }\vec{b}_1(x) \cdot \nabla) - (\text{Im }\vec{b}_1(x) \cdot \nabla) \Psi_{c_{M,R}}] + \text{skew-adjoint (mod order $0$)}
\end{align*}
This operator has symbol (mod order $0$)
$$-[\text{Im }\vec{b}_1(x) \cdot \xi] c_{M,R}(x,\xi) = -\theta_R(\xi)[\text{Im }\vec{b}_1(x) \cdot \xi] c_{M,R}(x,\xi) - (1-\theta_R(\xi))[\text{Im }\vec{b}_1(x) \cdot \xi] c_{M,R}(x,\xi)$$
 Examine the piece with symbol
\begin{align*}
&-\frac{c_0}{2}M\theta_R(\xi) \lambda_m(x) |\xi|c_{M,R}(x,\xi) -\theta_R(\xi)[\text{Im }\vec{b}_1(x) \cdot \xi] c_{M,R}(x,\xi) \\
&= \theta_R(\xi)c_{M,R}(x,\xi)|\xi|\left( -\frac{c_0}{2}M \lambda_m(x) - \vec{b}_1(x) \cdot \frac{\xi}{|\xi|}\right)\\
& \leq 0
\end{align*}
by taking $M$ sufficiently large.  Hence this term $\leq C\|v\|_{L^2}$ by the sharp G\r{a}rding inequality.

\section{Solutions to Lecture 4 problems}

\textbf{Problem 1}.  It was shown in the lecture notes that $\xi(t)$ satisfies
$$\lambda^{-2}|\xi_0|^2 \leq |\xi(t)|^2 \leq \lambda^2 |\xi_0|^2$$
The ellipticity condition
$$\lambda^{-1} |\xi|^2 \leq \sum_{j,k} a_{jk} \xi_j \xi_k \leq \lambda |\xi|^2$$
with $\xi= (0, \ldots, 0, \underset{j\text{th }}{1}, 0, \ldots 0)$, implies $\lambda^{-1} \leq a_{jj}(x) \leq \lambda$ (no sum) $\forall \; x$, $\forall \;j=1,\ldots, n$.  It also implies, with $\xi = (0, \ldots, 0, \underset{j\text{th}}{1}, 0, \ldots , 0, \underset{k\text{th}}{1}, 0,\ldots, 0)$, $j\neq k$, that $4\lambda^{-1} \leq a_{jk}+a_{kj}+a_{jj}+a_{kk}\leq 4\lambda$ (no sum),$\forall \; x$, $\forall \; j,k=1,\ldots, n$,  and hence
$$|a_{jk} + a_{kj}| \leq 4\lambda$$
We have
$$\dot{x}_j = \sum_k [ a_{jk}(x)+a_{kj}(x)] \xi_k$$
and thus
$$|\dot{x}_j| \leq c_\lambda |\xi_0|$$
The only way for a solution to fail to be globally defined is for it to ``blow-up'' by leaving every compact set  as $t\to t_0$ for some time $t_0$.\\

\noindent \textbf{Problem 2}.  The following solution is based on the more general results appearing as Lemma 1.3 and Lemma 3.2 in \cite{Do2}.  We shall assume that $a_{jk}$ is symmetric.  Let $\lambda_1^2 \equiv \lambda^{-2} \inf_{(x,\xi)\in K} |\xi|^2 >0$, and $\lambda_2^2 \equiv \lambda^2 \sup_{(x,\xi) \in K} |\xi|^2 < \infty$.  (Thus, $\lambda_1$, $\lambda_2$ are determined by $K$ and the ellipticity constant $\lambda$.)  By ellipticity, for each $(x_0,\xi_0)\in K$, we have that the flow $(X(t;x_0,\xi_0),\Xi(t;x_0,\xi_0))$ satisfies
$$\lambda_1^2 \leq |\Xi(t,x_0,\xi_0)|^2 \leq \lambda_2^2$$
Let $G\equiv \{ \; (x,\xi)\; | \;(x,\xi)\in \mathbb{R}^n\times [\lambda_1,\lambda_2] \; \}$, so that $(X(t;x_0,\xi_0),\Xi(t;x_0,\xi_0)) \in G$ for all $t$.
We shall construct $q(x,\xi)$ such that, for $(x,\xi)\in G$, we have 
\begin{equation} \label{PE:231}
H_h q \geq 1, \text{ and }|q| \leq C(1+|x|)
\end{equation}  
This $q$ will be constructed in two parts, $q_1$ and $q_2$, and we begin with the construction of $q_1$.  Let $\theta(x)=0$ for $x\leq M^2$, $\theta(x)=1$ for $x\geq (M+1)^2$, $\theta'(x)\geq 0$, $\theta(x)\geq 0$, where $M$ is to be chosen large (in terms of asymptotic flatness, etc.).  Let 
$$q_1(x,\xi) = \theta(|x|^2)H_h|x|^2$$
Then
$$H_h q_1 = \theta'(|x|^2) (H_h|x|^2)^2 + \theta(|x|^2) H_h^2|x|^2$$
We compute (with $a_{jk}=a_{jk}(x)$):
$$H_h|x|^2 = 4a_{jk}x_j \xi_k$$
and
$$H_h^2|x|^2 = 8 a_{lm}a_{lk}\xi_m\xi_k + 8a_{lm}(\partial_{x_l}a_{jk})x_j\xi_m\xi_k - 4 (\partial_{x_l}a_{mp})a_{jl}x_j\xi_m\xi_p= \text{I}+\text{II}+\text{III}$$
We have that $\text{I} =8 |A\xi|^2 \geq 8\lambda^{-2}|\xi|^2$, since
$$|A\xi| \geq A\xi\cdot \frac{\xi}{|\xi|}=\frac{1}{|\xi|}a_{jk}\xi_j\xi_k \geq \lambda^{-1}|\xi|$$
 and by the asymptotic flatness and ellipticity assumptions, that for $|x|\geq M$, with $M$ chosen sufficiently large (chosen in terms of $C_1,\lambda$),
$$|\text{II}| + |\text{III}| \leq \lambda^{-2}|\xi|^2$$
and hence we get, for $|x|\geq M$,
$$H_h^2|x|^2 \geq \lambda^{-2}|\xi|^2$$
Therefore, for $(x,\xi)\in G$, we have
$$H_h q_1 \geq \begin{cases} 0 & \text{for }|x|\leq M+1 \\ \lambda_1^2 & \text{for }|x|\geq M+1 \end{cases}$$
Moreover, if $(x,\xi)\in G$, then
$$H_h|x|^2 = 4a_{jk}x_j\xi_k \leq 4|a_{jk}||x||\xi| \leq 8\lambda |x||\xi| \leq 8\lambda \lambda_2|x|$$
(the fact the $|a_{jk}|\leq 2\lambda$ was explained in Problem 1), and thus $|q_1(x,\xi)| \leq C|x|$.

Now we construct $q_2$.  Let $G_2 \equiv \{ \; (x,\xi) \; \big| \; |x|\leq M+1, \lambda_1 \leq |\xi| \leq \lambda_2 \; \}$, so that $G_2$ is compact.  For each $(x_0,\xi_0)\in G_2$, there is a neighborhood $U_{(x_0,\xi_0)}$ and a time $t_{(x_0,\xi_0)}$ such that  $\forall \; (x,\xi) \in U_{(x_0,\xi_0)}$, the flow $(X(t,x,\xi),\Xi(t,x,\xi))$ with initial position $(x,\xi)$ satisfies $|X(t_{(x_0,\xi_0)},x,\xi)| \geq M+2$.  (Here, we used that the flow map is continuous, and the nontrapping assumption).  Since $G_2$ is compact, there is a finite cover $U_1, \ldots U_k$, and associated escape times $t_1, \ldots t_k$.  Let $\alpha_j$ be a partition of unity subordinate to this cover.  Let 
\begin{equation} \label{PE:232}
q_2(x,\xi) = \sum_{j=1}^k \int_{t=0}^{t_j} \alpha_j( X(x,\xi,-t), \Xi(x,\xi,-t) ) \, dt
\end{equation}
We have
\begin{align}
H_h q_2(x,\xi) &= \frac{d}{ds}\Big|_{s=0} q_2(X(x,\xi,s),\Xi(x,\xi,s)) \notag\\
&= \frac{d}{ds}\Big|_{s=0} \sum_{j=1}^k \int_{t=0}^{t_j} \alpha_j( X(x,\xi,s-t), \Xi(x,\xi,s-t)) \, dt \notag\\
&= \sum_{j=1}^k \int_{t=0}^{t_j} -\frac{d}{dt} \alpha_j(X(x,\xi,s-t), \Xi(x,\xi,s-t)) \, dt \Big|_{s=0} \notag\\
&= \sum_{j=1}^k\alpha_j(x,\xi) - \sum_{j=1}^k \alpha_j(X(x,\xi,-t_j),\Xi(x,\xi,-t_j)) \label{PE:230}
\end{align} 
If $(x,\xi)\in G_2$, then $\sum_{j=1}^k \alpha_j(x,\xi)=1$.  Also, if $(x,\xi)\in G_2$, we have $$\alpha_j(X(x,\xi,-t_j),\Xi(x,\xi,-t_j)) = 0 \qquad \forall j=1,\ldots k$$  Indeed, if $\neq 0$, then  $(X(x,\xi,-t_j),\Xi(x,\xi,-t_j))\in U_j$, and by flowing forward by $t_j$, we would arrive at $(x,\xi)$, a point in $G_2$, contradicting the definition of $t_j$, $U_j$.  Hence,  for $(x,\xi)\in G$,
$$H_hq_2 \geq \begin{cases} 1 & \text{for }|x|\leq M+1 \\ -1 & \text{for }|x| \geq M+1 \end{cases}$$
We see from \eqref{PE:232} that $|q_2(x,\xi)| \leq \sum_{j=1}^k t_j \leq C$.

Set $q=2\lambda_1^{-2}q_1+q_2$, so that $q$ satisfies \eqref{PE:231}.  Then, for all $(x_0,\xi_0)\in K$, we have
$$1 \leq (H_hq)(X(x_0,\xi_0,t),\Xi(x_0,\xi_0,t)) = \frac{d}{dt} q( X(x_0,\xi_0, t),\Xi(x_0,\xi_0,t))$$
which, when integrated, gives
$$t \leq q(X(x_0,\xi_0,t),\Xi(x_0,\xi_0,t))-q(x_0,\xi_0)$$
This gives, for $(x_0,\xi_0)\in K$, that
$$t+q(x_0,\xi_0) \leq q(X(x_0,\xi_0,t),\Xi(x_0,\xi_0,t)) \leq C(1+|X(x_0,\xi_0,t)|)$$
Letting $R=\inf_{(x_0,\xi_0)\in K} q(x_0,\xi_0)$, we have
$$t+R \leq C(1+|X(x_0,\xi_0,t)|) \qquad \forall \; (x_0,\xi_0)\in K$$

\noindent \textbf{Problem 3}.  We will be working with the system
$$\partial_t u =
\begin{aligned}[t]
&-\epsilon \Delta^2 u + i \Delta u + \vec{b}_1(x)\cdot \nabla u + \vec{b}_2(x)\cdot \nabla \bar{u} \\
&+ c_1(x) u + c_2(x) \bar{u} + f
\end{aligned}
$$
which, with $\vec{w}=\begin{bmatrix} u \\ \bar{u} \end{bmatrix}$, gives the system
$$\partial_t \vec{w} = -\epsilon \begin{bmatrix} \Delta^2 & 0 \\ 0 & \Delta^2 \end{bmatrix} \vec{w} + i \begin{bmatrix} \Delta & 0 \\ 0 &-\Delta \end{bmatrix} \vec{w} + \begin{bmatrix} \vec{b}_1\cdot \nabla & \vec{b}_2 \cdot \nabla \\ \overline{\vec{b}}_2 \cdot \nabla & \overline{\vec{b}}_1 \cdot \nabla \end{bmatrix}\vec{w} + \begin{bmatrix} c_1 & c_2 \\ \bar{c}_2 & \bar{c}_1 \end{bmatrix} \vec{w} + \begin{bmatrix} f \\ \bar{f} \end{bmatrix}$$
Put 
$$S= \begin{bmatrix} 0 & -\tfrac{1}{2} i (\vec{b}_2 \cdot \nabla) \Delta_R^{-1} \\ \tfrac{1}{2} i (\overline{\vec{b}}_2\cdot \nabla) \Delta_R^{-1} & 0 \end{bmatrix}$$
 $\Lambda = I - S$, and $\vec{z}=\Lambda \vec{w}$ ($\Lambda$ is of order $0$).  $R$ will be chosen large to make it invertible in the appropriate spaces.  We now calculate the equation solved by $\vec{z}$.  
$$\partial_t \vec{z} = \Lambda \left\{ -\epsilon \begin{bmatrix} \Delta^2 & 0 \\ 0 & \Delta^2 \end{bmatrix} \vec{w} + i \begin{bmatrix} \Delta & 0 \\ 0 &-\Delta \end{bmatrix} \vec{w} + \begin{bmatrix} \vec{b}_1\cdot \nabla & \vec{b}_2 \cdot \nabla \\ \overline{\vec{b}}_2 \cdot \nabla & \overline{\vec{b}}_1 \cdot \nabla \end{bmatrix}\vec{w} + \begin{bmatrix} c_1 & c_2 \\ \bar{c}_2 & \bar{c}_1 \end{bmatrix} \vec{w} + \begin{bmatrix} f \\ \bar{f} \end{bmatrix} \right\}$$
\noindent \textbf{Term I}.
$$ -\epsilon \Lambda \begin{bmatrix} \Delta^2 & 0 \\ 0 & \Delta^2 \end{bmatrix} \vec{w} = -\epsilon  \begin{bmatrix} \Delta^2 & 0 \\ 0 & \Delta^2 \end{bmatrix} \vec{z} + \epsilon R_1\vec{z}$$
where $R_1$ is order $2$. \\
\noindent \textbf{Term II}.  
\begin{align*}
& i\Lambda \begin{bmatrix} \Delta & 0 \\ 0 & -\Delta \end{bmatrix} \vec{w} = 
i \begin{bmatrix} \Delta & 0 \\ 0 & -\Delta \end{bmatrix}\vec{z} \\
&+ i \begin{bmatrix} 0 & -\tfrac{1}{2}i \Delta (\vec{b}_2 \cdot \nabla) \Delta_R^{-1} - \tfrac{1}{2} i (\vec{b}_2 \cdot \nabla ) \Delta_R^{-1}\Delta \\ -\tfrac{1}{2}i \Delta (\vec{b}_2 \cdot \nabla) \Delta_R^{-1} - \tfrac{1}{2} i (\vec{b}_2 \cdot \nabla ) \Delta_R^{-1}\Delta & 0 \end{bmatrix} \vec{w} \\
& = i \begin{bmatrix} \Delta & 0 \\ 0 & -\Delta \end{bmatrix}\vec{z} + \begin{bmatrix} 0 & -(\vec{b}_2 \cdot \nabla) \chi_{|\xi|\geq R} \\ -(\overline{\vec{b}}_2 \cdot \nabla) \chi_{|\xi|\geq R} & 0 \end{bmatrix}\vec{w} + R_2 \vec{z}
\end{align*}
where $R_2$ is of order $0$.\\
\noindent \textbf{Term III}.  
$$\Lambda \begin{bmatrix} \vec{b}_1 \cdot \nabla & 0 \\ 0 & \overline{\vec{b}}_2 \cdot \nabla \end{bmatrix} \vec{w} = \begin{bmatrix} \vec{b}_1 \cdot \nabla & 0 \\ 0 & \overline{\vec{b}}_2 \cdot \nabla \end{bmatrix}\vec{z} + R_3 \vec{z}$$
where $R_3$ is of order $0$.\\
\noindent \textbf{Term IV}.
$$\Lambda \begin{bmatrix} c_1 & c_2 \\ \bar{c}_2 &\bar{c}_1 \end{bmatrix} \vec{w} = R_4 \vec{z}$$
where $R_4$ is of order $0$.\\
\noindent \textbf{Term V}.
$$\Lambda \begin{bmatrix} f \\ \bar{f} \end{bmatrix} =: \vec{g}$$
Thus, the equation that $z$ solves is
$$\partial_t \vec{z} = -\epsilon \begin{bmatrix} \Delta^2 & 0 \\ 0 & \Delta^2 \end{bmatrix} \vec{z} + i \begin{bmatrix} \Delta & 0 \\ 0 & -\Delta \end{bmatrix} \vec{z} + \begin{bmatrix} \vec{b}_1 \cdot \Delta & 0 \\ 0 & \overline{\vec{b}}_1\cdot \Delta \end{bmatrix} \vec{z} + \epsilon R_1\vec{z} + R\vec{z} + \vec{g}$$
grouping order $0$ operators as $$R=R_2+R_3+R_4+ \begin{bmatrix} 0 & (\vec{b}_2 \cdot \nabla)\chi_{|\xi|\leq R} \\ \overline{\vec{b_2}}\cdot \nabla \chi_{|\xi|\leq R} & 0 \end{bmatrix}$$
Take $R$ large so that $\Lambda$ is invertible in the space $L^2_x$, i.e.\
$$\|\Lambda\vec{w}\|_{L_x^2} \sim \|\vec{w}\|_{L_x^2}$$
Also, since $S$ is of order $-1$, by taking $R$ large we have
$$\| J^{1/2} \Lambda \vec{w} \|_{L^2(\mathbb{R}^n\times [0,T]; \lambda_m(x)dxdt)} \sim \| J^{1/2} \vec{w} \|_{L^2(\mathbb{R}^n\times [0,T]; \lambda_m(x)dxdt)} $$
and
$$\tv J^{1/2} \Lambda \vec{w} \tv_T \sim \tv J^{1/2} \vec{w} \tv_T$$
Let me elaborate on the last statement.  We need to show that for $R$ large,
$$\tv J^{1/2}SJ^{-1/2}v \tv_T \leq \frac{1}{2} \tv v \tv_T$$
We have $J^{1/2}SJ^{-1/2}=J_R^{-1}(J^{3/2}SJ^{-1/2})$, where $\widehat{J_R^{-1}v} = \chi_{|\xi|\geq R}|\xi|^{-1}\hat{v}(\xi)$ and thus we have
$$\tv J^{1/2}SJ^{-1/2} \tv_T \leq \frac{1}{R} \tv J^{3/2}SJ^{-1/2}v \tv_T \leq \frac{1}{R} \tv v \tv_T$$
where, for the first inequality, we have used Cor \ref{C:1}, where the bound depends on $S^0$ seminorms of $J_R^{-1}$, which are clearly $\leq \frac{1}{R}$.
Problem \chapterref{P:4}.\ref{P:4} gives (in Case (1))
$$\sup_{0<t<T} \|\vec{z}(t) \|_{L^2} + \| J^{1/2}\vec{z} \|_{L^2(\mathbb{R}^n\times [0,T]; \lambda_m(x)dxdt)} \leq C_T \left\{ 
\begin{aligned} 
& \|\vec{z}_0 \|_{L^2} + \|\vec{g}\|_{L_T^1L_x^2} \\ 
& \|\vec{z}_0 \|_{L^2} + \|J^{-1/2} \vec{g} \|_{L^2(\mathbb{R}^n\times [0,T]; \lambda_m^{-1}(x) \, dxdt)}
\end{aligned}\right. $$
and, in case (2),
$$\sup_{0<t<T} \|\vec{z}(t) \|_{L^2} + \tv J^{1/2}\vec{z} \tv_T \leq C_T \left\{ 
\begin{aligned} 
& \|\vec{z}_0 \|_{L^2} + \|\vec{g}\|_{L_T^1L_x^2} \\ 
& \|\vec{z}_0 \|_{L^2} + \tv J^{-1/2} \vec{g} \tv_T'
\end{aligned}\right. $$
We must also check
\begin{gather}
\| \vec{g}\|_{L_T^1 L_x^2} \leq \|\vec{f}\|_{L_T^1L_x^2} \label{PE:401}\\
\|J^{-1/2}\vec{g} \|_{L^2(\mathbb{R}^n\times [0,T]; \lambda_m^{-1}(x)dxdt)} \leq \|J^{-1/2}\vec{f} \|_{L^2(\mathbb{R}^n\times [0,T]; \lambda_m^{-1}(x)dxdt)} \label{PE:402}\\
\tv J^{-1/2}\vec{g} \tv_T' \leq \tv J^{-1/2}\vec{f} \tv_T' \label{PE:400}
\end{gather}
The proof of \eqref{PE:400} is
$$\tv J^{-1/2} \Lambda^{-1} \vec{f} \tv_T' = \tv (J^{-1/2}\Lambda^{-1} J^{1/2})(J^{-1/2}f) \tv_T' \leq C\tv J^{-1/2}f \tv_T'$$
\eqref{PE:402} follows similarly from Theorem \ref{T:7}, and \eqref{PE:401} is just Theorem \ref{T:1} directly.\\

\noindent \textbf{Problem 4}.  We have thus now proved Theorem \ref{T:S3}, and the goal is to use this to prove:
\begin{theorem}[Extension of Theorem \ref{T:S1}]  Let $d\geq 3$.  Then $\forall \; u_0 \in H^s(\mathbb{R}^n)$, $s\geq s_0=n+6+\frac{1}{2}$, we have local well-posedness in $H^s$ of
$$\partial_t u = i\Delta u + P(u,\bar{u},\nabla_x u, \nabla_x \bar{u} )$$
\end{theorem}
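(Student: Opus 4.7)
The plan is to execute the strategy outlined by Kenig--Ponce--Vega at the end of Lecture 3: absorb the ``bad'' derivative terms into a linear operator whose coefficients depend only on $u_0$, and treat the rest as a small perturbation. For concreteness take $P = \partial_{x_l} u\,\partial_{x_j} u\,\partial_{x_k} u$. Rewrite the equation as
$$\partial_t u = i\Delta u + [\partial_{x_l} u_0\,\partial_{x_j} u_0]\,\partial_{x_k} u \;+\; [\partial_{x_l} u\,\partial_{x_j} u - \partial_{x_l} u_0\,\partial_{x_j} u_0]\,\partial_{x_k} u,$$
where the bracket on the right vanishes at $t=0$ and will furnish the key factor of $T$ needed to close the contraction. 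The extra regularity in $s_0 = n+6+\tfrac12$ (two more derivatives than in Theorem \ref{T:S1}) is what allows the frozen coefficient $(\partial u_0)^2$ to have the $C^{N(n)}$ regularity demanded by Theorem \ref{T:S3}.

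First I would fix $u_0 \in H^{s_0}$ and set up a Picard-type iteration on a space $Z_T^a$ analogous to the one from Theorem \ref{T:S1} but \emph{without} the smallness constraint: the radius $a$ and the time $T$ are now allowed to depend on $\|u_0\|_{H^{s_0}}$. Given $v \in Z_T^a$ with $v(0) = u_0$, I define $u = \Phi(v)$ as the solution of the linear problem
$$\partial_t u = i\Delta u + \vec{b}^{\,u_0}(x)\cdot\nabla u + F(v), \qquad u(0) = u_0,$$
with drift $\vec{b}^{\,u_0}(x) \sim \partial u_0 \cdot \partial u_0$ and source $F(v) = [\partial v\,\partial v - \partial u_0\,\partial u_0]\partial v$ (suitably symmetrized). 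The crucial structural point is that $\vec{b}^{\,u_0}$ is a fixed function of $x$, independent of $v$, so the linear solvability constants are frozen in terms of $u_0$ alone.

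Next I would verify that $\vec{b}^{\,u_0}$ meets the hypothesis of Theorem \ref{T:S3}(2): since $\partial u_0 \in H^{s_0 - 1}$ with $s_0 - 1 > n/2 + N(n)$, the product $(\partial u_0)^2$ admits a partition $\sum_\mu \alpha_\mu \varphi_\mu$ with $\operatorname{supp}\varphi_\mu \subset Q_\mu^*$, $\|\varphi_\mu\|_{C^{N}} \leq 1$, and $\sum_\mu |\alpha_\mu| \leq C\|u_0\|_{H^{s_0-1}}^2$ by cube-wise Sobolev embedding plus Cauchy--Schwarz in $\mu$. This reduces the linear step to Theorem \ref{T:S3}(2), yielding, at the $H^s$ level (see \eqref{E:100} and Problem 3 of Lecture 4),
$$\sup_{[0,T]}\|u(t)\|_{H^s} + \tv J^{s+1/2}u\tv_T \;\leq\; C_{u_0}\bigl\{\|u_0\|_{H^s} + \tv J^{s-1/2}F(v)\tv_T'\bigr\}.$$
The source $\tv J^{s-1/2}F(v)\tv_T'$ is estimated by the Leibniz/product arguments from the proof of Theorem \ref{T:S1}, but now the terms that previously forced smallness, namely those of shape $(\lambda_3^T(v))^2 \lambda_2^T(v)$ with no $T$ in front, have been moved into the linear drift. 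What remains factorizes through $\partial v(t) - \partial u_0 = \int_0^t \partial_t\partial v$, producing a gain of $T$ (or $T^{1/2}$).

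The main obstacle, and where I would spend the most care, is the contraction: estimating $\Phi(v_1) - \Phi(v_2)$ in the same norm. The difference solves a linear equation with the \emph{same} drift $\vec{b}^{\,u_0}$ (so Theorem \ref{T:S3} applies with the same constant $C_{u_0}$) and source $F(v_1) - F(v_2)$, which upon expansion has the schematic form $[\partial v_i\,\partial v_i - \partial u_0\,\partial u_0](\partial v_1 - \partial v_2)$ plus symmetric variants. Because each bracketed factor vanishes at $t = 0$, integration in time yields a factor of $T$, so for $T = T(\|u_0\|_{H^{s_0}})$ sufficiently small, $\Phi$ contracts on $Z_T^a$. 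Uniqueness at the rough endpoint and continuous dependence then follow by Bona--Smith regularization as in Step~4 of Lecture~1, and the same scheme extends to general polynomial $P$ with $d \geq 3$ since higher powers only improve the time-smallness.
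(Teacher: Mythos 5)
Your proposal is correct and follows the paper's own argument: freeze $(\partial u_0)^2$ into a fixed linear drift satisfying the hypotheses of Theorem \ref{T:S3}(2), gain a factor of $T$ from the vanishing at $t=0$ of $\partial v\,\partial v - \partial u_0\,\partial u_0 = \int_0^t\partial_t(\partial v\,\partial v)\,ds$, and contract. One point worth spelling out: to control that time integral in the $\tv\cdot\tv_T'$ norm one must add the seminorm $\lambda_4^T(v) = \|\partial_t\nabla v\|_{l_\mu^2(L^\infty(Q_\mu\times[0,T]))}$ to the space $Z_T^a$, and to close $\lambda_4^T(\Phi(v))$ by appealing to the equation $\partial_t\nabla u = i\Delta\nabla u + \cdots$ one must also enlarge $\lambda_3^T$ to control $\partial_x^\beta v$ for $|\beta|\leq 3$ --- these two adjustments are precisely what account for the jump from $s_0 = n + 4 + \tfrac12$ in the proof of Theorem \ref{T:S1} to $s_0 = n + 6 + \tfrac12$ here.
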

We shall restrict to the equation
\begin{equation} \label{PE:40}
\partial_t u = i \Delta u + \partial_x u \cdot \partial_x u \cdot \partial_x u
\end{equation}
where $\partial_x=\partial_{x_k}$ for some $k$.  Take $s_0=n+6+\frac{1}{2}$.  Consider the space $Z_T^a = \{ \; v: \mathbb{R}^n \times [0,T], \lambda_j^T(v) \leq a, j=1,2,3,4 \; \}$, $T\leq 1$, where
\begin{align*}
\lambda_1^T(v) &= \sup_{0\leq t \leq T} \|v(t)\|_{H^{s_0}}\\
\lambda_2^T(v) &= \sum_{|\beta|=s_0+\frac{1}{2}} \tv \partial_x^\beta v \tv_T \\
\lambda_3^T(v) &= \sum_{|\beta|\leq 3} \|\partial_x^\beta v \|_{l^2(L^\infty(Q_\mu\times [0,T]))} \\
\lambda_4^T(v) &= \| \partial_t \nabla v \|_{l^2(L^\infty(Q_\mu\times [0,T]))}
\end{align*}
If $v\big|_{t=0}=u_0$, then set \begin{equation*}
u=\Gamma v = e^{it\Delta} u_0 + \int_0^t e^{i(t-t')\Delta} \partial_xv \cdot \partial_x v \cdot \partial_x v (t') \, dt'
\end{equation*}
 so that $u$ solves the linear inhomogeneous problem
\begin{equation}\label{PE:403}
\left\{ \begin{aligned}
\partial_t u &= i\Delta u + \partial_x v \cdot \partial_x v \cdot \partial_x v\\
u\big|_{t=0} &= u_0
\end{aligned}
\right.
\end{equation}
We shall prove that $\Gamma$ is a contraction on $Z_a^T$ so that $\exists \; u$ such that $\Gamma u = u$, which will be a solution to our nonlinear problem.  Apply $J^{1/2}\partial_\beta$ to \eqref{PE:403}, $|\beta|=s_0-\frac{1}{2}$.  This gives
\begin{align*}
\partial_t (J^{1/2} \partial_x^\beta u) &= 
\begin{aligned}[t]
&i\Delta (J^{1/2}\partial_x^\beta u) + J^{1/2} [ \partial_x v \cdot \partial_x v \cdot \partial_x \partial_x^\beta v ] \\
&+ \sum_{\substack{ \beta_1+\beta_2+\beta_3=\beta \\ |\beta_i|\leq s_0-\frac{3}{2}}} C_{\beta_i} J^{1/2} [ \partial_x (\partial_x^{\beta_1}v) \cdot  \partial_x (\partial_x^{\beta_2}v) \cdot \partial_x (\partial_x^{\beta_3}v)] 
\end{aligned}\\
&= 
\begin{aligned}[t]
&i\Delta (J^{1/2}\partial_x^\beta u) + J^{1/2} [ \partial_x u_0 \cdot \partial_x u_0 \cdot \partial_x \partial_x^\beta v ] + J^{1/2} [ (\partial_x v \cdot \partial_x v - \partial_x u_0 \cdot \partial_x u_0) \partial_x \partial_x^\beta v ]\\
&+ \sum_{\substack{ \beta_1+\beta_2+\beta_3=\beta \\ |\beta_i|\leq s_0-\frac{3}{2}}} C_{\beta_i} J^{1/2} [ \partial_x (\partial_x^{\beta_1}v) \cdot  \partial_x (\partial_x^{\beta_2}v) \cdot \partial_x (\partial_x^{\beta_3}v)] 
\end{aligned}
\end{align*}
where, in the last step, we have introduced a first order linear term $J^{1/2} [ \partial_x u_0 \cdot \partial_x u_0 \cdot \partial_x \partial_x^\beta v ]$.  Apply Theorem \ref{T:S3} (2), which gives the bound
\begin{align*}
\| J^{1/2} \partial_x^\beta u \|_{L_x^2} + \tv J \partial_x^\beta v \tv_T &\leq 
\begin{aligned}[t]
&\|J^{1/2} \partial_x^\beta u_0 \|_{L^2} \\
&+ C \sum_{\beta_i} \| J^{1/2} [ \partial_x (\partial_x^{\beta_1}v)] \cdot J^{1/2} [ \partial_x (\partial_x^{\beta_2}v)] \cdot J^{1/2} [ \partial_x (\partial_x^{\beta_3}v) ] \|_{L_T^1 L_x^2} \\
&+ \tv (\partial_x v \cdot \partial_x v - \partial_x u_0 \cdot \partial_x u_0) \partial_x \partial_x^\beta v \tv_T'
\end{aligned}
\end{align*}
The new part of the proof is to estimate $\tv (\partial_x v \cdot \partial_x v - \partial_x u_0 \cdot \partial_x u_0) \partial_x \partial_x^\beta v \tv_T'$.  The trick is to use that
$$\int_0^t \partial_t( \partial_x v \cdot \partial_x v) \, ds = \partial_x v \cdot \partial_x v - \partial_x u_0 \cdot \partial_x u_0$$
We thus get
\begin{align*}
&\tv (\partial_x v \cdot \partial_x v - \partial_x u_0 \cdot \partial_x u_0) \partial_x \partial_x^\beta v \tv_T' \\
&\leq \left( \sum_\mu \left\| \int_0^t \partial_t (\partial_x v \cdot \partial_x v) \, ds \right\|_{L^\infty(Q_\mu\times [0,T])} \right) \sup_\mu \| \partial_x \partial_x^\beta v \|_{L^2(Q_\mu\times [0,T])} \\
& \leq T \left( \sum_\mu \left\| \partial_t \partial_x v \right\|_{L^\infty(Q_\mu\times [0,T])}^2 \right)^{1/2} \left( \sum_\mu \left\| \partial_x v \right\|_{L^\infty(Q_\mu \times [0,T])}^2 \right)^{1/2} \sup_\mu \| \partial_x \partial_x^\beta v \|_{L^2(Q_\mu \times [0,T])} \\
&\leq T \lambda_4^T(v)\lambda_3^T(v) \lambda_4^T(v)
\end{align*}
After summing over all $\beta$ such that $|\beta|=s_0-\frac{1}{2}$, we have
$$\lambda_1^T(u) + \lambda_2^T(u) \leq C\|u_0\|_{H^{s_0}} + T \lambda_4^T(v) \lambda_3^T(v) \lambda^T_2(v) + T\lambda_1^T(v)^3$$
Estimate 
$$\lambda_3^T(u) \leq \lambda_1^T(u)+ T\lambda_1^T(v)^3$$
using the stategy of Lemma \ref{L:MX}.  For $\lambda_4^T(u)$, use the equation directly, i.e. write
$$\partial_t \nabla u = i \Delta \nabla u + \partial_x \nabla v \cdot \partial_x v \cdot \partial_x v$$
and conclude
$$\lambda_4^T(u) \leq \lambda_3^T(u) + \lambda_3^T(v)\lambda_1^T(v)^2$$
by Sobolev imbedding.  This shows that for appropriately chosen $a$ and $T$, $\Gamma: Z_a^T \to Z_a^T$.  Similarly, show $\Gamma$ is a contraction on $Z_a^T$ to complete the proof.

\section{Solutions to Lecture 5 problems}

\textbf{Problem 1}.  We shall use Littlewood-Paley theory:  Let $\eta\in C^N(\mathbb{R}^n\backslash \{ 0 \})$ and  $0<\gamma <1$ be such that
\begin{equation}\label{PE:61}
\left| \frac{d^j}{d\xi^j} \eta(\xi) \right| \leq \begin{cases} c_j |\xi|^{\gamma-j} & \text{if }|\xi|\leq 1 \\ c_j |\xi|^{-\gamma -j} & \text{if }|\xi|\leq 1 \end{cases} \qquad 0\leq j \leq N
\end{equation}
and also
\begin{equation} \label{PE:62}
\tilde{c}_1 \leq \sum |\eta(2^{-k}\xi)|^2 \leq c_1 \quad \xi \neq 0
\end{equation}
Let $\widehat{Q_k f}(\xi) = \eta(2^{-k}\xi)\hat{f}(\xi)$.  The expression
$$ \left( \sum |Q_k f |^2 \right)^{1/2}$$ 
is called the discrete Littlewood-Paley function.  We have
\begin{theorem}[Littlewood-Paley]  \label{PT:LP}Suppose $1< p < \infty$.  Then
$$\| f\|_p \simeq \left\| \left( \sum |Q_k f |^2 \right)^{1/2} \right\|_p$$
If $f_k$ is a sequence of functions, then
$$ \left\| \sum Q_k f_k \right\|_p \leq C \left\| \left( \sum |f_k|^2 \right)^{1/2} \right\|_p$$
\end{theorem}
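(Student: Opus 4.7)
The plan is to establish both inequalities by randomization via Khintchine's inequality combined with the Mikhlin multiplier theorem. First I would dispose of the $L^2$ case by Plancherel:
\[
\left\| \Bigl( \sum_k |Q_k f|^2 \Bigr)^{1/2} \right\|_2^2 = \sum_k \int |\eta(2^{-k}\xi)|^2 |\hat f(\xi)|^2 \, d\xi \simeq \|f\|_2^2,
\]
where the equivalence is exactly the frame bound in \eqref{PE:62}. This gives both directions simultaneously when $p=2$.

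For general $1 < p < \infty$, the key step is to study, for each sign sequence $\varepsilon = (\varepsilon_k) \in \{\pm 1\}^{\mathbb Z}$, the Fourier multiplier
\[
m_\varepsilon(\xi) = \sum_k \varepsilon_k \,\eta(2^{-k}\xi),
\]
and to verify that $m_\varepsilon$ satisfies the Mikhlin condition $|\partial^\alpha m_\varepsilon(\xi)| \leq C_\alpha |\xi|^{-|\alpha|}$ with $C_\alpha$ independent of $\varepsilon$. Fix $\xi$ and let $2^j \sim |\xi|$; the derivative estimates \eqref{PE:61} give $|(2^{-k}\xi)^{|\alpha|}\partial^\alpha \eta(2^{-k}\xi)| \lesssim 2^{-\gamma|k-j|}$, so the sum over $k$ converges geometrically. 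Mikhlin's theorem then yields $\|T_\varepsilon f\|_p \leq C_p \|f\|_p$ uniformly in $\varepsilon$, where $T_\varepsilon f = \sum_k \varepsilon_k Q_k f$.

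I would then apply Khintchine's inequality pointwise: letting $\varepsilon_k(\omega)$ be Rademacher functions on $[0,1]$,
\[
\Bigl( \sum_k |Q_k f(x)|^2 \Bigr)^{1/2} \simeq \left( \int_0^1 \Bigl| \sum_k \varepsilon_k(\omega) Q_k f(x) \Bigr|^p \, d\omega \right)^{1/p}.
\]
Raising to the $p$-th power, integrating in $x$, and invoking Fubini together with the uniform Mikhlin bound produces the forward inequality $\|(\sum |Q_k f|^2)^{1/2}\|_p \lesssim \|f\|_p$. For the converse, I would choose an auxiliary Littlewood-Paley family $\tilde Q_k$ with symbol $\tilde\eta$ of the same type satisfying $\sum_k \tilde\eta(2^{-k}\xi)\eta(2^{-k}\xi) \equiv 1$ on $\xi \neq 0$, yielding the reproducing formula $f = \sum_k \tilde Q_k Q_k f$; pairing against $g \in L^{p'}$, Cauchy-Schwarz in $k$, and the forward inequality for $\tilde Q_k$ on $L^{p'}$ give $\|f\|_p \lesssim \|(\sum |Q_k f|^2)^{1/2}\|_p$. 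The second assertion $\|\sum Q_k f_k\|_p \leq C \|(\sum |f_k|^2)^{1/2}\|_p$ falls out either by dualizing the first, or directly by applying Khintchine--Mikhlin to $\sum \varepsilon_k Q_k f_k$.

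The main obstacle will be the uniform verification of the Mikhlin condition on $m_\varepsilon$: near $\xi = 0$ a naive bound $|\eta(2^{-k}\xi)| \leq c$ would make the sum divergent, so the \emph{positive} H\"older exponent $\gamma > 0$ in \eqref{PE:61} is essential in producing geometric decay $2^{-\gamma|k-j|}$ on both sides of the scale $|\xi| \sim 2^j$. A secondary point to check is that the auxiliary resolvent family $\tilde\eta$ can be constructed of the same type (smooth off the origin, with matching derivative bounds \eqref{PE:61} and a lower bound analogous to \eqref{PE:62}), which reduces to a standard partition-of-unity argument on dyadic annuli.
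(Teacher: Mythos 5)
Your argument is correct, and it is the standard route; note that the paper itself does not supply a proof of this theorem but instead defers to the cited reference~\cite{Duo}, where essentially the same Khintchine--Mikhlin argument appears. The decomposition you sketch is complete: Plancherel together with the frame bound~\eqref{PE:62} handles $p=2$; the two-sided decay $2^{-\gamma|k-j|}$ coming from the positive exponent $\gamma$ in~\eqref{PE:61} gives the uniform-in-$\varepsilon$ H\"ormander--Mikhlin bound for $m_\varepsilon = \sum_k \varepsilon_k\,\eta(2^{-k}\cdot)$; Khintchine plus Fubini then produces the forward square-function estimate; the reverse estimate follows from the reproducing family $\tilde\eta(\zeta)=\bar\eta(\zeta)/\sum_j |\eta(2^{-j}\zeta)|^2$ (the denominator is dyadically periodic, bounded above and below by~\eqref{PE:62}, and inherits the derivative bounds~\eqref{PE:61}, so $\tilde\eta$ is of the same class); and the second assertion follows most cleanly by duality against $Q_k^*$, whose symbol $\bar\eta$ again satisfies~\eqref{PE:61}--\eqref{PE:62}. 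The only place to be slightly careful is your parenthetical remark that the second assertion also ``falls out directly by applying Khintchine--Mikhlin to $\sum\varepsilon_k Q_k f_k$'': the direct route works but requires an extra step, namely writing $\sum_k Q_k f_k = \int_0^1 T_{\varepsilon(\omega)}\bigl(\sum_j \varepsilon_j(\omega) f_j\bigr)\,d\omega$ by Rademacher orthogonality, then Minkowski, the uniform $L^p$ bound for $T_{\varepsilon(\omega)}$, Jensen (using $p\geq 1$), and Khintchine; the duality route you lead with is cleaner.
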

For a proof, see \cite{Duo}.  We shall now apply this to the problem.  Select $\eta(\xi) \geq 0$, $\text{supp }\eta \in \{ \; \xi \in \mathbb{R}^n \; | \; \frac{1}{2} \leq |\xi| \leq 2 \; \}$, such that $\sum_{k=-\infty}^{+\infty} \eta(2^{-k}\xi) = 1$, $\xi \neq 0$.  Define $\widehat{Q_k f}(\xi) = \eta(2^{-k}\xi) \hat{f}(\xi)$.  Let $\tilde{\eta}(\xi)=1$ on $\{ \; \xi \in \mathbb{R}^n \; | \; \frac{1}{4} \leq  |\xi| \leq 4 \; \}$, $\tilde{\eta}(\xi)\geq 0$, $\text{supp }\tilde{\eta} \subset \{ \xi \in \mathbb{R}^n \; | \; \frac{1}{8} \leq |\xi| \leq 8 \; \}$, and define $\widehat{\tilde{Q}_kf}(\xi)= \tilde{\eta}(2^{-k}\xi) \hat{f}(\xi)$.  Let $p(\xi) = \sum_{j\leq -3} \eta(2^{-j}\xi)$, so that $p(\xi)=1$ for $|\xi| \leq \frac{1}{16}$, $\text{supp }p \subset \{ \; \xi \in \mathbb{R}^n \; | \; |\xi| < \frac{1}{4} \}$, and define $\widehat{P_kf}(\xi) = p(2^{-k}\xi)\hat{f}(\xi)$.  Note that $P_kf = \sum_{j\leq k-3} Q_j f$.  Let $\tilde{p}(\xi)\geq 0$, $\tilde{p}(\xi)= 1$ on $\{ \; \xi \in \mathbb{R}^n \; | \; |\xi| \leq 100 \; \}$, and $\text{supp }\tilde{p} \subset \{ \; \xi \in \mathbb{R}^n \; | \; |\xi| \leq 200 \; \}$, and define $\tilde{P}_k$ accordingly.  Note that $Q_k$ and $\tilde{Q}_k$ fall into the Littlewood-Paley framework outlined above.  We record some elementary facts.
\begin{fact}  \label{PF:101}
$Q_k f P_k g = \tilde{Q}_k(Q_kf P_k g )$.
\end{fact}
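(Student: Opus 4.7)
The plan is to verify this identity by examining Fourier supports. Since $\tilde{\eta} \equiv 1$ on $\{\tfrac{1}{4} \leq |\xi| \leq 4\}$, it suffices to show that $\widehat{Q_k f \cdot P_k g}$ is supported inside $\{\xi : \tfrac{1}{4} \leq |2^{-k}\xi| \leq 4\}$; once this is established, multiplying by $\tilde{\eta}(2^{-k}\xi)$ leaves the Fourier transform unchanged, and the identity follows upon taking inverse Fourier transforms.

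To analyze the support, I would write the Fourier transform of the product as a convolution:
\begin{equation*}
\widehat{Q_k f \cdot P_k g}(\xi) = \int \widehat{Q_k f}(\xi - \eta) \, \widehat{P_k g}(\eta) \, d\eta.
\end{equation*}
By the given cutoff properties, $\widehat{Q_k f}(\xi - \eta)$ vanishes unless $2^{k-1} \leq |\xi - \eta| \leq 2^{k+1}$, while $\widehat{P_k g}(\eta)$ vanishes unless $|\eta| < 2^{k-2}$. Thus $\xi$ must admit a decomposition $\xi = (\xi - \eta) + \eta$ with the first summand in the annulus of inner radius $2^{k-1}$ and outer radius $2^{k+1}$, and the second summand of modulus less than $2^{k-2}$.

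The triangle inequality then gives
\begin{equation*}
2^{k-1} - 2^{k-2} < |\xi| < 2^{k+1} + 2^{k-2},
\end{equation*}
so $|2^{-k}\xi| \in (\tfrac{1}{4}, \tfrac{9}{4}) \subset [\tfrac{1}{4}, 4]$, which is precisely the region on which $\tilde{\eta}(2^{-k}\xi) = 1$. This is the entirety of the argument; there is no real obstacle beyond keeping the dyadic bookkeeping straight, since the numerical gaps between the supports of $\eta$, $p$, and $\tilde{\eta}$ are chosen generously enough to absorb the convolution. The only thing to be careful about is to verify that the chosen cutoffs do in fact give the inclusion $(\tfrac{1}{4}, \tfrac{9}{4}) \subset \{\tilde{\eta} \equiv 1\}$, which holds because $\tilde{\eta} \equiv 1$ on the full annulus $[\tfrac{1}{4}, 4]$.
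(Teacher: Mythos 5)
Your proof is correct. The paper states Fact~\ref{PF:101} without proof, treating it as a routine Littlewood--Paley fact, and your Fourier-support argument via the convolution formula for $\widehat{Q_k f \cdot P_k g}$ is precisely the standard argument behind it. The numerics check out: $\text{supp}\,\widehat{Q_k f}\subset\{2^{k-1}\le|\zeta|\le 2^{k+1}\}$, $\text{supp}\,\widehat{P_k g}\subset\{|\zeta|<2^{k-2}\}$, so the sumset lies in $\{2^{k-2}<|\xi|<9\cdot 2^{k-2}\}$, which is contained in the region $\{2^{k-2}\le|\xi|\le 2^{k+2}\}$ on which $\tilde{\eta}(2^{-k}\xi)\equiv 1$.
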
 
\begin{fact} \label{PF:102}For $|j|\leq 2$, $Q_k f Q_{k-j} g = \tilde{P}_k (Q_k f Q_{k-j} g)$.\end{fact}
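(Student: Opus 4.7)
\medskip

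The plan is to verify the identity purely by tracking Fourier supports and exploiting the fact that $\tilde p$ is identically $1$ on a sufficiently large ball. The key observation is that $\tilde P_k h = h$ whenever $\hat h$ is supported in $\{|\xi| \leq 100\cdot 2^k\}$, since $\tilde p(2^{-k}\xi) = 1$ on that set by construction.

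First I would record the Fourier supports of the individual factors. By definition, $\widehat{Q_k f}$ is supported in $\{2^{k-1} \leq |\xi| \leq 2^{k+1}\}$, since $\eta$ is supported in $\{\tfrac12 \leq |\xi| \leq 2\}$. Likewise, for $|j|\leq 2$, $\widehat{Q_{k-j} g}$ is supported in $\{2^{k-j-1} \leq |\xi| \leq 2^{k-j+1}\} \subset \{|\xi| \leq 2^{k+3}\}$.

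Next I would use the convolution identity $\widehat{uv} = \hat u \ast \hat v$ to deduce that $\widehat{Q_k f\, Q_{k-j} g}$ is supported in the Minkowski sum of the two supports, which lies inside
\begin{equation*}
\{|\xi| \leq 2^{k+1} + 2^{k+3}\} \;=\; \{|\xi| \leq 10\cdot 2^k\} \;\subset\; \{|\xi|\leq 100\cdot 2^k\}.
\end{equation*}
On this set $\tilde p(2^{-k}\xi)\equiv 1$, so multiplication by $\tilde p(2^{-k}\xi)$ on the Fourier side leaves $\widehat{Q_k f\, Q_{k-j} g}$ unchanged. Taking the inverse Fourier transform yields $\tilde P_k(Q_k f\, Q_{k-j} g) = Q_k f\, Q_{k-j} g$.

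There is no real obstacle here; the statement is purely an exercise in bookkeeping of frequency supports, and the ample gap between the annular support of $Q_k f\, Q_{k-j} g$ (contained in a ball of radius $\lesssim 2^{k+3}$) and the plateau radius $100\cdot 2^k$ of $\tilde p$ is more than enough for the identity to hold for all $|j|\leq 2$. The only thing to be careful about is making sure the numerical constants line up, which they do with room to spare.
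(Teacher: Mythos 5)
Your proof is correct: the convolution/support bookkeeping (product frequencies lie in a ball of radius $10\cdot 2^k$, well inside the region $\{|\xi|\le 100\cdot 2^k\}$ where $\tilde p(2^{-k}\xi)\equiv 1$) is exactly the intended justification, which the paper records without proof as an elementary fact. Nothing to add.
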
 
\begin{fact} \label{PF:103}Let $\varphi \in S(\mathbb{R}^n)$, $\varphi_k = 2^{-kn}\varphi(2^{-k}x)$.  Then $\sup_k |(\varphi_k \ast f)(x)| \leq CMf(x)$, where $M$ is the Hardy-Littlewood maximal function.
\end{fact}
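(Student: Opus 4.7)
The plan is to dominate $\varphi_k \ast f$ pointwise by an integral whose kernel is integrable and radially controlled, then decompose that integral dyadically around $x$ so that each piece reduces to an average of $|f|$ over a ball, hence to $Mf(x)$. Since $\varphi \in \mathcal{S}(\mathbb{R}^n)$, I would use the schoolbook decay $|\varphi(z)| \leq C_N (1+|z|)^{-N}$ with, say, $N = n+1$, giving
$$|(\varphi_k \ast f)(x)| \leq C \int_{\mathbb{R}^n} \frac{2^{-kn}}{(1 + 2^{-k}|x-y|)^{n+1}}\, |f(y)| \, dy.$$
So the task reduces to bounding this integral by $C\, Mf(x)$ with $C$ independent of $k$.

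The next step is a dyadic decomposition adapted to the kernel's scale: set $B_j = \{y : |x-y| \leq 2^{j+k}\}$ for $j \geq 0$, and write
$$|(\varphi_k \ast f)(x)| \leq C \int_{B_0} \frac{2^{-kn}}{(1+2^{-k}|x-y|)^{n+1}}\, |f(y)|\, dy + C\sum_{j\geq 0} \int_{B_{j+1}\setminus B_j} \frac{2^{-kn}}{(1+2^{-k}|x-y|)^{n+1}}\, |f(y)|\, dy.$$
On the annulus $B_{j+1}\setminus B_j$ one has $2^{-k}|x-y| \sim 2^j$, so the kernel is $\leq C\, 2^{-kn}\, 2^{-j(n+1)}$; on $B_0$ it is simply $\leq C\, 2^{-kn}$. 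On the other side, the definition of the Hardy--Littlewood maximal function gives
$$\int_{B_{j+1}} |f(y)|\, dy \leq |B_{j+1}|\, Mf(x) = C\, 2^{(j+1)n}\, 2^{kn}\, Mf(x).$$
Multiplying the kernel bound on the $j$-th annulus by this volume bound produces a contribution of order $C\, 2^{-j}\, Mf(x)$, and the $B_0$-piece is $C\, Mf(x)$. Summing the geometric series in $j$ yields the desired estimate $\sup_k |(\varphi_k \ast f)(x)| \leq C\, Mf(x)$.

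The only subtlety is bookkeeping: I want to confirm that the $2^{-kn}$ factor from the kernel and the $2^{kn}$ factor from the volume of $B_{j+1}$ cancel exactly, so that the final bound is uniform in $k$. This cancellation is built in by choosing the annuli at scale $2^{j+k}$, which is the natural scale of $\varphi_k$, so the argument is robust. Once Fact~\ref{PF:103} is available, it will be used to pointwise-control operators like $P_k g$ and $\tilde{P}_k h$ by $Mg$ and $Mh$ respectively (since $P_k$ and $\tilde{P}_k$ are of the form $\varphi_k\ast$ for Schwartz $\varphi$), which together with the Littlewood--Paley square function theorem (Theorem~\ref{PT:LP}) and the Fefferman--Stein vector-valued inequality for $M$ will drive the remaining estimates in the commutator bound of Problem~1.
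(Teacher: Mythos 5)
Your argument is correct and is the standard textbook proof: bound the Schwartz kernel by the radially decreasing integrable majorant $C(1+|z|)^{-(n+1)}$, decompose the convolution integral into dyadic annuli at the natural scale $2^k$, and compare each piece to an average of $|f|$ over a ball centered at $x$. The paper states Fact~\ref{PF:103} without proof (it is classical, e.g.\ Stein, \emph{Singular Integrals}, Ch.~III), so there is nothing to compare against; your bookkeeping is right, the $2^{-kn}$ kernel normalization and the $2^{kn}$ ball-volume factor cancel exactly, and the remaining sum $\sum_{j\ge0} 2^{-j}$ gives a constant independent of $k$.
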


We next record some important consequences of the boundedness of the Hardy-Littlewood maximal function and Littlewood-Paley theory.
\begin{lemma} \label{PL:101} If $1<p\leq \infty$, then
\begin{align*}
\| \sup_k |Q_k f | \|_p &\leq C_p \|f\|_p \\
\| \sup_k |\tilde{Q}_k f | \|_p &\leq C_p \|f\|_p \\
\| \sup_k |P_k f | \|_p &\leq C_p \|f\|_p
\end{align*}
\end{lemma}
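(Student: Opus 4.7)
The plan is to reduce all three bounds to a single pointwise domination by the Hardy-Littlewood maximal function $M$, and then invoke the maximal theorem.

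First, I would recognize each operator as convolution with a dilated Schwartz function. Since $\eta \in C_c^\infty(\mathbb{R}^n \setminus \{0\})$, its inverse Fourier transform $\psi = \check\eta$ lies in $\mathcal{S}(\mathbb{R}^n)$, and an elementary change of variables shows
\[
Q_k f(x) = (\psi_{[k]} * f)(x), \qquad \psi_{[k]}(x) = 2^{kn}\psi(2^k x),
\]
so $\psi_{[k]}$ has the $L^1$-normalized form required by Fact \ref{PF:103} (after the harmless relabeling $k \mapsto -k$, which does not affect a supremum over all $k \in \mathbb{Z}$). The same reasoning applied to $\tilde\eta$ and to $p$ gives $\tilde Q_k f = \tilde\psi_{[k]} * f$ and $P_k f = \pi_{[k]} * f$ with $\tilde\psi = \check{\tilde\eta} \in \mathcal{S}$ and $\pi = \check p \in \mathcal{S}$ (note $p \in C_c^\infty$).

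Second, I would apply Fact \ref{PF:103} directly to each family to obtain the pointwise estimates
\[
\sup_k |Q_k f(x)| \leq C\, Mf(x), \quad \sup_k |\tilde Q_k f(x)| \leq C\, Mf(x), \quad \sup_k |P_k f(x)| \leq C\, Mf(x).
\]
Third, for $1 < p < \infty$ the conclusion follows from the $L^p$-boundedness of $M$ (Hardy-Littlewood maximal theorem). For $p = \infty$, the result is even more direct: $\|\psi_{[k]}\|_{L^1} = \|\psi\|_{L^1}$ is independent of $k$, so $\|Q_k f\|_\infty \leq \|\psi\|_{L^1}\|f\|_\infty$ uniformly in $k$, and similarly for the other two operators.

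There is really no obstacle here; the only point requiring a little care is verifying that Fact \ref{PF:103} applies, i.e., that each convolution kernel is an $L^1$-dilation of a fixed Schwartz function. This is automatic because the symbols $\eta$, $\tilde\eta$, $p$ are smooth, compactly supported, and bounded, so their inverse Fourier transforms decay faster than any polynomial. The proof is therefore essentially a two-line reduction to standard maximal function theory.
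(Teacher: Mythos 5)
Your proof is correct and takes exactly the approach the paper intends: the paper's entire proof of Lemma \ref{PL:101} is the one-line remark that it follows from Fact \ref{PF:103}, and your write-up simply spells out the reduction (identifying each $Q_k$, $\tilde{Q}_k$, $P_k$ as convolution with a dilated Schwartz kernel, then invoking the Hardy-Littlewood maximal theorem for $1<p<\infty$ and Young's inequality for $p=\infty$). The only cosmetic point worth noting is that the general framework only assumes $\eta\in C^N$ with the decay bounds \eqref{PE:61}, not $\eta\in C_c^\infty$; but the specific $\eta$, $\tilde\eta$, $p$ constructed in the solution are smooth with compact support, so your argument applies to them, and in any case bounds of the form \eqref{PE:61} already guarantee enough decay of the kernel for Fact \ref{PF:103} to hold.
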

\begin{proof}These three statements follow from Fact \ref{PF:103}.
\end{proof}
\begin{lemma}  \label{PL:102} If $1<p<\infty$, then
\begin{align*}
\left\| \sum_k \tilde{Q}_k ( Q_k f Q_k g ) \right\|_p &\leq C \|f \|_{p_1} \|g\|_{p_2} \quad \text{ for } \left\{ \begin{gathered} 1<p_1 \leq \infty \\ 1< p_2 \leq \infty \end{gathered} \right. \\
\left\| \sum_k \tilde{Q}_k ( Q_k f P_k g ) \right\|_p &\leq C \|f \|_{p_1} \|g\|_{p_2} \quad \text{ for } \left\{\begin{gathered} 1<p_1 < \infty \\ 1< p_2 \leq \infty \end{gathered}\right.
\end{align*}
where $\frac{1}{p_1}+\frac{1}{p_2}=1$.
\end{lemma}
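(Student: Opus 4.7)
The plan is to treat both inequalities as paraproduct-type bounds, leveraging Theorem \ref{PT:LP} twice: once in its vector-valued form to collapse $\|\sum_k \tilde{Q}_k(\cdots)\|_p$ into the $L^p$ norm of a square function, and once in its scalar form to convert a square function in $Q_k f$ back into $\|f\|_{p_1}$. The ``other factor'' will be handled pointwise by pulling out a supremum and invoking Lemma \ref{PL:101}.

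First I would observe that $\tilde{Q}_k$ has the same structural features as $Q_k$ (a compactly supported Fourier annulus separated from the origin, whose symbol satisfies the hypotheses \eqref{PE:61}--\eqref{PE:62}), so the vector-valued half of Theorem \ref{PT:LP} gives
$$
\Bigl\| \sum_k \tilde{Q}_k (F_k) \Bigr\|_p \leq C \Bigl\| \Bigl( \sum_k |F_k|^2 \Bigr)^{1/2} \Bigr\|_p
$$
for any $1<p<\infty$ and any sequence $F_k$. Applied with $F_k = Q_k f \, Q_k g$, this reduces the first inequality to estimating the square function $\bigl(\sum_k |Q_k f|^2 |Q_k g|^2\bigr)^{1/2}$. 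Next I would use the pointwise factorization
$$
\Bigl( \sum_k |Q_k f|^2 |Q_k g|^2 \Bigr)^{1/2} \leq \Bigl( \sup_j |Q_j g| \Bigr) \Bigl( \sum_k |Q_k f|^2 \Bigr)^{1/2},
$$
together with the symmetric variant obtained by interchanging $f$ and $g$. H\"older's inequality (with $\tfrac{1}{p}=\tfrac{1}{p_1}+\tfrac{1}{p_2}$, which is the natural paraproduct scaling) then reduces the problem to two factors: Lemma \ref{PL:101} controls $\|\sup_j |Q_j g|\|_{p_2}$ for $1<p_2\leq\infty$, and the scalar half of Theorem \ref{PT:LP} controls $\|(\sum_k |Q_k f|^2)^{1/2}\|_{p_1}$ for $1<p_1<\infty$. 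Since $1<p<\infty$ forces at least one of $p_1,p_2$ to be finite, I may always orient the factorization so that the square function lands on the argument with the finite exponent, recovering the full range stated in the first inequality.

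For the second inequality the argument is identical, but only one direction of the factorization is available since the two factors $Q_k f$ and $P_k g$ play asymmetric roles. I would use
$$
\Bigl( \sum_k |Q_k f|^2 |P_k g|^2 \Bigr)^{1/2} \leq \Bigl( \sup_j |P_j g| \Bigr) \Bigl( \sum_k |Q_k f|^2 \Bigr)^{1/2},
$$
noting that Lemma \ref{PL:101} yields $\|\sup_j |P_j g|\|_{p_2}\leq C\|g\|_{p_2}$ for the full range $1<p_2\leq\infty$, while the square function bound applied to $f$ forces the stricter constraint $1<p_1<\infty$. This accounts precisely for the asymmetry in the hypotheses stated in the lemma.

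The only delicate point I anticipate is verifying that Theorem \ref{PT:LP} actually applies to $\tilde{Q}_k$, which requires a check that $\tilde{\eta}$ satisfies \eqref{PE:61}--\eqref{PE:62}; this is immediate from the definition of $\tilde{\eta}$, whose support is a compact annulus separated from the origin. Beyond that, everything reduces to H\"older's inequality and the Hardy--Littlewood maximal estimate, both of which are already packaged into Lemma \ref{PL:101} and Theorem \ref{PT:LP}, so I expect no genuine obstacle.
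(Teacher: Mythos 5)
Your proposal is correct and follows essentially the same route as the paper: collapse the sum via the vector-valued Littlewood--Paley inequality, factor out a supremum pointwise, apply H\"older, and finish with the scalar Littlewood--Paley bound plus the maximal estimate of Lemma \ref{PL:101}, using the symmetry of $f$ and $g$ to orient the factorization in the first inequality. The symmetry observation and the remark that $\tilde{Q}_k$ fits the Littlewood--Paley framework are also the same points the paper makes, so there is no substantive difference.
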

\begin{proof}  We carry out the proof of the first inequality (the second is very similar).  Since the roles of $f$ and $g$ are symmetric, we may assume that $1<p_1<\infty$ and $1<p_2 \leq \infty$.  Then
\begin{align*}
\left\| \sum_k \tilde{Q}_k (Q_kf Q_k g) \right\|_p &\leq c \left \| \left( \sum_k |Q_k f Q_k g |^2 \right)^{1/2} \right\|_p \quad \text{by Theorem \ref{PT:LP}}\\
& \leq \left\| c \left( \sum_k |Q_kf|^2 \right)^{1/2} \sup_k |Q_kg| \right\|_p \\
& \leq \left\| c \left( \sum_k |Q_kf|^2 \right)^{1/2} \right\|_{p_1} \left\| \sup_k |Q_kg| \right\|_{p_2} \\
& \leq \|f\|_{p_1} \|g\|_{p_2} \quad \text{by Theorem \ref{PT:LP} for $f$ and Lemma \ref{PL:101} for $g$}
\end{align*}
\end{proof}

\begin{lemma}  \label{PL:103} If $1<p<\infty$, $1< p_1 \leq  \infty$, $1< p_2 \leq \infty$, then
$$\left\| \sum_k P_k (Q_k f Q_k g ) \right\|_p \leq \|f\|_{p_1} \|g\|_{p_2}$$
\end{lemma}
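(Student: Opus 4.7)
My plan is to prove Lemma \ref{PL:103} by a duality argument, reducing to the pairing with a test function $h$ in $L^{p'}$ of unit norm, and then splitting the resulting trilinear sum across sequence $\ell^p$--spaces of exponents $(2,2,\infty)$. Concretely, since $P_k$ is a real symmetric Fourier multiplier and hence self-adjoint, I would write
\begin{equation*}
\left\langle \sum_k P_k(Q_kf\, Q_kg),\, h\right\rangle
= \sum_k \int Q_kf \cdot Q_kg \cdot P_kh \, dx,
\end{equation*}
and then estimate the integrand pointwise by applying the elementary sequence H\"older inequality $|\sum_k a_kb_kc_k|\le \|a\|_{\ell^2}\|b\|_{\ell^2}\|c\|_{\ell^\infty}$. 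This yields
\begin{equation*}
\left|\sum_k Q_kf\, Q_kg\, P_kh\right|
\le \left(\sum_k |Q_kf|^2\right)^{1/2}\!\left(\sum_k |Q_kg|^2\right)^{1/2}\sup_k |P_kh|.
\end{equation*}

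Next I would apply H\"older in the $x$--variable with three exponents $p_1, p_2, p'$, which balance because $\tfrac1{p_1}+\tfrac1{p_2}+\tfrac1{p'}=\tfrac1p+\tfrac1{p'}=1$. Each of the three resulting factors is controlled by a previously established result: the discrete square functions of $f$ and $g$ are bounded in $L^{p_1}$ and $L^{p_2}$ respectively by the Littlewood--Paley theorem (Theorem \ref{PT:LP}), provided $1<p_1,p_2<\infty$, and $\|\sup_k|P_kh|\|_{p'}\le C\|h\|_{p'}$ by Lemma \ref{PL:101} (since $1<p'<\infty$ is equivalent to $1<p<\infty$). Taking the supremum over $h$ with $\|h\|_{p'}=1$ gives the desired bound $\|\sum_k P_k(Q_kf\,Q_kg)\|_p\le C\|f\|_{p_1}\|g\|_{p_2}$, where we implicitly use $\tfrac1{p_1}+\tfrac1{p_2}=\tfrac1p$ by analogy with Lemma \ref{PL:102}.

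The main obstacle is the endpoint case when one of $p_1$, $p_2$ equals $\infty$, since the Littlewood--Paley theorem does not give $\|(\sum_k|Q_kf|^2)^{1/2}\|_\infty\lesssim \|f\|_\infty$ (this only holds up to a BMO estimate). In that case I would argue instead via a paraproduct decomposition: pull the $\ell^\infty$ norm off the factor that lies in $L^\infty$ and rearrange, say
\begin{equation*}
\left|\sum_k Q_kf\, Q_kg\, P_kh\right|\le \sup_k|Q_kf|\, \sum_k |Q_kg|\,|P_kh|,
\end{equation*}
then use $\sup_k|Q_kf|\le CMf\in L^\infty$ (Fact \ref{PF:103}), pair $Q_kg$ with $P_kh$ using an additional frequency-support observation --- namely $Q_kg\cdot P_kh = \tilde Q_k(Q_kg\cdot P_kh)$ so that the Littlewood--Paley square function can be reintroduced --- and close the estimate by Cauchy--Schwarz in $k$ followed by H\"older in $x$ with exponents $p_2, p'$. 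This is the delicate step, and it is the only place where the particular definitions of $P_k$ and $Q_k$ (their frequency supports) really enter the argument; the generic case uses only their $L^p$ boundedness.
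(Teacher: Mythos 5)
Your argument in the non-endpoint case $1<p_1,p_2<\infty$ is exactly the paper's: dualize against $h\in L^{p'}$, move $P_k$ across the pairing, apply the discrete H\"older inequality in $k$ with exponents $(2,2,\infty)$, then H\"older in $x$ with exponents $(p_1,p_2,p')$, and finish with Theorem \ref{PT:LP} for $f$ and $g$ and Lemma \ref{PL:101} for $h$. So there is nothing to compare there. You also correctly note that the implicit constraint $\tfrac1{p_1}+\tfrac1{p_2}=\tfrac1p$ is being used; the paper leaves it unstated here (and has a typo in the analogous constraint in Lemma \ref{PL:102}), so your flagging of it is fair.

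Where you differ from the paper is the endpoint case $p_1=\infty$ (or $p_2=\infty$). The paper does not prove it: it explicitly says this case ``is a little more fancy'' and refers to the appendix of \cite{KPV93}. Your sketched fix, however, has a genuine gap. After passing to the pointwise bound
\[
\Bigl|\sum_k Q_kf\,Q_kg\,P_kh\Bigr|\le \sup_k|Q_kf|\sum_k|Q_kg|\,|P_kh|,
\]
you have already taken moduli, so the frequency-localization identity $Q_kg\cdot P_kh=\tilde Q_k(Q_kg\cdot P_kh)$ is no longer available: $|Q_kg|\,|P_kh|$ is not frequency-localized and cannot be re-inserted into a Littlewood--Paley square function. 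Closing by Cauchy--Schwarz in $k$ on the remaining sum would produce $\bigl(\sum_k|Q_kg|^2\bigr)^{1/2}\bigl(\sum_k|P_kh|^2\bigr)^{1/2}$, and the second factor is not a bounded operator on $L^{p'}$ --- indeed $P_kh\to h$ as $k\to\infty$, so $\sum_k|P_kh|^2$ diverges pointwise whenever $h(x)\ne 0$, and no $\ell^p_k$ rearrangement of the three factors that puts $\ell^\infty$ anywhere other than on $P_kh$ can work. The correct endpoint argument (as in \cite{KPV93}) uses a more structural decomposition of $\sum_k P_k(Q_kf\,Q_kg)$ before any moduli are taken; it does not reduce to a paraproduct plus Cauchy--Schwarz as you describe. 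If you want to keep the endpoint claim, either cite \cite{KPV93} as the paper does or give the full argument, but delete the current sketch, as it would not survive scrutiny.
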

\begin{proof} We shall only carry out the proof in the case $1<p_1<\infty$, $1<p_2< \infty$.  The case $p_1=\infty$ or $p_2=\infty$ is a little more fancy (see the appendix to \cite{KPV93}).
Test against $h\in L^{p'}$, $\|h\|_{L^{p'}}=1$.
\begin{align*}
\sum_k \int P_k(Q_kf Q_k g) h &= \sum_k \int Q_k f Q_k g P_k h\\
&\leq \int \left( \sum_k |Q_k f|^2 \right)^{1/2} \sup_k |P_k h | \left( \sum_k | Q_k g |^2 \right)^{1/2} \\
& \leq  \left\| \left( \sum_k |Q_k f|^2 \right)^{1/2} \right\|_{p_1} \| \sup_k |P_k h | \|_{p'} \left\| \left( \sum_k |Q_k g |^2 \right)^{1/2}\right\|_{p_2}\\
& \leq \|f\|_{p_1} \|h\|_{p'} \|g\|_{p_2}\quad \text{by Theorem \ref{PT:LP} for $f$ and $g$, Lemma \ref{PL:101} for $h$}
\end{align*}
\end{proof}
Let $\eta^1(\xi) = |\xi|^{-s}\eta(\xi)$, $\eta^3(\xi) = |\xi|^sp(\xi)$, $\tilde{\eta}^3(\xi) = |\xi|^s\tilde{p}(\xi)$, $\eta^4(\xi) = |\xi|^s\eta(\xi)$, $\tilde{\eta}^4(\xi) = |\xi|^s\tilde{\eta}(\xi)$.  Accordingly define the operators $Q^1_k$, $Q^3_k$, $\tilde{Q}^3_k$, $Q^4_k$, $\tilde{Q}^4_k$, and note that they fall into the Littlewood-Paley framework outlined above.
Expand
\begin{align*}
J^s(fg) &= \sum_k J^s( \tilde{Q}_k(Q_kf P_k g)) + \sum_k J^s(\tilde{Q}_k(P_kfQ_k g)) + \sum_{k, |j|\leq 2} J^s (\tilde{P}_k (Q_kfQ_{k-j}g)) \\
&= \text{I}_1+\text{II}_1+\text{III}_1
\end{align*}
We shall show that $\text{I}_1$ and $\text{III}_1$ pose no difficulty.  In fact,
$$\text{I}_1 = \sum_k \tilde{Q}^4_k( Q^1_k J^s f \cdot P_k g)$$
and thus $\| \text{I}_1 \|_{L^2}$ is appropriately bounded by Lemma \ref{PL:102}.  Note that here is the reason that the $L^\infty$ norm can only be put on $g$, and not on $J^s f$.  Also,
$$\text{III}_1 = \sum_k \tilde{Q}^3_k(Q^1_kJ^sf \cdot Q_{k-j}g)$$
and thus, by Lemma \ref{PL:102} it is appropriately bounded.  Expand
\begin{align*}
f J^s g &= \sum_k \tilde{Q}_k (Q_kf J^s P_k g ) + \sum_k \tilde{Q}_k (P_k f J^s Q_k g ) + \sum_{k, |j|\leq 2} \tilde{P}_k(Q_k f J^s Q_{k-j} g)\\
&= \text{I}_2+\text{II}_2 + \text{III}_2
\end{align*}
We shall show that $\text{I}_2$ and $\text{III}_2$ also pose no difficulty. In fact,
$$\text{I}_2 = \sum_k \tilde{Q}_k(Q^1_kJ^sf \cdot Q^3_kg)$$
and therefore, $\|\text{I}_2 \|_{L^2}$ is appropriately bounded by Lemma \ref{PL:102}.  Also,
$$\text{III}_2 = \sum_{k, |j|\leq 2} 2^{-js} \tilde{P}_k( Q_k^1  J^s f \cdot Q_{k-j}^4 g)$$
and therefore, $\|\text{III}_2\|_{L^2}$ is appropriately bounded by Lemma \ref{PL:103}.  It remains to control $\text{II}_1-\text{II}_2$.  
$$J^s(\tilde{Q}_k(P_kfQ_k g))  = \iint_{\xi,\zeta} e^{ix(\xi+\zeta)} |\xi+\zeta|^s \tilde{\eta} (2^{-k}(\xi+\zeta)) p(2^{-k}\xi) \eta(2^{-k}\zeta) \hat{f}(\xi)\hat{g}(\zeta) \, d\zeta \, d\xi$$
$$\tilde{Q}_k (P_k f J^s Q_k g ) = \iint_{\xi,\zeta} e^{ix(\xi+\zeta)} |\zeta|^s \tilde{\eta} (2^{-k}(\xi+\zeta)) p(2^{-k}\xi) \eta(2^{-k}\zeta) \hat{f}(\xi)\hat{g}(\zeta) \, d\zeta \, d\xi$$
and thus
\begin{equation} \label{PE:60}
J^s(\tilde{Q}_k(P_kfQ_k g)) -\tilde{Q}_k (P_k f J^s Q_k g ) = \iint_{\xi,\zeta} e^{ix(\xi+\zeta)} m(2^{-k}\xi,2^{-k}\zeta) |\xi|^s \hat{f}(\xi) \hat{g}(\zeta) \, d\zeta \, d\xi
\end{equation}
where
$$m(\xi,\zeta) = \underbrace{\frac{|\xi+\zeta|^s -|\zeta|^s}{|\xi|}}_{\equiv \tau(\xi,\zeta)} \tilde{\eta}(\xi+\zeta) \frac{|\xi|}{|\xi|^s}p(\xi) \eta(\zeta)$$
Now
$$\frac{|\xi+\zeta|^s - |\zeta|^s}{|\xi|} = s \int_0^1 \frac{|t\xi+\zeta|^{s-2}(t\xi+\zeta)\cdot \xi}{|\xi|} \, dt$$
Since $|\xi|\leq \frac{1}{4}$ and $\frac{1}{2}\leq |\zeta|\leq 2$, we have $|t\xi+\zeta|\geq \frac{1}{4}$, and thus $\tau(\xi,\zeta)\in C_0^\infty(\mathbb{R}^n\times \mathbb{R}^n)$ [add cutoffs in $\xi$ and $\zeta$ to the definition of $\tau(\xi,\zeta)$], and therefore there is a function $r(\mu,\nu) \in \mathcal{S}(\mathbb{R}^n\times \mathbb{R}^n)$ such that
$$\tau(\xi,\zeta) = \iint_{\mu,\nu} e^{i\mu\xi}e^{i\nu\zeta} r(\mu,\nu) \, d\mu d\nu$$
Inserting into \eqref{PE:60},
\begin{align*}
&J^s(\tilde{Q}_k(P_kfQ_k g)) -\tilde{Q}_k (P_k f J^s Q_k g ) \\
&= \iint_{\mu,\nu} r(\mu,\nu) \iint_{\xi,\zeta} e^{ix(\xi+\zeta)} \tilde{\eta}(2^{-k}(\xi+\zeta))\eta_\mu(2^{-k}\xi) |\xi|^s \hat{f}(\xi) \eta_\nu(2^{-k}\zeta) \hat{g}(\zeta) \, d\zeta \, d\xi \; d\mu \, d\nu
\end{align*}
where
$$\eta_\mu(\xi) = e^{i\mu\xi} p(\xi) \frac{|\xi|}{|\xi|^s} \qquad \eta_\nu(\zeta) = e^{i\nu \zeta} \eta(\zeta)$$
One can check that $\eta_\mu$, $\eta_\nu$, for fixed $\mu$, $\nu$, verify \eqref{PE:61}, \eqref{PE:62} (with constants independent of $\mu$, $\nu$).  Letting $Q_{\mu,k}$, $Q_{\nu,k}$ be the associated operators, we have
$$ J^s(\tilde{Q}_k(P_kfQ_k g)) -\tilde{Q}_k (P_k f J^s Q_k g ) = \iint_{\mu,\nu} r(\mu,\nu) \tilde{Q}_k (Q_{\mu,k} J^s f \cdot Q_{\nu,k} g) \, d\mu \, d\nu$$
and hence
$$\text{II}_1-\text{II}_2 = \iint_{\mu,\nu} r(\mu,\nu) \left[ \sum_k \tilde{Q}_k (Q_{\mu,k}J^sf \cdot Q_{\nu,k}g) \right] \, d\mu d\nu$$
By Fact 5, we have
$$\| \text{II}_1-\text{II}_2 \|_{L^2} \leq \iint_{\mu,\nu} |r(\mu,\nu)| \|J^sf\|_{L^2}\|g\|_{L^\infty} \, d\mu \, d\nu \leq C \|J^sf\|_{L^2}\|g\|_{L^\infty}$$
 
\noindent \textbf{Problem 2}. \underline{Assumptions:} We need $(\text{H}_{1,l})-(\text{H}_{5,l})$ as stated in the lecture, except that in $\text{H}_{2,l}$ we need to replace $N(n)$ by $\max (N(n),s)$. The constants are $\gamma$ (ellipticity), $C$, $C_1$, $B$ (Doi's Lemma).\\
\underline{Statement:}  Let $T_0>0$ be as given by Theorem \ref{T:1B}.  $\forall \; s > 0$, if we make the above assumptions, then $\exists \; A>0, \, K>0$ with dependencies
\begin{align*}
T_0 &= T_0(C,C_1,B)\\
A&= A(\gamma, C_1, B, s) \\
K &= K(\gamma, C, C_1, B, s)
\end{align*}
such that
\begin{equation}\label{PE:220}
\sup_{[0,T]}\| u(t)\|_{H^s}^2 + \|J^{s+\frac{1}{2}}u\|_{L^2(\mathbb{R}^n\times [0,T] ; \lambda_2(x) \, dxdt)}^2 \leq A e^{T_0K}\|u_0\|_{H^s}^2 + 
C\left\{\begin{aligned}
&\|f\|^2_{L_T^1H_x^s} \\ 
&\|J^{s-\frac{1}{2}}f\|_{L^2(\mathbb{R}^n\times [0,T]; \lambda_2^{-1}(x)dxdt)}^2 
\end{aligned}
\right.
\end{equation}
We shall only prove the result for $s=2m$, $m\in \mathbb{N}$, and the proof will be by induction on $m$.  By Theorem \ref{T:1B}, we have the bound \eqref{PE:220} for $m=0$. Apply $\Delta^m$ to the equation:
$$\partial_t \Delta^m u = -\epsilon \Delta^{2+m}u + \Delta^m (ia_{kl}\partial_{x_k}\partial_{x_l}u) + \Delta^m( b_{1,k}\partial_{x_k}u) + \Delta^m(b_{2,k}\partial_{x_k} \bar{u}) + \Delta^m(c_1u) + \Delta^m(c_2\bar{u}) + \Delta^mf$$
We now rearrange terms to put the equation in the form (with $w = \Delta^m u$)
$$\partial_t w = 
\begin{aligned}[t]
&-\epsilon \Delta^2 w + i  a_{lk}\partial_{x_l}\partial_{x_k}w  +  \Psi_{\tilde{b}_1}w + b_{2,k} \partial_{x_k} \bar{w}\\
&+ \Psi_{\tilde{c}_1}w + \Psi_{\tilde{c}_2} \bar{w} + \Delta^m f + f_1
\end{aligned}$$
where $f_1$ will contain terms that we have already bounded. Expand, and group terms as
\begin{align*}
i\Delta^m [ a_{kl}\partial_{x_k}\partial_{x_l}u] &= 
\begin{aligned}[t]
& ia_{kl}\partial_{x_k}\partial_{x_l} \Delta^m u\\
& + i \frac{(\partial_xa_{kl}) \partial_x^{2m-1}\partial_{x_k}\partial_{x_l}}{I+(-\Delta)^m} [I+(-\Delta)^m] u\\
& + \frac{i(\partial_x^2a_{kl}) \partial_x^{2m-2}\partial_{x_k}\partial_{x_l} + \partial_x^3a_{kl} \partial_x^{2m-3}\partial_{x_k}\partial_{x_l}}{I+(-\Delta)^m} [I+(-\Delta)^m] u\\
& \sum_{|r|\leq 2m-4} i(\partial_x^{2m-r}a_{kl}) \partial_x^r\partial_{x_k}\partial_{x_l}u
\end{aligned}\\
&= \text{I}+\text{II}+\text{III}+\text{IV}
\end{align*}
where $\partial_x^r$ represents a term of the type 
$\partial_x^{\sigma_1}\cdots \partial_x^{\sigma_n}$ with $|\sigma|=r$ with coefficient depending on $m$.  (II, III, and IV are actually sums of terms of the specified form).
Note that term II is a first order $\Psi$DO applied to $\Delta^mu$ and thus should be included in $\Psi_{\tilde{b}}$.  Moreover, the symbol $\tilde{b}$ satisfies the conditions
$$|\partial_t \text{Im}\vec{b}_1(x,t)| \leq \frac{C|\xi|}{\langle x \rangle^2}$$and
$$|\text{Im }\vec{b}_1(x,0)| \leq \frac{C_1|\xi|}{\langle x \rangle^2}$$
of $(\text{H}_{4,l})$ since $\partial_{x_j}a_{lk}$ satisfies the conditions
$$|\partial_t \partial_{x_j}a_{lk}(x,t)| \leq \frac{C}{\langle x \rangle^2}$$
and 
$$|\partial_{x_j}a_{lk}(x,0)|\leq \frac{C_1}{\langle x \rangle^2}$$
from $(\text{H}_{3,l})$.  Term III is a zeroth order $\Psi$DO applied to $\Delta^m u$, and thus should be included in $\Psi_{c_1}$.  Term IV is put into $f_1$.  A similar analysis can be applied to $\Delta^m(b_{1,k}\partial_{x_k}u)$ and $\Delta^m(b_{2,k}\partial_{x_k}\bar{u})$.  Let $w=\Delta^mu$.  We then proceed as in the lecture, first converting to a system, introducing $\vec{w}= \begin{bmatrix} w \\ \bar{w} \end{bmatrix}$, then diagonalizing by introducing the order $-1$ operator $S$ and changing variables to $\vec{z}=\Lambda \vec{w} = (I-S)\vec{w}$, and finally constructing the gauged system in the variable $\vec{\alpha}$.  Note that $M$ in the symbol, (and hence $R$) will depend on $s=2m$, since multiplicative factors of $s=2m$ appeared in the first order terms $\tilde{b}_1$.  However, the time interval has been restricted to $[0,T_0]$ independent of $s$.  As in the lecture notes, we get the bound (with $G$ corresponding to $\Delta^mf$ and $G_1$ corresponding to $f_1$), 
\begin{align*}
&\| \vec{\alpha}(t) \|_{L_x^2}^2 + \epsilon \int_0^t \|\Delta \vec{\alpha} \|_{L^2}^2 + \int_0^t \| J^{1/2} \vec{\alpha} \|_{L^2(dx/\langle x \rangle^2 )}^2\\ 
&\leq  \| \vec{\alpha}(0)\|_{L_x^2}^2 + \int_0^t \left[ K\|\vec{\alpha}(s)\|_{L_x^2}^2 + (\|G(s)\|_{L_x^2}+\|G_1(s)\|_{L_x^2})\|\vec{\alpha}(s)\|_{L^2_x}\right] \, ds\\
&\leq \|\vec{\alpha}(0)\|_{L_x^2}^2 + K\int_0^t \|\vec{\alpha}(s)\|_{L_x^2}^2 \, ds + (\|G\|_{L_{T_0}^1L_x^2}+\|G_1\|_{L_{T_0}^1L_x^2})\sup_{[0,T_0]}\|\vec{\alpha}\|_{L_x^2} 
\end{align*}
Thus, with $f(t) = \int_0^t \|\vec{\alpha}(s) \|_{L_x^2}^2 \, ds$, we have $f'(t) \leq c_1 + c_2 f(t)$.  Gronwall's inequality implies $f'(t) \leq c_1e^{c_2t}$, and therefore we get
$$\sup_{[0,T_0]} \|\vec{\alpha} \|_{L_x^2}^2 \leq e^{KT}(\| \vec{\alpha}(0)\|_{L_x^2}^2 + \sup_{[0,T_0]}\|\vec{\alpha}\|_{L_x^2}(\|G\|_{L_{T_0}L_x^2}+\|G_1\|_{L_{T_0}L_x^2}))$$
which converts to the bound 
$$\sup_{[0,T]} \| \Delta^m u(t) \|_{L^2_x}^2 \leq Ae^{TK}\|\Delta^m u(0)\|_{L_x^2}^2 + C\|f\|_{L_{T_0}^1L_x^2}^2 + C\|f_1\|_{L_{T_0}^1L_x^2}^2$$
Since $f_1$ consists of derivatives of total order $\leq m-2$, we can bound this term in terms of $\|u\|_{L_{T_0}^\infty L_x^2}^2$ and $\|\Delta^{m-1}u\|_{L_{T_0}^\infty L_x^2}^2$, which have already been estimated, giving the bound
$$\sup_{[0,T]} \| \Delta^m u(t) \|_{L^2_x}^2 \leq Ae^{TK}\| u(0)\|_{H_x^{2m}}^2 + C\|f\|_{L_{T_0}^1L_x^2}^2 $$

\noindent \textbf{Problem 4}. Suppose $h$ is as stated in the problem, with ellipticity
$$\lambda^{-1}|\xi|^2 \leq h(x,\xi) \leq \lambda |\xi|^2$$
and asymptotic flatness assumptions:
\begin{align*}
&\sup_{|\xi|=1} |\partial_{\xi_j}h(x,\xi)|\leq C_1 \\
& \sup_{|\xi|=1}|\partial_{x_k}\partial_{\xi_j}h(x,\xi)|\leq \frac{C_1}{\langle x \rangle^2} \\
& \sup_{|\xi|=1} |\partial_{x_k}h(x,\xi)|\leq \frac{C_1}{\langle x \rangle^2} \\
& \sup_{|\xi|=1} |\partial_{\xi_k}\partial_{\xi_j} h(x,\xi)|\leq C_1
\end{align*}
The first step is to show that it suffices to construct a smooth function $q(x,\xi)$ such that 
$$H_{\theta h}q(x,\xi) \geq B_1|\xi| - B_2, \quad \forall \; x,\xi$$
$$|\partial_x^\alpha\partial_\xi^\beta q(x,\xi)| \leq C_{\alpha,\beta}\langle x \rangle \langle \xi \rangle^{-|\beta|} \qquad \text{for }|\alpha|+|\beta| \leq N(n)$$
Proceeding as in the notes, we define $f$, $\phi$, $\phi_0$, $\phi_\pm$, $\Psi_0$, $\Psi_{\pm}$, and define
$$p=\frac{q}{\langle x \rangle}\Psi_0 + [ f(|q|) + 2\epsilon] (\Psi_+-\Psi_-) \in S^0$$
We have, for $|\xi| \geq 2$,
$$H_{\theta h} \left( \frac{q}{\langle x \rangle} \right) = \frac{H_{\theta h} q}{\langle x \rangle} - \frac{q(\partial_{\xi_j}h) x_j}{\langle x \rangle^3}$$
Now
$$H_{\theta h}q \geq B_1 |\xi| - B_2$$
$$|\partial_{\xi_j}h| \leq \sup_{|\eta|=1} |\partial_{\xi_j}h(x,\eta)| |\xi| \leq C_1|\xi|$$
and using that, on $\text{supp }\Psi_0$, we have $|q|\leq \epsilon \langle x \rangle$, we have
$$H_{\theta h} \left( \frac{q}{\langle x \rangle}\right) \geq \frac{B_1|\xi| -B_2}{\langle x \rangle} - \frac{C_1\epsilon \langle x \rangle |\xi| |x|}{\langle x \rangle^3} \geq \frac{\tilde{B}_1|\xi|}{\langle x \rangle} - \tilde{B}_2$$
for $\epsilon$ small in terms of $C_1$.  Just as in the notes, we obtain (after a few steps)
$$H_{\theta h}p \geq \frac{\tilde{B}_1|\xi|}{\langle x \rangle} - \tilde{B}_2$$
We now discuss the modifications necessary in the construction of $q$.  As in the notes, set
$$ q_1 = \langle \xi \rangle^{-1} \psi(|x|^2)H_h(|x|^2)$$
(where $\psi(t)=0$ for $t\leq M^2$, $\psi(t)=1$ for $t\geq (M+1)^2$, and $\psi'(t) \geq 0$)
and we set out to bound
$$H_h(q_1)= \langle \xi \rangle^{-1} \psi'(|x|^2)[H_h(|x|^2)]^2 + \langle \xi \rangle^{-1}\psi(|x|^2)H_h(H_h(|x|^2))\geq \langle \xi \rangle^{-1}\psi(|x|^2)H_h(H_h(|x|^2))$$
from below.  Now
$$H_h(|x|^2)=2\partial_{\xi_j}hx_j$$
$$H_hH_h(|x|^2) = 2\partial_{\xi_k}h\partial_{x_k}\partial_{\xi_j}hx_j + 2\partial_{\xi_k}h\partial_{\xi_k}h - 2 \partial_{x_k}h (\partial_{\xi_k}\partial_{\xi_j}h) x_j$$
Now $2\partial_{\xi_k}h\partial_{\xi_k}h = 2|\nabla_\xi h|^2$, and 
$$|\nabla_\xi h| \geq \nabla_{\xi}h\cdot \frac{\xi}{|\xi|} = \partial_r [h(x,r\xi')] = 2h(x,\xi')r = \frac{2h(x,\xi)}{|\xi|} \geq 2\lambda^{-1} |\xi|$$
by homogeneity and ellipticity.  From the asymptotic flatness bounds
$$2\partial_{x_k}h \cdot \partial_{\xi_k}\partial_{\xi_j}h \cdot x_j \leq (\sup_{|\eta|=1} |\partial_{x_k}h(x,\eta)|)|\xi|^2 ( \sup_{|\eta|=1} |\partial_{\xi_k}\partial_{\xi_j}h(x,\eta)|)\langle x \rangle \leq \frac{C_1|\xi|^2 \langle x \rangle}{\langle x \rangle^2}$$
and similarly,
$$2 \partial_{\xi_k}h \cdot \partial_{x_k}\partial_{\xi_j}h \cdot x_j \leq \frac{C_1|\xi|^2}{\langle x \rangle}$$
and thus:
$$H_hH_h(|x|^2) \geq 8\lambda^{-2}|\xi|^2 - \frac{C_1|\xi|^2}{\langle x \rangle}\geq \lambda^{-2}|\xi|^2$$
for $\langle x \rangle$ large (which dictates the choice of $M$), giving that
$$H_h(q_1) \geq \lambda^{-2} \psi(|x|^2) |\xi|$$
We need, as before, the lemma
\begin{lemma}
Let $K\subset \mathbb{R}^n\times \mathbb{R}^n\backslash \{ 0\}$ be a compact set.  Then, given $\mu>0$, $\exists \; s_0=s_0(K,\mu)$ such that $|X(s,x_0,\xi)|\geq \mu$, $\forall |s| > |s_0|$, $\forall (x_0, \xi_0)\in K$.  
\end{lemma}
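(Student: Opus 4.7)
The plan is to prove the uniform non-trapping claim by constructing a Lyapunov-style escape function $q(x,\xi)$ on the range of the flow starting in $K$, with $H_h q\geq 1$ pointwise and $|q(x,\xi)|\leq C\langle x\rangle$. The first observation is that because $h$ is conserved along bicharacteristics, ellipticity forces $\lambda_1\leq |\Xi(s;x_0,\xi_0)|\leq \lambda_2$ uniformly in $s$ and in $(x_0,\xi_0)\in K$, with $\lambda_1,\lambda_2$ depending only on $K$ and the ellipticity constant. Consequently it suffices to build $q$ on the slab $G=\mathbb{R}^n\times\{\lambda_1\leq |\xi|\leq\lambda_2\}$. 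Once $q$ is in hand, integrating $H_hq\geq 1$ along a bicharacteristic starting in $K$ yields
$$q(X(s),\Xi(s))\geq s+q(x_0,\xi_0)\geq s-\sup_K|q|,$$
and comparing with $|q|\leq C\langle x\rangle$ delivers $|X(s;x_0,\xi_0)|\geq (s-C_K)/C$ uniformly on $K$; the bound for $s\to -\infty$ follows by applying the same construction to the time-reversed flow.

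I would build $q$ as $q=2\lambda_1^{-2}q_1+q_2$. For the far-field part take $q_1(x,\xi)=\theta(|x|^2)H_h(|x|^2)=4\theta(|x|^2)\,a_{jk}(x)x_j\xi_k$ with a smooth nondecreasing cutoff $\theta$ vanishing on $|x|\leq M^2$ and equal to $1$ on $|x|\geq (M+1)^2$. A direct calculation gives
$$H_h H_h(|x|^2)=8|A(x)\xi|^2+8a_{lm}(\partial_{x_l}a_{jk})x_j\xi_m\xi_k-4(\partial_{x_l}a_{mp})a_{jl}x_j\xi_m\xi_p;$$
ellipticity bounds the first term below by $8\lambda^{-2}|\xi|^2$, while asymptotic flatness makes the remaining two of size $O(|\xi|^2/\langle x\rangle)$. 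Choosing $M$ large in terms of $C_1$ and $\lambda$ therefore produces $H_hq_1\geq c|\xi|^2$ on $|x|\geq M+1$, $H_hq_1\geq 0$ elsewhere, and $|q_1|\leq C\langle x\rangle$ throughout. For the near-field region $G_2=\{|x|\leq M+1,\ \lambda_1\leq|\xi|\leq\lambda_2\}$ I would invoke non-trapping together with continuous dependence: for every $(x_*,\xi_*)\in G_2$ there is an open neighborhood $U_{(x_*,\xi_*)}$ and a time $t_{(x_*,\xi_*)}>0$ so that $|X(t_{(x_*,\xi_*)};y,\eta)|\geq M+2$ for all $(y,\eta)\in U_{(x_*,\xi_*)}$. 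Compactness of $G_2$ extracts a finite subcover $\{U_j\}_{j=1}^k$ with escape times $t_j$; choosing a subordinate partition of unity $\{\alpha_j\}$, set
$$q_2(x,\xi)=\sum_{j=1}^k\int_0^{t_j}\alpha_j(X(-t;x,\xi),\Xi(-t;x,\xi))\,dt.$$

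The key identity is that the group property of the flow and a change of variable $u=s-t$ give
$$H_hq_2(x,\xi)=\sum_j\alpha_j(x,\xi)-\sum_j\alpha_j(X(-t_j;x,\xi),\Xi(-t_j;x,\xi)),$$
which equals $1$ on $G_2$: the first sum is identically $1$ there, and if the $j$-th term of the second sum were nonzero then $(X(-t_j),\Xi(-t_j))\in U_j$, which by the defining property of $t_j$ forces $|X(0;x,\xi)|=|x|\geq M+2$, contradicting $|x|\leq M+1$. Thus $H_hq_2\geq 1$ on $G_2$, and the combined function $q=2\lambda_1^{-2}q_1+q_2$ satisfies $H_hq\geq 1$ throughout $G$ with $|q|\leq C\langle x\rangle$. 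The principal obstacle is the near-region step: one must simultaneously ensure the subcover is finite, the escape times are uniformly bounded, and the telescoping identity for $H_hq_2$ holds smoothly. All three follow from compactness of $G_2$ and smoothness of the Hamiltonian flow, but they rely crucially on the ellipticity confinement of the $\Xi$ component noted at the start; without it, $G_2$ would fail to be compact and the finite partition of unity construction would break down.
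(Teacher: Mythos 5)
Your proposal matches the paper's own solution (Problem 2 of Lecture 4, given in the appendix) essentially step for step: conservation of $h$ confines $\Xi$ to a compact annulus, the far-field escape function $q_1=\theta(|x|^2)H_h(|x|^2)$ uses ellipticity and asymptotic flatness, the near-field $q_2$ comes from a finite cover of the compact set $G_2$ with escape times and a subordinate partition of unity, and the same telescoping identity for $H_h q_2$ together with the combination $q=2\lambda_1^{-2}q_1+q_2$ yields $H_h q\geq 1$ on the slab and the linear lower bound $|X(s)|\gtrsim s$. The argument is correct.
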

With $\phi_1(x)=1$ on $|x|\leq M+1$, $\phi_1(x)=0$ for $|x|\geq M+2$, set
$$q_2(x,\xi) = -\int_0^{+\infty} \phi_1(X(s,x,\xi)) \langle \Xi(s,x,\xi) \rangle \, ds$$
The homogeneity of the flow properties still hold, since $h(x,\xi)$ is homogeneous of degree $2$, and therefore
$$q_2(x,\xi) = -|\xi|^{-1}\int_0^{+\infty} \phi_1(X(s,x,\frac{\xi}{|\xi|})) \langle |\xi|\Xi(s,x,\frac{\xi}{|\xi|}) \rangle \, ds$$
With $\phi_2(\xi)=0$ for $|\xi|\leq 1$, $\phi_2(\xi)=1$ for $|\xi|\geq 2$, 
\begin{align*}
q_3(x,\xi) &= \phi_1(x)\phi_2(\xi) q_2(x,\xi)\\
&= \frac{\phi_1(x)\phi_2(\xi)}{|\xi|} \int_0^{+\infty} \phi_1(X(s,x,\frac{\xi}{|\xi|})) \langle |\xi| \Xi(s; x, \frac{\xi}{|\xi|}) \rangle \, ds
\end{align*}
Now we have to compute
\begin{align*}
H_hq_2(x,\xi) &= -\int_0^{+\infty} H_h [ \phi_1(X(s,x,\xi)) \langle \Xi(s,x,\xi) \rangle \, ds \\
& = -\int_0^{+\infty} \frac{d}{ds} [ \phi_1(X(s,x,\xi)) \langle \Xi(s,x,\xi) \rangle ]\, ds\\
&= \phi_1(x)\langle \xi \rangle
\end{align*}
Also let $K= \{ (x,\xi) \; | \; |x|\leq M+1, |\xi|=1 \}$.  Then we get $s_0$ as in the lemma (for $\mu = M+2$), and so
$$|q_3(x,\xi)| \leq \lambda s_0$$
Complete the proof as in the notes by setting $q=Nq_1+q_3$.\\

\noindent \textbf{Problem 5}.  If $n=1$, then $(\text{H}_{1,l})$ implies $\lambda^{-2}\xi_0^2 \leq \xi^2 \leq \lambda^2 \xi_0^2$, and since $\xi(s)$ is continuous, it never changes sign.  Thus
$$\xi_0 >0 \quad \Rightarrow \quad \xi(s)>0 \text{ and } \lambda^{-1}\xi_0 < \xi < \lambda \xi_0$$
$$\xi_0 < 0 \quad \Rightarrow \quad  \xi(s)<0 \text{ and } \lambda \xi_0 < \xi < \lambda^{-1} \xi_0$$
Also, $(\text{H}_{1,l})$ is $\lambda^{-1}\xi^2 \leq a(x) \xi^2 \leq \lambda \xi^2$, and since $\xi\neq 0$, $\lambda^{-1} \leq a(x) \leq \lambda$.  The first flow equation is $\dot{x}=2a(x)\xi$.  In the case $\xi_0>0$, we get $2\lambda^{-2}\xi_0 < \dot{x} < 2\lambda^2\xi_0$, so $2\lambda^{-2}\xi_0s +x_0 < x(s)<2\lambda^2\xi_0 s + x_0$.  If $s\to +\infty$, then $x(s)\to +\infty$, and if $s\to -\infty$, then $x(s)\to -\infty$.  The case $\xi_0<0$ is similar.

\def\cprime{$'$}
\providecommand{\bysame}{\leavevmode\hbox to3em{\hrulefill}\thinspace}
\providecommand{\MR}{\relax\ifhmode\unskip\space\fi MR }
\providecommand{\MRhref}[2]{%
  \href{http://www.ams.org/mathscinet-getitem?mr=#1}{#2}
}
\providecommand{\href}[2]{#2}

\end{document}